\setlist[enumerate]{font=\normalfont}
\newcommand{\N}{\mathbb{N}}                                         
\newcommand{\Z}{\mathbb{Z}}                                         
\newcommand{\Q}{\mathcal{Q}}
\newcommand{\Dd}{\mathcal{D}}
\newcommand{\RR}{\mathbb{R}}
\newcommand{\T}{\mathbb{T}}                                         
\newcommand{\coc}{c}        
\newcommand{\Hh}{\mathcal{H}}
\newcommand{\I}{\mathcal{I}}
\newcommand{\J}{\mathcal{J}}
\newcommand{\Bb}{\mathcal{B}}
\DeclareMathOperator{\Stab}{Stab}                                   
\DeclareMathOperator{\Aut}{Aut}
\DeclareMathOperator{\Span}{span}
\DeclareMathOperator{\supp}{supp}
\DeclareMathOperator{\Prim}{Prim}
\DeclareMathOperator{\Per}{Per}
\DeclareMathOperator{\Ad}{Ad}
\newcommand{\Iess}{\mathcal{I}^{\operatorname{ess}}}
\newcommand{\ess}{\operatorname{ess}}
\let\tilde\widetilde{}
\let\hat\widehat{}
\let\subset\subseteq{}
\let\supset\supseteq{}
\newtheorem{lemma}{Lemma}
\numberwithin{lemma}{section} %
\newtheorem{corollary}[lemma]{Corollary}
\newtheorem{theorem}[lemma]{Theorem}
\newtheorem*{theorem*}{Theorem}
\newtheorem{proposition}[lemma]{Proposition}
\theoremstyle{definition}
\newtheorem{definition}[lemma]{Definition}
\newtheorem{example}[lemma]{Example}
\newtheorem{notation}[lemma]{Notation}
\newtheorem{remark}[lemma]{Remark}
\numberwithin{equation}{section}                                     
\renewcommand{\tocsection}[3]{
  \indentlabel{\@ifnotempty{#2}{\ignorespaces#1 #2.\quad}}#3\dotfill%
}
\renewcommand{\tocsubsection}[3]{%
  \indentlabel{\@ifnotempty{#2}{\ignorespaces#1 #2.\quad}}#3\dotfill%
}
\let\origsection\section
\renewcommand\section{\@ifstar{\starsection}{\nostarsection}}
\newcommand\sectionspace{\vspace{0.5ex}}
\newcommand\nostarsection[1]{\sectionspace\origsection{#1}\sectionspace}
\newcommand\starsection[1]{\sectionspace\origsection*{#1}\sectionspace}
\g@addto@macro\bfseries{\boldmath}\makeatother
\subjclass[2020]{37A55}
\date{\today}
\thanks{We thank Johannes Christensen and Sergiy Neshveyev for pointing out an error in the original
proof of Proposition~\ref{prp:fP}. Aidan thanks the Winton Public Library and the Musical Fence
Caf\'e (especially Tom the barista) in Winton for good coffee, reliable wifi and pleasant places to
think about maths in outback Queensland. This research was supported by ARC Discovery Project
DP200100155. Kevin was supported by the Carlsberg Foundation through an Internationalisation Fellowship
and the Independent Research Fund Denmark (1025-00004B).}
\begin{document}

\title{Ideal structure of $C^*$-algebras of commuting local homeomorphisms}

\author[K.A.~Brix]{Kevin Aguyar Brix}
\address[K.A.~Brix]{School of Mathematics and Statistics, University of Glasgow, Glasgow G12 8QQ, United Kingdom}
\email{kabrix.math@fastmail.com}
\author[T.M.~Carlsen]{Toke Meier Carlsen}
\address[T.M.~Carlsen]{K\o{}ge, Denmark}
\email{toke.carlsen@gmail.com}
\author[A.~Sims]{Aidan Sims}
\address[A.~Sims]{School of Mathematics and Applied Statistics, University of Wollongong, Wollongong, NSW 2522, Australia}
\email{asims@uow.edu.au}

\keywords{$C^*$-algebra; topological groupoid; ideal lattice; primitive ideal space; Jacobson
topology; higher-rank graphs}

\begin{abstract}
We determine the primitive ideal space and hence the ideal lattice of a large class of
separable groupoid $C^*$-algebras that includes all $2$-graph $C^*$-algebras. A key
ingredient is the notion of harmonious families of bisections in \'etale groupoids
associated to finite families of commuting local homeomorphisms. Our results unify and
recover all known results on ideal structure for crossed products of commutative
$C^*$-algebras by free abelian groups, for graph $C^*$-algebras, and for Katsura's
topological graph $C^*$-algebras.
\end{abstract}

\maketitle

\setcounter{tocdepth}{1}
\tableofcontents

\section{Introduction}
\subsection*{Background}
The lattice of ideals of a $C^*$-algebra is a fundamental structural feature that is
notoriously difficult to compute. In the case of commutative $C^*$-algebras, type~I
$C^*$-algebras (e.g. continuous trace $C^*$-algebras), and other continuous fields of
simple $C^*$-algebras, or just-infinite $C^*$-algebras \cite{Grigorchuk-Musat-Rordam} the
primitive ideal space is a key piece of data for classifying the $C^*$-algebras
\cite{Gelfand-Naimark, Dixmier-Douady, Grigorchuk-Musat-Rordam}. However, in most cases
where $C^*$-algebras are built from dynamical or combinatorial data such as shifts of
finite type \cite{Cuntz-Krieger1980}, local homeomorphisms \cite{Renault, Deaconu,
Anantharaman-Delaroche, Exel-Renault}, or directed graphs (and their higher-rank
analogues) \cite{RaeburnCBMS}, work has focussed on conditions that ensure simplicity
\cite{KumPasRae, Kumjian-Pask2000}, or that reduce the complexity of the ideal structure
of the $C^*$-algebra of a dynamical system \cite{Renault:ideals, BatHonRaeSzy, WvW};
perhaps due to the Elliott classification program (see, for example, \cite{Elliott,
Elliott2, Phillips:DM00,Kirchberg, TWW, Winter}) whereby simple $C^*$-algebras can be classified by
$K$-theory and traces.

Cuntz \cite{Cuntz1981} determined the ideal structure of non-simple Cuntz--Krieger
algebras assuming condition (II) in terms of the irreducible components of the underlying
shift of finite type. Cuntz' condition (II) ensures that all ideals are gauge-invariant
(such ideals are called \emph{dynamical} in our companion paper
\cite{BriCarSim:sandwich}). The later groundbreaking work of an Huef and Raeburn
\cite{anHuef-Raeburn1997} further developed this technique and classified all
gauge-invariant ideals of non-simple Cuntz--Krieger algebras. This was shortly followed
by complete results for graph algebras \cite{Hong-Szymanski}, and very recently for
topological graphs (via actions of $\N$ by local homeomorphisms) \cite{Katsura2021}. The
key idea of an Huef and Raeburn underpins both analyses: the primitive ideals are indexed
by a quotient of $X\times \T$, where $X$ is the space of infinite paths; and the
hull-kernel topology is computed using a \emph{sandwiching lemma}: each primitive ideal
is sandwiched between a pair of gauge-invariant ideals for which the subquotient is
Morita equivalent to a crossed product of the form $C_0(U) \rtimes \Z$.

As a result of a significant program dating back to work of Mackey
\cite{Mackey}, Rieffel \cite{Rieffel}, and Green \cite{Green}, the
primitive-ideal spaces of such crossed products are well understood (see, for
example, \cite{Williams2007}): if $X$ is a second-countable locally compact
Hausdorff space and $G$ is a second-countable locally compact abelian group
acting on $X$, then there is an equivalence relation on $X \times
\widehat{G}$ such that $(x,\chi) \sim (y,\rho)$ if $x$ and $y$ have the same
orbit closure, and $\chi\overline{\rho}$ annihilates the stabiliser of $x$.
Each $(x,\chi)$ determines an irreducible representation $\pi_{x, \chi}$ of
$C_0(X) \rtimes G$ on $L^2(G \cdot x)$, and the kernels of $\pi_{x,\chi}$ and
$\pi_{y,\rho}$ coincide precisely if $(x, \chi) \sim (y, \rho)$. So $\pi
\colon (x,\chi) \to \ker(\pi_{x,\chi})$ descends to a bijection from $(X
\times \widehat{G})/{\sim}$ onto $\Prim(C_0(X) \rtimes G)$, and it transpires
that this map is in fact a homeomorphism. Few other general results on ideal
structure of $C^*$-algebras of dynamical systems like groupoids are available
in the literature, beyond those such as B\"onicke and Li's results
\cite{Bonicke-Li} on strongly effective \'etale groupoids for which every
ideal is dynamical, and those of van Wyk and Williams \cite{WvW} in which
continuity conditions are imposed on the isotropy groups. Katsura's results
\cite{Katsura2021} are the furthest reaching results that do not impose
regularity conditions on isotropy groups.

\subsection*{Our results}
In this paper, we make substantial new progress on the problem of ideal structure in
separable $C^*$-algebras of \'etale groupoids. Our main result \cref{cor:base for
topology} (see also~\cref{thm:neighbourhood-basis}) describes a base for the topology of
the primitive ideal space, for a large class of separable $C^*$-algebras. The formal
statement of the result is complicated, but we show by example that in many cases of
interest our description is genuinely computable (see, for example, \cref{sec:hrgs}). In
particular, our results are the first of their kind to cover large classes of higher-rank
graph $C^*$-algebras including all row-finite $2$-graphs with no sources.

The specific class of $C^*$-algebras that we study are those arising from actions
$T\colon \N^k\curvearrowright X$ by local homeomorphisms of second-countable locally
compact Hausdorff spaces $X$. All the cases mentioned above (actions by free abelian
groups, shifts of finite type, directed graphs, and Katsura's topological graphs) provide
examples of such actions. The associated topological groupoid $G_T$ (sometimes referred
to as the \emph{Deaconu--Renault groupoid}) is well behaved in the sense that it is
second-countable locally compact Hausdorff, amenable, and \'etale. Conceptually, $G_T$
can be regarded as a proxy for the orbit space of $T$.

For such actions, Sims and Williams \cite[Theorem 3.2]{Sims-Williams} discover a surjective map
\[
  \pi \colon X \times \T^k \to \Prim(C^*(G_T))
\]
and an equivalence relation on $X\times \T^k$ (akin to the equivalence relation in the
case of crossed products mentioned above) such that $\pi(x, z) = \pi(x', z')$ precisely
when $(x,z)$ and $(x',z')$ are equivalent. Importantly, the map $\pi$ is a
parameterisation and so does not \emph{a priori} say anything about the hull-kernel
topology on the primitive ideal space. The sandwiching lemma for general \'etale
groupoids in our companion paper \cite[Lemma~3.6]{BriCarSim:sandwich} applies to this
setting, so every ideal is optimally sandwiched between dynamical ideals. However, the
resulting subquotients do not admit natural descriptions as crossed products, and the
approach of \cite{anHuef-Raeburn1997, Hong-Szymanski, Katsura2021} does not naturally
extend to this setting; moreover, the isotropy-group bundle is typically badly
discontinuous, so the results of \cite{WvW} do not apply either. Without access to the
powerful Mackey--Green--Rieffel machine, that has underpinned previous analyses, we
needed a new approach.

We describe the ideals in $C^*(G_T)$ by directly analysing which subsets of $X\times
\T^k$ are preimages under $\pi$ of open sets in $\Prim(C^*(G_T))$. To do this, we
introduce the \emph{essential isotropy} (\cref{def:essential-isotropy}) of $G_T$ that
will play a key role; in particular, it determines certain isotropy subgroups $\J_x$ over
a unit $x$. We first describe a necessary condition for a subset $A$ of $X \times \T^k$
to be the preimage of an open set in terms of a family $\Bb = (B_\alpha)_{\alpha \in
\J_x}$ of homogeneous open bisections of $G_T$ indexed by the isotropy subgroups $\J_x$
where $x$ is a unit in the the projection of $A$ onto $X$. Given any such family
$(B_\alpha)_{\alpha \in \J_x}$ of bisections, we write $\Bb^{\ess}$ for its intersection
with the essential isotropy. We identify a system of \emph{$\Bb$-saturated} subsets $(U
\times V) \cdot (\Bb^{\ess})^\perp$ of $X \times \T^k$, indexed by pairs $(U, V)$
consisting of an open neighbourood $U$ of $x$ and an open subset $V$ of $\T^k$, with the
property that if $A \subseteq X \times \T^k$ is the preimage of an open set of primitive
ideals and contains a point $(x,z)\in X\times \T^k$, then there is a pair $(U, V)$ such
that $(x,z) \in U \times V$, and $(U \times V) \cdot (\Bb^{\ess})^\perp$ is contained in
$A$ (\cref{thm:Aopen}). This, our first main result, applies to any $G_T$ and gives
useful information about ideal structure; in particular, it follows that $\pi$ is
continuous (\cref{cor:pi-continuous}).
\begin{theorem*}
  Let $X$ be a locally compact Hausdorff space and suppose that $T\colon \N^2\curvearrowright X$ is an action by local homeomorphisms.
  The surjection $\pi\colon X \times \T^k \to \Prim(C^*(G_T))$ described above (cf.~\cite{Sims-Williams}) is continuous.
\end{theorem*}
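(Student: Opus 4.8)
The plan is to use the elementary criterion that $\pi$ is continuous if and only if $\pi^{-1}(W)$ is open in $X \times \T^k$ for every open subset $W$ of $\Prim(C^*(G_T))$ in its Jacobson topology. So I would fix such a $W$, set $A := \pi^{-1}(W)$, and reduce the whole problem to showing that $A$ is open; for this it suffices to produce, around an arbitrary point of $A$, a basic open box of $X \times \T^k$ that is contained in $A$.

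The decisive input is \cref{thm:Aopen}, which I am permitted to assume. Fix a point $(x,z) \in A$. Since $A$ is the preimage of the open set $W$ and contains $(x,z)$, that theorem supplies a pair $(U,V)$, where $U$ is an open neighbourhood of $x$ in $X$ and $V$ is an open subset of $\T^k$, together with the associated family $\Bb = (B_\alpha)_{\alpha \in \J_x}$ of homogeneous open bisections, such that $(x,z) \in U \times V$ and the $\Bb$-saturated set $(U \times V) \cdot (\Bb^{\ess})^\perp$ is contained in $A$.

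It then remains only to strip off the saturation. Here the crucial observation is that $(\Bb^{\ess})^\perp$ is a closed subgroup of $\T^k$ (the annihilator of a subgroup of $\Z^k$) and in particular contains the identity of $\T^k$. Saturating a subset of $X \times \T^k$ by $(\Bb^{\ess})^\perp$ therefore only enlarges it, so that $U \times V \subseteq (U \times V) \cdot (\Bb^{\ess})^\perp \subseteq A$. Thus $U \times V$ is an open neighbourhood of $(x,z)$ lying inside $A$. As $(x,z) \in A$ was arbitrary, $A$ is open; as $W$ was arbitrary, $\pi$ is continuous.

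I expect no genuine obstacle in this final deduction: granted \cref{thm:Aopen}, continuity is a purely formal consequence, the single point requiring care being the remark that passage to the $\Bb$-saturation can only grow $U \times V$ (because the identity lies in $(\Bb^{\ess})^\perp$), so the stated inclusion of the saturated set already forces $U \times V \subseteq A$. All the real difficulty sits upstream, in \cref{thm:Aopen} itself --- identifying the correct $\Bb$-saturated neighbourhoods and proving that they are swallowed by every preimage of an open set of primitive ideals.
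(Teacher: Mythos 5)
Your overall strategy is the paper's own: fix an open $W \subseteq \Prim(C^*(G_T))$, take $(x,z) \in \pi^{-1}(W)$, invoke \cref{thm:Aopen}, and then extract a genuine open box around $(x,z)$ from the $\Bb$-saturated set. One small misreading first: \cref{thm:Aopen} does not \emph{supply} the family $\Bb$ --- the family $(B_\alpha)_{\alpha\in\J_x}$ with $\alpha \in B_\alpha \subseteq \coc^{-1}(\coc(\alpha))$ is a \emph{hypothesis} of that theorem, so you must construct it before applying the theorem. This is immediate (since $G_T$ is \'etale and $\coc$ is continuous with discrete range, every $\alpha$ has such a bisection neighbourhood), and it is exactly how the paper opens its proof, but as written your proposal skips it.

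The more serious issue is the step you yourself flag as crucial. You assert that $(\Bb^{\ess})^\perp$ is ``a closed subgroup of $\T^k$'' containing the identity, and conclude that saturation can only enlarge a set. That description is wrong: by definition $(\Bb^{\ess})^\perp$ is a \emph{bundle} over $s(\Bb^{\ess})$ whose fibre over $y$ is $\{y\} \times \coc(\Bb^{\ess}_y)^\perp$, with fibres varying from point to point. Consequently ``saturation only enlarges'' is false for a general subset of $X \times \T^k$: the saturation $W \cdot (\Bb^{\ess})^\perp$ retains only those points of $W$ whose first coordinate lies in $s(\Bb^{\ess})$, and discards the rest. Your desired inclusion $U \times V \subseteq (U \times V)\cdot(\Bb^{\ess})^\perp$ is nevertheless true here, but for a reason you did not give: \cref{thm:Aopen} produces $U \subseteq B_x \cap X$, and since the unit space satisfies $G_T^{(0)} \subseteq \Iess$, every unit in $B_x$ lies in $\Bb^{\ess} = \bigcup\Bb \cap \Iess$; hence $U \subseteq s(\Bb^{\ess})$, so $(y,1) \in (\Bb^{\ess})^\perp$ for every $y \in U$, and the inclusion follows. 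This is precisely the point the paper's proof addresses: it chooses an open $U' \subseteq B_x \cap G_T^{(0)}$ containing $x$, notes $U' \times \{1\} \subseteq (\Bb^{\ess})^\perp$, and concludes that $(U \cap U') \times V = (U\times V)\cdot(U'\times\{1\})$ is an open neighbourhood of $(x,z)$ inside $\pi^{-1}(W)$. So your architecture is correct and matches the paper, but the justification of the final deduction must be repaired along these lines; as stated it rests on a false claim about $(\Bb^{\ess})^\perp$.
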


Next, we aim for a complete description of the ideal structure. The key idea is the
notion of \emph{harmonious families of bisections} (\cref{def:Bf}). These are families of
bisections $\Bb$ as above, whose intersections with the essential isotropy of $G_T$
satisfy additional consistency conditions. These conditions allow us to employ harmonic
analysis on $\T^k = \widehat{\Z}^k$ to prove a kind of noncommutative Urysohn lemma
(\cref{prp:fP}): given a harmonious family of bisections $\Bb = (B_\alpha)_{\alpha \in
\J_x}$, a point $z \in \T^k$ and open neighbourhoods $U$ of $x$ and $V$ of $z$, we
construct an element of $C^*(G_T)$ that does \emph{not} belong to the ideal $\pi(x, z)$
but does belong to $\pi(y,w)$ for all $(y,w)$ in the complement of $(U \times V) \cdot
(\Bb^{\ess})^\perp$. We believe this result is of independent interest, but here it is
the engine room of our proof that the sets $(U \times V) \cdot (\Bb^{\ess})^\perp$ are
preimages of open sets (\cref{thm:neighbourhood-basis}). Our main result (\cref{cor:base
for topology}) uses this to describe a base for the topology on $\Prim(C^*(G_T))$:
\begin{theorem*}
  Let $X$ be a second-countable locally compact Hausdorff space and suppose that $T\colon \N^k\curvearrowright X$ is an action by local homeomorphisms
  that admits harmonious families of bisections.
  A base for the hull-kernel topology of $\Prim(C^*(G_T))$ is given by sets of the form
  \[
    \pi((U \times V) \cdot (\Bb^{\ess})^\perp)
  \]
  where $\Bb$ is a harmonious family of bisections at a unit $x$, $U$ is an open neighbourhood of $x$, and $V$ is open in $\T^k$.
\end{theorem*}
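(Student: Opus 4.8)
The plan is to verify directly the two defining properties of a base for a topology: that each set of the stated form is open in $\Prim(C^*(G_T))$, and that these sets refine every open neighbourhood of every point. The key observation I would exploit is that these two requirements correspond precisely to the two halves of the technical work already done --- the sufficiency statement \cref{thm:neighbourhood-basis} supplies openness, while the necessity statement \cref{thm:Aopen} supplies refinement --- so that the present result should reduce to a short gluing argument using the surjectivity and continuity of $\pi$.

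For openness, I would fix a harmonious family $\Bb$ at a unit $x$, an open neighbourhood $U$ of $x$, and an open $V \subseteq \T^k$. By \cref{thm:neighbourhood-basis} (whose proof passes through the noncommutative Urysohn lemma \cref{prp:fP}, and is where harmoniousness and second-countability are genuinely used), the set $(U \times V)\cdot(\Bb^\ess)^\perp$ is the preimage under $\pi$ of some open $W \subseteq \Prim(C^*(G_T))$. Since $\pi$ is surjective, $\pi\big((U \times V)\cdot(\Bb^\ess)^\perp\big) = \pi(\pi^{-1}(W)) = W$ is open, which is exactly what is needed.

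For the refinement property, I would take an open $\mathcal{W} \subseteq \Prim(C^*(G_T))$ and a point $P \in \mathcal{W}$. Using surjectivity I can write $P = \pi(x,z)$, and then $A := \pi^{-1}(\mathcal{W})$ is an open set containing $(x,z)$ by continuity of $\pi$ (\cref{cor:pi-continuous}). The hypothesis that $T$ admits harmonious families provides a harmonious $\Bb$ at $x$; feeding $A$ and $\Bb$ into \cref{thm:Aopen} yields a pair $(U,V)$ with $(x,z) \in U \times V$ and $(U \times V)\cdot(\Bb^\ess)^\perp \subseteq A$. Since saturation only enlarges a set, $(x,z) \in U \times V \subseteq (U \times V)\cdot(\Bb^\ess)^\perp$, whence $P \in \pi\big((U \times V)\cdot(\Bb^\ess)^\perp\big)$; and applying $\pi$ to the displayed inclusion together with surjectivity gives $\pi\big((U \times V)\cdot(\Bb^\ess)^\perp\big) \subseteq \pi(A) = \mathcal{W}$. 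Combined with the openness step, this exhibits a basic neighbourhood of $P$ inside $\mathcal{W}$.

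The point I expect to require the most care --- and where the hypothesis on harmonious families really earns its keep --- is the compatibility of these two directions. \cref{thm:Aopen} is a necessary condition about a family of bisections at $x$, whereas \cref{thm:neighbourhood-basis} needs that family to be harmonious for the saturated set to be open; I would resolve this by choosing a single harmonious family $\Bb$ at the outset and applying the necessity statement to that very family, so that $(U \times V)\cdot(\Bb^\ess)^\perp$ is simultaneously contained in $A$ and open. I would verify that \cref{thm:Aopen} is indeed stated for a prescribed family at $x$ (rather than producing its own), since this is exactly what licenses the substitution. Granting that, the genuine difficulty lies upstream in \cref{prp:fP}, and the base statement itself is the gluing argument sketched here.
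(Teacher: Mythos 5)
Your decomposition---openness of the basic sets via \cref{thm:neighbourhood-basis}, refinement via \cref{cor:pi-continuous} plus \cref{thm:Aopen} fed the same harmonious family (legitimate, as you checked, since \cref{def:Bf}\ref{it:Bf homogeneous} is exactly the hypothesis of \cref{thm:Aopen})---is precisely how the paper obtains this result, via \cref{cor:open-if,cor:open-if-and-only-if}. However, your openness step rests on a misstatement of \cref{thm:neighbourhood-basis}, and the misstatement is genuinely false rather than cosmetic. That theorem does \emph{not} say that $(U\times V)\cdot(\Bb^{\ess})^\perp$ is the preimage under $\pi$ of an open set; it says that the \emph{image} $A(\Bb,V)=\pi\big((B_x\times V)\cdot(\Bb^{\ess})^\perp\big)$ is open in $\Prim(C^*(G_T))$. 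The two claims differ because the $\Bb$-saturated set is in general not $\pi$-saturated: it contains only pairs whose first coordinate lies in $U$, whereas $\pi^{-1}(\pi(\,\cdot\,))$ also picks up every $(y',w)$ with $y'\notin U$ having the same orbit closure and character data as some $(y,w)$ with $y \in U$. Concretely, in the dumbbell graph of \cref{eg:dumbbell}, take $U = Z(e^N)$ and the family $\mathcal{C}$ of \cref{sec:dumbbell} restricted to $s^{-1}(U)$: then $(e^Nfg^\infty,w)$ lies in the saturated set but $(g^\infty,w)$ does not, although $\pi(e^Nfg^\infty,w)=\pi(g^\infty,w)$. Your conclusion (openness of the image) is literally what the theorem asserts, so the step survives, but only after the citation is corrected from ``preimage'' to ``image''.

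The second gap is the mismatch between $U$ and $B_x$: \cref{thm:neighbourhood-basis} is stated only for the unit-space member $B_x$ of the family, whereas you invoke it for an arbitrary open neighbourhood $U$ of $x$---and you must, since in the refinement step \cref{thm:Aopen} hands you a possibly proper open subset $U \subseteq B_x$. The missing bridge is a restriction lemma: for open $x \in U \subseteq B_x$, the sets $B'_\alpha \coloneqq B_\alpha \cap s^{-1}(U)$ again form a harmonious family based at $x$ with $B'_x = U$. Conditions \ref{it:Bf units}, \ref{it:Bf homogeneous}, \ref{it:Bf right inv} and \ref{it:compact.open} of \cref{def:Bf} are immediate (keep the same compact sets $K_\alpha$), and \ref{it:Bf inverses} holds because isotropy elements have equal range and source, so membership of $s^{-1}(U)$ is preserved by inversion. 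Since $(\Bb')^{\ess}_y = \Bb^{\ess}_y$ for $y \in U$, one has $(U\times V)\cdot((\Bb')^{\ess})^\perp = (U\times V)\cdot(\Bb^{\ess})^\perp$, and applying \cref{thm:neighbourhood-basis} to $\Bb'$ gives the openness you need. (This also underwrites the inclusion $U\times V \subseteq (U\times V)\cdot(\Bb^{\ess})^\perp$ in your refinement step, which requires $U \subseteq B_x = s(\Bb^{\ess})$; \cref{thm:Aopen} does guarantee this.) With these two repairs your argument is complete and coincides with the paper's derivation of \cref{cor:base for topology}.
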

In particular, we recover known results about effective groupoids: if $G_T$ is
minimal and effective then the primitive ideal space is a singleton (so $C^*(G_T)$ is
simple); and if $G_T$ is strongly effective, then the primitive ideal space is
homeomorphic to the quasi-orbit space of $G_T$.

Using our main theorem, we provide an explicit description of the lattice of ideals of
$C^*(G_T)$ in terms of subsets of $X \times \T^k$ (\cref{prp:ideal generators}), and a
characterisation of convergence of sequences in $\Prim(C^*(G_T))$ (\cref{thm:necsuff for
convergence}).

At present, we do not know whether every action by commuting local homeomorphisms admits
harmonious families of bisections, but we confirm this for actions by free abelian
groups, (topological) graphs, and more importantly a large class of higher-rank graphs
including all row-finite $2$-graphs with no sources. These are the first general results
for irreversible dynamical systems of rank greater than~1.

We conclude the paper with a detailed analysis of the case of higher-rank graph
$C^*$-algebras \cite{Kumjian-Pask2000}, which were a significant motivator and source of
examples for our work. Given a row-finite higher-rank graph $\Lambda$ with no sources and
with infinite-path space $\Lambda^\infty$, the associated groupoid $G_\Lambda$ is the
Deaconu--Renault groupoid for the action $T\colon \N^k \curvearrowright \Lambda^\infty$
by shift maps. Our main theorem identifies the collection $\mathcal{A}_\Lambda$ of
subsets of $\Lambda^\infty \times \T^k$ that are the preimages of open subsets of
$\Prim(C^*(\Lambda))$ with an appropriate collection $\mathcal{D}_\Lambda$ of subsets of
$\Lambda^0 \times \T^k$ (\cref{cor:DLambda description}), so that such subsets index the
ideals of $C^*(\Lambda)$. This result is in the spirit of
\cite[Theorem~5.1]{Carlsen-Sims}. We finish by working through two concrete examples,
completely determining the ideal structure of two $2$-graph $C^*$-algebras that are not
accessible to any pre-existing computations of ideal structure.

The ideas and techniques we develop here are flexible, and we suspect they can be applied
to significantly larger classes of groupoid $C^*$-algebras, particularly when combined
with the sandwiching lemma of \cite{BriCarSim:sandwich} as we do for the case of single
local homeomorphisms in \cref{sec:Katsura stuff}. Although we restrict our attention here
to Deaconu--Renault groupoids, the notions of essential isotropy and of harmonious
families of bisections make sense, and are potentially useful, for arbitrary \'etale
groupoids.

\subsection*{Outline}
We start in \cref{sec:background} by outlining necessary background and notation for
topological groupoids with examples from dynamics and graphs, the ideal structure of
separable $C^*$-algebras, the results of \cite{Sims-Williams} on a parametrisation of
primitive ideals in Deaconu--Renault-groupoid $C^*$-algebras, and harmonic analysis on
$\T^k$. In \cref{sec:nice reps} we describe and analyse a family of representations of
Deaconu--Renault groupoids that interpolate between the regular representations and the
orbit-space representations. These are a key ingredient in the proof of our main theorem.
In \cref{sec:necessary condition}, we establish our first main result (\cref{thm:Aopen}):
a necessary condition for a subset of $X \times \T^k$ to be the preimage of an open
subset of $\Prim(C^*(G_T))$. It follows that $\pi \colon X \times \T^k \to
\Prim(C^*(G_T))$ is continuous (\cref{cor:pi-continuous}). In \cref{sec:sandwiching vs
SW}, we clarify how the nested open invariant sets of our general sandwiching lemma
\cite{BriCarSim:sandwich} relate to subsets of $X \times \T^k$, and we show that
Deaconu--Renault groupoids admit obstruction ideals in the sense of \cite{AraLolk,
BriCarSim:sandwich}. In \cref{sec:bisection-families} we introduce and study harmonious
families of bisections, which are the main new tool we apply to study ideal structure. We
identify two sufficient conditions to generate a harmonious family of bisections: one of
these (\cref{lem:relatively commuting}) is particularly useful in ample groupoids such as
those of (higher-rank) graphs; the other (\cref{lem:commuting-bisections}) is applicable
to a wider class of groupoids but requires more stringent hypotheses. In
\cref{sec:primitive-ideal-space}, we prove our main result
(\cref{thm:neighbourhood-basis}) using harmonious families of bisections: we determine
the preimages in $X \times \T^k$ of open subsets of $\Prim(C^*(G_T))$ for
Deaconu--Renault groupoids that admit harmonious families of bisections. We use this to
describe the ideal lattice of the $C^*$-algebras of such groupoids in \cref{sec:lattice}.
We characterise convergence of sequences in $\Prim(C^*(G_T))$ in \cref{sec:convergence}.
\cref{sec:examples} details a number of examples including all actions by commuting
homeomorphisms, all graph $C^*$-algebras, and actions by a local homeomorphism on a
second-countable locally compact Hausdorff space. Moreover, we show that many higher-rank
graphs (including all $2$-graphs) admit harmonious families of bisections. Finally, in
\cref{sec:hrgs} we use our main theorem to completely describe the ideal structure of the
$C^*$-algebras of higher-rank graphs whose groupoids admit harmonious families of
bisections.

\section{Background material}\label{sec:background}

Let $\Z$ and $\N$ denote the integers and the nonnegative integers, respectively.

\subsection{Isotropy in \texorpdfstring{\'etale}{etale} groupoids}\label{sec:isotropy}

We use the notation and conventions for groupoids of \cite{SimsCRM}.

A \emph{groupoid} is a small category in which every morphism is invertible. A groupoid
$G$ is a Hausdorff \emph{\'etale} groupoid if it carries a locally compact Hausdorff
topology with respect to which the range and source maps $r$ and $s$ are local
homeomorphisms onto the unit space $G^{(0)} = \{r(\alpha) : \alpha \in G\}$, the
inversion map $\alpha \mapsto \alpha^{-1}$ is continuous, and composition is continuous
in the subspace topology on the space $G^{(2)}$ of composable pairs. If $G$ is \'etale
then the open subsets of $G$ on which both $r$ and $s$ restrict to homeomorphisms form a
basis for the topology; we call such sets \emph{open bisections}. The unit space
$G^{(0)}$ in a Hausdorff \'etale groupoid $G$ is both closed and open in $G$.

The \emph{orbit} $[x]$ of a unit $x \in G^{(0)}$ is the set $\{r(\gamma) : \gamma\in G, s(\gamma) = x\}
\subseteq G^{(0)}$, and the \emph{orbit closure} $\overline{[x]}$ is the closure of $[x]$ in
$G^{(0)}$. The restriction $G|_{\overline{[x]}} = r^{-1}\big(\overline{[x]}\big) \cap
s^{-1}\big(\overline{[x]}\big)$ is itself a Hausdorff \'etale groupoid when $G$ is.
The groupoid is \emph{minimal} if every orbit is dense.

Let $G$ be an \'etale groupoid and take $x\in G^{(0)}$. The \emph{range fibre} of $x$ is
$G^x = r^{-1}(x)$ and similarly the source fibre is $G_x = s^{-1}(x)$; they are both
discrete subsets of $G$. An element $\gamma\in G$ is \emph{isotropy} at $x$ if $r(\gamma)
= x = s(\gamma)$, and the \emph{isotropy group} at $x\in G^{(0)}$ is the discrete group
$\I(G)_x \coloneqq \{ \gamma\in G : r(\gamma) = x = s(\gamma)\} = G^x\cap G_x$. The
\emph{isotropy subgroupoid} $\I(G)$ is the group bundle $\bigsqcup_{x\in G^{(0)}}
\I_x(G)$. This is an algebraic subgroupoid of $G$. The topological interior of the
isotropy is an open subgroupoid of $G$ denoted $\I^\circ(G)$. We write $\I^\circ(G)_x =
\I^\circ(G) \cap \I(G)_x$.
The groupoid is \emph{effective} if $\I^\circ(G) = G^{(0)}$.

The notion of essential isotropy will be important for our main results.

\begin{definition}\label{def:essential-isotropy}
Let $G$ be an \'etale groupoid. The \emph{essential isotropy at $x\in G^{(0)}$} is
$\Iess_x(G) \coloneqq \I^\circ(G_{\overline{[x]}})_x \subset G_{\overline{[x]}}$, and the
\emph{essential isotropy of $G$} is the bundle of discrete groups
\[
  \Iess(G) \coloneqq \bigsqcup_{x\in {G^{(0)}}} \Iess_x(G);
\]
that is, $\Iess(G)$ is the collection of all points that are interior to the isotropy in
the restriction of $G$ to the orbit-closure of their source.
\end{definition}

By a \emph{normal subgroupoid} of the isotropy of a groupoid $G$ we mean a subset $H
\subseteq \I(G)$ that is closed under inversion and composition and has the property that
if $\alpha \in H$ and $\beta \in G_{r(\alpha)}$ then $\beta\alpha\beta^{-1} \in H$.

\begin{lemma}\label{lem:Icirc-Iess normal}
Let $G$ be an \'etale groupoid. Then $\I^\circ(G)$ and $\Iess(G)$ are both normal
subgroupoids of $G$.
\end{lemma}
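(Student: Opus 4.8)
The plan is to prove one conjugation fact about interiors of isotropy and then apply it twice. First I would record the routine parts. The excerpt states that $\I^\circ(G)$ is an open subgroupoid of $G$, and it is contained in $\I(G)$ since an interior is contained in the set; hence it is automatically closed under inversion and composition. The same statement applied to the Hausdorff \'etale groupoid $G_{\overline{[x]}}$ shows that $\I^\circ(G_{\overline{[x]}})$ is a subgroupoid, so each fibre $\Iess_x(G) = \I^\circ(G_{\overline{[x]}})_x$ is a subgroup of $\I(G_{\overline{[x]}})_x$. Since $x \in \overline{[x]}$, every arrow from $x$ to $x$ lies in $G_{\overline{[x]}}$, so $\I(G_{\overline{[x]}})_x = \I(G)_x$, giving $\Iess(G) \subseteq \I(G)$; and two elements of $\Iess(G)$ are composable only when they share a single source--range unit, which forces them into a common fibre, so $\Iess(G)$ is closed under composition and inversion. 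This disposes of the subgroupoid requirements for both $\I^\circ(G)$ and $\Iess(G)$.

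The core of the argument is normality of $\I^\circ(G)$, which I would prove by conjugation with open bisections and which works for an arbitrary \'etale groupoid. Suppose $\alpha \in \I^\circ(G)$ with $r(\alpha) = s(\alpha) = y$, and let $\beta \in G_y$ with $r(\beta) = x$. Since $\alpha$ is interior to the isotropy and open bisections form a basis, I can choose an open bisection $V$ with $\alpha \in V \subseteq \I(G)$; I also fix an open bisection $B$ containing $\beta$. I would then invoke the standard inverse-semigroup calculus of open bisections in an \'etale groupoid \cite{SimsCRM}: the set-wise product $BVB^{-1}$ is again an open bisection, and it contains $\beta\alpha\beta^{-1}$. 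The key verification is that $BVB^{-1} \subseteq \I(G)$: a typical element is $\beta' v (\beta'')^{-1}$ with $\beta', \beta'' \in B$, $v \in V$, and composability constraints $s(\beta') = r(v)$ and $s(v) = s(\beta'')$; as $v$ is isotropy we have $r(v) = s(v)$, so $s(\beta') = s(\beta'')$, and injectivity of $s$ on the bisection $B$ forces $\beta' = \beta''$. Hence the element equals $\beta' v (\beta')^{-1}$, which is isotropy at $r(\beta')$. Thus $\beta\alpha\beta^{-1}$ lies in the open isotropy set $BVB^{-1}$, i.e.\ $\beta\alpha\beta^{-1} \in \I^\circ(G)$.

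Finally I would deduce normality of $\Iess(G)$ by applying the previous paragraph inside a restricted groupoid. Take $\alpha \in \Iess_y(G) = \I^\circ(G_{\overline{[y]}})_y$ and $\beta \in G_y$ with $r(\beta) = x$. Because $\beta$ witnesses $x \in [y]$, the orbit closures coincide, $\overline{[x]} = \overline{[y]}$, so the restricted groupoids agree: $G_{\overline{[x]}} = G_{\overline{[y]}} =: G'$, which is itself Hausdorff \'etale. Moreover $s(\beta) = y$ and $r(\beta) = x$ both lie in $\overline{[y]}$, so $\beta \in G'_y$, and multiplication in $G'$ agrees with that in $G$. Now $\alpha \in \I^\circ(G')_y$ and $\beta \in G'_y$, so the core computation applied to the \'etale groupoid $G'$ gives $\beta\alpha\beta^{-1} \in \I^\circ(G')$; since this element is isotropy at $x$ it lies in $\I^\circ(G')_x = \I^\circ(G_{\overline{[x]}})_x = \Iess_x(G)$, as required.

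I expect the main obstacle to be the bisection computation of the second paragraph: one must be careful that the set-wise product $BVB^{-1}$ really is an open bisection and that the two composability constraints genuinely force $\beta' = \beta''$, so that $BVB^{-1}$ lands in the isotropy. The remainder is bookkeeping, the one conceptual point being the reduction for $\Iess(G)$ via constancy of orbit closures along orbits, which is exactly what lets me transport the problem into $G'$ and reuse the normality already established.
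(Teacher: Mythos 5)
Your proof is correct and follows essentially the same route as the paper's: normality of $\I^\circ(G)$ by conjugating an isotropy bisection with a bisection through the given arrow, and normality of $\Iess(G)$ by observing that orbit closures are constant along orbits, so the conjugation takes place inside $G|_{\overline{[x]}}$ where the first argument applies. The only cosmetic differences are that the paper re-proves closure of $\I^\circ(G)$ under composition and inversion via products of isotropy bisections where you cite the stated background fact, and that your check that $BVB^{-1} \subseteq \I(G)$ spells out a step the paper leaves implicit.
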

\begin{proof}
If $(\alpha,\beta) \in \I^\circ(G) \cap G^{(2)}$, then there are open bisections $A \owns
\alpha$ and $B \owns \beta$ consisting of isotropy, and then $AB$ is an open subset of
the isotropy containing $\alpha\beta$, and $A^{-1}$ is an open subset of the isotropy
containing $\alpha^{-1}$. So $\I^\circ(G)$ is closed under composition and inversion.

To see that it is normal, suppose $\gamma \in \I^\circ(G)$ and $\eta \in G_{r(\gamma)}$.
Choose a bisection $U \subseteq \I(G)$ with $\gamma \in U$, and a bisection $V$
containing $\eta$. Then $V U V^{-1}$ is an open subset of $\I(G)$ containing
$\eta\gamma\eta^{-1}$, so $\eta\gamma\eta^{-1} \in \I^\circ(G)$.

To see that $\Iess(G)$ is a normal subgroupoid, observe that if $\alpha,\beta \in
\Iess(G) \cap G^{(2)}$, and if $\eta \in G_{r(\alpha)}$, then the units $r(\eta)$,
$r(\alpha)$, $s(\alpha) = r(\beta)$ and $s(\beta)$ all have the same orbit closure $K$.
Since $\alpha,\beta \in \I^\circ(G|_K)$, the first statement of the lemma shows that
$\alpha\beta$, $\alpha^{-1}$ and $\eta\alpha\eta^{-1}$ all belong to $\I^\circ(G|_K)$,
and hence to $\Iess(G)$.
\end{proof}

\begin{remark}
The essential isotropy of a groupoid is an \emph{algebraic} subgroupoid. It is in general
not an open subgroupoid as demonstrated by \cref{eg:dumbbell} below. We do not
know whether the essential isotropy is always closed, but we suspect not.
\end{remark}

\begin{notation}\label{ntn:Jx}
We define $\J = \J(G) \coloneqq \overline{\Iess(G)}$, the smallest closed subgroupoid of $G$
that contains the essential isotropy. For $x\in G^{(0)}$, we let $\J_x\coloneqq \J\cap \I_x$. Since
$\I$ is closed, we have $\J \subseteq \I$.
\end{notation}

\subsection{Deaconu--Renault groupoids}\label{sec:DR gpds}

As above, we follow the notational conventions of \cite{SimsCRM} for Deaconu--Renault
groupoids.

If $X$ is a locally compact Hausdorff space, then an \emph{action} of $\N^k$ on $X$ by
local homeomorphisms is a monoid homomorphism $T \colon n \mapsto T^n$ from $\N^k$ to the
monoid of local homeomorphisms of $X$. We use the shorthand $T \colon \N^k
\curvearrowright X$ to mean that $T$ is an action of $\N^k$ on $X$ by local
homeomorphisms. The orbit of a point $x$ under $T$ is the set
\[
  [x]_T = \bigcup_{n,m\in \N^k} T^{-m}(T^n x),
\]
and $T$ is \emph{irreducible} if there exists $x \in X$ such that $\overline{[x]}_T = X$;
we say that $T$ is \emph{minimal} if $\overline{[x]}_T = X$ for all $x \in X$. If $T$ is
clear from context, we simply write $[x]$ for $[x]_T$.

Suppose that $X$ is a locally compact Hausdorff space and that $T \colon \N^k
\curvearrowright X$ is an action by local homeomorphisms. We write $G_T$ for the set
\[
  \{(x, m, y) \in X \times \Z^k \times X : \text{there exist }p,q \in \N^k\text{ such that } T^p(x) = T^q(y)\text{ and }p-q = m\}.
\]
This set is a groupoid, called the \emph{Deaconu--Renault groupoid} of $T$ with
composable pairs $\big\{\big((x,m,y), (y',n,z)\big) \in G_T\times G_T : y = y' \big\}$ and
multiplication map
\[
  (x,m,y)(y,n,z) = (x, m+n, z).
\]
The inversion operation is $(x,m,y) = (y, -m, x)$.
The unit space of $G_T$ is
\[
  G_T^{(0)} = \{(x,0,x) : x \in X\},
\]
and we silently identify it with $X$. With this identification the orbit $[x]_T$ of $x
\in X$ under $T$ as defined above agrees with the orbit $[x]$ of $x$ in $G_T$ as defined
in \cref{sec:isotropy}.

For open sets $U, V \subseteq X$ and elements $p, q \in \N^k$, we define $Z(U, p, q, V)
\subseteq G_T$ by
\[
Z(U, p, q, V) \coloneqq \{(x,p-q, y) : x \in U, y \in V\text{ and } T^p(x) = T^q(y)\}.
\]
The collection of all such sets is a basis for a locally compact Hausdorff topology on
$G_T$ under which it becomes an \'etale groupoid. If $(x, p-q, y) \in Z(U, p, q, V)$,
then using that $T^p$ and $T^q$ are local homeomorphisms, we can choose precompact open
neighbourhoods $U'$ of $x$ and $V'$ of $y$ such that $T^p|_{U'}$ and $T^q|_{V'}$ are
homeomorphisms onto their ranges. Putting $W = T^p(U') \cap T^q(V')$ and then setting
$U'' = U' \cap (T^p)^{-1}(W)$ and $V'' = V' \cap (T^q)^{-1}(W)$, we obtain a basic open
set $Z(U'', p, q, V'') \subseteq Z(U, p-q, V)$ containing $(x, p-q, y)$ with the property
that $T^p|_{U''}$ and $T^q_{V''}$ are homeomorphisms onto the same precompact open subset
$W$ of $X$. So the collection of all such sets is a basis of precompact open bisections
for the same topology on $G_T$.

There is a canonical 1-cocycle $\coc_T\colon G_T \to \Z$ (that is, a group-valued
groupoid homomorphism) on $G_T$ given by $\coc_T(x,m,y) = m$ for all $(x,m,y)\in G_T$. If
$T$ is clear from context, we just write $\coc$ for $\coc_T$. A subset $B$ of $G_T$ is
\emph{$\coc_T$-homogeneous} (or simply \emph{homogeneous}) if $\coc_T(B)$ is a singleton
subset of $\Z^k$.

Suppose that $T$ is irreducible. Writing $G$ for $G_T$, Proposition~3.1 of
\cite{Sims-Williams} says that there is an open set $Y \subset X$ such that in the
reduction $G|_Y = \{\gamma \in G : r(\gamma),s(\gamma) \in Y\}$ of $G$ to $Y$, the
interior of the isotropy is closed. We shall need to know that in fact the interior of
the isotropy in $G$ itself is closed.

\begin{lemma}\label{lem:Io closed}
Let $X$ be a second-countable locally compact Hausdorff space and suppose that $T \colon \N^k \curvearrowright X$
be an action by local homeomorphisms. If $T$ is irreducible, then $\I^\circ(G_T)$ is closed in $G_T$.
\end{lemma}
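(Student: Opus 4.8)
The plan is to leverage \cite[Proposition~3.1]{Sims-Williams} by transporting the closedness of the interior of the isotropy from the reduction $G|_Y$ around the rest of the groupoid using conjugation by bisections, and then to globalise with the help of irreducibility. First I would reduce the claim. Since $G_T$ is Hausdorff and \'etale, the isotropy $\I(G_T)$ is closed (it is the preimage of the diagonal of $G_T^{(0)}\times G_T^{(0)}$ under $(r,s)$), so it suffices to prove $\overline{\I^\circ(G_T)}\subseteq\I^\circ(G_T)$. Writing $G=G_T$ and using that the cocycle $\coc$ is continuous into the discrete group $\Z^k$, each $\coc^{-1}(m)$ is clopen, so $\I$ and $\I^\circ$ split as disjoint unions over $m\in\Z^k$ and one may fix the cocycle value; in particular any convergent net in $\I^\circ$ is eventually of constant cocycle value.

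The engine is conjugation by open bisections. For an open bisection $N$, sending an isotropy element $\gamma$ with $u=r(\gamma)=s(\gamma)\in s(N)$ to $N_u\gamma N_u^{-1}$, where $N_u$ is the unique element of $N$ with source $u$, defines a homeomorphism from the isotropy over $s(N)$ onto the isotropy over $r(N)$ that preserves $\coc$; because $\I^\circ$ is a normal subgroupoid by \cref{lem:Icirc-Iess normal}, this homeomorphism carries $\I^\circ$ over $s(N)$ onto $\I^\circ$ over $r(N)$. For any open $O\subseteq G^{(0)}$ the reduction $G|_O$ is open in $G$, and one checks the routine identity $\I^\circ(G|_O)=\I^\circ(G)\cap G|_O$; hence the property ``$\I^\circ$ is closed'' transfers between $G|_{s(N)}$ and $G|_{r(N)}$. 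Now let $Y'=r(s^{-1}(Y))$ be the saturation of $Y$, an open invariant set, and cover it by the ranges $r(N_i)$ of open bisections $N_i$ with $s(N_i)\subseteq Y$. Since \cite[Proposition~3.1]{Sims-Williams} gives that $\I^\circ$ is closed in $G|_Y$, transport shows it is closed in each $G|_{r(N_i)}$. Although the $G|_{r(N_i)}$ need not cover $G|_{Y'}$, they do cover the isotropy over $Y'$ (an isotropy element has equal range and source, lying in a single $r(N_i)$); so a limit in $G|_{Y'}$ of elements of $\I^\circ$ is an isotropy element, hence lies in some open $G|_{r(N_i)}$, where it is already interior to the isotropy. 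Thus $\I^\circ$ is closed in $G|_{Y'}$. Irreducibility then makes this useful: choosing $x_0$ with $\overline{[x_0]}=X$ and $Y$ nonempty, the dense orbit meets $Y$, so $x_0\in Y'$ and $Y'\supseteq[x_0]$ is dense; hence $Y'$ is a dense, open, invariant subset of $X$ over which $\I^\circ$ is closed.

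The main obstacle is the final globalisation from the dense open invariant set $Y'$ to all of $X$. Over the closed, nowhere-dense, invariant complement $Z=X\setminus Y'$ the transport breaks down, since a point $x\in Z$ is joined by no element of $G$ to $Y$, so the conjugating bisections needed to move isotropy at $x$ into $G|_Y$ cannot be chosen convergently as one approaches $x$. I expect this to be resolved in one of two ways. Either the set $Y$ supplied by \cite[Proposition~3.1]{Sims-Williams} can be taken with full saturation $Y'=X$, so that $Z=\varnothing$ and the proof is already complete; or one argues directly that a limit $\gamma=(x,m,x)\in\overline{\I^\circ}$ lies in $\I^\circ$ by using openness of $\I^\circ$ and density of $Y'$ to approximate $\gamma$ by interior isotropy elements with sources in $Y'$, and then showing that the associated local periods stabilise to a single pair $(p,q)$ with $p-q=m$ for which $T^p$ and $T^q$ agree on a neighbourhood of $x$; equivalently, that $\gamma$ lies in a single open bisection contained in $\I$. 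Establishing this stabilisation, controlled by the uniform periodicity that \cite{Sims-Williams} extracts over $Y$, is the crux of the argument.
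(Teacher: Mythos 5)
Your transport machinery is correct, and it is in essence the same mechanism the paper uses: the paper also takes the set $Y$ from \cite[Proposition~3.10]{Sims-Williams} (open, with $T^pY\subseteq Y$ and $\I^\circ(G_T|_Y)$ closed in $G_T|_Y$) and then conjugates a convergent sequence of interior-isotropy elements into $G_T|_Y$ by a single homogeneous bisection $B=\{(z,p,T^pz):z\in U\}$ implementing $T^p$, using normality of $\I^\circ$ (\cref{lem:Icirc-Iess normal}) and openness of $G_T|_Y$ in $G_T$ to pull relative closedness back. Your intermediate steps are all sound: the identity $\I^\circ(G|_O)=\I^\circ(G)\cap G|_O$ for open $O$, the fact that conjugation by a bisection $N$ carries $\I^\circ$ over $s(N)$ homeomorphically onto $\I^\circ$ over $r(N)$, and the conclusion that $\I^\circ(G_T)$ is relatively closed in $G_T|_{Y'}$, where $Y'=r(s^{-1}(Y))=\bigcup_{p\in\N^k}T^{-p}(Y)$.

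The gap is the step you explicitly leave open, and it is not a technicality but the entire remaining content of the lemma. The paper needs no globalisation step because, immediately after invoking \cite[Proposition~3.10]{Sims-Williams}, it records that $X=\bigcup_{p\in\N^k}T^{-p}(Y)$; this is exactly your first alternative, it makes your $Y'$ equal to $X$, and with it your argument finishes. What you actually derive from irreducibility is strictly weaker: a dense orbit meets the open set $Y$, so $Y'$ contains every point whose orbit is dense, and is therefore dense, open and invariant. Density buys nothing here: relative closedness of $\I^\circ(G_T)$ in $G_T|_{Y'}$ gives no information about a limit $(x,m,x)$ with $x\in X\setminus Y'$, and points of an irreducible system whose own orbits fail to be dense (such as $e^\infty$ in the dumbbell graph of \cref{eg:dumbbell}) are precisely where closedness could conceivably fail. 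Moreover, the covering property is genuinely extra information about the particular $Y$ that \cite{Sims-Williams} construct: it is not a formal consequence of ``$Y$ open, forward invariant, $\I^\circ(G_T|_Y)$ closed'' together with irreducibility, since in the dumbbell graph the set $Y=Z(f)\cup Z(g)=\{fg^\infty,g^\infty\}$ has all of these properties while its saturation misses $e^\infty$. So the fork you identify cannot be resolved by a soft argument; it must be resolved by securing the covering property of the specific Sims--Williams set, which your proposal neither cites nor proves, and your second alternative (the ``stabilisation'' of local periods near $x$) is only a statement of hope, not an argument. As written, the proposal proves the lemma only over $Y'$.
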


\begin{proof}
By \cite[Proposition~3.10]{Sims-Williams} there is an open set $Y \subseteq X$ such that $T^p Y
\subseteq Y$ for all $p \in \N^k$ and $\I^\circ({G_T}|_Y)$ is closed in ${G_T}|_Y$. Since $T$ is
irreducible and $Y$ is open, we have $X = \bigcup_{p \in \N^k} T^{-p}(Y)$.

Suppose that $(\gamma_n)_{n=1}^\infty$ is a sequence in $\I^\circ(G_T)$ that converges to
$\gamma \in G_T$. We must show that $\gamma \in \I^\circ(G_T)$. It clearly belongs to
$\I(G_T)$. For each $n$ let $x_n = r(\gamma_n)$ and let $r(x) = \gamma$. So each $\gamma_n =
(x_n, m_n, x_n)$ and $\gamma = (x, m, x)$ for some $m_n$ and $m$ in $\Z^k$. By discarding finitely
many terms, we can assume without loss of generality that $m_n = m$ for all $n$.

Fix $p$ such that $r(\gamma) \in T^{-p}(Y)$. Since $Y$ and hence $T^{-p}(Y)$ is open, by
discarding finitely many terms again, we can assume that $r(\gamma_n) \in T^{-p}(Y)$ for all $n$
as well.

Write $m = a - b$ with $a,b \in \N^k$ satisfying $T^a(x) = T^b(x)$ and such that $a,b \geq p$
(we can arrange this by replacing $a$ with $a + p$ and $b$ with $b + p$ for example).
Choose a neighbourhood $U \subseteq T^{-p}(Y)$ of $x$ such that $T^a$ and $T^b$ restrict to homeomorphisms
of $U$. Then $T^a U, T^b U \subseteq Y$ because $a,b \geq p$. Since $Z(U, a, b, U)$ is an open
neighbourhood of $\gamma = (x,p,x)$, we can assume without loss of generality that
$\gamma_n = (x_n, p, x_n)$ belongs to $Z(U, a, b, U)$ for all $n$.

Let $B \coloneqq \{(z, p, T^p z) : z \in U\}$. Then $\gamma \mapsto B \gamma B^{-1}$ is a
homeomorphism of $Z(U, a, b, U)$ onto $B Z(U, a, b, U) B^{-1} \subseteq {G_T}|_Y$. Hence
the sequence $B \gamma_n B^{-1}$ converges in ${G_T}|_Y$ to $B \gamma B^{-1}$. Since each
$\gamma_n \in \I^\circ(G_T)$, which is normal in $G_T$ by \cref{lem:Icirc-Iess normal},
each $B \gamma_n B^{-1} \in \I^\circ(G_T)$ too. Since the interior of the isotropy in
${G_T}|_Y$ is closed by \cite[Proposition~3.10]{Sims-Williams}, it follows that $B \gamma
B \in \I^\circ((G_T)|_Y)$. Since ${G_T}|_Y \subseteq G_T$ is open, this gives $B \gamma B
\in \I^\circ(G_T)$. Using \cref{lem:Icirc-Iess normal} again, we see that $\gamma \in
\I^\circ(G_T)$.
\end{proof}

\begin{example}
  \begin{enumerate}
    \item If $G_T$ is minimal, then the essential isotropy of $G_T$ coincides with interior of the isotropy,
      and by the above result the interior of the isotropy is closed, so $\J(G_T) = \Iess(G_T) = \I^\circ(G_T)$.
    \item If $G_T$ is strongly effective (every reduction to a closed invariant set is effective),
      then the essential isotropy is trivial and $\J(G_T)$ may be identified with the unit space $X$.
    \item If $G_T$ is minimal and effective, then $\J(G_T) = X$.
  \end{enumerate}
\end{example}

\subsection{Graph groupoids}

We will frequently use graph groupoids to describe examples. Let $E$ be a row-finite
directed graph with no sources (see \cite{RaeburnCBMS} for definitions and conventions).
The path space $E^*$ of $E$ consists of finite strings $\mu = \mu_1 \cdots \mu_n$ of
edges of $E$ such that $s(\mu_i) = r(\mu_{i+1})$ for all $i < n$; we write $r(\mu) =
r(\mu_1)$ and $s(\mu) = s(\mu_n)$. The infinite-path space $E^\infty$ of $E$ consists of
strings $x = x_1 x_2 \cdots$ all of whose initial segments $x_1 \cdots x_n$ are paths; we
write $r(x)$ for $r(x_1)$. The space $E^\infty$ is a totally disconnected locally compact
Hausdorff space under the topology generated by the cylinder sets $Z(\mu) = \{\mu x :
s(\mu) = r(x)\}$ of finite paths $\mu$, and the shift map $\sigma \colon x \mapsto x_2
x_3 \dots$ is a local homeomorphism (it restricts to a homeomorphism on $Z(\mu)$ whenever
$\mu$ has length at least 1). The resulting Deaconu--Renault groupoid $G_E = G_\sigma$ is
called the \emph{graph groupoid} of $E$.

For finite paths $\mu,\nu \in E^*$ such that $s(\mu) = s(\nu)$, both
$\sigma^{|\mu|}|_{Z(\mu)}$ and $\sigma^{|\nu|}|_{Z(\nu)}$ are homeomorphisms onto
$Z(s(\mu)) \subseteq G_\sigma^{(0)} \cong E^\infty$, and the open sets
\[
Z(\mu,\nu) \coloneqq Z(Z(\mu), |\mu|, |\nu|, Z(\nu)) = \{(\mu x, |\mu| - |\nu|, \nu x) : x \in Z(s(\mu))\}
\]
in $G_\sigma$ indexed by such pairs constitute a basis of compact open sets for
$G_\sigma$.

We will refer to the following elementary but illustrative example a number of times
throughout the paper.

\begin{example}[The Dumbbell graph]\label{eg:dumbbell}
Consider the directed graph $E$ depicted below.
\[
\begin{tikzpicture}
    \node[inner sep=0pt, circle] (v) at (0,0) {$v$};
    \node[inner sep=0pt, circle] (w) at (2,0) {$w$};
    \draw[-stealth] (w) to node[pos=0.5, above] {$f$} (v);
    \draw[stealth-] (v.north) arc[radius=0.75, start angle=5, end angle=343];
    \draw[-stealth] (w.south) arc[radius=0.75, start angle=195, end angle=530];
    \node[inner sep=0pt, circle, anchor=east] at (-1.5,0) {$\vphantom{g}e$};
    \node[inner sep=0pt, circle, anchor=west] at (3.5,0) {$g\vphantom{e}$};
\end{tikzpicture}
\]

We call this the \emph{dumbbell graph}. The infinite-path space of $E$ is
\[
  E^\infty = \{e^\infty\} \cup \{e^n f g^\infty : n \in \N\} \cup \{g^\infty\}.
\]
A straightforward argument shows that in the topology on $E^\infty$ the subset $\{e^n f
g^\infty : n \in \N\} \cup \{g^\infty\}$ is a discrete open subset, and $E^\infty$ is
homeomorphic to the one-point compactification of this subset with $e^\infty$ as the
point at infinity. The Deaconu--Renault groupoid is
\begin{align*}
G_E = \{(e^\infty, n, e^\infty) : n \in \Z\}
    &{} \cup \{(\alpha g^\infty, |\alpha|-|\beta|, \beta g^\infty) : \alpha,\beta \in \{e^n f : n \in \N\}\}\\
    &{} \cup \{(g^\infty, n, g^\infty) : n \in \Z\}.
\end{align*}

An important point is that the topology of cylinder sets is finer than the topology
inherited from $E^\infty \times \Z \times E^\infty$. To see this, note that although
$(e^n f g^\infty, 0, e^n f g^\infty) \to (e^\infty, 0, e^\infty)$ as $n \to \infty$, for
$m \in \Z \setminus \{0\}$ the sequence $(e^n f g^\infty, m, e^n f g^\infty)_{n \in \N}$
belongs to $G_\sigma$ but has no convergent subsequence, and in particular does not
converge to $(e^\infty, m, e^\infty)$. To see this note that for any $p,q$ with $p - q =
m$, when $n \ge \max\{p,q\}$ we have $\sigma^p(e^n f g^\infty) = e^{n-p} f g^\infty \not=
e^{n-q}fg^\infty = \sigma^q(e^n f g^\infty)$, and so $(e^n f g^\infty, m, e^n f
g^\infty)$ does not belong to the open neighbourhood $Z(e^p, e^q)$ of $(e^\infty, m,
e^\infty)$.

We claim that $\Iess_{G_\sigma} = \I(G_\sigma) = \{(x, m, x) : x \in E^\infty, m \in
\Z\}$. Clearly $\Iess_{G_\sigma} \subseteq \{(x, m, x) : x \in E^\infty, m \in \Z\}$.
Since $E^\infty \setminus \{e^\infty\}$ is an open discrete subset of $E^\infty$, the set
$\{(x, m, x) : x \in E^\infty\setminus\{e^\infty\}, m \in \Z\}$ is an open discrete
subset of $G_\sigma$ and hence contained in $\Iess_{G_\sigma}$. So it suffices to show
that $\{(e^\infty, m, e^\infty) : m \in \Z\} \subseteq \Iess_{G_\sigma}$. For this, note
that the orbit of $e^\infty$ is the singleton $\{e^\infty\}$, so $\overline{[e^\infty]} =
\{e^\infty\}$, and $(G_\sigma)|_{\overline{[x]}} = \{(e^\infty, m, e^\infty) : m \in
\Z\}$ is a discrete group isomorphic to $\Z$. In particular, the interior of the isotropy
in this groupoid is the whole groupoid, and we obtain $\{(e^\infty, m, e^\infty) : m \in
\Z\} \subseteq \Iess_{G_\sigma}$ as claimed.

Since, as discussed above, $(e^{m+n} f g^\infty, m, e^n f g^\infty)_{n \ge |m|}$, the
space $\Iess_{G_\sigma}$ is not open.

It is instructive to describe the relative topology on $\Iess_{G_\sigma}$. The complement
$\Iess_{G_\sigma} \setminus E^\infty$ of the unit space is a discrete clopen subset,
while the unit space is homeomorphic to the one-point compactification of $\{e^n f
g^\infty : n \in \N\} \cup \{g^\infty\}$ as described above.

We find it helpful to picture $\Iess_{G_\sigma}$ as a subset of $\RR^3$ as follows: for
$x \in E^\infty$, we let $\theta(x) \coloneqq 1/\min\{n : x_n = g\}$ with the convention that
$\theta(e^\infty) = 0$; we then define points in $\Iess_{G_\sigma}$ with points in
$\RR^3$ by
\[
(x, m, x) \mapsto
    \begin{cases}
        (\theta(x), m, 0) &\text{ if $x \not= e^\infty$}\\
        (\theta(x), 0, m) &\text{ if $x = e^\infty$}.
    \end{cases}
\]
\end{example}

\begin{example}[Essential isotropy of graph groupoids]\label{eg:graph groupoids}
For a general graph groupoid, we can describe the essential isotropy relatively cleanly.
It suffices to discuss row-finite graphs with no sources, because up to groupoid equivalence all graph groupoids can be realised by such graphs.
Let $E$ be a row-finite with no sources (in particular, the unit space of the graph groupoid is identified with the infinite-path space).
A \emph{maximal tail} of $E$ is a set $T \subseteq E^0$ of vertices with the property that:
\begin{enumerate}
  \item $s(\alpha) \in T$ implies $r(\alpha) \in T$;
  \item if $v \in T$ then there exists $e \in vE^1$ such that $s(e) \in T$; and
  \item $T$ is cofinal in the sense that whenever $u, v \in T$ there exists $w \in T$ such that $uE^* w$ and $vE^* w$ are both nonempty.
\end{enumerate}
The orbit closures in $E^\infty$ are the sets $V_T \coloneqq \{x \in E^\infty : s(x_i) \in T
\text{ for all }i\}$ indexed by maximal tails $T$ of $E$: if $x$ is an infinite path,
then $T_x \coloneqq \bigcup_n \{v \in E^0 : v E^* s(x_n) \not= \varnothing\}$ is a maximal tail,
and $\overline{[x]} = V_{T_x}$. By \cite[Lemma~2.1]{Hong-Szymanski}, if $T$ is a maximal
tail, then $T$ can contain (up to cyclic permutation of edges and vertices) at most one
cycle $\mu$ with no entrance in $T$; if there is such a $\mu$, then $T = T_{\mu^\infty} =
\{v \in E^0 : v E^* r(\mu)\not= \varnothing\}$. It is routine to check using the arguments
of \cref{eg:dumbbell} that for a maximal tail $T$, the interior of the isotropy in
$G_{ET} \coloneqq G|_{V_T}$ is trivial if $T$ contains no cycle with an entrance, and is equal
to
\[
\{(\alpha \mu^\infty, n|\mu|, \alpha\mu^\infty) : \alpha \in E^*r(\mu), \alpha \not\in E^*\mu, n \in \Z\}
\]
if $T = T_{\mu^\infty}$ is a maximal tail containing a cycle $\mu$ with no entrance in $T$. It follows that
\begin{align*}
  \Iess(G_E) = G_E^{(0)} \cup \{(\alpha \mu^\infty, n|\mu|, \alpha\mu^\infty) : r(\mu) = s(\mu),\;
	&\mu\text{ has no entrance in } T_{\mu^\infty},\\
	&\alpha \in E^*r(\mu), \alpha \not\in E^*\mu, n \in \Z\}.
\end{align*}

To describe the topology on $\Iess$, first note that $G_E^{(0)}$ is a clopen subset of $\Iess$.
We claim that the complement of $G_E^{(0)}$ in $\Iess$ is discrete.
For this, fix a cycle $\mu$ of nonzero length with no entrance in $T = T_{\mu}$, a path $\alpha$ with $s(\alpha) = r(\mu)$ and an integer $n$.
We must show that given any sequence $\nu_i$ of cycles each having no entrance in $T_{\nu_i^\infty}$,
any sequence $\beta_i$ of paths with $s(\beta_i) = r(\nu_i)$ and $\beta_i \not\in E^*\nu_i$, and any sequence $p_i \in \Z$,
if $(\beta_i\nu^\infty_i, p_i|\beta_i|, \beta_i\nu^\infty_i) \to (\alpha\mu^\infty, n|\mu|, \alpha\mu^\infty)\}$,
then $(\beta_i\nu^\infty_i, p_i|\beta_i|, \beta_i\nu^\infty_i) = (\alpha\mu^\infty, n|\mu|, \alpha\mu^\infty)$ for large $i$.
We will argue the case when $n > 0$; the case $n < 0$ follows by taking inverses.
Observe that since the range and source maps and the cocycle $\coc : (z, m, y) \mapsto m$ are continuous,
we have $\beta_i\nu^\infty_i \to \alpha\mu^\infty$ and $p_i|\nu_i| = n|\mu|$ for large $i$; we may as well assume that $p_i|\nu_i| = n|\mu|$ for all $i$.
Fix $I$ such that $(\beta_i\nu^\infty_i, p_i|\beta_i|, \beta_i\nu^\infty_i) \in Z(\alpha \mu^{2n}, n|\mu|, \alpha\mu^n)$ for $i \ge I$.
Fix $i \ge I$.
We claim that $\beta_i\nu^\infty_i \in Z(\alpha\mu^{(2+k)n})$ for all $k \ge 0$. We prove this by induction.
The base case is trivial since $(\beta_i\nu^\infty_i, p_i|\beta_i|, \beta_i\nu^\infty_i) \in Z(\alpha \mu^{2n}, n|\mu|, \alpha\mu^n)$ implies $\beta_i\nu^\infty_i \in Z(\alpha \mu^{2n})$.
So suppose inductively that $\beta_i\nu^\infty_i \in Z(\alpha \mu^{(2+k)n})$.
Then $\beta_i\nu^\infty_i = \alpha\mu^{(2+k)n} y$ for some $y$.
Since $(\beta_i\nu^\infty_i, p_i|\beta_i|, \beta_i\nu^\infty_i) \in Z(\alpha \mu^{2n}, n|\mu|, \alpha\mu^n)$, we have
\[
\sigma^{|\alpha| + 2n|\mu|}(\beta_i \nu^\infty_i)
	= \sigma^{|\alpha| + n|\mu|}(\beta_i \nu^\infty_i)
	= \sigma^{|\alpha| + n|\mu|}(\alpha\mu^{(2+k)n} y)
	= \mu^{(1+k)n} y.
\]
Since $\beta_i\nu^\infty_i \in Z(\alpha \mu^{2n})$ by the base case,
\[
\beta_i \nu^\infty_i
	= \alpha \mu^{2n} \sigma^{|\alpha| + 2n|\mu|}(\beta_i \nu^\infty_i)
	= \alpha\mu^{2n}\mu^{(1+k)n} y
	\in Z(\alpha\mu^{(2+(k+1))n}).
\]
So $\beta_i\nu^\infty_i \in Z(\alpha\mu^{(2+k)n})$ for all $k \ge 0$ by induction, and hence $\beta_i\nu^\infty_i = \alpha\mu^\infty$.
\end{example}

\subsection{Ideals in \texorpdfstring{$C^*$}{C}-algebras}\label{sec:ideals}
Here we use the exposition from~\cite[Appendix A2]{Raeburn-Williams} to which the reader is also referred for details.

Let $A$ be a $C^*$-algebra.
By an \emph{ideal} in $A$, we will always mean a closed and two-sided ideal.
The ideals in $A$ are therefore exactly the kernels of $^*$-homomorphisms defined on $A$.
An ideal $I$ in $A$ is \emph{primitive} if it is the kernel of an irreducible representation of $A$,
and the collection of primitive ideals, $\Prim A$, is endowed with the \emph{hull-kernel} topology (or \emph{Jacobson} topology)
which is specified by the closure operation:
the closure of a subset $F\subset \Prim A$ is given by
\[
  \bar{F} = \{ P\in \Prim A : \bigcap_{I\in F} I \subset P\}.
\]
Let $I$ be an ideal in $A$.
The \emph{hull} of $I$ is the closed set of primitive ideals
\[
  h(I) = \{ P\in \Prim A : I \subset P\}.
\]
Conversely, if $F$ is a closed set of primitive ideals, then the \emph{kernel} of $F$ is
\[
  k(F) = \bigcap_{J\in F} J.
\]
The two operations are inverses of each other and therefore define a bijection between the ideals in $A$ and the closed subset of $\Prim A$.
Similarly, ideals in $A$ correspond bijectively to open sets of primitive ideals as in $I \mapsto \{P : I \not\subset P\}$.

\subsection{Primitive ideals in \texorpdfstring{$C^*$}{C*}-algebras of Deaconu--Renault groupoids}\label{sec:DR prelims}

Recall that if $G$ is an \'etale groupoid then $C^*(G)$ is the universal $C^*$-algebra
generated by a $^*$-representation of the convolution algebra $C_c(G)$ (see, for example,
\cite{SimsCRM}). Since $G^{(0)}$ is a clopen subset of $G$, the completion of
$C_c(G^{(0)})$ in $C^*(G)$ is a subalgebra isomorphic to $C_0(G^{(0)})$. We identify the
two and regard $C_0(G^{(0)})$ as a subset of $C^*(G)$.

If $G = G_T$ is the Deaconu--Renault groupoid for an action $T \colon \N^k
\curvearrowright X$, then for $z \in \T$ and for each $f \in C_c(G_T)$ the function
$\gamma_z(f) \colon (x, n, y) \mapsto z^nf(x,n,y)$ also belongs to $C_c(G_T)$. The map
$\gamma_z$ is a $^*$-homomorphism, so the universal property of $C^*(G_T)$ implies that
$\gamma_z$ extends to an endomorphism of $C^*(G_T)$. A routine $\varepsilon/3$-argument
shows that $z \mapsto \gamma_z(a)$ is continuous for $a \in C^*(G)$. Since $\gamma_z
\circ \gamma_w = \gamma_{zw}$ on $C_c(G_T)$ and since $\gamma_1$ is the identity on
$C_c(G_T)$, this $\gamma$ is an action of $\T^k$ on $C^*(G_T)$ called the \emph{gauge
action}.

Let $G_T$ be the Deaconu--Renault groupoid of an action $T\colon \N^k\curvearrowright X$.
For each $(x, z) \in X \times \T^k$, there is an irreducible representation $\pi_{x,z} :
C^*(G_T) \to \Bb(\ell^2([x]))$ such that
\[
  \pi_{x,z}(f) e_y = \sum_{\gamma\in (G_T)_y} z^{\coc(\gamma)} f(\gamma) e_{r(\gamma)}
\]
for every $f\in C_c(G_T)$ and $y\in [x]$. By~\cite[Theorem~3.2]{Sims-Williams}, there is
a surjection $\pi\colon X\times \T^k \to \Prim(C^*(G_T))$ such that $\pi(x,z) =
\ker(\pi_{x,z})$ for all $(x,z)\in X\times \T^k$. Moreover, $\pi_{x, z}$ and $\pi_{x',
z'}$ have the same kernel precisely if $\overline{[x]} = \overline{[x']}$ and $z$ and
$z'$ induce the same character on the group
\[
  H(x) \coloneqq \bigcup_{\substack{\varnothing \neq U\subset \overline{[x]} \\ U~\textrm{relatively open}}}
  \{m-n : T^m y = T^n y~\textrm{for all}~y\in U\} \subseteq \Z^k
\]
described immediately before~\cite[Theorem~3.2]{Sims-Williams}. Observe that $H(x) =
H(y)$ whenever $\overline{[x]} = \overline{[y]}$.

We claim that $H(x) = \coc(\I^\circ((G_T)|_{\overline{[x]}}))$: the containment
$\supseteq$ follows from the definition of $H(x)$, and the reverse containment follows
from \cite[Lemma~3.9]{Sims-Williams} since $(G_T)|_{\overline{[x]}}$ is irreducible by
definition. We claim further that
\[
H(x) = \coc(\Iess_x).
\]
Indeed, if $n \in H(x)$, then there exists $\gamma \in \I^\circ((G_T)|_{\overline{[x]}})$
with $\coc(\gamma) = n$. So there is an open bisection $B$ in
$\I^\circ((G_T)|_{\overline{[x]}})$ containing $\gamma$, and since $\coc$ is locally
constant, we can assume that $B \subseteq \coc^{-1}(n)$. Since $[x]$ is dense in
$\overline{[x]}$, there exists $\eta \in (G_T)_x$ with $r(\eta) \in s(B)$.
\cref{lem:Icirc-Iess normal} shows that $\eta^{-1}\gamma\eta \in
\I^\circ((G_T)|_{\overline{[x]}})_x$, and so $n \in \coc(\Iess_x)$. The reverse
containment is clear because $\Iess_x \subseteq \I^\circ((G_T)|_{\overline{[x]}})$.

\subsection{Harmonic analysis}

For the proof of our main result we will need a bit of harmonic analysis. We use notation
and results from~\cite[Chapter~8]{Folland}.

Let $K \subset \Z^k$ be a subgroup. Its annihilator is the compact subgroup $K^\perp
\coloneqq \{z\in \T^k : z^h = 1,\text{ for all }h\in K\} \le \T^k$. The annihilator acts
on $\T^k$ by translation. The Pontryagin dual of $K$ (defined as the group of continuous
homomorphisms from $H$ into $\T$) is isomorphic to the quotient group $\T^k / K^\perp
\cong \hat{K}$. Let $\psi\in C^\infty(\hat{K})$. The Fourier coefficients of $\psi$ are
\[
  \hat{\psi}(h) \coloneqq \int_{\hat{K}} \psi(\eta) \overline{\eta(h)}~d\eta,
\]
where the integration is with respect to normalised Haar measure on $\hat{K}$. When
$\psi$ is smooth (that is, all partial derivatives exist and are continuous), its Fourier
coefficients are absolutely summable, cf.~\cite[p. 257]{Folland}.

Given $h_0\in K$, we may \emph{perturb} $\psi$ by $h_0$ to obtain $\psi_{h_0}\in
C^\infty(\hat{K})$ given by
\begin{equation}\label{eq:perturbation}
  \psi_{h_0}(\eta) = \eta(h_0) \psi(\eta)
\end{equation}
for $\eta\in \hat{K}$. The Fourier coefficients of $\psi_{h_0}$ are $\hat{\psi}_{h_0}(h)
= \hat{\psi}(h - h_0)$, for $h\in K$. If $\psi$ is supported on an open subset $V \subset
\hat{K}$, then $\psi_{h_0}$ is also supported on $V$.

If $H \le K$ is a subgroup, then $K^\perp \subset H^\perp$, and there is a canonical
quotient map $\hat{q}_{H,K}\colon \hat{K} \to \hat{H}$. We define an \emph{averaging map}
$\Phi_{H,K}\colon C(\hat{K}) \to C(\hat{H})$ by
\begin{equation} \label{eq:averaging-map}
  \Phi_{H,K}(\psi)(\hat{q}_{H,K}(\eta)) \coloneqq \int_{H^\perp} \psi(\chi \eta)~d\chi
\end{equation}
for $\psi\in C(\hat{K})$ and $\eta\in \hat{K}$. Again we integrate with respect to
normalised Haar measure on $H^\perp$. If $\psi\in C(\hat{K})$ is smooth, then it is the
$\|\cdot\|_\infty$-limit of its Fourier series:
\[
  \psi(\eta) = \sum_{h\in K} \eta(h) \hat{\psi}(h).
\]
In this case,
\begin{equation} \label{eq:averaging-as-sum}
  \Phi_{H,K}(\psi)(q_{H,K}(\eta)) = \sum_{h\in K} \eta(h) \hat{\psi}(h) \int_{H^\perp} \chi(h)~d\chi = \sum_{h\in H} \eta(h) \hat{\psi}(h).
\end{equation}
In particular, $\big(\Phi_{H,K}(\psi)\hat{\;}\big)(h) = \hat{\psi}(h)$ for all $h\in H$.

If $\psi\in C(\hat{K})$ is supported on $\hat{V} \subset \hat{K}$, then
$\Phi_{H,K}(\psi)$ is supported on $\hat{q}_{H,K}(\hat{V})$.

\begin{lemma} \label{lem:H-restriction-is-nonzero}
Suppose that $z \in \T^k$ and $\psi\in C^\infty(\T^k)$ satisfy $\psi(z) \not= 0$.
For $h_0 \in \Z^k$ let $\psi_{h_0}$ be as in~\labelcref{eq:perturbation}.
For any subgroup $H \subset \Z^k$ there exists $h_0\in \Z^k$ such that
\[
  \sum_{h \in H} z^h \hat{\psi}_{h_0}(h) \not= 0.
\]
\end{lemma}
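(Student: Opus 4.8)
The plan is to recognise the target sum $\sum_{h\in H}z^h\hat{\psi}_{h_0}(h)$ as, up to a unimodular factor, a Fourier coefficient of the restriction of $\psi$ to a single coset of the annihilator $H^\perp$, and then to conclude by observing that the function being expanded is nonzero at one point of that coset.

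First I would note that $\psi_{h_0}(\eta)=\eta(h_0)\psi(\eta)$ is again smooth, being the product of $\psi$ with a trigonometric monomial, so that \eqref{eq:averaging-as-sum} applies to it. Taking $K=\Z^k$ there, replacing $\psi$ by $\psi_{h_0}$, and evaluating at $\eta=z$ gives
\[
  \sum_{h\in H} z^h\,\widehat{\psi}_{h_0}(h) = \Phi_{H,\Z^k}(\psi_{h_0})\big(q_{H,\Z^k}(z)\big).
\]
Unwinding the definition \eqref{eq:averaging-map} of the averaging map, and then the perturbation formula \eqref{eq:perturbation} applied with $\eta=\chi z$, this equals
\[
  \int_{H^\perp}\psi_{h_0}(\chi z)\,d\chi
  = \int_{H^\perp}(\chi z)(h_0)\,\psi(\chi z)\,d\chi
  = z^{h_0}\int_{H^\perp}\chi^{h_0}\,\psi(\chi z)\,d\chi .
\]
Since $|z^{h_0}|=1$, the lemma is reduced to producing an $h_0\in\Z^k$ for which $\int_{H^\perp}\chi^{h_0}\,\psi(\chi z)\,d\chi\neq 0$.

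Second, I would argue by contradiction: suppose this integral vanishes for every $h_0\in\Z^k$, and consider the continuous function $f\colon H^\perp\to\C$ given by $f(\chi)=\psi(\chi z)$, that is, the restriction of $\psi$ to the coset $z\,H^\perp$ reparametrised by the compact group $H^\perp$. As $h_0$ ranges over $\Z^k$, the maps $\chi\mapsto\chi^{h_0}$ run over \emph{all} characters of $H^\perp$: since $H$ is (automatically closed) in $\Z^k$, Pontryagin duality gives $(H^\perp)^\perp=H$ and hence $\widehat{H^\perp}\cong\Z^k/H$. Thus the standing assumption says that every Fourier coefficient of $f$ vanishes, which forces $f\equiv 0$. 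But the identity $1\in H^\perp$ lies in the domain, and $f(1)=\psi(z)\neq 0$ by hypothesis, a contradiction. Hence some $h_0$ yields a nonzero integral, and unwinding the displays above delivers $\sum_{h\in H}z^h\hat{\psi}_{h_0}(h)\neq 0$.

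The bookkeeping with \eqref{eq:perturbation}--\eqref{eq:averaging-as-sum} is routine, and I expect the only points deserving genuine care are the two duality facts for $H^\perp$: that its character group is $\Z^k/H$ (so that $\{\chi\mapsto\chi^{h_0}\}$ really exhausts the characters), and the completeness statement that a continuous function on the compact group $H^\perp$ all of whose Fourier coefficients vanish must itself vanish (equivalently, that these characters span a dense subalgebra of $C(H^\perp)$ by Stone--Weierstrass). The conceptual heart of the argument is simply spotting that the given sum is a Fourier coefficient of $\psi|_{zH^\perp}$; once that is seen, nonvanishing at the single point $\chi=1$ immediately supplies the required $h_0$.
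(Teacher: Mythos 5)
Your proof is correct, but it takes a genuinely different route from the paper's. The paper works entirely on the dual (coefficient) side: using smoothness to get absolute summability of $\big(\hat{\psi}(h)\big)_{h\in\Z^k}$, it chooses a section $\sigma$ of $\Z^k\to\Z^k/H$, splits the nonzero sum $\psi(z)=\sum_{h\in\Z^k}z^h\hat{\psi}(h)$ into coset sums indexed by $\Z^k/H$, notes that each coset sum is (up to a unimodular factor) exactly $\sum_{h\in H}z^h\hat{\psi}_{h_0}(h)$ with $h_0=\sigma(y)$, and concludes that a nonzero absolutely convergent sum must have a nonzero term. You instead work on the group side: after the same reduction through \eqref{eq:averaging-as-sum} and \eqref{eq:averaging-map}, you recognise the quantity as a unimodular multiple of a Fourier coefficient of the continuous function $f=\psi(\cdot\,z)|_{H^\perp}$ on the compact group $H^\perp$, and rule out the vanishing of all such coefficients by injectivity of the Fourier transform on $C(H^\perp)$ (equivalently, Stone--Weierstrass density of the restricted characters $\chi\mapsto\chi^{h_0}$), since $f(1)=\psi(z)\neq 0$. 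The two decompositions are Pontryagin-dual to one another: cosets of $H$ in $\Z^k$ versus the coset $zH^\perp$ in $\T^k$. The paper's argument buys self-containedness --- beyond the absolute summability already established, it needs only elementary rearrangement, and it locates $h_0$ among coset representatives. Yours buys conceptual clarity and a more local argument: the contradiction step sees only the values of $\psi$ on the single coset $zH^\perp$ and needs only continuity there (smoothness enters solely through the quoted identity \eqref{eq:averaging-as-sum}), at the price of invoking standard but heavier harmonic-analysis inputs. The two duality facts you flag are indeed the only delicate points, and both are standard; note that the Stone--Weierstrass formulation lets you bypass the character-extension theorem altogether, since the restrictions of characters of $\T^k$ already form a unital self-adjoint point-separating subalgebra of $C(H^\perp)$.
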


\begin{proof}
Since $\psi$ is smooth, it is the norm limit of its Fourier series, so
\[
  0 \not= \psi(z) = \sum_{h\in \Z^k} z^h \hat{\psi}(h).
\]

Choose a section $\sigma$ for the quotient map $\Z^k \to \Z^k / H$.
For each $y\in \Z^k/H$, consider the $\sigma(y)$-pertubation of $\psi$,
\begin{equation} \label{eq:psi_y}
  \psi_{\sigma(y)}(\eta) = \eta(\sigma(y)) \psi(\eta).
\end{equation}
As discussed above, for $h \in \Z^k$ the corresponding Fourier coefficient of
$\psi_{\sigma(y)}$ is $\hat{\psi}_{\sigma(y)}(h) = \hat{\psi}\big(h - \sigma(y)\big)$. We
have
\begin{equation}\label{eq:perturbation formula}
  \Phi_{H, \Z^k}(\psi_{\sigma(y)})\big(\hat{q}(\chi)\big) = \sum_{h \in H} z^h \hat{\psi}\big(h - \sigma(y)\big).
\end{equation}
Let $q : \T^k \to \hat{H}$ be the quotient map so that $q(z)(h) = z^h$ for $h \in H$ and
$z \in \T^k$.  Since the Fourier coefficients of $\psi$ are absolutely summable, we can
rearrange the summation~\eqref{eq:perturbation formula} to see that
\[
  0\not= \sum_{h \in \Z^k} z^h \hat{\psi}(h)
  = \sum_{y \in \Z^k/H} \Big(\sum_{h \in H} z^h \hat{\psi}\big(h - \sigma(y)\big)\Big)
  = \sum_{y \in \Z^k/H} \Phi_{H, \Z^k}(\psi_{\sigma(y)})(q(z)).
\]
In particular, there exists $y\in \Z^k/H$ such that $h_0 = \sigma(y)$ satisfies
\[
  0 \neq \Phi_{H, \Z^k}(\psi_{h_0})(z) = \sum_{h\in \Z^k} z^h \hat{\psi}(h).\qedhere
\]
\end{proof}

\section{A family of representations}\label{sec:nice reps}

Our analysis of ideals depends upon the existence and behaviour of representations of the
$C^*$-algebra of a Deaconu--Renault groupoid that interpolate between the regular
representation on $\ell^2((G_T)_x)$ and the representation on $\ell^2([x])$ induced by
the action of $G_T$ on $[x]$. We establish these technical results in this section.

Let $X$ be a second-countable locally compact Hausdorff space and let $T \colon \N^k
\curvearrowright X$ be an action by local homeomorphisms. Suppose that $H$ is a subgroup
of $\Z^k$. We let $\sim_H$ be the equivalence relation on $G_T$ given by
\[
    (x_1,h_1,y_1)\sim_H (x_2,h_2,y_2)\iff x_1=x_2,\ y_1=y_2,\text{ and }h_1-h_2\in H.
\]
This is an equivalence relation, and for $\xi\in G_T$, we let $[\xi]_H$ denote the equivalence class of $\xi$ with respect to
$\sim_H$.

\begin{proposition} \label{prp:H-representations}
Let $X$ be a second-countable locally compact Hausdorff space and suppose
that $T \colon \N^k \curvearrowright X$ is an action by local homeomorphisms.
For each $(x,z)\in X\times\T$ there is a representation $\pi^H_{(x,z)} \colon
C^*(G_T) \to B(\ell^2((G_T)_x/{\sim_H}))$ such that for each $f \in C_c(G_T)$
and $\xi_1, \xi_2\in (G_T)_x$,
\begin{equation}\label{eq:piHxz def}
  \langle e_{[\xi_1]_H},\pi^H_{(x,z)}(f)e_{[\xi_2]_H}\rangle=\sum_{(x_1,h,x_2)\in [\xi_1\xi_2^{-1}]_H} z^h f(x_1,h,x_2).
\end{equation}
If $H_1$ and $H_2$ are subgroups of $\Z^k$ and $H_1\subseteq H_2$, then
$\ker(\pi^{H_1}_{(x,z)})\subseteq \ker(\pi^{H_2}_{(x,z)})$.
\end{proposition}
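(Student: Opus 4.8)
The plan is to deduce the kernel inclusion from the norm estimate $\|\pi^{H_2}_{(x,z)}(a)\| \le \|\pi^{H_1}_{(x,z)}(a)\|$ for all $a \in C^*(G_T)$, which is exactly equivalent to $\ker(\pi^{H_1}_{(x,z)}) \subseteq \ker(\pi^{H_2}_{(x,z)})$. Since both representations are norm-continuous and $C_c(G_T)$ is dense in $C^*(G_T)$, it suffices to establish this estimate for $f \in C_c(G_T)$, and for that it is enough to realise every matrix coefficient of $\pi^{H_2}_{(x,z)}$ as a limit of matrix coefficients of $\pi^{H_1}_{(x,z)}$ taken against unit vectors. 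The mechanism is a F\o lner-averaging argument carried out purely in the cocycle (isotropy) direction, which is a subquotient of the amenable group $\Z^k$; notably this never invokes amenability of $G_T$ itself.

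First I would analyse the canonical surjection $q\colon (G_T)_x/{\sim_{H_1}} \to (G_T)_x/{\sim_{H_2}}$ induced by $H_1 \subseteq H_2$. Writing $P \coloneqq \coc(\I(G_T)_x) \subseteq \Z^k$ for the period group at $x$, the key bookkeeping observation is that whenever $\xi_i = (w_i, m_i, x) \in (G_T)_x$ and $(w_1, h, w_2) \in G_T$, comparison of $(w_1,h,w_2)$ with $\xi_1\xi_2^{-1} = (w_1, m_1 - m_2, w_2)$ shows $h - (m_1 - m_2) \in \coc(\I(G_T)_{w_1}) = P$. Consequently the fibre of $q$ over each class is in bijection with the finitely generated abelian group $Q \coloneqq (H_2 \cap P)/(H_1 \cap P)$. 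Fixing a F\o lner sequence $(F_n)$ for $Q$ and a set-theoretic section of the quotient map $H_2 \cap P \to Q$, I would define operators $V_n \colon \ell^2((G_T)_x/{\sim_{H_2}}) \to \ell^2((G_T)_x/{\sim_{H_1}})$ by sending each basis vector $e_{[\xi]_{H_2}}$ to the normalised sum $|F_n|^{-1/2}\sum_{\bar c \in F_n} e_{[\eta_{\bar c}]_{H_1}}$ over the corresponding slice of its fibre. Because distinct cosets give distinct $H_1$-classes and distinct $H_2$-classes have disjoint fibres, each $V_n$ carries an orthonormal basis to an orthonormal set, and is therefore an isometry.

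Then I would compute $\langle V_n e_{[\xi_1]_{H_2}},\, \pi^{H_1}_{(x,z)}(f) V_n e_{[\xi_2]_{H_2}}\rangle$ by expanding via \eqref{eq:piHxz def} and collecting, for each $h$ with $(w_1,h,w_2)\in G_T$ and $h \equiv m_1 - m_2 \pmod{H_2}$, the coefficient of $z^h f(w_1,h,w_2)$. The observation above shows such a term is produced exactly by the pairs $(\bar c, \bar c') \in F_n \times F_n$ with $\bar c - \bar c' = \overline{h - (m_1-m_2)}$ in $Q$, so its total weight is $|F_n \cap (F_n - \overline{h-(m_1-m_2)})|/|F_n|$, which tends to $1$ by the F\o lner property. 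Since $f \in C_c(G_T)$, only finitely many such $h$ contribute, so the finite sum passes to the limit and yields precisely $\langle e_{[\xi_1]_{H_2}},\, \pi^{H_2}_{(x,z)}(f) e_{[\xi_2]_{H_2}}\rangle$. As each $V_n$ is an isometry, the bound $|\langle \zeta_1,\, \pi^{H_2}_{(x,z)}(f)\zeta_2\rangle| \le \|\pi^{H_1}_{(x,z)}(f)\|\,\|\zeta_1\|\,\|\zeta_2\|$ follows first for basis vectors, then for all $\zeta_1,\zeta_2$ by density. This gives $\|\pi^{H_2}_{(x,z)}(f)\| \le \|\pi^{H_1}_{(x,z)}(f)\|$ for $f \in C_c(G_T)$, hence for all of $C^*(G_T)$, and therefore the desired kernel inclusion.

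The main obstacle is the period-group bookkeeping: one must verify that averaging the off-diagonal coefficients of $\pi^{H_1}_{(x,z)}$ over a fibre reconstructs \emph{exactly} the enlarged sum defining $\pi^{H_2}_{(x,z)}$, with each term acquiring asymptotic weight $1$ and none over- or under-counted. The subtlety is that the congruence classes modulo $H_2$ are only populated along the period group $P$, so the correct averaging group is $Q = (H_2\cap P)/(H_1 \cap P)$ rather than $H_2/H_1$; pinning down this identification (and confirming that $V_n$ is genuinely an isometry) is where the care lies. The interchange of limit and summation is harmless precisely because $f$ has compact support.
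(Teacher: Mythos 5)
There is a genuine gap: \cref{prp:H-representations} makes two assertions --- the \emph{existence} of a representation $\pi^H_{(x,z)}$ of $C^*(G_T)$ on $\ell^2((G_T)_x/{\sim_H})$ whose matrix coefficients are given by \eqref{eq:piHxz def}, and the kernel inclusion for nested subgroups --- and your proposal addresses only the second, taking the representations as given. The existence claim is not routine: the formula only prescribes matrix coefficients for $f \in C_c(G_T)$, so one must show that these coefficients define bounded operators, that the resulting map $C_c(G_T) \to B(\ell^2((G_T)_x/{\sim_H}))$ is a $^*$-homomorphism, and, crucially, that it is bounded for the full groupoid $C^*$-norm so that it extends to $C^*(G_T)$. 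This is where the bulk of the paper's proof lies: it builds $\pi^H_{(x,z)}$ as the restriction, to an invariant subspace, of the manifestly well-defined representation $(1 \otimes \lambda^{\Z^k/H})\circ(\varepsilon_x \otimes \rho_{z,H})\circ\delta_T$, where $\delta_T$ is the coaction dual to the gauge action, $\varepsilon_x$ acts on $\ell^2([x])$, $\rho_{z,H}\colon C^*(\Z^k)\to C^*(\Z^k/H)$ comes from the unitary representation $n \mapsto z^n u_{n+H}$, and $\lambda^{\Z^k/H}$ is the regular representation; the paper then decomposes $\ell^2([x])\otimes\ell^2(\Z^k/H)$ into invariant summands $\mathcal{H}_t$ indexed by cosets of $\coc(\I(G_T)_x)+H$ and identifies $\mathcal{H}_0$ with $\ell^2((G_T)_x/{\sim_H})$ to obtain \eqref{eq:piHxz def}. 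Without some such construction your argument has no representations to compare, so the proposal is incomplete as a proof of the proposition.

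The part you do prove is essentially correct, and it is a genuinely different route to the kernel inclusion from the paper's. Your period-group bookkeeping is right: writing $P = \coc(\I(G_T)_x)$, each $\sim_{H_i}$-class is populated exactly along $H_i \cap P$, the fibres of the canonical surjection $(G_T)_x/{\sim_{H_1}} \to (G_T)_x/{\sim_{H_2}}$ are torsors over $Q = (H_2\cap P)/(H_1\cap P)$, and your F\o{}lner averaging via the isometries $V_n$ shows that $V_n^*\pi^{H_1}_{(x,z)}(f)V_n \to \pi^{H_2}_{(x,z)}(f)$ entrywise, the interchange of limit and sum being justified because $\coc(\supp f)$ is finite. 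One small repair is needed at the end: convergence of matrix coefficients against basis vectors does not give the norm estimate ``by density'' on its own; you must also use that the operators $V_n^*\pi^{H_1}_{(x,z)}(f)V_n$ are uniformly bounded by $\|\pi^{H_1}_{(x,z)}(f)\|$ (because the $V_n$ are isometries), so that entrywise convergence upgrades to weak-operator convergence on all vectors, and then invoke lower semicontinuity of the norm in that topology. By contrast, the paper obtains the inclusion structurally: having shown $\ker(\pi^H_{(x,z)}) = \ker((\varepsilon_x\otimes\rho_{z,H})\circ\delta_T)$ (using that $\Z^k/H$ is amenable and $C^*(\Z^k/H)$ nuclear), it factors $\rho_{z,H_2} = q\circ\rho_{z,H_1}$ through the quotient $C^*(\Z^k/H_1)\to C^*(\Z^k/H_2)$. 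Your argument buys a hands-on, self-contained comparison that never leaves the concrete Hilbert spaces; the paper's construction buys existence and the comparison simultaneously, which is exactly the piece your proposal is missing.
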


\begin{proof}
The canonical cocycle $\coc_T  \colon G_T \to \Z^k$ defined by $c_T((y,n,x)) = n$ for every $(y,n,x)\in G_T$ determines a coaction (dual to the gauge action)
$\delta_T \colon C^*(G_T) \to C^*(G_T) \otimes C^*(\Z^k)$ that satisfies $\delta_T(f) = f \otimes u_n$
whenever $f \in C_c(c_T^{-1}(n)) \subseteq C_c(G_T)$.

Fix $x\in X$ and consider the representation $\varepsilon_x \colon C^*(G_T) \to
B(\ell^2([x]))$ of~\cite[Proposition 5.2]{Brown-Clark-Farthing-Sims} satisfying
$\varepsilon_x(f)e_y = \sum_{\gamma \in (G_T)_y} f(\gamma)e_{r(\gamma)}$ for all $f \in
C_c(G_T)$ and $y \in [x]$. Fix a subgroup $H \le \Z^k$ and $z\in \T^k$. Let $\{u_n : n
\in \Z^k\} \subseteq C^*(\Z^k)$ be the canonical generators. Similarly, let $\{u_{n+H} :
n+H \in \Z^k/H\} \subseteq C^*(\Z^k/H)$ be the canonical generators. Since $n + H \mapsto
z^n u_{n+H}$ is a unitary representation of $\Z^k$, there is a homomorphism $\rho_{z, H}
\colon C^*(\Z^k) \to C^*(\Z^k/H)$ such that $\rho_{z,H}(u_n) = z^n u_{n+H}$. Finally, let
$\lambda^{\Z^k/H}$ be the left regular representation of $C^*(\Z^k/H)$ on $\ell^2
(\Z^k/H)$.

Consider the representation
\begin{equation}
  \psi^H_{(x,z)} \coloneqq (1 \otimes \lambda^{\Z^k/H})\circ (\varepsilon_x \otimes \rho_{z, H})\circ \delta_T
    \colon C^*(G_T) \to B\big(\ell^2([x]) \otimes \ell^2(\Z^k/H)\big).
\end{equation}

Observe that $\coc(\I(G_T)_x)$ is a subgroup of $\Z^k$, so $\coc(\I(G_T)_x) + H$ is also
a subgroup. Let $\Sigma \subseteq \Z^k$ be a complete set of coset representatives for
the cosets of $\coc(\I(G_T)_x) + H$ in $\Z^k$ such that $0 \in \Sigma$ is the
representative of the coset $\coc(\I(G_T)_x) + H$ itself.

Our strategy is as follows. We decompose $\ell^2([x]) \otimes \ell^2(\Z^k/H)$ into a
direct sum of subspaces $\mathcal{H}_t$, indexed by elements $t$ in $\Sigma$, that are
invariant for $\psi^H_{(x,z)}$. We then identify a unitary isomorphism $W_j\colon
\ell^2((G_T)_x/{\sim_H}) \to \mathcal{H}_0$ such that $\pi^H_{(x,z)} \coloneqq \Ad(W_j^*)
\circ \psi^H_{(x,z)}|_{\mathcal{H}_0}$ satisfies~\labelcref{eq:piHxz def}, establishing
the first statement. We then show that the restrictions $\psi^H_{(x,z)}|_{\mathcal{H}_t}$
are all unitarily equivalent to one another, and conclude that $\ker(\pi^H_{(x,z)}) =
\ker(\psi^H_{(x,z)})$. Finally we will show that if $H_1 \le H_2 \le \Z^k$, then
$\ker(\psi^{H_1}_{(x,z)}) \subseteq \ker(\psi^{H_2}_{(x,z)})$, giving the final
statement.

We first claim that
\begin{equation}\label{eq:set decomp}
    [x] \times \Z^k/H = \bigsqcup_{t \in \Sigma} \{(y, n + t + H) : (y, n, x) \in G_T\}.
\end{equation}
It is clear that the left-hand side contains the union of the sets on the right, so we
have to show that the union on the right contains the left-hand side, and that the sets
on the right are disjoint. For the first assertion, fix $y \in [x]$ and $n\in \Z^k$ so
that $(y, n+H)$ is a typical element of $[x] \times \Z^k/H$. Since $y \in [x]$ there
exists $n_y$ such that $(y, n_y, x) \in G_T$. Let $t \in \Sigma$ be the unique element
such that $t \in n - n_y + H + \coc(\I(G_T)_x)$. So $n = n_y + t + h + m$ for some $h \in
H$ and $m \in \coc(\I(G_T)_x)$. Since $m \in \coc(\I(G_T)_x)$ we have $(x, m, x) \in
G_T$, and so $(y, n_y + m, x) = (y, n_y, x)(x, m, x) \in G_T$. We then have $(y, n + H) =
(y, n - h + H) = (y, (n_y + m) + t + H)$, which belongs to the right-hand side
of~\labelcref{eq:set decomp}. We must now show that $\{(y, n + t + H) : (y, n, x) \in
G_T\}_{t \in \Sigma}$ are mutually disjoint. So suppose that $s,t \in \Sigma$ and that
$(y, n + t + H) = (y, n' + s + H)$ for some $(y, n, x), (y, n', x) \in G_T$. So $s - t
\in n - n' + H$. We have $(x, n-n', x) = (x, n, y)(x, n',y)^{-1} \in G_T$ so that $n - n'
\in \coc(\I(G_T)_x)$. Hence $s - t \in \coc(\I(G_T)_x) + H$. Therefore, $s\in \Sigma\cap
(t + \coc(\I(G_T)_x) + H) = \{t\}$, so $s = t$ as required.

For $t \in \Sigma$, we define
\[
  \mathcal{H}_t \coloneqq \overline{\operatorname{span}}\{e_y \otimes e_{n + t + H} : (y,n,x) \in G_T\}.
\]
Then \labelcref{eq:set decomp} shows that $\ell^2([x]) \otimes \ell^2(\Z^k/H) \cong \bigoplus_{t \in \Sigma} \mathcal{H}_t$.
To show that each $\mathcal{H}_t$ is invariant for $\psi^H_{(x,z)}$, we fix $t \in \Sigma$.
Since $C^*(G_T)$ is the closed linear span of functions supported on basic open
sets, it suffices to fix open sets $U,V \subseteq X$ and $p,q \in \N^k$ such that
$T^p|_U$ and $T^q|_V$ are homeomorphisms onto the same open set $W \subset X$, a function $f \in
C_c(Z(U, p, q, V))$ and an element $(y, n, x) \in G_T$ so that $e_y \otimes e_{n + t +
H}$ is a typical basis element of $\mathcal{H}_t$, and show that $\psi^H_{(x,z)}(f)(e_y
\otimes e_{n + t + H}) \in \mathcal{H}_t$. This is a straightforward calculation:
\begin{align*}
  \psi^H_{(x,z)}(f)(e_y \otimes e_{n + t + H})
  &= (1\otimes \lambda^{\Z^k/H})\circ (\varepsilon_x(f) \otimes z^{p-q}u_{p-q + H})(e_y \otimes e_{n + t + H}) \\
    &= \sum_{\gamma \in (G_T)_y} z^{p-q}f(\gamma) (e_{r(\gamma)} \otimes e_{n+p-q+t+H}).
\end{align*}
If $y \not\in V$, then $f(\gamma) = 0$ for all $\gamma \in (G_T)_y$, and then $\psi^H_{(x,z)}(f)(e_y \otimes e_{n + t + H}) = 0$.
If $y \in V$, then there is a unique $u \in U$ such that $T^p(u) = T^q(y)$,
and then $\gamma = (u, p-q, y)$ is the unique element of $(G_T)_y$ such that $f(\gamma) \neq 0$, so the calculation above gives
\begin{equation}\label{eq:psizH action}
  \psi^H_{(x,z)}(f)(e_y \otimes e_{n + t + H}) = z^{p-q}f(u, p-q, y) (e_u \otimes e_{n+p-q+t+H}).
\end{equation}
Since $(u, n + p-q, x) = (u, p-q, y)(y, n, x) \in G_T$, we have $e_u \otimes e_{n+p-q+t+H} \in \mathcal{H}_t$,
so $\mathcal{H}_t$ is invariant for $\psi^H_{(x,z)}$ as claimed.

In particular, the subspace $\mathcal{H}_0 = \overline{\Span}\{e_y \otimes e_{n + H} :
(y, n, x) \in G_T\}$ is invariant for $\psi^H_{(x,z)}$. We show that $\mathcal{H}_0$ is
isomorphic to $\ell^2((G_T)_x/{\sim_H})$. To see this, observe that there is a map
$\tilde j \colon (G_T)_x \to [x]\times\Z^k/H$ satisfying $\tilde j(y,n,x) = (y, n+H)$,
and we have $(y, n, x) \sim_H (y', n', x)$ if and only if $y = y'$ and $n - n' \in H$,
and hence if and only if $\tilde j(y, n, x) = \tilde j(y', n', x)$. This means that there
is an injective map $j \colon (G_T)_x/{\sim_H} \to [x] \times \Z^k/H$ satisfying
$j([y,n,x]) = (y, n+H)$. Since $j$ induces a bijection $e_{[y, n, x]} \mapsto e_y \otimes
e_{n+H}$ between orthonormal bases for $\ell^2((G_T)_x/{\sim_H})$ and $\mathcal{H}_0$, it
induces a unitary $W_j  \colon \ell^2((G_T)_x/{\sim_H}) \to \mathcal{H}_0$.

Since $\mathcal{H}_0$ is invariant for $\psi^H_{(x,z)}$ we obtain a representation
\[
  \pi^H_{(x,z)} \coloneqq \Ad(W^*_j) \circ \psi^H_{(x,z)}\colon C^*(G_T) \to B(\ell^2((G_T)_x/{\sim_H})).
\]
We claim that this representation satisfies~\labelcref{eq:piHxz def}. Once again it
suffices to establish~\labelcref{eq:piHxz def} for $f \in C_c(Z(U, p-q, V))$ where
$T^p|_U$ and $T^q|_V$ are homeomorphisms onto the same open set $W$. Fix $[\xi_1] = [w,
m, x]$ and $[\xi_2] = [y, n, x]$ in $(G_T)_x/{\sim_H}$. As in the paragraph
including~\labelcref{eq:psizH action}, we have
\[
    \langle e_{[\xi_1]_H},\pi^H_{(x,z)}(f)e_{[\xi_2]_H}\rangle
        =\begin{cases}
            z^{p-q}f(w, p-q, y) &\text{ if $y \in V$, $w \in U$, $T^p(z) = T^q(y)$,}\\
                        &\qquad\text{and $n+p-q+H = m+H$,}\\
            0 &\text{ otherwise.}
        \end{cases}
\]
Note that $[\xi_1 \xi_2^{-1}] = \{(w, m-n+h, y) : h \in H\} \cap G_T$ and that $f$ is
nonzero at at most one point in this set, which occurs if $y \in V$, $w \in U$, $T^p(w) =
T^q(y)$ and $m-n \in p-q + H$. So
\[
    \sum_{(x_1,h,x_2)\in [\xi_1\xi_2^{-1}]_H} z^h f(x_1,h,x_2)
        = \begin{cases}
            z^{p-q}f(w, p-q, y) &\text{ if $y \in V$, $w \in U$, $T^p(z) = T^q(y)$,}\\
                    &\qquad\text{and $p-q+H = m-n+H$,}\\
            0 &\text{ otherwise.}
        \end{cases}
\]
Comparing the last two displayed equations we see that $\pi^H_{(x,z)}$ satisfies~\labelcref{eq:piHxz def} as claimed.
This completes the proof of the first statement.

Next we claim that $\ker(\pi^H_{(x,z)}) = \ker(\psi^H_{(x,z)}) = \ker((\varepsilon_x \otimes \rho_{z, H})\circ \delta_T)$.
First note that for each $t \in \Sigma$ the map $e_y \otimes e_{n + t + H} \mapsto e_y \otimes e_{n + H}$ is
a bijection between orthonormal bases for $\mathcal{H}_t$ and $\mathcal{H}_0$ and so
induces a unitary $V_t \colon \mathcal{H}_t \to \mathcal{H}_0$.
The definition of $V_t$ and the paragraph including~\labelcref{eq:psizH action} show that $U_t$ commutes with $\psi^H_{(x,z)}(f)$ for $f \in C_c(Z(U, p, q, V)$.
Hence $\Ad(V_t) \circ \psi^H_{(x,z)}|_{\mathcal{H}_0} = \psi^H_{(x,z)}|_{\mathcal{H}_t}$.
It follows that
\[
  \psi^H_{(x,z)}
  = \bigoplus_{t \in \Sigma} \Ad(V_t) \circ \psi^H_{(x,z)}|_{\mathcal{H}_0}
  = \bigoplus_{t \in \Sigma} \Ad(V_t W_j) \circ \pi^H_{(x, z)},
\]
so $\ker(\pi^H_{(x, z)}) = \ker(\psi^H_{(x,z)})$. Since $\Z^k/H$ is amenable, the regular
representation $\lambda^{\Z^k/H}$ is faithful, and $C^*(\Z^k/H)$ is nuclear so $1 \otimes
\lambda^{Z^k/H}$ is faithful on $B(\ell^2([x])) \otimes C^*(\Z^k/H)$. Therefore,
$\ker(\psi^H_{(x,z)}) = \ker((\varepsilon_x \otimes \rho_{z, H})\circ \delta_T)$.

It remains to verify that if $H_1 \le H_2 \le \Z^k$ are subgroups then $\ker(\pi^{H_1}_{(x,z)}) \subseteq \ker(\pi^{H_2}{(x,z)})$ for all $(x,z)\in X\times \T^k$.
From the above observations, we just need to show that
\begin{equation}\label{eq:containment}
\ker(\varepsilon_x \otimes \rho_{z, H_1}) \subseteq \ker(\varepsilon_x \otimes \rho_{z, H_2}).
\end{equation}
The quotient map $n + H_1 \mapsto n + H_2$ from $\Z^k/H_1$ to $\Z^k/H_2$ induces a homomorphism
$q \colon C^*(\Z^k/H_1) \to C^*(\Z^k/H_2)$ satisfying $\rho_{(z, H_2)} = q \circ \rho_{(z, H_1)}$.
Therefore $(1 \otimes q)\circ(\varepsilon_x \otimes \rho_{z, H_1}) = \varepsilon_x \otimes (q\circ \rho_{z, H_1}) = \varepsilon_x \otimes \rho_{z, H_2}$,
and this proves~\labelcref{eq:containment}.
\end{proof}

\begin{remark}\label{rmk:H=Zk}
When $H = \Z^k$, the equivalence relation $\sim_{\Z^k}$ is given by $(x, m,
y) \sim_{\Z^k} (w, n, z)$ if and only if $x = w, y = z$ and $m - n \in \Z^k$;
that is, if and only if $x = w$ and $y = z$. Hence there is a bijection
$(G_T)_x/{\sim_{\Z^k}} \to [x]$ given by $[y, n, x] \mapsto y$. Using this
bijection to induce a unitary $\ell^2((G_T)_x/{\sim_{\Z^k}}) \to
\ell^2([x])$, we see that $\pi^{\Z^k}_{(x, z)}$ is unitarily equivalent to
the representation $\pi_{x, z}$ of $C^*(G_T)$ on $\ell^2([x])$ appearing
in~\cite[Theorem~3.2]{Sims-Williams} (cf.~\cref{sec:DR prelims}).
\end{remark}

It will be important later that the nesting of kernels described in the final statement of
Proposition~\ref{prp:H-representations} is equality if $H_1$ is the image under $\coc_T$ of the
essential isotropy at $x$, and $H_2$ is the whole of $\Z^k$.

\begin{lemma}\label{lem:piH kernels equal}
Let $X$ be a second-countable locally compact Hausdorff space and suppose that
$T \colon \N^k \curvearrowright X$ is an action by local homeomorphisms. Let
$\coc_T : G_T \to \Z^k$ be the canonical cocycle. Fix $(x, z) \in X \times
\T^k$, and let $K := \coc_T(\Iess_x)$. Then $\ker(\pi^K_{(x,z)}) =
\ker(\pi_{(x,z)})$.
\end{lemma}
\begin{proof}
Let $Y := \overline{[x]} \subseteq X$, the orbit-closure of $x$. Then $Y$ is a
closed invariant subset of $X$ for $T$, so $T$ restricts to an action $S : \N^k
\curvearrowright Y$. The resulting Deaconu--Renault groupoid $G_S$ is precisely
the reduction of $(G_T)$ to the closed invariant subspace $Y$ of its unit
space. Let $q_Y : C^*(G_T) \to C^*(G_S)$ be the surjective homomorphism
extending restriction of functions $C_c(G_T) \to C_c(G_S)$
\cite[Proposition~10.3.2]{SimsCRM}.

By definition, $\pi^K_{(x, z)}$ and $\pi_{(x, z)}$ annihilate $\ker(q_Y)$. Consequently, 
$\pi^K_{(x, z)}$ and $\pi_{(x, z)}$ factor through the corresponding representations $\widetilde{\pi}^K_{(x,
z)}$ and $\widetilde{\pi}_{(x, z)}$ of $C^*(G_S)$ respectively. So it suffices to show that
$\ker(\widetilde{\pi}^K_{(x, z)}) = \ker(\widetilde{\pi}_{(x, z)})$.

Since $G_S^{(0)} = Y = \overline{[x]}$, the groupoid $G_S$ is
irreducible. Hence Lemma~\ref{lem:Io closed} implies
that $\I^\circ(G_S)$ is closed in $G_S$, and so
\cite[Proposition~2.5]{Sims-Williams} implies that $G' := G_S/\I^\circ(G_S)$ is
an amenable effective locally compact Hausdorff \'etale groupoid. By
definition, $\I^\circ(G_S) = \Iess(G_T)_x = \{(y, h, y) : y \in
\overline{[x]}\text{ and } h \in K\}$. In particular, the set $G'_x$ is equal
to $(G_S)_x/{\sim_K}$, and the orbit $[x]_{G'}$ is equal to the orbit
$[x]_{G_S}$. So we can identify $\ell^2(G'_x)$ with $\ell^2((G_S)_x/{\sim_K})$ and
$\ell^2([x]_{G'})$ with $\ell^2([x]_{G_S})$. Consequently, we can regard the
regular representation $\pi^{G'}_x$ of $C^*(G')$ for $x \in (G')^{(0)}$ as a
representation on $\ell^2((G_S)_x/{\sim_K})$, and the
representation $\varepsilon_x : C^*(G') \to \ell^2([x]_{G'})$ such that
$\varepsilon_x(f)(\delta_y) = \sum_{\alpha \in G'_y} f(\alpha)
\delta_{r(\alpha)}$ (see \cite[Proposition~5.2]{Brown-Clark-Farthing-Sims}) as
a representation on $\ell^2([x]_{G_S})$.

Proposition~2.6 of \cite{Sims-Williams} shows that there is a homomorphism
$\kappa : C^*(G_S) \to C^*(G')$ such that $\kappa(f)([\gamma]) = \sum_{\eta \in
[\gamma]} f(\eta)$ for all $f \in C_c(G_S)$. Let $\gamma_z \in \Aut(C^*(G_S))$
be the automorphism such that $\gamma_z(f)(u, n, v) = z^n f(u, n, v)$ for $f
\in C_c(G_S)$ as in Section~\ref{sec:DR prelims}. Direct calculation shows
that, with the identifications of Hilbert spaces in the preceding paragraph,
$\widetilde{\pi}^K_{(x, z)} = \pi^{G'}_x \circ \kappa \circ \gamma_z$ and
$\widetilde{\pi}_{(x, z)} = \varepsilon_x \circ \kappa \circ \gamma_z$. Since
$\overline{[x]} = (G')^{(0)}$, both $\pi^{G'}_x$ and $\varepsilon_x$ are injective on
$C_0((G')^{(0)})$. Hence \cite[Theorem~10.2.7]{SimsCRM} shows that they are
both injective. Consequently, $\ker(\widetilde{\pi}^K) = \ker(\kappa \circ
\gamma_z) = \ker(\widetilde{\pi}_{(x, z)})$ as required.
\end{proof}

The following technical lemma will be helpful in identifying elements in the
kernels of the representations $\pi^H_{(x, z)}$.

\begin{lemma}\label{lem:easier zero check}
Let $H$ be a subgroup of $\Z^k$, fix $(x,z) \in X \times \T$, and let $\pi^H_{(x,z)}$ be
the representation of~\cref{prp:H-representations}. For $f \in C^*(G_T)$, we have
$\pi^H_{(x,z)}(f) = 0$ if and only if for every pair of homogeneous bisections $B_1, B_2
\subseteq G_T$ and every pair of functions $h_i \in C_c(B_i)$, we have $\langle
e_{[(x,0,x)]_H},\pi^H_{(x,z)}(h_1fh_2)e_{[(x,0,x)]_H}\rangle = 0$. In particular, an
ideal $I$ is contained in $\ker(\pi^H_{(x, z)})$ if and only if
\[
  \langle e_{[(x,0,x)]_H},\pi^H_{(x,z)}(f)e_{[(x,0,x)]_H}\rangle = 0,
\]
for all $f \in I$.
\end{lemma}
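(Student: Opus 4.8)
The plan is to establish the main equivalence and then deduce the "in particular" clause formally from it. The forward implication of the main equivalence is immediate: since $\pi^H_{(x,z)}$ is a $*$-homomorphism, $\pi^H_{(x,z)}(f) = 0$ forces $\pi^H_{(x,z)}(h_1 f h_2) = \pi^H_{(x,z)}(h_1)\pi^H_{(x,z)}(f)\pi^H_{(x,z)}(h_2) = 0$, so every matrix coefficient of it vanishes, in particular the vacuum one in the statement.

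The substance is the reverse implication, and its engine is the observation that the vacuum vector $e_{[(x,0,x)]_H}$ generates each basis vector via a single bisection operator. The first step is therefore to prove the following claim: for each $\xi \in (G_T)_x$ there exist a homogeneous open bisection $B \owns \xi$ and a function $g \in C_c(B)$ with $\pi^H_{(x,z)}(g)\, e_{[(x,0,x)]_H} = c\, e_{[\xi]_H}$ for some nonzero scalar $c$. Since the sets $Z(U,p,q,V)$ form a basis of homogeneous open bisections (\cref{sec:DR gpds}), one may take such a $B \owns \xi$ and any $g \in C_c(B)$ with $g(\xi) \neq 0$. Using \labelcref{eq:piHxz def} with $\xi_1 = \eta$ and $\xi_2 = (x,0,x)$ (so that $\xi_1\xi_2^{-1} = \eta$), the coefficient of $\pi^H_{(x,z)}(g)\, e_{[(x,0,x)]_H}$ against $e_{[\eta]_H}$ is $\sum_{(x_1,h,x_2)\in[\eta]_H} z^{h} g(x_1,h,x_2)$; because $B$ is a bisection, the only point of $B$ over the source $x$ is $\xi$ itself, so the sum is nonzero only when $[\eta]_H = [\xi]_H$, giving $c = z^{\coc(\xi)} g(\xi) \neq 0$. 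This is the one place where the bisection hypothesis is genuinely used — it guarantees that a single basis vector, rather than a superposition, is produced — and verifying it is the main (if routine) technical obstacle.

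With the claim in hand, fix arbitrary $\xi_1, \xi_2 \in (G_T)_x$. I would choose a homogeneous bisection $B_2 \owns \xi_2$ and $g_2 \in C_c(B_2)$ with $\pi^H_{(x,z)}(g_2)\, e_{[(x,0,x)]_H} = c_2\, e_{[\xi_2]_H}$, $c_2 \neq 0$, and a homogeneous bisection $B_1' \owns \xi_1$ and $g_1 \in C_c(B_1')$ with $\pi^H_{(x,z)}(g_1)\, e_{[(x,0,x)]_H} = c_1\, e_{[\xi_1]_H}$, $c_1 \neq 0$. Setting $h_1 \coloneqq g_1^* \in C_c((B_1')^{-1})$ and $h_2 \coloneqq g_2$ gives an admissible pair (as $(B_1')^{-1}$ is again a homogeneous bisection), and since $\pi^H_{(x,z)}(h_1)^* e_{[(x,0,x)]_H} = \pi^H_{(x,z)}(g_1)\, e_{[(x,0,x)]_H} = c_1 e_{[\xi_1]_H}$, expanding the vacuum coefficient of $h_1 f h_2$ yields
\[
  \langle e_{[(x,0,x)]_H}, \pi^H_{(x,z)}(h_1 f h_2)\, e_{[(x,0,x)]_H}\rangle = \overline{c_1}\,c_2\, \langle e_{[\xi_1]_H}, \pi^H_{(x,z)}(f)\, e_{[\xi_2]_H}\rangle.
\]
By hypothesis the left-hand side is $0$ and $\overline{c_1}c_2 \neq 0$, so $\langle e_{[\xi_1]_H}, \pi^H_{(x,z)}(f)\, e_{[\xi_2]_H}\rangle = 0$. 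As $\xi_1,\xi_2$ were arbitrary and the $e_{[\xi]_H}$ form an orthonormal basis of $\ell^2((G_T)_x/{\sim_H})$, this gives $\pi^H_{(x,z)}(f) = 0$, completing the main equivalence.

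Finally, the "in particular" clause follows formally. If $I \subseteq \ker(\pi^H_{(x,z)})$, the displayed coefficient vanishes for every $f \in I$ since $\pi^H_{(x,z)}(f) = 0$. Conversely, suppose the vacuum coefficient vanishes for all $f \in I$, and fix $f \in I$; for any homogeneous bisections $B_1,B_2$ and $h_i \in C_c(B_i)$ the product $h_1 f h_2$ again lies in the ideal $I$, so its vacuum coefficient vanishes by assumption, and the main equivalence then gives $\pi^H_{(x,z)}(f) = 0$. Hence $I \subseteq \ker(\pi^H_{(x,z)})$, and the only step requiring real work remains the bisection claim of the second paragraph.
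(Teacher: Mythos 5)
Your proof is correct and takes essentially the same approach as the paper's: both reduce the problem to showing that every matrix coefficient $\langle e_{[\xi_1]_H},\pi^H_{(x,z)}(f)e_{[\xi_2]_H}\rangle$ vanishes, by choosing compactly supported functions on homogeneous bisections through $\xi_2$ and through (the inverse of) $\xi_1$ which carry the vector $e_{[(x,0,x)]_H}$ to nonzero scalar multiples of $e_{[\xi_2]_H}$ and $e_{[\xi_1]_H}$, the bisection and homogeneity hypotheses guaranteeing that only a single term of the defining sum survives. The only cosmetic difference is that you package this as a separate claim and take $h_1 = g_1^*$ with $g_1$ supported on a bisection containing $\xi_1$, whereas the paper directly chooses $h_1$ supported on a bisection containing $\xi_1^{-1}$ and passes to the adjoint afterwards.
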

\begin{proof}
 If $\pi^H_{(x,z)}(f) = 0$, then each $\pi^H_{(x,z)}(h_1fh_2) = 0$, so the ``only if'' implication
is immediate. We just need to prove the ``if'' implication.

Fix $f\in C_c(G_T)$ and suppose that for every pair of homogeneous bisections $B_1, B_2
\subseteq G_T$ and every pair of functions $h_i \in C_c(B_i)$, we have $\langle
e_{[(x,0,x)]_H},\pi^H_{(x,z)}(h_1fh_2) e_{[(x,0,x)]_H}\rangle = 0$. It suffices to show
that for all $\xi_1 = (y_1, n_1, x)$ and $\xi_2 = (y_2, n_2, x)$ in $(G_T)_x$, we have
\[
\langle e_{[\xi_1]_H},\pi^H_{(x,z)}(f) e_{[\xi_2]_H}\rangle = 0.
\]
By definition of the topology on $G_T$ there are open bisections $B_1 \subseteq c_T^{-1}(-n_1)$ and $B_2 \subseteq c_T^{-1}(n_2)$
such that $\xi_1^{-1} \in B_1$ and $\xi_2 \in B_2$,
and by Urysohn's lemma we can find $h_i \in C_c(B_i)$ such that $h_1(\xi_1^{-1}) = 1 = h_2(\xi_2)$.

Let $\eta\in (G_T)_x$. Since $h_1$ is supported on a bisection containing $\xi_1$, the
formula~\labelcref{eq:piHxz def} implies that
\begin{equation*}
  \langle e_{[\eta]_H}, \pi_{(x,z)}^H(h_1) e_{[\xi_1]_H} \rangle
  = \sum_{(u, p, y_1)\in [\eta \xi_1^{-1}]_H} z^p h_1(u,p,y_1)
  = z^{-n_1} h_1(\xi_1^{-1}) = z^{-n_1}.
\end{equation*}
We deduce that $\pi_{(x,z)}^H(h_1) e_{[\xi_1]_H} = z^{-n_1} e_{[\eta]_H}$ where $[\eta]_H = [(x,0,x)]_H$
and therefore that $\pi^H_{(x,z)}(h_1)^* e_{[(x,0,x)]_H} = z^{n_1} e_{[\xi_1]_H}$.
A similar calculation shows that $\pi^H_{(x,z)}(h_2) e_{[(x,0,x)]_H} = z^{n_2} e_{[\xi_2]_H}$.
Hence
\[
  0 = \langle e_{[(x,0,x)]_H},\pi^H_{(x,z)}(h_1 f h_2) e_{[(x,0,x)]_H}\rangle
    =z^{n_2-n_1}\langle e_{[\xi_1]_H},\pi^H_{(x,z)}(f) e_{[\xi]_H}\rangle,
\]
and this shows that $\pi_{(x,z)}^H(f) = 0$.

For the final statement, the ``only if'' implication is trivial. For the ``if''
direction, fix $f \in I$. Then for any pair of homogeneous bisections $B_1, B_2 \subseteq
G_T$ and any pair of functions $h_i \in C_c(B_i)$, we have $h_1fh_2 \in I$, and so
$\langle e_{[(x,0,x)]_H},\pi^H_{(x,z)}(h_1fh_2)e_{[(x,0,x)]_H}\rangle = 0$ by hypothesis.
Hence $\pi^H_{(x, z)}(f) = 0$ by the first statement.
\end{proof}

We shall need to know that averaging over subgroups of $H$ preserves the kernel of $\pi^H_{(x, z)}$.

\begin{lemma}\label{lem:averaging and kernels}
Let $X$ be a second-countable locally compact Hausdorff space and suppose that $T \colon \N^k
\curvearrowright X$ is an action by local homeomorphisms. Fix nested subgroups $H \leq K$
of $\Z^k$ and a point $(x, z) \in X \times \T^k$. Let $E_K$ be the conditional
expectation on $C^*(G_T)$ given by
\[
  E_K(f)(x, n, y) = \int_{K^\perp} w^n f(x, n, y)\,dw,
\]
for all $f\in C_c(G_T)$ and $(x,n,y)\in G_T$.
Then $E_K\big(\ker(\pi^H_{(x, z)})\big) \subseteq \ker(\pi^H_{(x, z)})$.
\end{lemma}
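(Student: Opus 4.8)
The plan is to realise $E_K$ as an average of the gauge action over $K^\perp$ and to show that $\pi^H_{(x,z)}$ carries this average to conjugation by an explicit diagonal unitary. First I would note that, for $f \in C_c(G_T)$, the defining formula gives $E_K(f) = \int_{K^\perp} \gamma_w(f)\,dw$, where $\gamma$ denotes the gauge action and the integral is taken against normalised Haar measure on the compact group $K^\perp$; since $w \mapsto \gamma_w(f)$ is norm-continuous this is a Bochner integral, and as both sides are contractive in $f$ and agree on the dense subalgebra $C_c(G_T)$, the identity $E_K = \int_{K^\perp}\gamma_w\,dw$ holds on all of $C^*(G_T)$.

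The crux is the following intertwining identity, and this is where the hypothesis $H \leq K$ enters. Since $H \leq K$ we have $K^\perp \subseteq H^\perp$, so each $w \in K^\perp$ satisfies $w^h = 1$ for all $h \in H$; hence $\coc_T$ descends to a map $(G_T)_x/{\sim_H} \to \Z^k/H$, and $w$ determines a well-defined diagonal unitary $U_w$ on $\ell^2((G_T)_x/{\sim_H})$ by $U_w e_{[\xi]_H} = w^{\coc_T(\xi)} e_{[\xi]_H}$. I would then verify, directly from the matrix-coefficient formula~\labelcref{eq:piHxz def}, that
\[
  \pi^H_{(x,z)}(\gamma_w(f)) = U_w\, \pi^H_{(x,z)}(f)\, U_w^*
\]
for all $f \in C_c(G_T)$ and all $w \in K^\perp$. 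The computation is immediate: writing $\xi_i = (y_i, n_i, x)$, every term $(x_1,h,x_2)$ of the class $[\xi_1\xi_2^{-1}]_H$ has middle coordinate $h$ in the single coset $n_1 - n_2 + H$, so for $w \in K^\perp \subseteq H^\perp$ the extra factor $w^h$ coming from $\gamma_w(f)(x_1,h,x_2) = w^h f(x_1,h,x_2)$ is constant across the defining sum and equals $w^{n_1-n_2}$; this is exactly the phase $w^{n_1}\overline{w^{n_2}}$ produced by $\Ad(U_w)$. Continuity of $\pi^H_{(x,z)}$, $\gamma_w$ and $\Ad(U_w)$ in $f$ then extends the identity from $C_c(G_T)$ to $C^*(G_T)$.

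Granting the intertwining, the conclusion is immediate: because $\pi^H_{(x,z)}$ is bounded and therefore commutes with the Bochner integral,
\[
  \pi^H_{(x,z)}(E_K(f)) = \int_{K^\perp}\pi^H_{(x,z)}(\gamma_w(f))\,dw = \int_{K^\perp} U_w\, \pi^H_{(x,z)}(f)\, U_w^*\,dw.
\]
If $f \in \ker(\pi^H_{(x,z)})$ then $\pi^H_{(x,z)}(f) = 0$, so every integrand vanishes and $E_K(f) \in \ker(\pi^H_{(x,z)})$, as required.

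I expect the only genuine obstacle to be the hypothesis-sensitive observation that the factor $w^h$ is constant across the summation defining the matrix coefficient: this is precisely what forces $w \in K^\perp \subseteq H^\perp$, and it is also what makes $U_w$ well defined on $\sim_H$-classes, so the assumption $H \leq K$ is used twice and is essential. The remaining points (that $U_w$ is unitary, and the passage from $C_c(G_T)$ to $C^*(G_T)$ together with the interchange of $\pi^H_{(x,z)}$ with the integral over the compact group $K^\perp$) are routine.
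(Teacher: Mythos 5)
Your proposal is correct and takes essentially the same route as the paper: both realise $E_K$ as the average over $K^\perp$ of the (restricted) gauge action, and both hinge on the single fact that $w^h = 1$ for $w \in K^\perp$ and $h \in H \leq K$, so that in the matrix coefficients of $\pi^H_{(x,z)}$ the twist by $w$ contributes only the constant phase $w^{m-n}$, leaving a multiple of a matrix coefficient of $\pi^H_{(x,z)}(f)$, which vanishes. The paper carries this out as a direct inline computation of $\langle e_{[(u,m,x)]_H}, \pi^H_{(x,z)}(E_K(f))\, e_{[(v,n,x)]_H}\rangle$ (invoking \cite[Lemma~C.2]{Raeburn-Williams} to pass the integral through), whereas you package the same phase as conjugation by the diagonal unitary $U_w$ and then integrate; this is a reorganisation of the identical computation rather than a different argument.
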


\begin{proof}
Let $\alpha \colon K^\perp \to \Aut(C^*(G_T))$ be the action given by $\alpha_w(f)(x, n,
y) = w^n f(x, n, y)$, for all $f\in C_c(G_T)$ and $(x,n,y)\in G_T$. Then $E_K(f) =
\int_{K^\perp} \alpha_w(f)\,dw$. Fix basis vectors $e_{[(u, m, x)]_H}, e_{([v, n, x)]_H}$
of $\ell^2((G_T)_x/{\sim_H})$. For any $f\in \ker(\pi_{(x,z)}^H)$, we calculate,
using~\cite[Lemma~C.2]{Raeburn-Williams} at the first equality, and that $H \le K^\perp$
at the last equality:
\begin{align*}
  \langle e_{[(u,m,x)]_H}, \pi_{(x,z)}^H(E_K(f)) e_{[(v,n,x)]_H} \rangle
  &= \int_{K^\perp} \sum_{h\in H} z^{h+m-n} w^{h+m-n} f(u, h+m-n, v) \,dw \\
  &= \int_{K^\perp} w^{m-n} \sum_{h\in H} z^{h+m-n} f(u, h+m-n, v) \,dw \\
  &= \int_{K^\perp} w^{m-n} \langle e_{[(u,m,x)]_H}, \pi_{(x,z)}^H(f) e_{[(v,n,x)]_H}\rangle \,dw\\
  &= 0.
\end{align*}
Therefore, $\pi_{(x,z)}^H(E_K(f)) = 0$ and the claim follows.
\end{proof}

\section{The map \texorpdfstring{$\pi$}{pi} is continuous}\label{sec:necessary condition}

We show in this section that the map from $X \times \Z^k$ to the
primitive-ideal space of the $C^*$-algebra of a Deaconu--Renault groupoid
determined by the irreducible representations $\pi^{\Z^k}_{(x,z)}$ from the
preceding section is continuous. We will use the following notation
throughout the remainder of the paper.

\begin{notation}\label{ntn:pi}
Let $X$ be a second-countable locally compact Hausdorff space and suppose that $T
\colon \N^k \curvearrowright X$ is an action by local homeomorphisms. For
each $x \in X$ and $z \in \T^k$, we write
\begin{equation}\label{eq:pisubxz def}
\pi_{(x,z)} \coloneqq \pi_{(x,z)}^{\Z^k} \colon C^*(G_T) \to B(\ell^2((G_T)_x/{\sim_{\Z^k}}))
\end{equation}
for the representation obtained from \cref{prp:H-representations} applied
with $H = \Z^k$. By \cite[Theorem~3.2]{Sims-Williams} and \cref{rmk:H=Zk},
the map
\begin{equation}\label{eq:pi def}
\pi \colon X \times \T^k  \to \Prim(C^*(G_T))
\end{equation}
defined by $\pi(x,z) \coloneqq \ker(\pi_{(x,z)})$ is a surjection.
\end{notation}

With the notation above, we have $\ker(\pi_{(x, z)}) = \ker(\pi_{(x', z')})$
if and only if $\overline{[x]} = \overline{[x']}$ and $z$ and $z'$ determine
the same character of $\coc(\Iess_x) = \coc(\Iess_{x'})$.

Suppose that $\Bb = \{B_i : i \in I\}$ is a family of bisections of $G_T$. We write
$\Bb^{\ess}$ for the intersection
\[\textstyle
\Bb^{\ess} \coloneqq \big(\bigcup \Bb\big) \cap \Iess = \big(\bigcup_{i \in I} B_i\big) \cap \Iess.
\]
This is an algebraic bundle of subsets of $\Z^k$ (its fibres are not necessarily groups,
and it need not be particularly well behaved topologically; for example, it is unlikely
to be locally compact). For $x \in X$, we write $\Bb^{\ess}_x$ for the fibre $\Bb^{\ess}
\cap G_x$ of this bundle over $x$. We then write
\[
(\Bb^{\ess})^\perp =
    \{(x, z) \in (s(\Bb^{\ess}) \times \T^k) : z^{\coc(\gamma)} = 1\text{ for all }\gamma \in \Bb^{\ess}_x\}.
\]
Algebraically, this is a bundle over $s(\Bb^{\ess})$ of subgroups of $\T^k$, though it
need not be topologically well-behaved. We think of it as the bundle of annihilators of
the fibres of $\Bb^{\ess}$.

Given a subset $W \subseteq s(\Bb^{\ess}) \times \T^k$, we define the
\emph{$\Bb$-saturation} of $W$ to be the set
\[
W \cdot (\Bb^{\ess})^\perp = \{(x, wz) : (x,w) \in W\text{ and } (x,z) \in (\Bb^{\ess})^\perp\}.
\]
Equivalently,
\[
W \cdot (\Bb^{\ess})^\perp = \bigcup_{(x,w) \in W} \{(x, z) : z^{\coc(\gamma)} = w^{\coc(\gamma)}\text{ for all } \gamma \in \Bb^{\ess}_x\}.
\]

\begin{theorem}\label{thm:Aopen}
Let $X$ be a second-countable locally compact Hausdorff space and suppose
that $T \colon \N^k \curvearrowright X$ is an action by local homeomorphisms.
Let $\pi \colon X \times \T^k \to \Prim(C^*(G_T))$ be as in \cref{ntn:pi}. Let
$A\subset \Prim(C^*(G_T))$ be an open subset. Suppose that $(x,z) \in
\pi^{-1}(A)$ and that $(B_{\alpha})_{\alpha\in\J_x}$ is a family of open
bisections such that $\alpha \in B_{\alpha}\subseteq \coc^{-1}(\coc(\alpha))$
for each $\alpha\in\J_x$. Then there exist an open neighbourhood $U\subseteq
B_x\cap X$ of $x$ and an open neighbourhood $V \subset \T^k$ of $z$ such that
the $\Bb$-saturation $(U \times V)\cdot(\Bb^{\ess})^\perp$ of $U \times V$ is
contained in $\pi^{-1}(A)$.
\end{theorem}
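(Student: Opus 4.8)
The plan is to work on the ideal side of the hull--kernel correspondence and to detect non-membership in the relevant kernels through matrix coefficients of the representations from \cref{prp:H-representations}. Since $A$ is open it is the set $\{P \in \Prim(C^*(G_T)) : I \not\subseteq P\}$ for a unique ideal $I$, and for every $(y,w)$ we have $(y,w) \in \pi^{-1}(A)$ exactly when $I \not\subseteq \ker(\pi_{(y,w)}) = \ker(\pi^{\Z^k}_{(y,w)})$, i.e.\ when $\pi_{(y,w)}(f) \neq 0$ for some $f \in I$. Everything therefore reduces to detecting non-vanishing of $\pi_{(y,u)}$ on $I$ as $(y,u)$ ranges over the saturation.

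First I would fix a witness at $(x,z)$. Since $I \not\subseteq \ker(\pi_{(x,z)})$, \cref{lem:easier zero check} (with $H = \Z^k$) yields homogeneous bisections $B_1, B_2$, functions $h_i \in C_c(B_i)$, and hence $g := h_1 f h_2 \in I$, with
\[
  \langle e_{[(x,0,x)]}, \pi_{(x,z)}(g)\, e_{[(x,0,x)]}\rangle = \sum_{h \in \coc(\I_x)} z^h\, g(x,h,x) \neq 0 .
\]
The element $g$ detects $(x,z)$ through the isotropy at $x$, but---crucially---its support is spread along $B_1, B_2$ and the support of $f$, so it also takes nonzero values at nearby \emph{non-isotropy} elements of $G_T$ whose source is a unit $y$ close to $x$. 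This off-diagonal support is what will detect the nearby saturated points, and the given family $(B_\alpha)_{\alpha \in \J_x}$ is what lets us track it. I record the elementary observation that $\Bb^{\ess}_x = \Iess_x$ and hence $\coc(\Bb^{\ess}_x) = \coc(\Iess_x)$, since $\coc$ is injective on $\I_x$ and each $\alpha \in \Iess_x \subseteq \J_x$ lies in $B_\alpha \subseteq \bigcup_\beta B_\beta$.

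Next I would choose $U \ni x$ and $V \ni z$ and verify $(U \times V)\cdot(\Bb^{\ess})^\perp \subseteq \pi^{-1}(A)$ by splitting the isotropy degrees of $g$ at a nearby unit $y$ into two kinds. For a \emph{captured} degree $h \in \coc(\Bb^{\ess}_y)$ the unique degree-$h$ isotropy element at $y$ is essential, hence (by \cref{def:essential-isotropy}) interior to the isotropy of $G_T|_{\overline{[y]}}$, so it persists as $y$ moves and contributes continuously to the diagonal coefficient; moreover for $(y,u)$ in the saturation there is $w \in V$ with $u^h = w^h$, so this contribution is insensitive to the saturation direction. For an \emph{uncaptured} degree the isotropy typically disappears for $y \neq x$ (as in \cref{eg:dumbbell}), but then $g$ is nonzero instead at a genuine, \emph{homogeneous} non-isotropy element along the relevant bisection, producing an off-diagonal coefficient $\langle e_{[\xi_1]}, \pi_{(y,u)}(g)\, e_{[\xi_2]}\rangle$ of the form $u^{h}\, g(\,\cdot\,)$ that is nonzero for \emph{every} $u$. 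Either way $\pi_{(y,u)}(g) \neq 0$, and---pulling the relevant coefficient back to the diagonal via \cref{lem:easier zero check}, and shrinking $U, V$ so that the finitely many contributing degrees behave as described---one concludes $(y,u) \in \pi^{-1}(A)$.

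The main obstacle is exactly the control of these matrix coefficients as the base point varies. The isotropy slices $\{(y,h,y) \in G_T\}$ are badly discontinuous, so the diagonal coefficient $y \mapsto \sum_h u^h g(y,h,y)$ is neither continuous nor invariant under the saturation directions $\coc(\Bb^{\ess}_y)^\perp$, and (as \cref{eg:dumbbell} shows) no single diagonal coefficient can witness every saturated point. The real work is to organise the finitely many degrees in the support of $g$, to show that the captured ones persist via the interior-in-orbit-closure property of essential isotropy while the uncaptured ones are detected off-diagonally for all $u$ simultaneously, and to choose $U$ and $V$ small enough that these two regimes together cover the whole saturation. Making this dichotomy uniform---particularly for nearby units whose orbit closures differ from $\overline{[x]}$, and for which the bisections capture only part of the essential isotropy, so that $\coc(\Bb^{\ess}_y) \subsetneq \coc(\Iess_y)$---is the delicate heart of the argument.
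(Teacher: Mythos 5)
Your reduction of the statement to ``$I \not\subseteq \ker(\pi_{(y,u)})$ for every $(y,u)$ in the saturation'' is correct, and so is the observation that $\Bb^{\ess}_x = \Iess_x$; but your detection mechanism breaks down at exactly the point you call the delicate heart, in two ways. First, \emph{non-essential} isotropy is covered by neither branch of your dichotomy. The witness coefficient $\sum_h z^h g(x,h,x)$ runs over all of $\coc(\I_x)$, and its terms of degree $h \in \coc(\I_x)\setminus\coc(\Iess_x)$ --- which can be present at $x$ itself and at every nearby $y$, never disappear, and are never captured no matter how small $U$ and $V$ are --- acquire phases $u^h$ that the saturation does not fix: for $v \in (\Bb^{\ess}_y)^\perp$ the value $v^h$ ranges over a nontrivial subgroup of $\T$ whenever $h$ lies outside the group generated by $\coc(\Bb^{\ess}_y)$. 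Such terms can cancel the rest of the sum, so the diagonal coefficient of your single witness can vanish at saturation points, already at points $(x,z'v)$ lying over $x$. Second, for essential isotropy you conflate ``the element of $B_\alpha$ with source $y$ is not isotropy'' with ``$y$ has no isotropy of that degree''. In the dumbbell with the family $\mathcal{C}$ of \cref{sec:dumbbell}, every $y = e^mfg^\infty$ near $e^\infty$ has $\Iess_y \cong \Z$ but $\Bb^{\ess}_y = \{y\}$, so the saturation over $y$ is all of $\{y\}\times\T$, while the captured part of the diagonal coefficient is the single number $g(y,0,y)\approx g(x,0,x)$, which may well be $0$ even though $\sum_n z^n g(x,n,x)\neq 0$. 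Your off-diagonal fallback does not repair this: an off-diagonal coefficient is $\sum_{\gamma}\, u^{\coc(\gamma)}g(\gamma)$, summed over all $\gamma$ with $s(\gamma)=y$, $r(\gamma)=y'$ (in the dumbbell, a $\Z$-torsor), so it is again a character sum with several terms of different degrees, and nothing prevents it from vanishing at particular $u$; ``nonzero for \emph{every} $u$'' is unjustified and false in general. (In the dumbbell one does have $\pi_{(y,u)}(g)\neq 0$ for all $u$, but only because $\ker(\pi_{(y,u)})\subseteq\ker(\pi_{(x,z)})$ there --- a containment invisible to your coefficient analysis.)

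These two problems are precisely why the paper argues the \emph{contrapositive} and quantifies over the whole ideal rather than fixing one witness: given $x_i\to x$, $z_i\to z$, with $z_i$ and $z_i'$ agreeing on $\coc(\Bb^{\ess}_{x_i})$ and $I\subseteq\ker(\pi_{(x_i,z_i')})$, it proves $I\subseteq\ker(\pi^H_{(x,z)})\subseteq\ker(\pi_{(x,z)})$, where $H$ is the group of \emph{persistently} captured degrees and $\pi^H_{(x,z)}$ is the interpolating representation of \cref{prp:H-representations}, whose diagonal coefficient sees only $H$-degrees; by \cref{lem:easier zero check} it suffices to check that one coefficient on all $f\in I$. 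The non-essential terms are then annihilated by the conditional expectations $E_i$ over $\coc(\Iess_{x_i})^\perp$ via \cref{lem:averaging and kernels}, and this is legitimate only because, by \cite{Sims-Williams}, twisting by $\coc(\Iess_{x_i})^\perp$ does not change $\ker(\pi_{(x_i,z_i')})$, so containment of the ideal survives averaging; there is no analogue of this step for a single witness, since $E_i(g)$ need not lie in $I$. Your ``disappearing isotropy'' intuition does appear in the paper, as the compact-support argument that $g$ eventually vanishes on essential isotropy of persistent degree outside $H$, but it applies only to essential isotropy and only along tails of sequences. Finally, the nonvanishing-propagation direction you attempt is what \cref{prp:fP} later achieves, but it requires harmonious families of bisections (the inversion, multiplicativity and compactness conditions of \cref{def:Bf}) and Fourier perturbations, and even then it produces one detecting element \emph{per saturation point}, not one element detecting them all; \cref{thm:Aopen} is stated for arbitrary families of homogeneous bisections precisely because its proof cannot go that way.
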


\begin{remark}
The condition in the conclusion of the theorem that $(U \times V)\cdot(\Bb^{\ess})^\perp
\subseteq \pi^{-1}(A)$ can be restated as follows: whenever $x_1\in U$, $z_1\in V$, and
$z_2\in\T^k$ satisfy $(z_1)^h = (z_2)^h$ for every $h \in \coc(\J_x) \cap
\coc(\Bb^{\ess}_{x_1})$, we have $(x_1,z_2)\in \pi^{-1}(A)$.
\end{remark}

\begin{proof}[Proof of \cref{thm:Aopen}]
We prove the contrapositive; that is, we consider a point $(x, z) \in X \times \T^k$ and
a sequence $(x_i, z_i, z_i')_{i\in \N} \in X\times V\times \T^k$ satisfying
\begin{enumerate}[label=(\roman*)]
  \item \label{item:convergence} $x_i \to x$ and $z_i \to z$ as $i\to \infty$,
  \item \label{item:characters} for every $i\in \N$, we have $(z_i)^h = (z_i')^h$ for
      all $h \in \coc(\bigcup \Bb \cap \Iess_{x_i})$, and
  \item \label{item:notinA} for every $i \in \N$, we have $(x_i, z_i') \notin
      \pi^{-1}(A)$,
\end{enumerate}
and we prove that $(x, z) \not \in \pi^{-1}(A)$.

Since $A$ is open in the hull-kernel topology, there is an ideal $I \subset C^*(G_T)$
such that $A = \{ P \in \Prim(C^*(G_T)) : I\not\subset P \}$. Condition~\ref{item:notinA}
implies that each $(x_i, z'_i) \not\in \pi^{-1}(A)$, and hence that each $\ker(\pi_{(x_i,
z'_i)}) \not\in A$; so
\begin{equation}\label{eq:I vs pixizi}
    I \subseteq \ker(\pi_{(x_i, z'_i)})\quad\text{ for all $i$}.
\end{equation}
Let $H$ be the subgroup of $\Z^k$ generated by
\begin{equation}\label{eq:H-generators}\textstyle
\bigcup^\infty_{i=1} \Big(\bigcap^\infty_{j=i} c(\Bb^{\ess}_{x_i})\Big)
    = \{h \in \coc(\J_x) : (x_i, h, x_i) \in B_{(x,h,x)} \cap \Iess\text{ for large }i\}.
\end{equation}
Since subgroups of $\Z^k$ are finitely generated, by discarding finitely many terms in
the sequence $(x_i, z_i, z'_i)_i$ and relabelling, we may assume that
\begin{equation}\label{eq:H in Iessi}
    H \subseteq \coc(\Bb^{\ess}_{x_i}) \subseteq \coc(\Iess_{x_i})\text{ for all $i$.}
\end{equation}

Let $\pi_{(x, z)}^H : C^*(G_T) \to \Bb(\ell^2([x]) \otimes \ell^2(\Z^k/H))$ be the
representation of \cref{prp:H-representations}. We will show that
\begin{equation} \label{eq:inclusion}
  I \subset \ker(\pi_{(x,z)}^H) \subset \ker(\pi_{(x,z)}),
\end{equation}
giving $(x,z) \notin \pi^{-1}(A)$ as required. Since $\ker(\pi_{(x, z)}) =
\ker(\pi^{\Z^k}_{(x, z)})$ (see Remark~\ref{rmk:H=Zk}), the second inclusion
in~\eqref{eq:inclusion} follows from Proposition~\ref{prp:H-representations}, so we just
have to establish that $I \subset \ker(\pi_{(x,z)}^H)$.

By the final statement of Lemma~\cref{lem:easier zero check} it suffices to prove that
\begin{equation}\label{eq}
  \langle e_{[(x,0,x)]_H},\pi^H_{(x,z)}(f) e_{[(x,0,x)]_H}\rangle = 0,
\end{equation}
for all $f\in I$.

Fix $f\in I$ and $\varepsilon > 0$, and choose $g\in C_c(G_T)$ such that $\|f - g\| <
\varepsilon / 3$. Since $g$ has compact support, there is a finite subset $F \subset
\Z^k$ such that
\begin{equation*}
  \supp(g) \subset \bigcup_{h\in F} \coc^{-1}(h).
\end{equation*}
We have
\begin{equation} \label{inn}
  \langle e_{[(x,0,x)]_H},\pi^H_{(x,z)}(g) e_{[(x,0,x)]_H}\rangle = \sum_{h\in H\cap F} z^h g(x, h, x).
\end{equation}
By \labelcref{item:convergence}, there exists $i \in \N$ such that
\begin{equation}\label{est}
  \left|\sum_{h\in H\cap F} z^h g(x, h, x) - \sum_{h\in H\cap F} (z_j)^h g(x_j, h, x_j)\right| < \varepsilon / 3
\end{equation}
for all $j \ge i$.

We now make a few observations. The set
\[
  \{h\in\Z^k : (z_i)^h = (z_i')^h\text{ and } (x_i,h,x_i)\in \Iess \text{ for large }i\}
\]
is a subgroup of $\Z^k$ that contains $H$ by~\labelcref{item:characters}. After possibly
discarding finitely many terms of the sequence $(x_i, z_i, z'_i)$ and re-indexing, we may
therefore assume that $(z_i)^h = (z_i')^h$ and $(x_i,h,x_i)\in \Iess$ for all $i\in \N$
and all $h \in H \cap F$.

Since $F$ is finite, by passing to a subsequence of the $(x_i, z_i, z'_i)_i$ and
re-indexing, we may further assume that for every $h'\in F$ either $(x_i, h',x_i) \in
\Iess$ for all $i \in \N$ or $(x_i, h', x_i) \not\in \Iess$ for all $i \in \N$.

Suppose that $h'\in F$ satisfies $(x_i,h',x_i)\in \Iess$ for all $i\in \N$ and that there
is a subsequence $(x_{i_j})_{j\in\N}$ of $(x_i)_{i\in\N}$ such that $(x_{i_j}, h',
x_{i_j})\in\supp(g)$ for all $j$. Since $\supp(g)$ is compact, every subsequence of
$(x_{i_j}, h', x_{i_j})_j$ has a convergent subsequence, and since $x_i \to x$, the limit
is $(x, h', x)$. Hence $(x_{i_j}, h', x_{i_j}) \to (x, h', x)$. In particular,
$h'\in\J_x$ and $(x_{i_j}, h', x_{i_j})\in \bigcup\Bb$ for large $j$, so $h'\in H$.

By the preceding paragraph, if $h'\in F$ satisfies $(x_i,h',x_i)\in \Iess$ for all $i$
but $h' \not\in H$, then $g(x_i,h,x_i) = 0$ for large $i$. So for each such $h'$, by
discarding finitely many terms of the sequence $(x_i, z_i, z'_i)$ and relabelling again,
we may assume that $g(x_i,h,x_i) = 0$ for all $i\in\N$, whenever $h'\in F\setminus H$
satisfies $(x_i,h',x_i)\in \Iess$ for all $i$.

For each $i \in \N$, let $E_i$ be the conditional expectation on $C^*(G_T)$ satisfying
\[
  E_i(f')((w, p, y)) = \int_{\coc(\Iess_{x_i})^\perp} z^p f((w,p,y))\,dz
\]
for $f'\in C^*(G_T)$ and $(w,p,y)\in G_T$.

Since $(z_i)^h = (z_i')^h$ and $(x_i,h,x_i)\in \Iess$ for $h \in H \cap F$, and
$g(x_i,h,x_i)=0$ for $h\in \{h'\in F: (x_i,h',x_i)\in \Iess~\textrm{for all}~i\in
\N\}\setminus H$,
\begin{align}
\sum_{h\in H\cap F}(z_i)^h g(x_i,h,x_i)
    &=\sum_{h\in H\cap F}(z'_i)^h g(x_i,h,x_i)\nonumber\\
    &=\sum_{h\in F \cap \coc(\Iess_{x_i})}(z'_i)^h g(x_i,h,x_i)\nonumber\\
    &=\langle \delta_{x_i},\pi_{(x_i,z'_i)}(E_i(g))\delta_{x_i}\rangle.\label{eqq}
\end{align}

Since $H \subseteq \Iess_{x_i}$ for all $i$ (see~\eqref{eq:H in Iessi}),
Lemma~\ref{lem:averaging and kernels} shows that $E_i(\ker(\pi^H_{(x_i,z_i')}))\subseteq
\ker(\pi^H_{(x_i,z_i')})$ for all $i$. So it follows from~\eqref{eq:I vs pixizi} that
that $E_i(f)\in \ker(\pi^H_{(x_i,z_i')})$ for all $i$. Since $\|f - g\| < \varepsilon /
3$ and each $E_i$ is a contraction, we deduce that
\[
\left|\langle e_{x_i},\pi^H_{(x_i,z'_i)}(E_i(g))e_{x_i}\rangle\right| < \varepsilon / 3.
\]
Combining this with \labelcref{inn}, \labelcref{est}, and \labelcref{eqq}, we conclude that
\[
  \left|\langle e_{[(x,0,x)]_H},\pi^H_{(x,z)}(g)e_{[(x,0,x)]_H}\rangle\right| < 2 \varepsilon / 3,
\]
and since $\|f - g\| < \varepsilon / 3$ it follows that
\begin{equation*}
  \left|\langle e_{[(x,0,x)]_H},\pi^H_{(x,z)}(f)e_{[(x,0,x)]_H}\rangle\right| < \varepsilon.
\end{equation*}
As $\varepsilon > 0$ was arbitrary this proves the claim~\labelcref{eq}.
\end{proof}

This result allows us to infer that the map $\pi$ is continuous. Elementary examples
(such as the dumbbell graph---see \cref{sec:dumbbell}) show that it is not typically
open.

\begin{corollary} \label{cor:pi-continuous}
The map $\pi\colon X\times \T^k \to \Prim(C^*(G_T))$ of \cref{ntn:pi} is
continuous.
\end{corollary}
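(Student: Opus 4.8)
The plan is to deduce continuity directly from \cref{thm:Aopen} by proving that the preimage under $\pi$ of every open subset of $\Prim(C^*(G_T))$ is open in $X \times \T^k$. So I would fix an open set $A \subseteq \Prim(C^*(G_T))$ together with a point $(x,z) \in \pi^{-1}(A)$, and seek an open neighbourhood of $(x,z)$ that is contained in $\pi^{-1}(A)$.

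To invoke \cref{thm:Aopen} I first need to produce a family $(B_\alpha)_{\alpha \in \J_x}$ of open bisections with $\alpha \in B_\alpha \subseteq \coc^{-1}(\coc(\alpha))$. Since $G_T$ is \'etale and $\coc$ is locally constant, each $\alpha \in \J_x$ admits such a homogeneous open bisection, so this choice is always available; the only point I would be careful about is the unit $(x,0,x) \in \J_x$, for which I would insist on taking $B_{(x,0,x)}$ inside the clopen unit space $G^{(0)}$, that is, an open neighbourhood of $x$ in $X$. With such a family fixed, \cref{thm:Aopen} supplies open neighbourhoods $U \ni x$ (with $U \subseteq B_{(x,0,x)}$) and $V \ni z$ such that the $\Bb$-saturation $(U \times V) \cdot (\Bb^{\ess})^\perp$ is contained in $\pi^{-1}(A)$.

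The key observation, and essentially the only new step, is that saturation can only enlarge a set, so that $U \times V \subseteq (U \times V) \cdot (\Bb^{\ess})^\perp$. Indeed, every $x' \in U$ lies in $B_{(x,0,x)} \subseteq G^{(0)}$, so $(x',0,x')$ is a unit and therefore lies in $\Iess$ (units are always interior to the isotropy of the restriction to their own orbit closure); hence $x' \in s(\Bb^{\ess})$. The trivial character $1 \in \T^k$ satisfies $1^{\coc(\gamma)} = 1$ for every $\gamma$, so $(x',1) \in (\Bb^{\ess})^\perp$, and for any $(x',w) \in U \times V$ this gives $(x',w) = (x', w\cdot 1) \in (U \times V) \cdot (\Bb^{\ess})^\perp$, proving the claimed inclusion.

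Combining the two preceding paragraphs, $U \times V$ is an open neighbourhood of $(x,z)$ satisfying $U \times V \subseteq (U \times V) \cdot (\Bb^{\ess})^\perp \subseteq \pi^{-1}(A)$. Since $(x,z) \in \pi^{-1}(A)$ was arbitrary, $\pi^{-1}(A)$ is open, and hence $\pi$ is continuous. The substantive work of the argument is already encapsulated in \cref{thm:Aopen}; the remaining obstacle is only the bookkeeping needed to guarantee that the base points of the saturation lie in $s(\Bb^{\ess})$, which the unit-space choice of $B_{(x,0,x)}$ dispatches cleanly.
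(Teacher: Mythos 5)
Your proposal is correct and follows essentially the same route as the paper: invoke \cref{thm:Aopen} with a family of homogeneous open bisections, then observe that units lie in $\Iess$ so the trivial character places an open box neighbourhood of $(x,z)$ inside the $\Bb$-saturation $(U\times V)\cdot(\Bb^{\ess})^\perp \subseteq \pi^{-1}(A)$. The only cosmetic difference is that you fix $B_{(x,0,x)}$ inside the unit space at the outset, so that $U\times V$ itself lies in the saturation, whereas the paper makes an arbitrary choice of $B_{(x,0,x)}$ and afterwards intersects $U$ with a further neighbourhood $U' \subseteq G_T^{(0)} \cap B_{(x,0,x)}$; this is the same bookkeeping performed in a slightly different order.
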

\begin{proof}
Pick an open set $A\subset \Prim(C^*(G_T))$ and take $(x,z)\in \pi^{-1}(A)$. Since $G_T$
is \'etale, there exists a collection $\Bb = (B_\alpha)_{\alpha \in \J_x}$ of open
bisections such that $\alpha \in B_\alpha \subset \coc^{-1}(\coc(\alpha))$ for all
$\alpha \in \J_x$. \cref{thm:Aopen} implies that there are open sets $U\subset X$ and
$V\subset \T^k$ containing $x$ and $z$, respectively, such that $(U \times V) \cdot
(\Bb^{\ess})^\perp \subseteq \pi^{-1}(A)$. Since $(\Bb^{\ess})^\perp$ is a bundle over
$s(\Bb^{\ess})$ of subgroups of $\T^k$ it contains $s(\Bb^{\ess}) \times \{1\}$. Since
$B_0$ is a neighbourhood of $(x, 0, x)$, it contains an open neighbourhood $(x, 0, x) \in
U' \subseteq G_T^{(0)} \subseteq \Iess$, so $U' \times \{1\} \subseteq s(\Bb^{\ess})
\times \{1\}$. In particular $(U \cap U') \times V = (U \times V)\cdot(U' \times \{1\})$
is an open subset of $X \times \T^k$ and we have $(x, z) \in (U \cap U') \times V
\subseteq \pi^{-1}(A)$. Hence $\pi^{-1}(A)$ is open in $X\times \T^k$, and therefore
$\pi$ is continuous.
\end{proof}

\section{The sandwiching lemma for Deaconu--Renault groupoids}\label{sec:sandwiching vs SW}

In \cite[Lemma 3.3]{BriCarSim:sandwich}, we proved that for any \'etale groupoid $G$ and
any ideal $I \subseteq C^*(G)$, the set
\[
  U = \{ x\in G^{(0)} : j(f)(x) \neq 0 \textrm{ for some } f\in I\cap C_0(G^{(0)}) \}
\]
is the unique smallest open invariant set such that $I \subset I_U = C^*(G|_U)$, and
\[
  V = \{ x\in G^{(0)} : j(a)(x) \neq 0 \textrm{ for some } a\in I\}
\]
is the unique largest open invariant set such that $C^*(G|_V) = I_V \subset
I$. We refer to $U$ and $V$ as the sandwich sets related to $I$.

In this section, we identify the sandwich sets for an ideal $I$ of the
$C^*$-algebra of a Deaconu--Renault groupoid $G_T$, and relate them to the
open set $\{(x,z) \in X \times \T^k : I \not\subseteq \ker(\pi_{(x,z)})\}$
corresponding to a representation $\pi_{(x,z)}$ as in \cref{ntn:pi}. This
serves to relate the sandwiching lemma to Katsura's results for
singly-generated dynamical systems in \cref{sec:examples}.

We first need to know that if the groupoid $G_T$ admits a unit $x$ with dense
orbit, then the direct sum of the representations $\pi_{(x, z)}$ as $z$
ranges over $\T^k$ is faithful.

\begin{lemma}\label{lem:compare kernels}
Let $X$ be a second-countable locally compact Hausdorff space and suppose
that $T \colon \N^k \curvearrowright X$ is an action by local homeomorphisms.
For $x \in X$ and $z \in \T^k$ let $\pi_{(x,z)}$ be as in \cref{ntn:pi}.
Suppose that $x \in X$ satisfies $\overline{[x]} = X$. Then $\bigoplus_{z \in
\T^k} \pi_{(x,z)}$ is a faithful representation of $C^*(G_T)$. In particular,
writing $\lambda_x$ for the regular representation of $C^*(G_T)$ on
$\ell^2((G_T)_x)$, we have $\ker\big(\bigoplus_{z \in \T^k} \pi_{(x,z)}\big)
= \ker(\lambda_x)$.
\end{lemma}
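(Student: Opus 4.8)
The plan is to prove the kernel equality $\ker\bigl(\bigoplus_{z\in\T^k}\pi_{(x,z)}\bigr)=\ker(\lambda_x)$ by Fourier analysis on $\T^k$, and then to deduce faithfulness of the direct sum from faithfulness of $\lambda_x$, which I would establish using amenability of $G_T$ together with density of $[x]$. Since these two facts together give $\ker\bigl(\bigoplus_z\pi_{(x,z)}\bigr)=\ker(\lambda_x)=\{0\}$, this is exactly the assertion, and it also explains the ``in particular'': the equality of kernels and the faithfulness of the direct sum are really the same statement once $\lambda_x$ is known to be faithful.

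For the kernel equality I would compare matrix coefficients. Recall from \cref{rmk:H=Zk} that $(G_T)_x/{\sim_{\Z^k}}$ is identified with $[x]$ via $[y,n,x]\mapsto y$, so for $y_1,y_2\in[x]$ the scalars $c_{y_1,y_2}(z):=\langle e_{y_1},\pi_{(x,z)}(f)e_{y_2}\rangle$ make sense for each $f\in C^*(G_T)$. A direct computation from \eqref{eq:piHxz def} gives $\pi_{(x,z)}=\pi_{(x,1)}\circ\gamma_z$, and since $z\mapsto\gamma_z(f)$ is norm-continuous the function $z\mapsto c_{y_1,y_2}(z)$ is continuous on $\T^k$. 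For $f\in C_c(G_T)$ the formula \eqref{eq:piHxz def} shows that $c_{y_1,y_2}$ is a trigonometric polynomial whose $h$-th Fourier coefficient is $f(y_1,h,y_2)$; writing $(y_1,h,y_2)=\xi_1\xi_2^{-1}$ with $\xi_1,\xi_2\in(G_T)_x$, this coefficient equals the matrix coefficient $\langle e_{\xi_1},\lambda_x(f)e_{\xi_2}\rangle$ of the regular representation. Both the $h$-th Fourier coefficient of $c_{y_1,y_2}$ and this matrix coefficient of $\lambda_x(f)$ depend norm-continuously on $f$, so the identity persists for all $f\in C^*(G_T)$ by density of $C_c(G_T)$.

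The kernel equality is then immediate: $\pi_{(x,z)}(f)=0$ for every $z\in\T^k$ if and only if every function $c_{y_1,y_2}$ vanishes identically, which for a continuous function on $\T^k$ happens exactly when all its Fourier coefficients vanish, hence exactly when every matrix coefficient of $\lambda_x(f)$ is zero, i.e. when $\lambda_x(f)=0$. As $y_1,y_2$ and $h$ range over all admissible values this yields $\ker\bigl(\bigoplus_z\pi_{(x,z)}\bigr)=\ker(\lambda_x)$. To finish I would show $\lambda_x$ is faithful. Amenability of $G_T$ gives $C^*(G_T)=C^*_r(G_T)$, so the regular representation $\lambda=\bigoplus_{y\in X}\lambda_y$ is isometric. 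For $f\in C_c(G_T)$ the map $y\mapsto\|\lambda_y(f)\|$ is a supremum of the functions $y\mapsto|\langle\lambda_y(f)\xi,\eta\rangle|$ over finitely supported unit vectors, hence lower semicontinuous, and it is constant on orbits since $\lambda_y\cong\lambda_{y'}$ whenever $[y]=[y']$. Given $y\in X=\overline{[x]}$, a sequence in $[x]$ converging to $y$ then gives $\|\lambda_y(f)\|\le\|\lambda_x(f)\|$; extending by density yields $\ker(\lambda_x)\subseteq\ker(\lambda_y)$ for all $y\in X$, so $\ker(\lambda_x)\subseteq\ker(\lambda)=\{0\}$. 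Thus $\lambda_x$ is faithful, and with the kernel equality this shows $\bigoplus_z\pi_{(x,z)}$ is faithful.

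The main obstacle is the comparison step: one must make the Fourier-coefficient identity precise for all of $C^*(G_T)$ rather than merely for $C_c(G_T)$, which requires the norm-continuity of $z\mapsto\pi_{(x,z)}(f)$ and careful bookkeeping in identifying $(G_T)_x/{\sim_{\Z^k}}$ with $[x]$ and matching the index $(y_1,h,y_2)=\xi_1\xi_2^{-1}$. A secondary point needing care is the lower semicontinuity of $y\mapsto\|\lambda_y(f)\|$, for which I would either cite the standard groupoid result or argue directly from continuity of the matrix coefficients on $C_c(G_T)$.
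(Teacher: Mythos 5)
Your proposal is correct, but it takes a genuinely different route from the paper's. The paper identifies $G_T$ with the groupoid of a topological higher-rank graph and invokes the gauge-invariant uniqueness theorem of Carlsen--Larsen--Sims--Vittadello \cite[Corollary~5.21]{CarLarSimVit}: faithfulness of $\bigoplus_{z}\pi_{(x,z)}$ follows once one checks that it is injective on $C_0(X)$ (immediate from density of $[x]$) and that the gauge action $\gamma$ is implemented spatially on the direct sum, which the paper does with the permutation unitaries $U_w e^z_y = e^{wz}_y$; the kernel identity with $\lambda_x$ is then left implicit (it follows, for instance, from \cref{prp:H-representations} with $H_1=\{0\}\le H_2=\Z^k$, since $\pi^{\{0\}}_{(x,z)}$ is unitarily equivalent to $\lambda_x$). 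You instead prove the kernel identity $\ker\big(\bigoplus_z\pi_{(x,z)}\big)=\ker(\lambda_x)$ directly, by matching the Fourier coefficients of the continuous functions $z\mapsto\langle e_{y_1},\pi_{(x,z)}(f)e_{y_2}\rangle$ with the matrix coefficients of $\lambda_x(f)$, and you then deduce faithfulness from that of $\lambda_x$, obtained via amenability of $G_T$ plus the standard dense-orbit argument. Both steps are sound: the Fourier-coefficient identity extends from $C_c(G_T)$ to $C^*(G_T)$ because both sides are functionals bounded by $\|f\|$, and a continuous function on $\T^k$ with vanishing Fourier coefficients is zero. Your route is more self-contained (no product-system machinery) and actually proves something slightly stronger: the kernel equality holds for \emph{every} $x\in X$, with density of $[x]$ used only for faithfulness. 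What the paper's route buys is brevity and the avoidance of the one technical point you rightly flag: lower semicontinuity of $y\mapsto\|\lambda_y(f)\|$ cannot literally be expressed as a supremum of fixed matrix-coefficient functions, since the vectors live in the varying spaces $\ell^2((G_T)_y)$; the correct argument transports finitely supported vectors along open bisections through the points of their supports (or one cites the standard groupoid fact). With that point repaired or cited, your argument is complete.
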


\begin{proof}
We identify the groupoid $G_T$ with the groupoid of the topological higher-rank graph
$\Lambda$ defined by $\Lambda^n = X \times \{n\}$ for all $n$, range and sources maps
given by $s(x,n) = (T^n(x), 0)$ and $r(x, n) = (x , 0)$ and factorisation rules
$(x,m)(T^m(x), n) = (x, m+n) = (x, n)(T^n(x), m)$. The isomorphism $C^*(G_T) \cong
C^*(\Lambda)$ induced by this identification carries $C_0(G_T^{(0)})$ to
$C_0(\Lambda^0)$. Hence, by \cite[Corollary~5.21]{CarLarSimVit}, it suffices to show that
$\bigoplus_{z \in \T^k} \pi_{(x,z)}$ is faithful on $C_0(X)$ and that there is an action
$\beta$ of $\T^k$ on $\bigoplus_{z \in \T^k} \ell^2([x])$ such that $\beta_w \circ
(\bigoplus_{z \in \T^k} \pi_{(x,z)}) = (\bigoplus_{z \in \T^k} \pi_{(x,z)}) \circ \gamma_w$
for all $w \in \T^k$.

For the first statement, observe that since $\overline{[x]} = X$, if $f \in C_0(X)$ is
nonzero, then there exists $y \in [x]$ such that $f(y) \not= 0$. Thus, for any $z$ we
have $\langle e_y, \pi_{(x,z)}(f) e_y\rangle = f(y) \not= 0$. So $\bigoplus_z \pi_{(x,z)}(f)
\not= 0$.

For the second, to keep notation straight, identify $\bigoplus_{z \in \T^k} \ell^2([x])$
with $\ell^2([x] \times \T^k)$ so that the copy of $\ell^2([x])$ corresponding to $z \in
\T^k$ is identified with $\ell^2([x] \times \{z\}) \subseteq \ell^2([x] \times \T^k)$.

For each $z \in \T^k$ write $\Hh_z$ for the summand of $\bigoplus_{z \in \T^k}
\ell^2([x])$ corresponding to $z$, and denote the canonical orthonormal basis for $\Hh_z$
by $\{e^z_y : y \in [x]\}$. For each $w \in\T^k$, let $U_w : \bigoplus_z \Hh_z \to
\bigoplus_z \Hh_z$ be the unitary given by $U_w e^z_y = e^{wz}_y$. By definition,
$\pi_{(x, z)} = \pi_{(x,1)} \circ \gamma_z$, regarded as representations on $\ell^2([x])$.
So regarding $\pi_{(x,z)}$ as a representation on $\Hh_z$ and $\pi_{(x, 1)}$ as a
representation on $\Hh_1$, we have $\pi_{(x, z)} = \Ad_{U_z} \circ \pi_{(x, 1)} \circ
\gamma_z$. Hence
\begin{align*}
  \pi_{(x, z)} \circ \gamma_w
  &= \Ad_{U_z} \circ \pi_{(x, 1)}\circ \gamma_z \circ \gamma_w\\
  &= \Ad_{U_z} \circ \Ad_{U^*_{wz}} \circ \pi_{(x, wz)}\\
  &= \Ad_{U^*_w} \circ \pi_{(x, z)}.
\end{align*}
So $\beta_w = \Ad_{U^*_w}$ gives the desired action.
\end{proof}

Now we characterise the sandwich sets for an ideal in $C^*(G_T)$.

\begin{proposition}\label{prp:U-V for DR}
Let $X$ be a second-countable locally compact Hausdorff space and suppose
that $T \colon \N^k \curvearrowright X$ is an action by local homeomorphisms.
For $x \in X$ and $z \in \T^k$, let $\pi_{(x,z)}$ be as in \cref{ntn:pi}. Let
$I$ be an ideal of $C^*(G_T)$, and let $U$ and $V$ be the sandwich sets
related to $I$. The set
\begin{equation} \label{eq:setW}
	W = \{(x, z) \in X \times \T^k : I \not\subseteq \ker(\pi_{(x,z)})\}
\end{equation}
is open and
\[
X\setminus V = \{x \in X : (\{x\} \times \T^k) \cap W = \varnothing\}
	\quad\text{ and }\quad
U = \{x \in X : \{x\} \times \T^k \subseteq W\}.
\]
\end{proposition}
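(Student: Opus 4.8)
The plan is to deduce everything from continuity of $\pi$ together with one structural identity for the kernels along a single fibre. Openness of $W$ is immediate: if $A = \{P \in \Prim(C^*(G_T)) : I \not\subseteq P\}$ is the open subset of the primitive-ideal space corresponding to $I$, then $W = \pi^{-1}(A)$, so \cref{cor:pi-continuous} gives that $W$ is open.

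The engine is the identity
\[
  \bigcap_{z \in \T^k} \ker(\pi_{(x,z)}) = C^*\big((G_T)|_{X \setminus \overline{[x]}}\big).
\]
To prove it I would note that each $\pi_{(x,z)}$ is built from the orbit $[x] \subseteq \overline{[x]}$ and hence factors through the restriction homomorphism $q \colon C^*(G_T) \to C^*((G_T)|_{\overline{[x]}})$, whose kernel is exactly $C^*((G_T)|_{X \setminus \overline{[x]}})$. In the restricted groupoid the unit $x$ has dense orbit, so \cref{lem:compare kernels} applies and shows that $\bigoplus_{z} \overline{\pi}_{(x,z)}$ is faithful on $C^*((G_T)|_{\overline{[x]}})$; pulling back along $q$ yields the identity. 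Feeding this into the definition of $W$ immediately translates the two fibre conditions into ideal containments: we have $(\{x\} \times \T^k) \cap W = \varnothing$ if and only if $I \subseteq C^*((G_T)|_{X \setminus \overline{[x]}})$, while $\{x\} \times \T^k \subseteq W$ says precisely that $I \not\subseteq \ker(\pi_{(x,z)})$ for every $z$.

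For the statement about $V$ I would prove $X \setminus V = \{x : I \subseteq C^*((G_T)|_{X\setminus\overline{[x]}})\}$, which with the first equivalence above is exactly the claim. Let $E$ be the canonical conditional expectation of $C^*(G_T)$ onto $C_0(G_T^{(0)})$; since $G_T$ is amenable, $E$ is faithful, as is the corresponding expectation $E_{\overline{[x]}}$ on the (also amenable) restriction, and these intertwine the quotient map with restriction of functions to $\overline{[x]}$. If $I \subseteq C^*((G_T)|_{X\setminus\overline{[x]}})$ then for each $a \in I$ the diagonal $E(a)$ is supported off $\overline{[x]} \ni x$, so $E(a)(x) = 0$ for all $a \in I$; by the description of $V$ in \cite[Lemma~3.3]{BriCarSim:sandwich} this means $x \notin V$. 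Conversely, if $x \notin V$ then — using that $V$ is open and invariant by \cite[Lemma~3.3]{BriCarSim:sandwich} — we get $\overline{[x]} \cap V = \varnothing$, so $E(a^*a)$ vanishes on all of $\overline{[x]}$ for every $a \in I$; the intertwining identity then gives $E_{\overline{[x]}}(q(a)^*q(a)) = 0$, and faithfulness forces $q(a) = 0$, i.e. $I \subseteq \ker(q) = C^*((G_T)|_{X\setminus\overline{[x]}})$.

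For the statement about $U$ one inclusion is routine: if $x \in U$ there is $f \in I \cap C_0(G_T^{(0)})$ with $f(x) \neq 0$, and then $\langle e_x, \pi_{(x,z)}(f) e_x\rangle = f(x) \neq 0$ for every $z$, so $\{x\} \times \T^k \subseteq W$. The reverse inclusion is where I expect the real difficulty, since it requires, for each $x \notin U$, the production of a single $z$ with $I \subseteq \ker(\pi_{(x,z)})$. I would pass to the reduced ideal $\overline{I} = q(I)$ of $C^*((G_T)|_{\overline{[x]}})$ and, using \cref{lem:easier zero check}, reformulate $I \subseteq \ker(\pi_{(x,z)})$ as the vanishing on $I$ of the single functional $b \mapsto \sum_{h \in \coc(\I(G_T)_x)} z^h\, b(x,h,x)$. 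The task is then to find a character $z$ annihilating the isotropy symbols of all elements of $I$; equivalently, to exhibit a primitive ideal of $C^*((G_T)|_{\overline{[x]}})$ that contains $\overline{I}$ and has base point of full orbit closure $\overline{[x]}$, so that it is realised as some $\ker(\overline{\pi}_{(x,z)})$ via the Sims--Williams parametrisation (\cite[Theorem~3.2]{Sims-Williams}, cf.~\cref{sec:DR prelims}). The crux — and the step I expect to be the main obstacle — is to show that the hypothesis $x \notin U$ forces $\overline{I}$ to meet the diagonal $C_0(\overline{[x]})$ trivially, which is what prevents the hull of $\overline{I}$ from being confined to proper closed invariant subsets and thereby guarantees a containing primitive ideal of the required generic form; pulling this ideal back through $q$ then yields the desired $z$.
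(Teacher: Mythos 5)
Your treatment of everything except the last inclusion is sound. Openness of $W$ is exactly the paper's argument (continuity of $\pi$ from \cref{cor:pi-continuous}). Your ``engine'' identity $\bigcap_{z\in\T^k}\ker(\pi_{(x,z)}) = C^*\big((G_T)|_{X\setminus\overline{[x]}}\big)$ is correct: it is \cref{lem:compare kernels} applied to the restricted system on $\overline{[x]}$, together with amenability to identify $\ker(\lambda_x)$ with the kernel of the restriction map $q$. Your proof of the $V$-statement is a legitimate variant of the paper's (the paper uses $I\subseteq I_V$ and Renault's $j$-map where you use invariance of $V$ and faithfulness of the conditional expectations), and your easy inclusion $U\subseteq\{x:\{x\}\times\T^k\subseteq W\}$ matches the paper's.

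The genuine gap is the reverse inclusion $\{x:\{x\}\times\T^k\subseteq W\}\subseteq U$. Your plan rests on two unproven claims: (a) $x\notin U$ forces $q(I)\cap C_0(\overline{[x]})=\{0\}$, and (b) an ideal of $C^*\big((G_T)|_{\overline{[x]}}\big)$ meeting the diagonal trivially is contained in a primitive ideal of the form $\ker(\overline{\pi}_{(x,z)})$ with base point $x$ of dense orbit. Claim (a) is true (it is a consequence of the proposition) but you give no proof; the serious problem is (b), whose justification is a non sequitur. Writing $\overline{I}=\bigcap_P P$ over the primitive ideals $P=\ker(\overline{\pi}_{(y_P,z_P)})$ containing $\overline{I}$, one has
\[
\overline{I}\cap C_0(\overline{[x]})
   \;=\;\bigcap_P C_0\big(\overline{[x]}\setminus\overline{[y_P]}\big)
   \;=\;C_0\Big(\overline{[x]}\setminus\overline{\textstyle\bigcup_P\overline{[y_P]}}\Big),
\]
so triviality of $\overline{I}\cap C_0(\overline{[x]})$ says only that $\bigcup_P\overline{[y_P]}$ is \emph{dense} in $\overline{[x]}$; it in no way prevents every single $\overline{[y_P]}$ from being a proper closed invariant subset. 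Thus ``the hull is not confined to proper closed invariant subsets'' does not follow, and no containing primitive ideal of the required form is produced. Worse, claim (b) \emph{is} the hard inclusion of the proposition itself, applied to the restricted action $T\colon\N^k\curvearrowright\overline{[x]}$ and the ideal $\overline{I}$ (whose $U$-set is empty); so as sketched, the plan is circular.

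The missing idea is to argue at the level of sets of units rather than one point at a time. The paper puts $O=\{x:\{x\}\times\T^k\subseteq W\}$, notes that $O$ is open ($X\setminus O$ is the image of the closed set $(X\times\T^k)\setminus W$ under the first-coordinate projection, a closed map since $\T^k$ is compact) and invariant (by \cite[Theorem~3.2]{Sims-Williams}), and then proves the single containment $I_O\subseteq I$: writing $I$ as the intersection of the primitive ideals containing it gives
\[
I=\bigcap_{(x,z)\notin W}\ker(\pi_{(x,z)})
  \supseteq\bigcap_{x\in X\setminus O}\;\bigcap_{z\in\T^k}\ker(\pi_{(x,z)})
  =\bigcap_{x\in X\setminus O}\ker(\lambda_x)=I_O,
\]
using \cref{lem:compare kernels} and faithfulness (by amenability) of the regular representations over the closed invariant set $X\setminus O$. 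The conclusion $O\subseteq U$ is then immediate from the sandwiching lemma of \cite{BriCarSim:sandwich}: $U$ is the \emph{largest} open invariant set with $I_U\subseteq I$. It is this maximality property that replaces the need to exhibit, for each individual $x\notin U$, a character $z$ with $I\subseteq\ker(\pi_{(x,z)})$ --- which is exactly the step your proposal cannot complete.
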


\begin{proof}
Since $I$ is an ideal in $C^*(G_T)$, the set $\{ P\in \Prim C^*(G_T) : I\subset P\}$ is open,
and $W$ is the preimage of this open set under $\pi\colon X\times \T^k \to \Prim C^*(G_T)$
which is continuous by \cref{cor:pi-continuous}.

Observe that if $f \in C_c(G_T|_V)$, $y \in X \setminus V$, and $z\in \T^k$, then we have
$\pi_{(y,z)}(f) = 0$, so by continuity, we have $I_V \subseteq \ker(\pi_{(y,z)})$ for all
$z$. In particular, $I \subseteq I_V \subseteq \ker(\pi_{(y,z)})$, which implies that
$(y,z) \not \in W$ for all $y \in X \setminus V$ and $z \in \T^k$. That is, $X\setminus V
\subseteq \{x \in X : (\{x\} \times \T^k) \cap W = \varnothing\}$. For the reverse
containment, let $x \in X$ and suppose that $(\{x\} \times \T^k) \cap W = \varnothing$.
Then $I \subseteq \bigcap_{z \in \T^k} \ker(\pi_{(x,z)})$, so $I \subseteq
\ker(\lambda_x)$ where $\lambda_x$ is the regular representation of $C^*(G_T)$ by
\cref{lem:compare kernels}. If $j \colon C^*(G_T) \to C_0(G_T)$ is Renault's map, then
$j(a)|_{(G_T)_x} = 0$ for all $a \in I$. In particular, $j(I)(x) = \{0\}$, so that $x
\not\in V$.

For the second statement, first observe that $C_0(U) \subseteq \ker(\pi_{(x,z)})$ if and
only if $x \not\in U$. Since $C_0(U) \subseteq I_U \subseteq I$, if $x \in U$ then $I
\not\subseteq \ker(\pi_{(x,z)})$ for all $z \in \T^k$. Hence if $x \in U$ then $\{x\} \times
\T^k \subseteq W$. Thus $U \subseteq \{x \in X : x \times \T^k \subseteq W\}$.

For the reverse containment, let $O = \{x \in X : \{x\} \times \T^k \subseteq W\}$.
Then $X\setminus O$ is the image of $(X\times \T^k)\setminus W$ under the projection map which is closed because $\T^k$ is compact,
so $O$ is open in $X$.
It is invariant by \cite[Theorem~3.2]{Sims-Williams}.
Since $U$ is the largest open invariant set such that $I_U \subseteq I$, so to see that $O \subseteq U$ it suffices to show that $I_O \subseteq I$.
Recall from \cite[Theorem~3.2]{Sims-Williams} that $\Prim(C^*(G_T)) = \{\ker(\pi_{(x,z)}) : x \in X, z \in \T^k\}$ as a set.
Since every ideal of a separable $C^*$-algebra is the intersection of the primitive ideals that contain it,
we have
\[
  I = \bigcap_{\{(x,z) : I \subseteq \ker(\pi_{(x,z)})\}} \ker(\pi_{(x,z)})
  = \bigcap_{(x,z) \in (X \times \T^k) \setminus W} \ker(\pi_{(x,z)}).
\]
Let $K = X \setminus O$ and observe that
$(X \times \T^k) \setminus W \subseteq (X \times \T^k) \setminus (O \times \T^k) = K \times \T^k$.
So
\[
  I \supset \bigcap_{(x,z) \in K \times \T^k} \ker(\pi_{(x,z)})
	= \bigcap_{x \in K} \Big(\bigcap_{z \in \T^k} \ker(\pi_{(x,z)})\Big)
	= \bigcap_{x \in K} \ker\Big(\bigoplus_{z \in \T^k}\ker(\pi_{(x,z)})\Big).
\]
For each $x$, write $\lambda_x$ for the regular representation of $C^*(G_T)$ on $\ell^2((G_T)_x)$.
Then \cref{lem:compare kernels} gives
\[
  I \supset \bigcap_{x \in K} \ker(\lambda_x)
	= \ker\Big(\bigoplus_{x \in K} \lambda_x\Big).	
\]
This is precisely the regular representation of $C^*(G_T|_K)$, and since Deaconu--Renault groupoids are amenable,
it is faithful on $C^*(G_T|_K)$.
So by, for example, \cite[Proposition~4.3.2]{SimsCRM},
we have $\ker\Big(\bigoplus_{x \in K} \lambda_x\Big) = I_{X \setminus K} = I_O$.
Therefore, $I_O \subseteq I$ as claimed.
\end{proof}

By \cite[Theorem~3.5]{BriCarSim:sandwich}, in order to understand the ideals of the $C^*$-algebras of an action $T \colon \N^k \curvearrowright X$
it suffices to describe the purely non-dynamical ideals with full support;
this is the ideals $I$ satisfying $I \cap C_0(X) = \{0\}$ and $\supp(I) = G_T$.
The following corollary describes an obstruction to the
existence of such an ideal (cf. \cite[Section~4]{BriCarSim:sandwich}).

\begin{corollary}\label{cor:obstruction}
Let $X$ be a second-countable locally compact Hausdorff space and suppose that $T \colon \N^k
\curvearrowright X$ is an action by local homeomorphisms. Suppose that $I$ is an ideal of
$C^*(G_T)$ such that $I \cap C_0(X) = \{0\}$ and $\supp(I) = G_T$. For every $x \in
X$ and every family $(B_\alpha)_{\alpha \in \J_x}$ of open bisections such that $\alpha
\in B_\alpha \subseteq \coc^{-1}(\coc(\alpha))$ for each $\alpha \in \J_x$, there is a
neighbourhood $U$ of $X$ such that $\Bb^{\ess}_y \not= \{y\}$ for all $y \in U$. In
particular, $\Iess_x(G_T) \not= \{x\}$ for all $x \in X$.
\end{corollary}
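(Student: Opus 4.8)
The plan is to prove the contrapositive of the main statement and then read off the ``in particular'' clause from it. Fix $x \in X$ and a family $\Bb = (B_\alpha)_{\alpha \in \J_x}$ of open bisections with $\alpha \in B_\alpha \subseteq \coc^{-1}(\coc(\alpha))$ for each $\alpha$. Suppose, for contradiction, that \emph{no} neighbourhood of $x$ has the asserted property. Since $X$ is second countable, hence first countable, I can fix a decreasing neighbourhood base at $x$ and extract a sequence $y_i \to x$ with $\Bb^{\ess}_{y_i} = \{y_i\}$ for every $i$. The decisive consequence is that $\coc(\Bb^{\ess}_{y_i}) = \{0\}$, so the annihilator fibre $(\Bb^{\ess})^\perp_{y_i} = \{z : z^{\coc(\gamma)} = 1 \text{ for all } \gamma \in \Bb^{\ess}_{y_i}\}$ is all of $\T^k$.

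Next I translate the two hypotheses on $I$ into statements about the open set $W = \{(y,w) : I \not\subseteq \ker(\pi_{(y,w)})\}$, which is $\pi^{-1}(A)$ for the open set $A = \{P \in \Prim(C^*(G_T)) : I \not\subseteq P\}$ and is open by \cref{cor:pi-continuous}. Because $I \cap C_0(X) = \{0\}$, the sandwich set $U = \{y : \{y\}\times \T^k \subseteq W\}$ of \cref{prp:U-V for DR} is empty: a nonempty such $U$ would be open and invariant with $\{0\}\neq C_0(U) \subseteq I_U \subseteq I$, contradicting $I \cap C_0(X) = \{0\}$. Because $\supp(I) = G_T$, the footprint $V = \{y : (\{y\}\times\T^k)\cap W \neq \varnothing\}$ is all of $X$: by \cref{prp:U-V for DR} (and the argument in its proof), if some $x$ had $(\{x\}\times\T^k)\cap W = \varnothing$ then $I \subseteq \bigcap_{w \in \T^k}\ker(\pi_{(x,w)}) \subseteq \ker(\lambda_x)$ via \cref{lem:compare kernels} applied over $\overline{[x]}$, forcing $j(a)|_{(G_T)_x} = 0$ for all $a \in I$ and hence $\supp(I) \cap (G_T)_x = \varnothing$, contrary to $\supp(I) = G_T$. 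In particular $x \in V$, so I may fix $z_0 \in \T^k$ with $(x, z_0) \in W = \pi^{-1}(A)$.

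Now I apply \cref{thm:Aopen} to the open set $A$, the point $(x, z_0) \in \pi^{-1}(A)$, and the family $\Bb$: it yields open neighbourhoods $U' \ni x$ and $V' \ni z_0$ with $(U' \times V')\cdot(\Bb^{\ess})^\perp \subseteq \pi^{-1}(A) = W$. Since $y_i \to x$ we have $y_i \in U'$ for large $i$, and as $z_0 \in V'$ the pair $(y_i, z_0)$ lies in $U' \times V'$; saturating at $y_i$ over $(\Bb^{\ess})^\perp_{y_i} = \T^k$ shows $\{y_i\}\times\T^k \subseteq (U'\times V')\cdot(\Bb^{\ess})^\perp \subseteq W$, so $y_i \in U = \varnothing$, the desired contradiction. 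This proves the main statement. For the ``in particular'' clause, fix any $x$ and choose some family $\Bb$ as above, which exists because $G_T$ is \'etale and $\coc$ is locally constant; the main statement gives a neighbourhood $U \ni x$ with $\Bb^{\ess}_y \neq \{y\}$ for all $y \in U$, and taking $y = x$ together with $\Iess_x \subseteq \J_x$ (so that $\Bb^{\ess}_x = \Iess_x$) yields $\Iess_x(G_T) \neq \{x\}$.

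The main obstacle I expect is not the final saturation step but the careful bookkeeping that turns the two hypotheses into the identities $U = \varnothing$ and $V = X$: in particular the footprint computation, where one must pass to the orbit closure to invoke \cref{lem:compare kernels} and conclude that a missed circle fibre would make $\supp(I)$ omit a source fibre. Once that dictionary is in place, the heart of the argument is the simple but crucial observation that a trivial essential-isotropy fibre annihilates nothing, so its perpendicular is all of $\T^k$ and \cref{thm:Aopen} is forced to sweep the entire circle fibre $\{y_i\}\times\T^k$ into $\pi^{-1}(A)$, colliding with the emptiness of the core.
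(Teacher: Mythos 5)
Your proof is correct and takes essentially the same route as the paper's: both translate the hypotheses via \cref{prp:U-V for DR} into the statement that every fibre $W_y = \{z \in \T^k : I \not\subseteq \ker(\pi_{(y,z)})\}$ is nonempty and proper, then apply \cref{thm:Aopen} at a point of $W$ and exploit the fact that a trivial fibre $\Bb^{\ess}_y = \{y\}$ has annihilator all of $\T^k$, which would force $\{y\} \times \T^k \subseteq W$. The only difference is packaging: the paper argues directly that every $y$ in the neighbourhood produced by \cref{thm:Aopen} satisfies $(\Bb^{\ess}_y)^\perp \cdot z \subseteq W_y \subsetneq \T^k$ and hence $\Bb^{\ess}_y \neq \{y\}$, whereas you run the contrapositive with a sequence $y_i \to x$, which requires no new ideas.
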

\begin{proof}
Let $W$ be as in~\labelcref{eq:setW} and let $W_x = \{z \in \T^k : (x,z) \in W\}$ for every $x\in X$.
Since $I \cap C_0(X) = \{0\}$ and $\supp(I) = G_T$, the sandwich sets for $I$ are $U = \varnothing$
and $V = X$, so each $W_x$ is a nonempty and proper subset of $\T^k$ by \cref{prp:U-V for DR}.

Fix $x \in X$, and a family $\Bb = (B_\alpha)_{\alpha \in \J_x}$ of open bisections such
that $\alpha \in B_\alpha \subseteq \coc^{-1}(\coc(\alpha))$ for each $\alpha \in \J_x$.
Since $W_x$ is nonempty, there exists $z \in \T^k$ such that $(x,z) \in W$.
Theorem~\ref{thm:Aopen} shows that there exist an open neighbourhood $U \subseteq B_x
\cap X$ of $x$ and an open neighbourhood $V \subseteq \T^k$ of $z$ such that $(U \times
V) \cdot (\Bb^{\ess})^\perp \subseteq W$. In particular, for each $y \in U$ we have
$(\Bb^{\ess}_y)^\perp \cdot z \subseteq W_y \subsetneq \T^k$. This forces
$(\Bb^{\ess}_y)^\perp \subsetneq \T^k$ and hence $\Bb^{\ess}_y \not= \{y\}$.

For the final statement, just observe that the preceding paragraph implies in particular
that $\Bb^{\ess}_x \not= \{x\}$, and since $\Bb^{\ess}_x \subseteq \Iess_x(G_T)$, the
result follows.
\end{proof}

We now obtain an obstruction ideal, in the sense of Ara--Lolk
\cite[Definition~7.11]{AraLolk} for a Deaconu--Renault groupoid.

\begin{corollary}\label{cor:obstruction ideal}
Let $X$ be a second-countable locally compact Hausdorff space and suppose that $T \colon \N^k
\curvearrowright X$ is an action by local homeomorphisms.
Let $W' = s(\Iess(G_T)\setminus X)^\circ$, and let $W$ be the set of points $x \in X$
such that for every family $(B_\alpha)_{\alpha \in \J_x}$ of open bisections such
that $\alpha \in B_\alpha \subseteq \coc^{-1}(\coc(\alpha))$ for each $\alpha \in \J_x$,
there is a neighbourhood $U$ of $X$ such that $\Bb^{\ess}_y \not= \{y\}$ for all $y \in U$.
Then $W \subseteq W'$, and if $I$ is an ideal of $C^*(G_T)$ such that $I \cap C_0(X) = \{0\}$,
then $I \subseteq I_W$.
\end{corollary}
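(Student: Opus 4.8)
The plan is to establish the two assertions separately, reducing the ideal containment $I\subseteq I_W$ to the set inclusion $V\subseteq W$ for the support set $V$ of $I$, and then to confront the genuinely hard point, namely that $W$ is open and invariant so that $I_W$ is an honest ideal.

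\emph{The inclusion $W\subseteq W'$.} Let $x\in W$. Since $G_T$ is \'etale there is at least one family $\Bb=(B_\alpha)_{\alpha\in\J_x}$ with $\alpha\in B_\alpha\subseteq\coc^{-1}(\coc(\alpha))$, and by definition of $W$ there is an open neighbourhood $U$ of $x$ with $\Bb^{\ess}_y\neq\{y\}$ for all $y\in U$. For each such $y$ this produces a non-unit element of $\Iess_y$, so $U\subseteq s(\Iess(G_T)\setminus X)$; as $U$ is open and contains $x$, we get $x\in s(\Iess(G_T)\setminus X)^\circ=W'$.

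\emph{Reducing to, and proving, $V\subseteq W$.} Let $V=\{y\in X: j(a)(y)\neq 0\text{ for some }a\in I\}$ be the support set of $I$; by the sandwiching lemma \cite{BriCarSim:sandwich} (as used in \cref{prp:U-V for DR}) it is open and invariant and satisfies $I\subseteq I_V=C^*(G_T|_V)$. Granting that $W$ is open and invariant, $V\subseteq W$ yields $I_V\subseteq I_W$ and hence $I\subseteq I_W$, so it suffices to prove $V\subseteq W$. Put $\mathcal W=\{(y,w)\in X\times\T^k: I\not\subseteq\ker(\pi_{(y,w)})\}$, which is open because $\pi$ is continuous (\cref{cor:pi-continuous}), and write $\mathcal W_y=\{w:(y,w)\in\mathcal W\}$. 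By \cref{prp:U-V for DR}, $\{y:\mathcal W_y=\T^k\}$ is the sandwich set of $I$ cut out by $I\cap C_0(X)$, which is empty since $I\cap C_0(X)=\{0\}$; hence $\mathcal W_y\subsetneq\T^k$ for every $y$, while the same proposition gives $V=\{y:\mathcal W_y\neq\varnothing\}$. Now fix $x\in V$ and an \emph{arbitrary} family $\Bb=(B_\alpha)_{\alpha\in\J_x}$, choose $z\in\mathcal W_x$, and apply \cref{thm:Aopen} to the open set $A$ with $\pi^{-1}(A)=\mathcal W$: this yields open neighbourhoods $U$ of $x$ and $V_0$ of $z$ with $(U\times V_0)\cdot(\Bb^{\ess})^\perp\subseteq\mathcal W$. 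Taking $w=z$ shows, for each $y\in U$, that $z\cdot(\Bb^{\ess}_y)^\perp\subseteq\mathcal W_y\subsetneq\T^k$, which forces $(\Bb^{\ess}_y)^\perp\neq\T^k$ and therefore $\Bb^{\ess}_y\neq\{y\}$. Since $\Bb$ was arbitrary, $x\in W$, so $V\subseteq W$; this is exactly the argument of \cref{cor:obstruction}, now localised to the points of $V$ rather than run on all of $X$.

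\emph{The main obstacle: $W$ is open and invariant.} This is the crux, because the defining property of $W$ quantifies over \emph{all} families $\Bb$ and $\Iess(G_T)$ is topologically badly behaved: in the dumbbell of \cref{eg:dumbbell}, $e^\infty\notin W$ precisely because the essential isotropy over the nearby points $e^nfg^\infty$ does not converge onto the isotropy at $e^\infty$. Invariance I expect to extract from the normality of the essential isotropy (\cref{lem:Icirc-Iess normal}) together with \cref{ntn:Jx}, by conjugating a family at $y\in[x]$ through an open bisection containing a connecting arrow, which transports a neighbourhood witnessing $x\in W$ to one witnessing $y\in W$ and preserves $\Iess(G_T)$. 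Openness is the delicate part; the most promising route is to identify $W$ with the union $\bigcup_{I'}V_{I'}$ of the support sets of \emph{all} ideals $I'$ with $I'\cap C_0(X)=\{0\}$, which is manifestly open and invariant. The inclusion $\bigcup_{I'}V_{I'}\subseteq W$ is exactly the previous step, whereas the reverse inclusion—producing, for each $x\in W$, a purely non-dynamical ideal whose support contains $x$, exploiting that $x\in W'$ gives $\coc(\Iess_x)\neq\{0\}$ and hence a non-gauge-invariant primitive ideal—is where the full weight of the paper's main machinery is needed, and is the step I expect to be hardest.
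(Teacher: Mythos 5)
Everything you actually prove is correct, and it constitutes the paper's entire proof. Your verification of $W \subseteq W'$ is the step the paper dismisses as clear, done properly (including the observation that at least one family exists, so the condition is not vacuous). Your proof that the support set $V$ of $I$ lies in $W$ --- using \cref{prp:U-V for DR} to get $\mathcal{W}_y \subsetneq \T^k$ for \emph{every} $y$ (because the sandwich set cut out by $I \cap C_0(X)$ is empty) together with $V = \{y : \mathcal{W}_y \neq \varnothing\}$, and then applying \cref{thm:Aopen} to an arbitrary family at a point of $V$ --- is exactly the argument of \cref{cor:obstruction}; the paper packages it by regarding $I$ as a purely non-dynamical ideal with full support in $C^*(G_T|_V)$ and citing that corollary, whereas you run the same computation directly in $C^*(G_T)$. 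Your inlined version is, if anything, slightly cleaner, since it avoids having to check that $\Iess$, $\J_x$ and families of bisections are insensitive to restriction to the open invariant set $V$.

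Where you diverge from the paper is your final paragraph. The paper's proof \emph{stops} at $V \subseteq W$: it passes from $I \subseteq I_V$ and $V \subseteq W$ directly to $I \subseteq I_W$, and nowhere proves (or even asserts) that $W$ is open and invariant. So the ``crux'' you identify is not a step of the published proof; measured against the paper, you had already finished. Your instinct is not unreasonable --- for $I_W$ to denote an ideal containing $I_V$ in the paper's notation, $W$ should be open and invariant, and the paper leaves this entirely implicit --- but the route you propose for closing it is the wrong one: producing, for each $x \in W$, a purely non-dynamical ideal whose support contains $x$ would require the noncommutative Urysohn lemma \cref{prp:fP}, hence harmonious families of bisections, which \cref{cor:obstruction ideal} does not assume to exist; so the identification $W = \bigcup_{I'} V_{I'}$ is unavailable at this level of generality (and is more than the corollary needs). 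Invariance of $W$, by contrast, does follow from your conjugation/transport sketch, as in \cref{lem:bisection family transport}. In short: your proposal reproduces the paper's proof by the same method, and the unresolved point you flag is an elision in the paper's own exposition rather than a defect of your argument --- but your plan for repairing it would not go through.
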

\begin{proof}
The inclusion $W \subseteq W'$ is clear so we just need to prove the second containment.
Let $U$ and $V$ be the sandwich sets for $I$, and note that $U = \varnothing$ because $I \cap C_0(X) = \{0\}$.
Moreover, $I \subseteq I_V \cong C^*(G_T|_V)$, so we can regard $I$ as an ideal of $C^*(G_T|_V)$ with $I \cap C_0(V) = \{0\}$
and $\supp(I) = G_T|_V$.
Now \cref{cor:obstruction} implies that $V \subseteq W$.
\end{proof}

\begin{remark}
The definition of $W$ in \cref{cor:obstruction ideal} is more technical than
that of $W'$, but $W$ can be strictly smaller, so it provides a better estimate. For
example, in the instance of the dumbbell graph of \cref{eg:dumbbell}, the
essential isotropy at every unit is nontrivial, and so the set $W'$ of
\cref{cor:obstruction ideal} is all of $E^\infty$. However, as we will see in
\cref{sec:dumbbell} below, it is straightforward to construct a family $\Bb =
(B_\alpha)_{\alpha \in \J_{e^\infty}}$ of bisections such that for $n \not= 0$, we have
$B_{(e^\infty, n, e^\infty)} \cap \I(G_E) = \{(e^\infty, n, e^\infty)\}$. So the set $W$
of \cref{cor:obstruction ideal} is the (open) orbit of $g^\infty$, which is the
support of the minimal obstruction ideal described in
\cite[Definition~4.3]{BriCarSim:sandwich}.
\end{remark}

\section{Harmonious families of bisections}\label{sec:bisection-families}

Our main result requires the concept of a harmonious family of bisections based at a
unit. Recall from \cref{ntn:Jx} that for an etale groupoid $G$, we write $\J_x =
\overline{\Iess(G)}_x$. We emphasise that harmonious families of bisections are
meaningful for any \'etale groupoid but we shall only study the case of groupoids $G_T$
of actions $T\colon \N^2\curvearrowright X$ by local homeomorphisms.

\begin{definition}\label{def:Bf}
A \emph{harmonious family of bisections based at a unit $x\in X$} is a collection $\Bb = (B_{\alpha})_{\alpha\in \J_x}$ of open bisections of $G_T$
satisfying the following conditions:
\begin{enumerate}[label=(\roman*)]
  \item\label{it:Bf units} $B_{x} \subseteq X$;
  \item\label{it:Bf homogeneous} $\alpha \in B_\alpha \subseteq \coc^{-1}(\coc(\alpha))$ for all $\alpha \in
      \J_x$;
  \item\label{it:Bf inverses} $(B_\alpha \cap \Iess)^{-1} = B_{\alpha^{-1}}
      \cap \Iess$ for all $\alpha$;
  \item\label{it:Bf right inv} $B_\alpha (B_\beta \cap \Iess) \subseteq
      B_{\alpha\beta}$ for all $\alpha,\beta \in \J_x$; and
  \item\label{it:compact.open} for each $\alpha \in \J_x$ there exists a compact set
      $K_\alpha \subseteq G^{(0)}$ such that $s(B_\alpha) = K_\alpha \cap B_x$.
\end{enumerate}
In particular, $\bigcup \Bb \subset s^{-1}(B_x)$. We say a unit $x \in X$ \emph{admits a
harmonious family of bisections} if there exist a harmonious family of bisections based
at $x$ and we frequently just say that $(B_{\alpha})_{\alpha \in \J_x}$ \emph{is a
harmonious family of bisections}; it is implicit that it is based at $x$.
\end{definition}

\begin{remark}\label{rmk:H_B groups}
If $\Bb = (B_\alpha)_{\alpha \in \J_x}$ is a harmonious family of bisections in $G_T$,
then $\Bb^{\ess}_y$ is a subgroup of $(G_T)_y$ for every $y \in B_x \subset X$, and
determines a subgroup
\[
  H_\Bb(y) \coloneqq \coc(\Bb^{\ess}_y) \le \coc(\J_x) \le \Z^k.
\]
We have
\[
    H_\Bb(y) = \{n \in \Z^k : (x, n, x) \in \J_x \text{ and } (y, n, y) \in B_{(x,n,x)} \cap \Iess\}.
\]
\end{remark}

\begin{remark}\label{rem:Bf}
Condition~\ref{it:compact.open} in \cref{def:Bf} is a technical condition that captures
two special cases:
\begin{enumerate}
\item if there are bisections $B_\alpha$ satisfying \ref{it:Bf units}--\ref{it:Bf right
    inv} that are all compact open sets, then Condition~\ref{it:compact.open} is
    satisfied since we can take $K_\alpha = s(B_\alpha)$ for all $\alpha$; and
\item if there are bisections $B_\alpha$ satisfying \ref{it:Bf units}--\ref{it:Bf right
    inv} such that $s(B_\alpha) = B_x$ for all $\alpha \in \J_x$ (such as when $T$ is
    an action by homeomorphisms), then for any precompact open $U$ such that $x \in U
    \subseteq \overline{U}  \subseteq B_x$, the sets $(B_\alpha U)_\alpha$ also satisfy
    \ref{it:Bf units}--\ref{it:Bf right inv}, and they satisfy~\ref{it:compact.open}
    since we can take $K_\alpha = \overline{U}$ for all $\alpha$.
\end{enumerate}
The condition is used in the proof of \cref{prp:fP} to show that the functions
$1_{B_\alpha} \phi$ are continuous.
\end{remark}

\begin{example}\label{eg:isolated or strongly effective}
  \begin{enumerate}
    \item Any isolated unit admits harmonious families of bisections.
    \item If $G_T$ is a Deaconu--Renault groupoid that is strongly effective, then
        $\J_x = \{x\}$ for every unit $x$, and $G_T$ admits a harmonious family of
        bisections.
  \end{enumerate}
\end{example}

For the following, recall that a subset $C$ of the groupoid $G_T$ is homogeneous,
if $\coc(C)$ is a singleton.

\begin{lemma}\label{lem:bisection family transport}
Let $X$ be a second-countable locally compact Hausdorff space and suppose $T\colon \N^k
\curvearrowright X$ is an action by local homeomorphisms. Suppose that $\Bb =
(B_\alpha)_{\alpha \in \J_x}$ is a harmonious family of bisections based at $x \in X$,
and that $C$ is an open and homogeneous bisection such that $x \in s(C)$. Let $\gamma \in
C$ be the unique element such that $s(\gamma) = x$. Then $(C B_\alpha C^{-1})_{\alpha \in
\J_x}$ is a harmonious family of bisections at $r(\gamma)$. In particular, if $x\in X$
admits a harmonious family of bisections then every element in the orbit of $x$ admits a
harmonious family of bisections.
\end{lemma}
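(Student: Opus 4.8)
The plan is to transport $\Bb$ across $\gamma$ by conjugation and then verify the five conditions of \cref{def:Bf} at $y \coloneqq r(\gamma)$. Write $n \coloneqq \coc(\gamma)$, so that $C \subseteq \coc^{-1}(n)$ and $C^{-1} \subseteq \coc^{-1}(-n)$. Each product $CB_\alpha C^{-1}$ is again an open bisection, and $\gamma\alpha\gamma^{-1} \in CB_\alpha C^{-1}$. Because $\Iess$ is normal in $G_T$ (\cref{lem:Icirc-Iess normal}) and conjugation by a single homogeneous bisection is a homeomorphism between the relevant reductions of $G_T$, the map $\beta \mapsto \gamma^{-1}\beta\gamma$ is a bijection of $\J_y$ onto $\J_x$; I would therefore index the transported family by $\J_y$, setting $B_\beta' \coloneqq C B_{\gamma^{-1}\beta\gamma} C^{-1}$ for $\beta \in \J_y$.

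The engine for conditions \ref{it:Bf units}--\ref{it:Bf right inv} is the identity $CBC^{-1} \cap \Iess = C(B \cap \Iess) C^{-1}$, valid for any open bisection $B$. To prove it, note that an isotropy element of $CBC^{-1}$ has the form $c_1 b c_2^{-1}$ with $c_i \in C$, $b \in B$, and equal range and source; since $r(c_1 b c_2^{-1}) = r(c_1)$ and $s(c_1 b c_2^{-1}) = r(c_2)$, injectivity of $r|_C$ forces $c_1 = c_2$, so $b = c_1^{-1}(c_1 b c_1^{-1})c_1$ is isotropy and, by normality, lies in $\Iess$ precisely when $c_1 b c_1^{-1}$ does. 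Granting this: condition \ref{it:Bf units} holds because $c\,u\,c^{-1} = r(c)$ for a unit $u$, so $B_y' = CB_xC^{-1} \subseteq X$; condition \ref{it:Bf homogeneous} holds because $\coc$ adds to $n + \coc(\alpha) - n = \coc(\beta)$; condition \ref{it:Bf inverses} follows by applying the identity together with condition \ref{it:Bf inverses} for $\Bb$ and taking inverses; and condition \ref{it:Bf right inv} follows from the identity, the collapse $C^{-1}C = s(C) \subseteq G_T^{(0)}$, and condition \ref{it:Bf right inv} for $\Bb$.

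The main obstacle is the final, compactness condition. Writing $\theta_C \colon s(C) \to r(C)$ for the homeomorphism $r \circ (s|_C)^{-1}$ induced by $C$, a direct computation of the triple product shows that
\[
  s(B_\beta') = \theta_C\big(\{\,s(b) : b \in B_\alpha,\ r(b) \in s(C),\ s(b) \in s(C)\,\}\big),
\]
where $\alpha = \gamma^{-1}\beta\gamma$; in particular $s(B_y') = \theta_C(B_x \cap s(C))$. The delicate point is that the extra requirement $r(b) \in s(C)$ is an \emph{open} constraint, so the displayed source need not be relatively closed in $B_y'$, and hence is not a priori of the form $(\text{compact}) \cap B_y'$. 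I would handle this by first arranging that $C$ is \emph{precompact}: since $G_T$ carries a basis of precompact open bisections, I may intersect $C$ with such a basic set containing $\gamma$ without disturbing $\gamma$ or $y$, after which $\overline{r(C)}$ is compact and $s(B_\beta') \subseteq r(C)$ is precompact. The remaining task is to produce a compact $K_\beta'$ with $K_\beta' \cap B_y' = s(B_\beta')$, which requires controlling $\theta_C$ near $\partial s(C)$; this boundary analysis is where the argument needs the most care, and I would combine compactness of $\overline{r(C)}$ with the relative closedness of $s(B_\alpha)$ in $B_x$ and continuity of $\theta_C^{-1}$ on the open set $r(C)$.

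Finally, the ``in particular'' statement is immediate from the main claim: given $y$ in the orbit of $x$, choose $\gamma \in G_T$ with $s(\gamma) = x$ and $r(\gamma) = y$, and a precompact open homogeneous bisection $C \ni \gamma$ with $x \in s(C)$ (for instance a basic bisection $Z(V,p,q,U)$), and apply the transport construction to obtain a harmonious family at $y$.
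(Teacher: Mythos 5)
Your verification of conditions \ref{it:Bf units}--\ref{it:Bf right inv} of \cref{def:Bf} is correct and is essentially the paper's argument: conjugate by $C$, reindex by $\J_{r(\gamma)}$ using normality of $\Iess$ (\cref{lem:Icirc-Iess normal}), and check the algebra. Your identity $CBC^{-1}\cap \Iess = C(B\cap \Iess)C^{-1}$ is a clean way to package the steps that the paper carries out element-by-element, and it makes the verifications of \ref{it:Bf inverses} and \ref{it:Bf right inv} tight.

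The gap is \cref{def:Bf}\ref{it:compact.open}, which your proposal never establishes. You compute $s(CB_\alpha C^{-1})$ correctly, flag the constraint $r(b)\in s(C)$ as the delicate point, and then stop: no compact set $K'_\beta$ satisfying $s(CB_\alpha C^{-1}) = K'_\beta\cap CB_xC^{-1}$ is ever produced, and the deferred ``boundary analysis'' is precisely the content of the condition. Your preparatory reduction is also not available as stated: replacing $C$ by a smaller precompact bisection changes the family $(CB_\alpha C^{-1})_{\alpha\in\J_x}$, so you would be proving the conclusion for a different family than the one the lemma asserts (this is harmless only for the final ``in particular'' clause, not for the main statement or for the way the lemma is invoked elsewhere, e.g.\ in \cref{prp:D description}). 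By contrast, the paper completes this step by setting $K'_{\gamma\alpha\gamma^{-1}} \coloneqq CK_\alpha C^{-1}$, observing that this is the image of $K_\alpha$ under the partial homeomorphism $\theta_C = r\circ (s|_C)^{-1}$ associated with $C$ and hence compact, and then checking $K'_{\gamma\alpha\gamma^{-1}}\cap CB_xC^{-1} = s(CB_\alpha C^{-1})$ by writing all the relevant sets as $\theta_C$-images and using $s(B_\alpha) = K_\alpha\cap B_x$. It is fair to say you have put your finger on the point where the paper's computation is tersest: its set identities do not explicitly address the requirement $r(b)\in s(C)$ that you isolate. But identifying the difficulty is not the same as resolving it, and as submitted your argument proves only four of the five conditions, so the lemma is not established.
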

\begin{proof}
For any $\alpha \in \J_x$, we have
\[
  \gamma\alpha\gamma^{-1} \in C B_h(x) C^{-1}.
\]
Since $\Iess(G_T)$ is a normal subgroupoid by \cref{lem:Icirc-Iess normal}
the map $\alpha \mapsto \gamma \alpha \gamma^{-1}$ is a group isomorphism
between $\Iess(G_T)_x$ and $\Iess(G_T)_z$, so $(C B_\alpha C^{-1})_{\alpha \in
\J_x}$ is a family of bisections indexed by $\J_z$.

For each $\beta \in \J_z$, let $C_\beta \coloneqq C B_{\gamma^{-1} \beta \gamma} C^{-1}$ and
write $\mathcal{C} = (C_\beta)_{\beta \in \J_z} = (C B_\alpha C^{-1})_{\alpha \in \J_x}$.

The set $C B_x C^{-1}$ is contained in $X$ and contains $z \coloneqq r(\gamma)$, which is
\cref{def:Bf}\ref{it:Bf units}.

Fix $\beta \in \J_z$ and let $\alpha = \gamma^{-1}\beta\gamma$. Then $\alpha \in \J_x$, and
we have $\beta = \gamma \alpha \gamma^{-1} \in C_\beta$.  For any $\zeta \in C_\beta =
C B_\alpha C^{-1}$, there exist $\eta \in B_\alpha$ and $\rho,\tau \in C$ such that
$\zeta = \rho \eta \tau^{-1}$. Since $C$ is homogeneous, we have $\coc(\rho) =
\coc(\tau)$, and so $\coc(\zeta) = \coc(\rho) + \coc(\eta) - \coc(\tau) = \coc(\eta)
\in \coc(B_\alpha) = \{\coc(\alpha)\}$. Applying this to $\zeta = \beta$ we see that
$\coc(\beta) = \coc(\alpha)$ as well, and so $C_\beta \subseteq
\coc^{-1}(\coc(\beta))$, giving \cref{def:Bf}\ref{it:Bf homogeneous}.

Next fix $\alpha \in \J_x$ and $\eta\in C B_\alpha C^{-1}\cap \I^{\textrm{ess}}$. Then
$\eta^{-1} \in \Iess$ by \cref{lem:Icirc-Iess normal}. We need to verify that
$\eta^{-1}\in C_{(\gamma\alpha\gamma^{-1})^{-1}}$. Write $\eta = \xi \theta \xi^{-1}$.
Then by \cref{lem:Icirc-Iess normal} again, $\theta \in \Iess$. We have $\theta^{-1} \in
B_{\alpha^{-1}}$ by definition of a harmonious family of bisections, so we see that
$\eta^{-1} = \xi \theta^{-1} \xi^{-1} \in C B_{\alpha^{-1}} C^{-1}$, which is
\cref{def:Bf}\ref{it:Bf inverses}.

Since $C^{-1}C = s(C)$, for $\beta,\eta \in \J_z$, we have $C_\beta C_{\eta} = C
B_{\gamma^{-1}\beta\gamma} C^{-1}C C_{\gamma^{-1}\eta\gamma} C^{-1} \subseteq C
B_{\gamma^{-1}\beta\gamma\gamma^{-1}\eta\gamma} C^{-1} = B_{\gamma^{-1}\beta\eta\gamma} =
C_{\beta\eta}$, giving \cref{def:Bf}\ref{it:Bf right inv}.

Choose compact subsets $K_\alpha \in X$ satisfying $s(B_\alpha) = K_\alpha \cap B_x$ for
every $\alpha\in \J_x$. Let $K'_{\gamma\alpha\gamma^{-1}} = C K_\alpha C^{-1}$ for each
$\alpha\in \J_x$. Then
\[
  C K_\alpha(x) C^{-1} = r\circ (s|_C)^{-1}(K_\alpha(x)).
\]
Since $r\circ (s|_C)^{-1}$ is the canonical partial homeomorphism associated to the
bisection $C$, we see that $K'_\alpha(z)$ is a compact subset of $X$. Let $\beta \coloneqq
\gamma\alpha\gamma^{-1}$. We claim that $s(C_{\beta}) = K'_{\beta} \cap C_z$.

Computation shows that $s(B_\beta) = r\circ (s|_C)^{-1}(s(B_\alpha))$ and $C B_x C^{-1} =
r\circ (s|_C)^{-1}(B_x)$, so
\[
  K'_\beta C_z = r\circ (s|_C)^{-1}(K_\alpha) \cap r\circ (s|_C)^{-1}(B_x) = r\circ (s|_C)^{-1}(s(B_\alpha)) = s(C_\beta).
\]
This gives \cref{def:Bf}\ref{it:compact.open}.

For the final statement, suppose that $x$ admits a harmonious family of bisections $\Bb$,
fix $z \in [x]$ and $\gamma \in (G_T)^z_x$. Since $G_T$ is \'etale, there is an open
bisection $C$ containing $\gamma$. Now the preceding paragraphs show that $(C B_\alpha
C^{-1})_{\alpha \in \J_x}$ is a harmonious family of bisections based at $z$.
\end{proof}

The main obstacle in applying our strongest results is establishing the existence of
harmonious families of bisections at all points for a given action of $\N^k$ by local
homeomorphisms. Next we outline some techniques for constructing harmonious families of
bisections that apply to a large class of examples among them actions by commuting
homeomorphisms, a single local homeomorphism, and $2$-graphs in \cref{sec:examples}. We
leave it as an open problem to determine if every Deaconu--Renault groupoid admits
harmonious families of bisections.

The first existence result applies e.g. to actions of commuting homeomorphisms, cf.~\cref{subsec:cps}.

\begin{lemma} \label{lem:commuting-bisections}
Let $X$ be a second-countable locally compact Hausdorff spaces and suppose that $T\colon \N^k\curvearrowright X$ is an action by local homeomorphisms.
Let $x\in X$.
Suppose there are open bisections $(B_\alpha)_{\alpha \in \J_x}$ satisfying
$\alpha \in B_\alpha \subseteq \coc^{-1}(\coc(\alpha))$ and $B_\alpha B_\beta = B_{\alpha\beta}$ for all $\alpha,\beta \in \J_x$.
Then $x$ admits a harmonious family of bisections.
\end{lemma}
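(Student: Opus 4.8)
The plan is to verify the five conditions of \cref{def:Bf} for the given family $\Bb=(B_\alpha)_{\alpha\in\J_x}$, shrinking the bisections only at the very end to secure the compactness condition~\ref{it:compact.open}. Condition~\ref{it:Bf homogeneous} is assumed outright, and condition~\ref{it:Bf right inv} is immediate from the hypothesis, since $B_\alpha(B_\beta\cap\Iess)\subseteq B_\alpha B_\beta = B_{\alpha\beta}$. So the real work lies in conditions~\ref{it:Bf units}, \ref{it:Bf inverses}, and~\ref{it:compact.open}.

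The main tool I would use is the standard correspondence between open bisections of the \'etale groupoid $G_T$ and partial homeomorphisms of $X$: each $B_\alpha$ determines $\tau_\alpha = r\circ(s|_{B_\alpha})^{-1}\colon s(B_\alpha)\to r(B_\alpha)$, and composition of bisections corresponds to composition of partial homeomorphisms (with the natural domains). Under this dictionary the hypothesis $B_\alpha B_\beta = B_{\alpha\beta}$ reads $\tau_\alpha\circ\tau_\beta = \tau_{\alpha\beta}$. First I would handle the unit: since $x\cdot x = x$ in $\J_x$ we have $B_x B_x = B_x$, so $\tau_x$ is an idempotent injective partial homeomorphism, which forces $\tau_x = \mathrm{id}_{s(B_x)}$; as $B_x\subseteq\coc^{-1}(0)$ this yields $B_x\subseteq X$, establishing~\ref{it:Bf units}. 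Feeding in the relations $B_x B_\alpha = B_\alpha = B_\alpha B_x$ and $B_\alpha B_{\alpha^{-1}} = B_x = B_{\alpha^{-1}}B_\alpha$ (which come from $x\alpha=\alpha=\alpha x$ and $\alpha\alpha^{-1}=x=\alpha^{-1}\alpha$) then shows, by a short piece of inverse-semigroup bookkeeping, that each $\tau_\alpha$ restricts to a bijection of $D\coloneqq s(B_x)$ onto itself with inverse $\tau_{\alpha^{-1}}$; hence $s(B_\alpha) = r(B_\alpha) = B_x$ for every $\alpha\in\J_x$.

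With $s(B_\alpha) = B_x$ in hand, condition~\ref{it:Bf inverses} follows by a direct factorisation argument. Given $\gamma\in B_\alpha\cap\Iess$ with $r(\gamma) = s(\gamma) = y$, the point $y$ lies in $s(B_\alpha) = B_x$, and since $B_x\subseteq X$ the unit $y$ is the element of $B_x$ at $y$. Writing $y\in B_x = B_{\alpha^{-1}}B_\alpha$ as a product $\delta\gamma'$ with $\delta\in B_{\alpha^{-1}}$, $\gamma'\in B_\alpha$ and $s(\gamma') = y$, injectivity of the bisection $B_\alpha$ forces $\gamma'=\gamma$, whence $\delta = y\gamma^{-1} = \gamma^{-1}$ and therefore $\gamma^{-1}\in B_{\alpha^{-1}}$; since $\Iess$ is closed under inversion (\cref{lem:Icirc-Iess normal}), $\gamma^{-1}\in B_{\alpha^{-1}}\cap\Iess$. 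This proves $(B_\alpha\cap\Iess)^{-1}\subseteq B_{\alpha^{-1}}\cap\Iess$, and the reverse inclusion follows by interchanging $\alpha$ and $\alpha^{-1}$.

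It remains to arrange condition~\ref{it:compact.open}. Here I would not expect the given $B_\alpha$ to work as they stand, because $s(B_\alpha) = B_x$ need not be contained in any compact set. Instead, having established $s(B_\alpha) = B_x$ for all $\alpha$, I am exactly in the situation of the second case of \cref{rem:Bf}: choosing a precompact open $U$ with $x\in U\subseteq\overline{U}\subseteq B_x$ (possible by local compactness of $X$), the shrunken family $(B_\alpha U)_{\alpha\in\J_x}$ still satisfies \ref{it:Bf units}--\ref{it:Bf right inv} and now also satisfies~\ref{it:compact.open} with $K_\alpha = \overline{U}$, so it is the desired harmonious family based at $x$. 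The main obstacle in this argument is the second step—pinning down $s(B_\alpha)$ and $r(B_\alpha)$—since both the inverse condition and the final shrinking rest on knowing $s(B_\alpha) = r(B_\alpha) = B_x$; the partial-homeomorphism viewpoint is what keeps this bookkeeping manageable.
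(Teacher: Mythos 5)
Your proposal is correct and follows essentially the same route as the paper's proof: both derive $B_x\subseteq X$ and $s(B_\alpha)=r(B_\alpha)=B_x$ from the relations $B_\alpha B_{\alpha^{-1}}=B_x=B_{\alpha^{-1}}B_\alpha$ and $B_xB_\alpha=B_\alpha B_x=B_\alpha$, deduce $B_{\alpha^{-1}}=B_\alpha^{-1}$ (the paper via the group of open bisections with source and range $B_x$, you via a direct factorisation, which is the same computation), and obtain condition~\ref{it:compact.open} of \cref{def:Bf} by the shrinking trick of \cref{rem:Bf}(2). The only presentational difference is that you obtain $B_x\subseteq X$ from the Deaconu--Renault-specific fact that isotropy elements with zero cocycle value are units, whereas the paper's argument is groupoid-general.
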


\begin{proof}
We have $B_\alpha B_{\alpha^{-1}} = B_x = B_{\alpha^{-1}}B_\alpha$ for each $\alpha$,
and this implies that $B_x \subseteq r(B_\alpha)$.
In particular, $B_x \subseteq X$ and this is \labelcref{it:Bf units} in \cref{def:Bf}.
By assumption, we have $\alpha \in B_\alpha \subseteq \coc^{-1}(\coc(\alpha))$ and this is \labelcref{it:Bf homogeneous}.
Since $B_x B_\alpha = B_{x\alpha} = B_\alpha$, we have $r(B_\alpha) \subseteq B_x$,
and combined with the first sentence this gives equality.
Hence the $B_\alpha$ are a subset of the group $\Bb(X)$ of open bisections with range and source equal to $B_x$,
and they constitute the range of the homomorphism $\alpha \mapsto B_\alpha$.
So $\{B_\alpha : \alpha \in \J_x\}$ is a subgroup of $\Bb(X)$.
Since the inverse operation in $\Bb(X)$ is implemented by pointwise inverses in $G$,
we obtain $B_{\alpha^{-1}} = B_\alpha^{-1}$,
and we already have $B_\alpha B_\beta = B_{\alpha\beta}$ for all $\alpha,\beta$, so \labelcref{it:Bf inverses,it:Bf right inv} are trivial.
The final condition \labelcref{it:compact.open} is automatic by \cref{rem:Bf}(2)
\end{proof}

\begin{corollary}\label{cor:finitely generated Bf}
Let $X$ be a second-countable locally compact Hausdorff space and suppose that $T\colon \N^k\curvearrowright X$ is an action by local homeomorphisms.
Let $x \in X$.
Suppose that $M \subseteq \Z^k$ satisfies $|M| = \operatorname{rank}(\coc(\J_x))$ and $\coc(\J_x) = \Span_\Z(M)$.
Suppose that there is an open neighbourhood $B_x \subseteq X$ of $x$ and a collection of open bisections $(B_\alpha)_{\alpha\in M}$ such that
\begin{enumerate}[label=(\arabic*)]
\item each $\alpha \in B_\alpha \subseteq \coc^{-1}(\coc(\alpha))$,
\item\label{it:commuting} $B_\alpha B_\beta = B_\beta B_\alpha$ for all
    $\alpha,\beta \in M \cup -M$, and
\item\label{it:source-range} $r(B_\alpha) = s(B_\alpha) = B_x$ for all
    $\alpha \in M$.
\end{enumerate}
Then $x$ admits a harmonious family of bisections.
\end{corollary}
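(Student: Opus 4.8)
The plan is to reduce to \cref{lem:commuting-bisections} by extending the given family $(B_\alpha)_{\alpha \in M}$, indexed by the basis $M$, to a multiplicative family indexed by all of $\J_x$. Since every isotropy element of $G_T$ at $x$ has the form $(x,m,x)$ and is determined by $m$, the cocycle $\coc$ restricts to a group isomorphism of $\J_x$ onto $\coc(\J_x)$; the hypotheses say precisely that $M = \{m_1, \dots, m_d\}$ is a $\Z$-basis for the free abelian group $\coc(\J_x)$, where $d = \operatorname{rank}(\coc(\J_x))$. Adopting the convention $B_{-m_i} \coloneqq B_{m_i}^{-1}$, condition~\labelcref{it:source-range} places each $B_{m_i}^{\pm 1}$ in the group $\Bb(B_x)$ of open bisections with range and source equal to $B_x$, and condition~\labelcref{it:commuting} says that these pairwise commute.

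First I would define, for $m = \sum_{i=1}^d n_i m_i \in \coc(\J_x)$, the open bisection $B_m \coloneqq B_{m_1}^{n_1} \cdots B_{m_d}^{n_d} \in \Bb(B_x)$. Since $M$ is a basis, the coordinate vector $(n_1, \dots, n_d)$ is uniquely determined by $m$, so $B_m$ is well-defined; and since the factors commute by~\labelcref{it:commuting}, the assignment $m \mapsto B_m$ is a group homomorphism from $\coc(\J_x)$ into $\Bb(B_x)$. Transporting along $\coc|_{\J_x}$, I would then set $B_\beta \coloneqq B_{\coc(\beta)}$ for each $\beta \in \J_x$, obtaining a family of open bisections indexed by $\J_x$.

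Next I would verify the two hypotheses of \cref{lem:commuting-bisections}. Homogeneity $B_\beta \subseteq \coc^{-1}(\coc(\beta))$ is immediate from the cocycle identity: each factor $B_{m_i}^{\pm 1}$ lies in $\coc^{-1}(\pm m_i)$, so the product lies in $\coc^{-1}$ of the corresponding signed sum, namely $\coc(\beta)$. That $\beta \in B_\beta$ follows by tracking the unique element with source $x$ through each factor: as $B_{m_i}$ is a bisection containing $(x, m_i, x)$ with $x \in s(B_{m_i}) = B_x$, this element is exactly $(x, m_i, x)$ and has range $x$, so the composable string built from these elements (and their inverses for negative powers) is $(x, \sum_i n_i m_i, x) = \beta$, which therefore belongs to $B_\beta$. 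Finally, $B_\beta B_\gamma = B_{\beta\gamma}$ is exactly the homomorphism property, using $\coc(\beta\gamma) = \coc(\beta) + \coc(\gamma)$. With these verified, \cref{lem:commuting-bisections} applies and shows that $x$ admits a harmonious family of bisections.

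The main obstacle is the bookkeeping underlying the well-definedness and multiplicativity of $m \mapsto B_m$: one must use that $M$ is a \emph{basis} (so that coordinates are unique) together with hypothesis~\labelcref{it:commuting} (so that all the $B_{m_i}^{\pm 1}$ commute and products may be freely rearranged) to see that the assignment genuinely descends to a homomorphism rather than merely a set map defined on generators. Once this is in place, the verifications of $\beta \in B_\beta$ and of homogeneity are routine.
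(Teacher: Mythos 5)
Your proposal is correct and follows essentially the same route as the paper: extend the family $(B_\alpha)_{\alpha\in M}$ multiplicatively over the group generated by $M$ (using that $M$ is a basis for well-definedness and hypothesis~(2) for commutation), then verify the hypotheses of \cref{lem:commuting-bisections} and apply it. Your write-up simply makes explicit the induction and the transport of indexing along $\coc|_{\J_x}$ that the paper leaves implicit.
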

\begin{proof}
The hypotheses guarantee that there is a well-defined map $\gamma \mapsto
B_\gamma$ from the group generated by $M$ to the collection of open
bisections of $G_T$ such that for every function $n : M \to \Z$, we have
\[
B_{\prod_{\alpha \in M} \alpha^{n(\alpha)}} = \prod_{\alpha \in M_+(x)}B_{\alpha}^{n(\alpha)}.
\]
An induction using \ref{it:commuting}~and~\ref{it:source-range} shows that these
$B_\gamma$ satisfy the hypotheses of \cref{lem:commuting-bisections}, and the result
follows.
\end{proof}

Our next existence result will be applied to the case of $2$-graphs in~\cref{sec:rank2}.

\begin{lemma}\label{lem:relatively commuting}
Let $X$ be a second-countable locally compact Hausdorff space and suppose that $T \colon \N^k \curvearrowright X$ is an action by local homeomorphisms.
Let $x \in X$.
Suppose that $M \subseteq \J_x$ generates $\J_x$ as a group and that $|M| = \operatorname{rank}(\J_x)$.
Suppose that $\{B_\alpha : \alpha \in M\}$ is a family of compact open bisections satisfying
\begin{enumerate}
\item $\alpha \in B_\alpha \subseteq c^{-1}(\coc(\alpha))$ for all $\alpha \in M$,
\item $B_\alpha B_\beta = B_\beta B_\alpha$ for all $\alpha,\beta \in M$,
    and
\item $B_\alpha B^{-1}_\beta \subseteq B^{-1}_\beta B_\alpha$ for all
    $\alpha,\beta \in M$.
\end{enumerate}
For each $\gamma \in \J_x$ and $\alpha \in M$, let $m_\alpha(\gamma)$ be the
integers such that $\gamma = \prod_{\alpha \in M} \alpha^{m_\alpha(\gamma)}$.
Taking the convention that the empty product is equal to $X$, the sets
\[
B_\gamma \coloneqq \Big(\prod_{m_\alpha(\gamma) < 0} B_\alpha^{m_\alpha(\gamma)} \Big)
                \Big(\prod_{m_\alpha(\gamma) > 0} B_\alpha^{m_\alpha(\gamma)} \Big),
\]
indexed by $\gamma \in \J_x$ constitute a harmonious family of bisections at $x$.
\end{lemma}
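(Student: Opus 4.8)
The plan is to verify the five conditions of \cref{def:Bf} for the family $(B_\gamma)_{\gamma\in\J_x}$. Conditions \cref{def:Bf}\ref{it:Bf units}, \ref{it:Bf homogeneous} and \ref{it:compact.open} I would dispatch quickly. For $\gamma\neq x$ the set $B_\gamma$ is a product of compact open bisections and their inverses, hence itself a compact open bisection (here one uses that $G_T$ is \'etale and Hausdorff, so that multiplication is continuous and $G_T^{(2)}$ is closed), so \ref{it:compact.open} follows from \cref{rem:Bf}; and $B_x$ is the unit bisection, giving \ref{it:Bf units}. Since $\coc$ is a homomorphism with each $B_\alpha\subseteq\coc^{-1}(\coc(\alpha))$, we get $B_\gamma\subseteq\coc^{-1}(\coc(\gamma))$, and because every power $\alpha^{m_\alpha(\gamma)}$ is isotropy at $x$ these powers all compose at $x$, so reordering in the abelian group $\J_x$ yields $\gamma\in B_\gamma$; this is \ref{it:Bf homogeneous}. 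For \cref{def:Bf}\ref{it:Bf inverses} I would note the pleasant fact that taking inverses interchanges the positive and negative blocks of the normal form, so that in fact $B_\gamma^{-1}=B_{\gamma^{-1}}$ on the nose; since $\Iess$ is inversion-invariant by \cref{lem:Icirc-Iess normal}, intersecting with $\Iess$ gives $(B_\gamma\cap\Iess)^{-1}=B_{\gamma^{-1}}\cap\Iess$.

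The substantive condition is \cref{def:Bf}\ref{it:Bf right inv}, and I would in fact establish the stronger inclusion $B_\gamma B_\delta\subseteq B_{\gamma\delta}$ for all $\gamma,\delta\in\J_x$, from which \ref{it:Bf right inv} follows at once because $B_\delta\cap\Iess\subseteq B_\delta$. Writing $B_\gamma=N_\gamma P_\gamma$ with $N_\gamma,P_\gamma$ the products of the negative and positive powers, the task is to rewrite $N_\gamma P_\gamma N_\delta P_\delta$ into the normal form $N_{\gamma\delta}P_{\gamma\delta}$. First I would promote the relative-commuting hypothesis $B_\alpha B_\beta^{-1}\subseteq B_\beta^{-1}B_\alpha$ to powers: a double induction on the exponents gives $B_\alpha^{p}B_\beta^{-q}\subseteq B_\beta^{-q}B_\alpha^{p}$ for all $\alpha,\beta\in M$ and $p,q\ge 0$, while the commuting hypothesis $B_\alpha B_\beta=B_\beta B_\alpha$ (and its inverse) shows that same-sign blocks commute exactly. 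Using these to bubble the negative block $N_\delta$ leftward past the positive block $P_\gamma$, one reaches $B_\gamma B_\delta\subseteq N_\gamma N_\delta\,P_\gamma P_\delta$. The key point is that every crossing performed here has the shape positive-then-negative, which is exactly the direction supplied by the hypothesis; the reverse inclusion is never needed, which is precisely why the one-directional relative-commuting hypothesis suffices.

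It then remains, for each generator $\alpha$, to reconcile the factor $B_\alpha^{u}$ (total negative power, $u\le 0$) sitting in $N_\gamma N_\delta$ with the factor $B_\alpha^{v}$ (total positive power, $v\ge 0$) sitting in $P_\gamma P_\delta$, noting that $u+v=m_\alpha(\gamma)+m_\alpha(\delta)=m_\alpha(\gamma\delta)$. Since same-sign powers of a single bisection multiply exactly and $B_\alpha^{-1}B_\alpha=s(B_\alpha)$ is a set of units, a short induction gives the elementary cancellation $B_\alpha^{u}B_\alpha^{v}\subseteq B_\alpha^{u+v}$. Using the commuting relations to move $B_\alpha^{u}$ to the right-hand end of the left block and $B_\alpha^{v}$ to the left-hand end of the right block, I can make these two factors adjacent across the block boundary, apply the cancellation, and slide the single resulting factor $B_\alpha^{m_\alpha(\gamma\delta)}$ into whichever block its sign dictates; doing this one generator at a time lands us in $N_{\gamma\delta}P_{\gamma\delta}=B_{\gamma\delta}$. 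I expect the main obstacle to be exactly this bookkeeping: one must check that each same-generator reduction is effected only by moves legitimate under the two commuting hypotheses (in particular that no reverse crossing is ever required), and that reducing one generator does not disturb the factors already reduced---both of which hold because distinct generators commute within each block, so the reductions localise to one generator at a time.
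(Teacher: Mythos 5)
Your proposal is correct and takes essentially the same route as the paper's proof: conditions \ref{it:Bf units}, \ref{it:Bf homogeneous} and \ref{it:compact.open} are dispatched directly, \ref{it:Bf inverses} follows from the exact identity $B_{\gamma^{-1}} = B_\gamma^{-1}$ (inversion swaps the negative and positive blocks), and \ref{it:Bf right inv} is obtained from the stronger inclusion $B_\gamma B_\delta \subseteq B_{\gamma\delta}$, proved by promoting hypothesis~(3) to blocks of powers and sliding the negative block of $B_\delta$ leftward past the positive block of $B_\gamma$. The only notable difference is that you make explicit the same-generator reconciliation $B_\alpha^{u} B_\alpha^{v} \subseteq B_\alpha^{u+v}$ (via $B_\alpha^{-1}B_\alpha = s(B_\alpha)$) and the within-block commutations needed to identify the rearranged four-block product with a subset of $B_{\gamma\delta}$ --- bookkeeping that the paper compresses into its final equality sign.
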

\begin{proof}
The bisections $(B_\alpha)_\alpha$ pairwise commute, so the formula for $B_\gamma$ is well-defined.
We must verify the five conditions of \cref{def:Bf}. We have $B_x
\subseteq X$, which is~\ref{it:Bf units}, by definition. Since each $\alpha \in B_\alpha$,
we have $\gamma \in B_\gamma$ for all $\gamma$, and since each $B_\alpha
\subseteq c^{-1}(\alpha)$ and $c$ is a homomorphism on $\J_x$ we have
$B_\gamma \subseteq c^{-1}(\gamma)$ for all $\gamma$; this gives~\ref{it:Bf homogeneous}.

To see~\ref{it:Bf inverses}, note that $m_{\alpha(\gamma^{-1})} = -m_\alpha(\gamma)$, and so
\begin{align*}
B_{\gamma^{-1}}
   &= \prod \Big(\prod_{m_\alpha(\gamma^{-1}) < 0} B_\alpha^{m_\alpha(\gamma^{-1})} \Big)
                \Big(\prod_{m_\alpha(\gamma^{-1}) > 0} B_\alpha^{m_\alpha(\gamma^{-1})} \Big)\\
   &= \prod \Big(\prod_{m_\alpha(\gamma) > 0} B_\alpha^{m_\alpha(\gamma)} \Big)^{-1}
                \Big(\prod_{m_\alpha(\gamma) < 0} B_\alpha^{m_\alpha(\gamma^{-1})} \Big)^{-1}
   = B_{\gamma}^{-1}
\end{align*}
for all $\gamma \in \J_x$. Since $\Iess$ is self-inverse, it follows that
$(B_\gamma \cap \Iess)^{-1} = B_{\alpha}^{-1} \cap \Iess$ for all $\gamma \in
\J_x$, which gives~\ref{it:Bf inverses}.

Finally, for~\ref{it:Bf right inv}, First note that a simple induction using that $B_\alpha
B^{-1}_\beta \subseteq B^{-1}_\beta B_\alpha$ for all $\alpha,\beta \in M$
shows that for any pair of functions $p,q : M \to \N$ we have
\[
\Big(\prod_{\alpha \in M} B_\alpha^{p(\alpha)}\Big)\Big(\prod_{\alpha \in M} B_\alpha^{-q(\alpha)}\Big)
    \subseteq \Big(\prod_{\alpha \in M} B_\alpha^{-q(\alpha)}\Big) \Big(\prod_{\alpha \in M} B_\alpha^{p(\alpha)}\Big).
\]
We deduce that for $\gamma,\delta \in \J_x$, we have
\begin{align*}
B_\gamma B_\delta
 &= \Big(\prod_{m_\alpha(\gamma) < 0} B_\alpha^{m_\alpha(\gamma)} \Big)
                \Big(\prod_{m_\alpha(\gamma) > 0} B_\alpha^{m_\alpha(\gamma)} \Big)
                \Big(\prod_{m_\alpha(\delta) < 0} B_\alpha^{m_\alpha(\delta)} \Big)
                \Big(\prod_{m_\alpha(\delta) > 0} B_\alpha^{m_\alpha(\delta)} \Big)\\
 &\subseteq \Big(\prod_{m_\alpha(\gamma) < 0} B_\alpha^{m_\alpha(\gamma)} \Big)
                \Big(\prod_{m_\alpha(\delta) < 0} B_\alpha^{m_\alpha(\delta)} \Big)
                \Big(\prod_{m_\alpha(\gamma) > 0} B_\alpha^{m_\alpha(\gamma)} \Big)
                \Big(\prod_{m_\alpha(\delta) > 0} B_\alpha^{m_\alpha(\delta)} \Big)\\
 &= B_{\gamma\delta}.
\end{align*}
Hence $B_\gamma (B_\delta \cap \Iess) \subseteq B_\gamma B_\delta \subseteq
B_{\gamma\delta}$, giving~\ref{it:Bf right inv}.

Since the $B_\alpha$ are compact open, so are the $B_\gamma$, and so
condition~\ref{it:compact.open} is automatic.
\end{proof}

\begin{example}\label{eg:minimal}
If $G_T$ is a minimal Deaconu--Renault groupoid, then $\J(G_T) = \I^\circ(G_T)$ by
\cite[Proposition~2.1]{KPS-simple}. As in \cref{sec:DR prelims}, $H(x) =
\coc(\I^\circ(G_T)_x)$ is constant with respect to $x$, and writing $H$ for this group,
$\J_x = \Iess_x = \I^\circ_x = \{x\} \times H \times \{x\}$ for all $x$. So for $n \in
H$, the set $B(n) \coloneqq \{(x, n, x) : x \in X\}$ is a bisection. These $B(n)$ pairwise
commute, and each commute with any subset of $G^{(0)}_T$. So for each $x \in X$ and $n
\in H$, choosing a precompact open neighbourhood $W_x$ of $x$ and putting $B_{(x,n,x)} \coloneqq
W_x B(n)$ for each $n$ determines a harmonious family of bisections $\Bb^x =
(B_\alpha)_{\alpha \in \J_x}$.
\end{example}

To make use of results like \cref{lem:relatively commuting} or \cref{cor:finitely
generated Bf}, we need to be able to identify free generators of a given subgroup of
$\Z^k$. When working with Deaconu--Renault groupoids, it is often helpful to work with
minimal collections of generators in $\N^k$. So we prove that every rank-$k$ subgroup of
$\Z^k$ admits $k$ free generators in $\N^k$ that are minimal with respect to the usual
algebraic order. This is surely well known, but we could not find a reference. We found
the key idea behind the proof on Math StackExchange \cite{StackExLink}.

We make use of the usual lattice order $\le$ on $\Z^k$; so $m \le n$ if $n-m \in \N^k$.
For $i < k$ we also identify $\Z^i$ with the subgroup of $\Z^k$ consisting of elements
whose final $k-i$ coordinates are zero. So $\N^i \setminus \N^{i-1} = \N^{i-1} \times
(\N\setminus \{0\}) \times \{0_{k-i}\}$.

\begin{lemma}\label{lem:positive-generation}
Let $H \subseteq \Z^k$ be a subgroup of rank $k$. Then there exist $m^1,
\dots, m^k \in \N^k \cap H$ such that
\begin{enumerate}[label=(\arabic*)]
	\item\label{it:minimal} each $m^i$ is a minimal element of $\N^k \cap H \setminus \{0\}$,
	\item\label{it:mi coords} for each $i \le k$, we have $m^i \in \N^i \setminus \N^{i-1}$,
	\item\label{it:free generators} $H = \bigoplus_i \Z m^i$.
\end{enumerate}
\end{lemma}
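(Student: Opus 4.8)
The plan is to construct the $m^i$ one coordinate-level at a time, as a ``staircase'' basis in the spirit of Hermite normal form, and then to force minimality by choosing each generator as small as possible in the lattice order. For $1 \le i \le k$ write $\Z^i = \Z^i \times \{0\}^{k-i} \subseteq \Z^k$ as in the statement, and set $H_i \coloneqq H \cap \Z^i$. Projecting $H$ onto its last $k-i$ coordinates shows $\operatorname{rank}(H_i) = i$, since the image has rank at most $k-i$ and the kernel of this projection is exactly $H_i$; consequently $H_i$ has full rank in $\Z^i$ and hence finite index there. Let $p_i \colon \Z^k \to \Z$ be projection onto the $i$-th coordinate; then $p_i(H_i)$ is a nonzero subgroup of $\Z$ (nonzero because $H_i \not\subseteq \Z^{i-1}$, again by the rank count), so $p_i(H_i) = d_i\Z$ for a unique $d_i > 0$. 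I would then set
\[
  S_i \coloneqq \{ n \in H_i : n \ge 0 \text{ and } n_i = d_i \},
\]
and let $m^i$ be a $\le$-minimal element of $S_i$.

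First I would check each $S_i$ is nonempty, so that $m^i$ exists: choose $w \in H_i$ with $w_i = d_i$; since $H_{i-1}$ has finite index $e$ in $\Z^{i-1}$ we have $e\Z^{i-1} \subseteq H_{i-1}$, so adding a suitable element of $e\Z^{i-1} \subseteq H_{i-1}$ to $w$ makes its first $i-1$ coordinates non-negative without disturbing its $i$-th coordinate, yielding an element of $S_i$. A $\le$-minimal element of the nonempty set $S_i \subseteq \N^i$ then exists because $\le$ restricts to a well-founded order on $\N^k$ (a strictly descending chain has strictly decreasing coordinate-sum). Condition~\labelcref{it:mi coords} is immediate, since $m^i \in \N^i$ and $m^i_i = d_i > 0$.

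The heart of the argument is condition~\labelcref{it:minimal}, and this is the step I expect to be the main obstacle, because $m^i$ was chosen minimal only inside the restricted set $S_i$. So suppose $0 \ne n \in \N^k \cap H$ with $n \le m^i$. Since $m^i_j = 0$ for $j > i$ and $n \ge 0$, we get $n_j = 0$ for $j > i$, so $n \in H_i$; moreover $n_i \in p_i(H_i) = d_i\Z$ together with $0 \le n_i \le m^i_i = d_i$ forces $n_i \in \{0, d_i\}$. If $n_i = d_i$, then $n \in S_i$ and $n \le m^i$, so minimality gives $n = m^i$. If instead $n_i = 0$, then $n \in H_{i-1}$, and the key observation is that $m^i - n$ again lies in $S_i$ (it is in $H_i$, its $i$-th coordinate is still $d_i$, and $0 \le m^i - n \le m^i$ because $0 \le n \le m^i$); minimality of $m^i$ then forces $m^i - n = m^i$, i.e.\ $n = 0$, contradicting $n \ne 0$. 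Hence $m^i$ is minimal in $\N^k \cap H \setminus \{0\}$.

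Finally I would establish~\labelcref{it:free generators}. The vectors $m^1, \dots, m^k$ are linearly independent, since $m^i_j = 0$ for $j > i$ while $m^i_i = d_i > 0$ makes the matrix with rows $m^i$ triangular with nonzero diagonal; thus $\bigoplus_i \Z m^i$ is a direct sum of rank $k$. To see it equals $H$, I would prove $H_i = \bigoplus_{j \le i} \Z m^j$ by induction on $i$. For $i = 1$ we have $m^1 = (d_1, 0, \dots, 0)$ and $H_1 = d_1\Z \times \{0\}^{k-1} = \Z m^1$. For the inductive step, given $n \in H_i$ write $n_i = c d_i$ with $c \in \Z$ (possible as $n_i \in p_i(H_i) = d_i\Z$); then $n - c\,m^i$ has vanishing $i$-th and higher coordinates, so lies in $H \cap \Z^{i-1} = H_{i-1} = \bigoplus_{j<i}\Z m^j$, whence $n \in \bigoplus_{j \le i}\Z m^j$. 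Taking $i = k$ gives $H = H_k = \bigoplus_i \Z m^i$, completing the proof.
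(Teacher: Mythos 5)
Your proof is correct, and its skeleton is in fact the same staircase construction the paper uses, just organised differently: the set $S_i$ you minimise over is exactly the set the paper minimises over at the top level --- the paper's coset $C = \big(\tilde{m}^k + \Span_\Z\{m^1,\dots,m^{k-1}\}\big) \cap \N^k$ is precisely $\{h \in H \cap \N^k : h_k = d_k\}$, i.e.\ your $S_k$ --- but where the paper proceeds by induction on $k$ (invoking the inductive hypothesis for $H' = H \cap \Z^{k-1}$ and constructing only the top generator at each stage), you unroll the recursion, define every $m^i$ at once by an intrinsic condition, and recover spanning by an easy induction on $i$. The substantive difference is in the proof of condition~(1), which you rightly identify as the crux. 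To show $m^k$ is minimal, the paper takes $0 < p \le m^k$ in $H \cap \N^k$ and splits into the cases $p \in H'$ and $p \notin H'$; in the first case it asserts that $m^j \le p$ for some $j < k$ by the inductive hypothesis. But minimality of $m^1, \dots, m^{k-1}$ does not imply that every nonzero element of $H' \cap \N^{k-1}$ dominates one of them: for $H' = \{(a,b) \in \Z^2 : a+b \in 2\Z\}$ the construction yields $m^1 = (2,0)$ and $m^2 = (1,1)$, and $(0,2) \in H' \cap \N^2$ dominates neither. That case is in fact vacuous, but the paper's justification does not show this, whereas your subtraction trick --- if $n \le m^i$ and $n_i = 0$ then $m^i - n \in S_i$, contradicting minimality of $m^i$ unless $n = 0$ --- does, and it handles both cases uniformly (applied to the paper's setup: $p \in H'$ and $0 < p \le m^k$ would give $m^k - p \in C$ strictly below $m^k$). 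So your route buys an airtight, self-contained treatment of minimality that would also patch the paper's argument; what the paper's inductive organisation buys is that nonemptiness of the minimising set falls out of an explicit choice of coefficients against the already-constructed positive generators, whereas you need the small extra finite-index argument to see that $S_i \neq \varnothing$.
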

\begin{proof}
Since the rank of $H$ is $k$, any $k$ elements that generate $H$ are free abelian
generators, so it suffices to establish that there exist $m^1, \dots, m^k \in \N^k \cap
H$ satisfying \ref{it:minimal}~and~\ref{it:mi coords} such that $H = \Span_\Z\{m^1,
\dots, m^k\}$. 	

We argue by induction on $k$. For $k = 1$, the result is trivial. So suppose inductively
that every rank-$(k-1)$ subgroup $H'$ of $\Z^{k-1}$ has such generators. Let $\pi \colon \Z^k
\to \Z$ be the homomorphism $\pi(n) = n_k$ onto the $k$th coordinate, so $\ker(\pi) =
\Z^{k-1}$. Since $\pi(H) \le \Z$, we have $\pi(H) = a \Z$ for some $a \in \N$. Fix
$\tilde{m}^k \in H$ satisfying $\pi(\tilde{m}^k) = a$.

Let $H' = H \cap \Z^{k-1}$. For any $h \in H$, we have $a \mid \pi(h)$, and so $h' = h -
\frac{\pi(h)}{a} \tilde{m}^k \in H'$, and so $h = h' + \frac{\pi(h)}{a} \tilde{m}^k \in
\Z\tilde{m}^k + H'$. So $H$ is generated by $\{\tilde{m}^k\} \cup H'$. This implies in
particular that $\operatorname{rank}(H) \le 1 + \operatorname{rank}(H')$, so
$\operatorname{rank}(H') \ge k-1$. Hence $\operatorname{rank}(H') = k-1$ because $H'
\subseteq \Z^{k-1}$. By the inductive hypothesis, there are generators $m^1, \dots,
m^{k-1}$ of $H \cap \N^{k-1}$ satisfying \ref{it:minimal}~and~\ref{it:mi coords}. In
particular, $H = \Span_\Z\{m^1, \dots, m^{k-1}, \tilde{m}^k\}$ where each $m^i$ is in $\N^i \setminus \N^{i-1}$.

Since each $m^i_i \not= 0$, there exist $a_1, \dots, a_{k-1} \ge 0$ such that $a_i m^i_i
+ \tilde{m}^k_i \ge 0$. Since each $m^i_j \ge 0$, we deduce that $\big(\sum_i a_i
m^i\big) + \tilde{m}^k \in \N^k$. Hence $C \coloneqq \big(\tilde{m}^k_i + \Span_\Z \{m^1, \dots,
m^{k-1}\}\big) \cap \N^k$ is nonempty, and therefore has a minimal element. We fix $a_1,
\dots, a_k$ such that $m^k \coloneqq \tilde{m}^k + \sum_{i < k} a_i m^i_i$ is a minimal element
of $C$. This implies that $m^i \not\le m^k$ for $i < k$. Then $m^k \in H \cap \N^k$, and
$m^k_k = \tilde{m}^k_k > 0$. We have $\tilde{m}^k = m^k + \sum _{i < k} (-a_i)m^i \in
\Span_\Z\{m^1, \dots, m^k\}$. Hence $\Span_\Z\{m^1, \dots, m^{k-1}, m^k\} = \Span_\Z\{m^1,
\dots, m^{k-1}, \tilde{m}^k\} = H$.

It remains to show that $m^1, \dots, m^k$ are minimal in $H \cap \N^k
\setminus \{0\}$. Fix $i < k$. Since $m^1, \dots, m^{k-1} \subseteq \N^{k-1}$
and $m^k_k > 0$, we have $\{p \in H \cap \N^k : p < m^i\} = \{p \in H' \cap
\N^k : p < m^i\} = \{0\}$ by the inductive hypothesis. So $m^i$ is minimal in
$H \cap \N^k \setminus \{0\}$. To see that $m^k$ is minimal, suppose that $p
\in H \cap \N^k$ satisfies $0 < p \le m^k$. If $p \in H'$ then there exists
$j < k$ such that $m^j \le p \le m^k$ which is impossible as observed in the
preceding paragraph. So $p \in H \setminus H'$. Then $0 < \pi(p) \in a\Z$ and
$\pi(p) \le a$, forcing $\pi(p) = a$. Write $p = \sum_i b_i m^i$. Since
$\pi(m^i) = 0$ for $i < k$, we have $b_k = 1$. Hence $p \in m^k +
\Span_\Z\{m^1, \dots, m^{k-1}\} \cap \N^k = \tilde{m}^k + \Span_\Z\{m^1,
\dots, m^{k-1}\} \cap \N^k = C$. Since $m^k$ is minimal in $C$, we obtain $p
= m^k$.
\end{proof}

\section{The primitive-ideal space}\label{sec:primitive-ideal-space}
With the concept of harmonious families of bisections available, we can now
state our second main theorem, which describes a family of subsets of $X
\times \T^k$ that are preimages of open subsets of $\Prim(C^*(G_T))$.

\begin{theorem} \label{thm:neighbourhood-basis}
Let $X$ be a second-countable locally compact Hausdorff space and suppose
that $T \colon \N^k \curvearrowright X$ is an action by local homeomorphisms.
For $x \in X$ and $z \in \T^k$, let $\pi_{(x,z)}$ be as in \cref{ntn:pi}. Let
$(x_0, z_0)\in X\times \T^k$ and suppose that $\Bb = (B_\alpha)_{\alpha \in
\J_{x_0}}$ is a harmonious family of bisections, and that $V\subset \T^k$ is
an open neighbourhood of $z_0$. Then the set
\begin{align} \label{eq:basic-open}
  A(\Bb,V) \coloneqq \{\ker(\pi_{(x, z)}) : x\in B_{x_0}, z\in V H_\Bb(x)^\perp\} \subseteq \Prim(C^*(G_T))
\end{align}
is an open neighbourhood of $\ker(\pi_{(x_0, z_0)})$ in $\Prim(C^*(G_T))$.
\end{theorem}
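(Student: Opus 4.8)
The plan is to characterise openness of $A := A(\Bb,V)$ through the ideal/open-set correspondence recalled in \cref{sec:ideals}: a subset of $\Prim(C^*(G_T))$ is open precisely when it has the form $\{P : I \not\subseteq P\}$ for an ideal $I$. I would take $I := \bigcap\{P : P \notin A\}$, the kernel of the complement of $A$. With this choice the inclusion $\{P : I \not\subseteq P\} \subseteq A$ is automatic, since any $P \notin A$ satisfies $I \subseteq P$. The entire content is therefore the reverse inclusion: every $P \in A$ must fail to contain $I$, that is, for each such $P$ I must produce an element $f \in I$ with $f \notin P$. That the distinguished point $P_0 = \ker(\pi_{(x_0,z_0)})$ lies in $A$ is immediate: $x_0 \in B_{x_0}$ by \cref{def:Bf}, and $z_0 \in V$ together with $1 \in H_\Bb(x_0)^\perp$ gives $z_0 = z_0\cdot 1 \in V H_\Bb(x_0)^\perp$.

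The engine for producing the separating elements $f$ is the noncommutative Urysohn lemma \cref{prp:fP}. Writing $S := (B_{x_0}\times V)\cdot(\Bb^{\ess})^\perp$, so that $A = \pi(S)$, I note $S \subseteq \pi^{-1}(A)$, whence the complement of $\pi^{-1}(A)$ is contained in the complement of $S$; thus $\ker(\pi_{(y,w)}) \notin A$ forces $(y,w) \notin S$. Fixing $P = \ker(\pi_{(x',z')}) \in A$ with $(x',z') \in S$, I would apply \cref{prp:fP} at $(x',z')$ with suitable open neighbourhoods $U' \ni x'$ and $V' \ni z'$ to obtain $f \in C^*(G_T)$ with $\pi_{(x',z')}(f) \neq 0$ and $\pi_{(y,w)}(f) = 0$ for every $(y,w)$ outside the saturation $(U'\times V')\cdot(\Bb^{\ess})^\perp$. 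Provided this saturation sits inside $S$, and hence inside $\pi^{-1}(A)$, every $(y,w)$ with $\ker(\pi_{(y,w)}) \notin A$ is excluded from it, so $\pi_{(y,w)}(f) = 0$ there and $f \in I$; since $f \notin P$ this yields $I \not\subseteq P$, as required. Running this over all $P \in A$ completes the reverse inclusion and shows $A = \{P : I \not\subseteq P\}$ is open.

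The main obstacle lives entirely in the preceding step, and is twofold. First, \cref{prp:fP} must be invoked not only at the base unit $x_0$ but at every $x'$ with $(x',z') \in S$, so I need it in the form that takes the harmonious family $\Bb$ based at $x_0$ together with a point $(x',z')$ whose first coordinate $x'$ merely lies in $B_{x_0}$ (rather than only $x' = x_0$). Second, and more seriously, I must secure a neighbourhood $V' \ni z'$ with $(B_{x_0}\times V')\cdot(\Bb^{\ess})^\perp \subseteq S$: a point $(u, w\eta)$ of the left-hand side, with $\eta \in H_\Bb(u)^\perp$, lies in $S$ exactly when $w \in V H_\Bb(u)^\perp$, so this amounts to controlling the annihilator groups $H_\Bb(u)^\perp = \coc(\Bb^{\ess}_u)^\perp$ as $u$ varies near $x'$. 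Because $\Iess$ is typically not open, the fibres $\Bb^{\ess}_u$, and hence the subgroups $H_\Bb(u)$, can jump (cf.\ \cref{eg:dumbbell}).

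The way through this obstacle is to exploit the consistency conditions built into the definition of a harmonious family (\cref{def:Bf}) and the fact, recorded in \cref{rmk:H_B groups}, that $\Bb^{\ess}_u$ is a genuine subgroup of $(G_T)_u$ for each $u \in B_{x_0}$, in order to show that $z'$ admits a neighbourhood whose $\Bb$-saturation remains inside $S$. This is precisely the phenomenon that \cref{thm:Aopen} isolates as a \emph{necessary} condition on preimages of open sets; here I would be deploying the harmonious hypotheses to obtain the matching \emph{sufficient} control, so that the Urysohn elements furnished by \cref{prp:fP} genuinely land in the ideal $I$.
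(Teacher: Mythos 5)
Your global skeleton is exactly the paper's proof: set $I \coloneqq \bigcap\{Q \in \Prim(C^*(G_T)) : Q \notin A(\Bb,V)\}$ and, for each $P \in A(\Bb,V)$, exhibit an element of $I$ not in $P$, with the separating element supplied by \cref{prp:fP}. The gap lies in how you invoke \cref{prp:fP}, and in the patch you propose for the resulting ``obstacle''. The proposition does not take fresh neighbourhoods $U' \ni x'$ and $V' \ni z'$ and return an $f$ vanishing on all ideals outside $\pi\big((U'\times V')\cdot(\Bb^{\ess})^\perp\big)$. It takes the \emph{original} data --- the family $\Bb$ based at $x_0$ and the \emph{original} neighbourhood $V$ of $z_0$ --- together with a point $(x,z) \in B_{x_0} \times V$ and a correction $w \in H_\Bb(x)^\perp$, and it returns $f$ with $\pi_{(x,zw)}(f) \neq 0$ and $f \in Q$ for every primitive $Q \notin \pi\big((B_{x_0}\times V)\cdot(\Bb^{\ess})^\perp\big) = A(\Bb,V)$. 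Since every point of $A(\Bb,V)$ is, by its very definition, of the form $\ker(\pi_{(x,zw)})$ with $x \in B_{x_0}$, $z \in V$ and $w \in H_\Bb(x)^\perp$, the proposition applies verbatim and there is nothing further to check: both of your ``obstacles'' are artifacts of the misreading. Your first one is void because \cref{prp:fP} is already stated for arbitrary $(x,z) \in B_{x_0}\times V$, not only $x = x_0$; and the Fourier-theoretic mechanism inside its proof --- $w^h = 1$ for all $h \in H_\Bb(x)$, so $\sum_{h\in H_\Bb(x)} (zw)^h \hat{\psi}_{h_0}(h) = \sum_{h\in H_\Bb(x)} z^h \hat{\psi}_{h_0}(h)$ --- is precisely what lets a bump supported near $z$ detect the translate $zw$, with no neighbourhood of $zw$ ever entering the argument.

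The step you flag as the serious one --- securing $V' \ni z'$ with $(B_{x_0}\times V')\cdot(\Bb^{\ess})^\perp \subseteq S \coloneqq (B_{x_0}\times V)\cdot(\Bb^{\ess})^\perp$ --- is not merely hard: it is false in general, so no appeal to \cref{def:Bf} or \cref{rmk:H_B groups} can rescue it. In the dumbbell graph (\cref{eg:dumbbell}), take the harmonious family $\mathcal{C}$ based at $x_0 = e^\infty$ constructed in \cref{sec:dumbbell}, so that $B_{x_0} = E^\infty$, $H_{\mathcal{C}}(e^\infty) = \Z$ and $H_{\mathcal{C}}(y) = \{0\}$ for $y \neq e^\infty$, and let $V$ be a small neighbourhood of $z_0 = 1$. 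Then $S = (\{e^\infty\}\times V)\cup\big((E^\infty\setminus\{e^\infty\})\times \T\big)$, and $(x',z') = (g^\infty,-1)$ lies in $S$; but for \emph{every} neighbourhood $V'$ of $-1$ the saturation $(E^\infty\times V')\cdot(\mathcal{C}^{\ess})^\perp$ contains $\{e^\infty\}\times V'$, which is not contained in $\{e^\infty\}\times V$, so no admissible $V'$ exists --- yet $\ker(\pi_{(g^\infty,-1)})$ is a point of $A(\mathcal{C},V)$ that your argument must separate from the complement. The paper's proof handles such a point by decomposing $z' = zw$ (here $-1 = 1\cdot(-1)$ with $1 \in V$ and $-1 \in H_{\mathcal{C}}(g^\infty)^\perp = \T$) and feeding the pair $(z,w)$ into \cref{prp:fP} with the original $V$. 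With that correct reading, your first two paragraphs already constitute the paper's proof; as written, your key step fails.
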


The following technical proposition is the engine-room in the proof of
\cref{thm:neighbourhood-basis}. We think of this result as a kind of
noncommutative Urysohn lemma, and we state it separately so that we can use it
later to describe generators for the ideal of $C^*(G_T)$ corresponding to a
given open subset of $X \times \T^k$ in \cref{prp:ideal generators}. We thank
Johannes Christensen and Sergiy Neshveyev for pointing out an error in the
original proof of this result, and for helpful subsequent conversations.

\begin{proposition}\label{prp:fP}
Let $X$ be a second-countable locally compact Hausdorff space and suppose
that $T \colon \N^k \curvearrowright X$ is an action by local homeomorphisms.
For $x \in X$ and $z \in \T^k$, let $\pi_{(x,z)}$ be as in Notation~\ref{ntn:pi}.
Let $(x_0, z_0) \in X\times \T^k$ and suppose $\Bb = (B_\alpha)_{\alpha \in
\J_{x_0}}$ is a harmonious family of bisections, and that $V\subset \T^k$ is
an open neighbourhood of $z_0$. For any $(x, z) \in B_{x_0} \times V$ there
exist a function $\phi \in C_c(B_{x_0}, [0,1])$ such that $\phi(x) = 1$, a
function $\psi\in C^\infty(\T^k)$ such that $\psi(z) = 1$ and
$\psi|_{\T^k \setminus V} = 0$, and an element $h_0 \in \Z^k$ such that the
$h_0$-perturbation $\psi_{h_0}$ satisfies
\[
  \sum_{h\in H_\Bb(x)} z^h \hat{\psi}_{h_0}(h) \not= 0.
\]
For any such $\phi, \psi$ and $h_0$ the series
\[
  \sum_{(x_0, h, x_0) \in \J_{x_0}} \hat{\psi}_{h_0}(h) (1_{B_{(x_0, h, x_0)}} \phi)
\]
converges to an element $f$ of $C^*(G_T)$ such that $\pi_{(x, zw)}(f) \neq 0$
for every $w \in H_\Bb(x)^\perp$ and such that $f \in Q$ for every primitive ideal
$Q \not\in \pi((B_{x_0} \times V)\cdot(\Bb^{\ess})^\perp)$.
\end{proposition}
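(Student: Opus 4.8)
The plan is to dispatch the three assertions in turn---convergence of the series, non-vanishing of $\pi_{(x,zw)}(f)$, and membership in the complementary primitive ideals---the last being by far the most involved. Throughout I write $B_h$ for $B_{(x_0,h,x_0)}$ when $h\in\coc(\J_{x_0})$, and recall from \cref{rmk:H_B groups} that $h\in H_\Bb(x)$ exactly when $(x,h,x)\in B_h\cap\Iess$.

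For convergence I would use that, $\psi$ being smooth, its Fourier coefficients $\hat\psi(h)$---and hence the perturbed coefficients $\hat{\psi}_{h_0}(h)=\hat\psi(h-h_0)$---are absolutely summable, as recalled before \labelcref{eq:perturbation}. Each $1_{B_h}\phi$ lies in $C_c(G_T)$: this is precisely where condition \labelcref{it:compact.open} of \cref{def:Bf} is used to guarantee continuity, as flagged in \cref{rem:Bf}. Since $1_{B_h}\phi$ is supported on the bisection $B_h$, its $C^*$-norm equals its supremum norm, which is at most $\|\phi\|_\infty\le 1$. Hence $\sum_h|\hat\psi_{h_0}(h)|\,\|1_{B_h}\phi\|<\infty$ and the series converges absolutely to some $f\in C^*(G_T)$.

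For $\pi_{(x,zw)}(f)\neq 0$ I would compute the diagonal matrix coefficient at the basis vector $e_{[(x,0,x)]}$. By continuity of the representation one may work term by term, and \labelcref{eq:piHxz def} with $H=\Z^k$ (so that $\pi_{(x,zw)}$ is the orbit representation, via \cref{rmk:H=Zk}) leaves only isotropy contributions. Since $\phi(x)=1$ and $(x,h,x)\in B_h$ precisely for $h\in H_\Bb(x)$, this yields $\langle e_{[(x,0,x)]},\pi_{(x,zw)}(f)e_{[(x,0,x)]}\rangle=\sum_{h\in H_\Bb(x)}(zw)^h\hat\psi_{h_0}(h)$; as $w\in H_\Bb(x)^\perp$ kills every $w$-factor this equals $\sum_{h\in H_\Bb(x)}z^h\hat\psi_{h_0}(h)\neq 0$ by the first part of the proposition (itself an instance of \cref{lem:H-restriction-is-nonzero}). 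For the final assertion I would argue by contraposition on matrix coefficients. Using surjectivity of $\pi$ we may write $Q=\ker(\pi_{(y,u)})$, and by \cref{lem:easier zero check} it suffices to control the diagonal coefficients of $g_1fg_2$ at $y$; a direct computation reduces these to expressions of the form
\[
  \langle e_a,\pi_{(y,u)}(f)e_b\rangle=\phi(b)\sum_{h\in S_{a,b}}\hat\psi_{h_0}(h)\,u^h,\qquad S_{a,b}:=\{h\in H_\Bb(x): (a,h,b)\in B_h\},
\]
for $a,b\in[y]$, vanishing unless $b\in B_{x_0}$. Here the harmonious axioms enter: using the multiplicativity condition $B_\alpha(B_\beta\cap\Iess)\subseteq B_{\alpha\beta}$ and the inversion condition of \cref{def:Bf} one shows that $S_{a,b}$ is saturated under translation by (a subgroup of) the essential isotropy $\coc(\Bb^{\ess}_b)$ at $b$. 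Grouping the absolutely convergent sum by these cosets and invoking \labelcref{eq:averaging-as-sum} identifies each coset-sum with a value $\Phi_{\,\cdot\,,\Z^k}(\psi_{\,\cdot\,})(q(u))$ of an averaged perturbation of $\psi$; since perturbation preserves the support of $\psi$ in $V$, such an average is supported on the image of $V$ in the relevant quotient. Thus a nonzero coefficient forces $u$ to meet $V\cdot\coc(\Bb^{\ess}_b)^\perp$, so $(b,u)\in(B_x\times V)\cdot(\Bb^{\ess})^\perp$; as $b\in[y]$ shares its orbit closure and (with the common $u$) its character with $y$, we get $Q=\pi(b,u)\in\pi((B_x\times V)\cdot(\Bb^{\ess})^\perp)$, contradicting $Q\notin\pi(\dots)$. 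Hence every coefficient vanishes and $f\in Q$.

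The main obstacle is exactly this last harmonic-analytic bookkeeping. Extracting the coset structure of $S_{a,b}$ from the harmonious axioms, and then matching the support of the averaged perturbations \emph{precisely} to the complement of the $\Bb$-saturation, is delicate because the subgroup $\coc(\Bb^{\ess}_b)$ governing vanishing at a column $b$ can differ from $H_\Bb(x)$ at the base point: the essential isotropy may shrink as one moves along the orbit $[y]$. The plan's success therefore hinges on controlling how $\coc(\Bb^{\ess}_b)$ varies over $[y]\cap B_{x_0}$, and on ensuring---through the support of $\phi$ (regulated by condition \labelcref{it:compact.open}) together with the support of $\psi$ in $V$---that the region where the coefficients fail to vanish is confined to the points $(b,u)$ that actually lie in the saturated set $(B_x\times V)\cdot(\Bb^{\ess})^\perp$.
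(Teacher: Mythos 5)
Your treatment of the first two assertions is correct and is exactly the paper's argument: condition \labelcref{it:compact.open} of \cref{def:Bf} gives continuity of each $1_{B_h}\phi$, the $C^*$-norm of a function supported on a single bisection equals its sup-norm, absolute summability of the Fourier coefficients of the smooth function $\psi$ gives convergence, and the diagonal coefficient at $e_{[(x,0,x)]}$ collapses to $\sum_{h\in H_\Bb(x)}z^h\hat{\psi}_{h_0}(h)\neq 0$ because $w$ annihilates $H_\Bb(x)$.

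The third assertion is where your route diverges from the paper's, and it contains a genuine gap. Writing $Q=\ker(\pi_{(y,u)})$ and $S_{a,b}=\{h\in H_\Bb(x):(a,h,b)\in B_h\}$ as you do, conditions \labelcref{it:Bf inverses}~and~\labelcref{it:Bf right inv} make $S_{a,b}$ invariant only under the group $H'_b:=\coc(\Bb^{\ess}_b)\cap H_\Bb(x)$: translation by an element of $\coc(\Bb^{\ess}_b)$ must also stay inside the index set $H_\Bb(x)$ of the series defining $f$. Grouping into $H'_b$-cosets and invoking \labelcref{eq:averaging-as-sum} therefore shows that a nonzero coefficient forces only $u\in V\cdot(H'_b)^\perp$. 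But $H'_b\subseteq\coc(\Bb^{\ess}_b)$ gives $(H'_b)^\perp\supseteq\coc(\Bb^{\ess}_b)^\perp$, so $V\cdot(H'_b)^\perp$ \emph{contains} $V\cdot\coc(\Bb^{\ess}_b)^\perp$ rather than being contained in it; membership in the larger set does not place $(b,u)$ in $(B_{x_0}\times V)\cdot(\Bb^{\ess})^\perp$, so no contradiction with $Q\notin\pi\big((B_{x_0}\times V)\cdot(\Bb^{\ess})^\perp\big)$ results. Put the other way around: by \cite[Theorem~3.2]{Sims-Williams} the standing hypothesis yields $u\notin V\cdot H_\Bb(b)^\perp$ for every $b\in[y]\cap B_{x_0}$, whereas vanishing of your coset sums requires the strictly stronger statement $u\notin V\cdot(H'_b)^\perp$, which fails to follow precisely when $H'_b\subsetneq H_\Bb(b)$, i.e.\ exactly in the situation (``the essential isotropy captured by $\Bb$ varies along the orbit'') that you name as the main obstacle but do not resolve.

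That resolution is the actual content of the paper's proof, and it is a device your plan never uses: the interpolating representations of \cref{prp:H-representations} for a \emph{proper} subgroup of $\Z^k$. The paper sets $H:=\bigcap_{b\in B_{x_0}\cap[y]}H_\Bb(b)$, a single group attached to the whole orbit, and observes that $\ker(\pi^H_{(y,u)})\subseteq\ker(\pi_{(y,u)})=Q$, so it suffices to kill the matrix coefficients of $\pi^H_{(y,u)}(f)$. In that representation every coefficient is, by the very construction of $\ell^2((G_T)_y/{\sim_H})$, a sum over a coset of the fixed group $H$ --- no saturation property of $\supp(f)$ is needed; condition \labelcref{it:Bf right inv} is used only to keep support membership constant along an $H$-coset --- and it evaluates to $u^{h_2-h_1}\phi(b)\,\Phi_{H,\Z^k}(\psi_{h_0+(h_1-h_2)})(\hat{q}_H(u))$. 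The pointwise information coming from $Q\notin\pi\big((B_{x_0}\times V)\cdot(\Bb^{\ess})^\perp\big)$ is then converted into the single statement $\hat{q}_H(u)\notin\hat{q}_H(V)$ via the duality fact that $H^\perp$ is generated by $\bigcup_{b\in B_{x_0}\cap[y]}H_\Bb(b)^\perp$, which kills every coefficient at once. Replacing your varying local groups $H'_b$ by this one fixed averaging group, and working in $\pi^H_{(y,u)}$ rather than in $\pi_{(y,u)}$, is the missing idea; without it the harmonic-analytic bookkeeping cannot close.
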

\begin{proof}
By Urysohn's lemma there exists $\phi\in C_c(X, [0,1])$ such that $\phi(x) =
1$ and $\phi|_{X \setminus B_{x_0}} = 0$. By, for example,
\cite[Chapter~8]{Folland}, there exists $\psi\in C^\infty(\T^k)$ such that
$\psi(z) = 1$ and $\psi|_{\T^k \setminus V} = 0$.
By~\cref{lem:H-restriction-is-nonzero}, there exists $h_0\in \Z^k$ such that
\[
  \sum_{h\in H_\Bb(x)} z^h \hat{\psi}_{h_0}(h) \not= 0.
\]

For each $\alpha \in \J_{x_0}$, by definition of a harmonious family of bisections, there
is a compact subset $K_\alpha$ of $X$ such that $s(B_\alpha) = K_\alpha \cap B_{x_0}$. We
claim that this implies that $1_{B_\alpha} \phi$ is continuous. Since $B_\alpha$ is open
it suffices to show that if $(\beta_n)_n$ is a sequence in $B_\alpha$ and $\beta_n \to
\beta$ then $(1_{B_\alpha} \phi)(\beta_n) \to (1_{B_\alpha} \phi)(\beta)$ as $n\to
\infty$. Since each $\beta_n \in B_\alpha$, we have
\[
(1_{B_\alpha} \phi)(\beta_n) = 1_{B_\alpha}(\beta_n)\phi(s(\beta_n)) = \phi(s(\beta_n)),
\]
so we must show that
\[
\phi(s(\beta_n)) \to
    \begin{cases}
        \phi(s(\beta)) &\text{ if $\beta \in B_\alpha$}\\
        0 &\text{ if $\beta\not\in B_\alpha$}.
    \end{cases}
\]
Since $\phi \circ s$ is continuous, it therefore suffices to show that if $\beta\not\in
B_\alpha$ then $\phi(s(\beta)) = 0$. So suppose that $\beta \not\in B_\alpha$. Since
$K_\alpha$ is compact and the $s(\beta_n)$ belong to $K_\alpha$, we have $s(\beta) \in
K_\alpha$. We claim that  $s(\beta) \not\in s(B_\alpha)$. To see this, suppose for
contradiction that  $s(\beta) = s(\beta')$ for some $\beta' \in B_\alpha$. Since $s$
restricts to a homeomorphism $B_\alpha \to s(B_\alpha)$ and $s(\beta_n) \to s(\beta')$ we
have $\beta_n \to \beta'$. Since $\beta_n \to \beta$ by hypothesis, and since $G_T$ is
Hausdorff, this forces $\beta = \beta' $, contradicting $\beta \not\in B_\alpha$. Thus
$s(\beta) \not\in B_\alpha = K_\alpha \cap B_{x_0}$. We saw that $s(\beta) \in K_\alpha$,
so we deduce that $s(\beta) \not\in B_{x_0}$. Since $\phi$ vanishes on $X \setminus
B_{x_0}$ by construction, we then have $\phi(s(\beta)) = 0$ as required, so $1_{B_\alpha} \phi$ is continuous.

For each $\alpha \in \J_{x_0}$, it follows from \cite[Corollary~9.3.4]{SimsCRM}
that $\|1_{B_\alpha} \phi\|_{C^*(G_T)} = \|\phi\|_\infty = 1$. Since $\psi$ is
smooth, its Fourier coefficients are absolutely summable
\cite[Chapter~8]{Folland}. Hence the series $\sum_{(x_0, h, x_0) \in \J_{x_0}}
\hat{\psi}_{h_0}(h) (1_{B_{(x_0, h, x_0)}} \phi)$ converges to an element $f$
of $C^*(G_T)$. Since Renault's map $j \colon C^*(G_T) \to C_0(G_T)$ of
\cite[Proposition~II.4.2]{Renault} is continuous, we have
\begin{equation}\label{eq:j(f) formula}
 j(f)(y',h',x') =
  \begin{cases}
    \phi(x') \hat{\psi}_{h_0}(h) & \textrm{if}~(y',h',x')\in \bigcup\Bb,\\
    0 & \textrm{otherwise},
  \end{cases}
\end{equation}
for all $(y', h', x') \in G_T$. Fix $w \in H_{\Bb}(x)^\perp$. We must show that
$\pi_{(x, zw)}(f) \not = 0$. Let $K := \coc_T(\Iess_x)$. By Lemma~\ref{lem:piH
kernels equal}, it suffices to show that $\pi^K_{(x, zw)}(f) \not= 0$. For
this, note that, by definition, $[(x, 0, x)]_K = \{(x, h, x) : h \in K\} = \Iess_X$, and
by~\eqref{eq:j(f) formula}, if $j(f)(x,h,x) \not= 0$ then $(x,h,x) \in \bigcup \Bb$. So 
if $(x, h, x) \in [(x, 0, x)]_K$ and $j(f)(x, h, x) \not= 0$, then $(x, h, x) \in \Iess_x \cap \bigcup \Bb = \Bb^{\ess}_x$; in 
particular $h \in H_\Bb(x)$. Hence
\[
\langle e_{[x, 0, x]_K}, \pi^K_{(x,zw)}(f) e_{[x, 0, x]_K} \rangle
  = \sum_{h\in H_\Bb(x)} z^h w^h \hat{\psi}_{h_0}(h)
  = \sum_{h\in H_\Bb(x)} z^h \hat{\psi}_{h_0}(h) \neq 0.
\]
Hence $\pi^K_{(x, zw)}(f) \not= 0$ as required.

It remains to show that $f \in Q$ for every primitive ideal $Q \not\in \pi((B_{x_0}
\times V)\cdot(\Bb^{\ess})^\perp)$. To see this, fix such a $Q$. By
\cite[Theorem~3.2]{Sims-Williams}, there exists $(x_1, z_1) \in X \times \T^k$ such that
$Q = \ker(\pi_{(x_1, z_1)})$, and it suffices to show that
\begin{equation}\label{eq:goal}
  \langle e_{y_2}, \pi_{(x_1,z_1)}(f) e_{y_1} \rangle = 0
\end{equation}
for all $y_1, y_2 \in [x_1]$.

Fix $y_1, y_2 \in [x_1]$. Let $H \coloneqq c(\I_{y_1}) = \{h \in \Z^k : (y_1,
h, y_1) \in G_T\}$. Then $H_\Bb(y_1) \le H$. Fix a
complete set $R \subseteq H$ of representatives of the cosets of $H_{\Bb}(y_1)$
in $H$. Then $H = \bigsqcup_{r \in R} (r + H_{\Bb}(y_1))$. Fix $h_1, h_2
\in \Z^k$ such that $(y_i, h_i, x_1)$ are in $G_T$. Then $(y_2, h_2 - h_1, y_1)
\in G_T$.

We have
\[
\langle e_{y_2}, \pi_{(x_1,z_1)}(f) e_{y_1} \rangle
    = \sum_{(y_2, h, y_1) \in G_T} z_1^h j(f)((y_2, h, y_1))
    = \sum_{h \in H} z_1^{h+h_2-h_1} j(f)((y_2, h+h_2-h_1, y_1)).
\]
By definition of $f$, for each $h$ the number $j(f)(y_2, h+h_2-h_1, y_1)$ is
equal to either $\phi(y_1) \widehat{\psi}_{h_0}(h)$ or to $0$. By
\cite[Chapter~8]{Folland} as before, the series $\sum_{h \in \Z^k}
\widehat{\psi}_{h_0}(h)$ is absolutely convergent, and it follows that the
series $\sum_{h \in H} z_1^{h+h_2-h_1}j(f)((y_2, h+h_2-h_1, y_1))$ is also
absolutely convergent. So we can rearrange its terms to obtain
\begin{align*}
\langle e_{y_2}, \pi_{(x_1,z_1)}(f) e_{y_1} \rangle
    &= \sum_{r \in R} \sum_{h \in H_{\Bb}(y_1)} z_1^{h+r+(h_2-h_1)}j(f)((y_2, h+r+(h_2-h_1), y_1)).
\end{align*}
To see that this is zero, fix $r \in R$. It suffices to show that
\[
    \sum_{h \in H_{\Bb}(y_1)} z_1^{h+r+(h_2-h_1)}j(f)((y_2, h+r+(h_2-h_1), y_1)) = 0.
\]

The formula~\eqref{eq:j(f) formula} shows that $j(f)((y_2, h+r+(h_2-h_1), y_1))
= 0$ for all $h$ if $y_1 \not \in B_{x_0}$, so we may assume that $y_1 \in
B_{x_0}$. Letting $h'_2 \coloneqq r + h_2$, we have
\begin{equation}\label{eq:sum to be zero}
\begin{split}
\sum_{h \in H_{\Bb}(y_1)} z_1^{h+r+(h_2-h_1)}&j(f)((y_2, h+r+(h_2-h_1), y_1))\\
    &= \sum_{h\in H_{\Bb}(y_1)}z_1^{h + (h_2' - h_1)}j(f)\big(y_2,h + (h_2'-h_1),y_1\big).
\end{split}
\end{equation}
If $(y_2,h_2'-h_1,y_1)\notin \bigcup\Bb$, then Condition~\labelcref{it:Bf
right inv} of \cref{def:Bf} implies that
\[
  (y_2,h + (h_2'-h_1),y_1) = (y_2, (h_2'-h_1), y_1)(y_1, h , y_1) \notin \bigcup\Bb
\]
for all $h\in H_{\Bb}(y_1)$, so once again~\eqref{eq:j(f) formula} shows
that~\labelcref{eq:sum to be zero} vanishes. So we may assume that $(y_2,
h_2'-h_1, y_1)\in \bigcup\Bb$.

Condition~\ref{it:Bf right inv} of \cref{def:Bf} then implies that $(y_2, h +
(h_2'-h_1), y_1) \in \bigcup\Bb$ for all $h\in H_{\Bb}(y_1)$. Thus,
using~\eqref{eq:j(f) formula} again, for each $h \in H_{\Bb}(y_1)$, we have
$j(f)\big(y_2,h + (h_2'-h_1),y_1\big) =
\phi(y_1)\widehat{\psi}_{h_0}(h+(h_2'-h_1))$, and so~\eqref{eq:sum to be zero}
becomes
\begin{align*}
\sum_{h \in H_{\Bb}(y_1)} z_1^{h+r+(h_2-h_1)}&j(f)((y_2, h+r+(h_2-h_1), y_1))\\
    &= \phi(y_1)\sum_{h\in H_{\Bb}(y_1)}z_1^{h + (h_2' - h_1)}\widehat{\psi}_{h_0}(h+(h_2'-h_1)).
\end{align*}

To see that this is zero, let $\psi_{h_0 + (h_1 - h_2')}$ be the $(h_1 -
h_2')$-perturbation of $\psi_{h_0}$, and let $\chi_1\in \hat{H}_{\Bb(y_1)}$ be
the character defined by $\chi_1(h) = z_1^h$, for all $h\in H_{\Bb}(y_1)$.
Recall that $\Phi_{H_{\Bb}(y_1), \Z^k}$ is the averaging map
of~\labelcref{eq:averaging-map}. We have
\begin{align}
\phi(y_1)\sum_{h \in H_{\Bb}(y_1)}& z_1^{h+r+(h_2-h_1)} \widehat{\psi}_{h_0}(h+r+(h_2-h_1))\\
  &= z_1^{h_2' - h_1} \phi(y_1) \sum_{h\in H_{\Bb}(y_1)} z_1^h \hat{\psi}_{h_0 + (h_1 - h_2')}(h) \nonumber \\
  &= z_1^{h_2' - h_1} \phi(y_1) \Phi_{H_{\Bb}(y_1), \Z^k} (\psi_{h_0 + (h_1 - h_2')})(\chi_1).\label{eq:Phi_H}
\end{align}

In order to show that~\labelcref{eq:Phi_H} is zero, it now suffices to
show that $\chi_1 \notin \hat{q}_{H_{\Bb}(y_1)}(V)$, because $\supp(\psi_{h_0
+ (h_1 - h_2')}) = \supp(\psi) \subseteq V$ as discussed immediately
after~\eqref{eq:perturbation}, and hence $\supp(\Phi_{H_{\Bb}(y_1)}(\psi_{h_0
+ (h_1 - h_2')})) \subseteq \hat{q}_{H_{\Bb}(y_1)}(V)$ as discussed just
prior to \cref{lem:H-restriction-is-nonzero}. So it suffices to show that
$z_1 w \notin V$ for all $w \in H_{\Bb}(y_1)^\perp$.

Suppose for contradiction that $w \in H_{\Bb}(y_1)^\perp$ satisfies $z_1 w
\in V$. Then $z_1 \in V H_{\Bb}(y_1)^\perp$, and
hence $(y_1, z_1)\in (B_{x_0} \times V)\cdot(\Bb^{\ess})^\perp$. Since $y_1
\in [x_1]$, Theorem~3.2 of \cite{Sims-Williams} implies that $\ker(\pi_{(y_1,
z_1)}) = \ker(\pi_{(x_1, z_1)}) = Q$, contradicting $Q \not\in \pi((B_{x_0}
\times V)\cdot(\Bb^{\ess})^\perp)$.

Thus~\labelcref{eq:Phi_H} is zero, whence~\labelcref{eq:sum to be zero} is
zero, giving~\labelcref{eq:goal}. Hence $f\in Q$.
\end{proof}

\begin{proof}[Proof of \cref{thm:neighbourhood-basis}]
By definition of the topology on $\Prim(C^*(G_T))$, the closed sets are the sets $\{Q : Q
\subseteq I\}$ indexed by ideals $I$ of $C^*(G_T)$. We must show that the complement of
$A(B, V)$ is closed, so we fix $x \in B_{x_0}$, $z \in V$ and $w \in H_\Bb(x)^\perp$ so
that $P = \ker(\pi_{(x, zw)})$ is a typical point in $A(B, V)$. We must show that
$\bigcap_{Q \in \Prim(C^*(G_T))\setminus A(B, V)} Q \not\subseteq P$. That is, we must
find $f_P \in \bigcap_{Q \in \Prim(C^*(G_T)) \setminus A(B, V)} Q$ such that $f_P \not\in
P$. But this is a direct application of \cref{prp:fP}.
\end{proof}

This now leads us to a complete description of the primitive ideal space when the action
$T\colon \N^k\curvearrowright X$ admits sufficiently many harmonious families of
bisections.

\begin{definition}\label{def:admits Bs}
Let $X$ be a second-countable locally compact Hausdorff space and suppose that $T \colon
\N^k \curvearrowright X$ is an action by local homeomorphisms. We say that $T$
\emph{admits harmonious families of bisections} if every $x \in X$ admits a harmonious
family of bisections in $G_T$.
\end{definition}

\begin{corollary}\label{cor:open-if}
Let $X$ be a second-countable locally compact Hausdorff space and suppose
that $T \colon \N^k \curvearrowright X$ is an action by local homeomorphisms
that admits harmonious families of bisections. Let $\pi \colon X \times \T^k \to
\Prim(C^*(G_T))$ be the map of \cref{ntn:pi}. Let $A$ be a subset of
$\Prim(C^*(G_T))$ such that for every $(x_0,z_0)\in \pi^{-1}(A)$ there exist
a harmonious family of bisections $\Bb$ at $x_0$ and an open neighbourhood
$V_0\subset \T^k$ of $z_0$ such that $(B_{x_0} \times V_0) \cdot
(\Bb^{\ess})^\perp \subseteq \pi^{-1}(A)$. Then $A$ is open in
$\Prim(C^*(G_T))$.
\end{corollary}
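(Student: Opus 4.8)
The plan is to verify directly that $A$ is open in the hull-kernel topology by producing, around each of its points, an open neighbourhood contained in $A$. Since a subset of a topological space is open exactly when it is a union of open neighbourhoods of its own points, this will suffice, and the requisite neighbourhoods are supplied verbatim by \cref{thm:neighbourhood-basis}. In effect the corollary is a repackaging of that theorem, so the work is bookkeeping rather than new analysis.

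First I would fix an arbitrary point $P \in A$. Because $\pi \colon X \times \T^k \to \Prim(C^*(G_T))$ is surjective (\cref{ntn:pi}), there is a pair $(x_0, z_0) \in \pi^{-1}(A)$ with $\pi(x_0, z_0) = P$. Applying the hypothesis on $A$ to this pair yields a harmonious family of bisections $\Bb = (B_\alpha)_{\alpha \in \J_{x_0}}$ based at $x_0$ and an open neighbourhood $V_0 \subseteq \T^k$ of $z_0$ such that $(B_{x_0} \times V_0) \cdot (\Bb^{\ess})^\perp \subseteq \pi^{-1}(A)$.

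The crux is to recognise that the image under $\pi$ of this $\Bb$-saturation is precisely the set $A(\Bb, V_0)$ of \eqref{eq:basic-open}. Since \cref{def:Bf}\labelcref{it:Bf units} gives $B_{x_0} \subseteq X$, and the unit space is always contained in $\Iess$, we have $B_{x_0} \subseteq \Bb^{\ess}$ and hence $B_{x_0} \subseteq s(\Bb^{\ess})$; thus by \cref{rmk:H_B groups} the fibre of $(\Bb^{\ess})^\perp$ over each $x \in B_{x_0}$ is exactly $H_\Bb(x)^\perp$. Unwinding the definition of the saturation then gives
\[
(B_{x_0} \times V_0) \cdot (\Bb^{\ess})^\perp = \{(x,z) : x \in B_{x_0},\ z \in V_0\, H_\Bb(x)^\perp\},
\]
whose image under $\pi$ is precisely $A(\Bb, V_0)$. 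By \cref{thm:neighbourhood-basis}, $A(\Bb, V_0)$ is an open neighbourhood of $P = \ker(\pi_{(x_0, z_0)})$. Finally, applying $\pi$ to the containment of the previous paragraph and using the elementary fact $\pi(\pi^{-1}(A)) \subseteq A$, I obtain $A(\Bb, V_0) \subseteq A$. Hence every point of $A$ has an open neighbourhood inside $A$, so $A$ is open.

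There is no genuine obstacle here, as the hard analytic content lives in \cref{prp:fP} and \cref{thm:neighbourhood-basis}. The only step meriting a line of care is the identification of $\pi\big((B_{x_0} \times V_0) \cdot (\Bb^{\ess})^\perp\big)$ with $A(\Bb, V_0)$, which hinges on the observation that $B_{x_0}$ lies in $s(\Bb^{\ess})$ so that the annihilator fibres of $(\Bb^{\ess})^\perp$ coincide with the groups $H_\Bb(x)^\perp$ appearing in \eqref{eq:basic-open}.
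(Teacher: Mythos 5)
Your proof is correct and takes essentially the same approach as the paper's: both hinge on applying \cref{thm:neighbourhood-basis} to produce, around each point of $A$, a basic open neighbourhood $A(\Bb,\cdot)$ contained in $A$. The only difference is bookkeeping: the paper applies the theorem to the saturated neighbourhood $V = V_0 \cdot H_\Bb(x_0)^\perp$ and then argues on preimages, whereas you apply it to $V_0$ itself and conclude via the identification $\pi\big((B_{x_0}\times V_0)\cdot(\Bb^{\ess})^\perp\big)=A(\Bb,V_0)$ together with the containment $\pi(\pi^{-1}(A))\subseteq A$, which is if anything slightly cleaner.
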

\begin{proof}
Fix $(x_0, z_0)\in \pi^{-1}(A)$. By hypothesis there exist a harmonious family of
bisections $\Bb$ based at $x_0$ and an open neighbourhood $V_0\subset \T^k$ of $z_0$ such
that $(B_{x_0} \times V_0)\cdot(\Bb^{\ess})^\perp \subseteq \pi^{-1}(A)$. Let $V
\coloneqq V_0 \cdot (H_\Bb(x_0))^\perp$, the $H(x_0)$-saturation of $V_0$ and let
$A(\Bb,V) \subset \Prim(C^*(G_T))$ be the corresponding basic open neighbourhood of
$\ker(\pi_{(x_0,z_0)})$ as in~\labelcref{eq:basic-open}. It suffices to show that
$A(\Bb,V) \subset A$. Let $(x,z)\in \pi^{-1}A(\Bb,V)$. Then there exist $z' \in V_0$ and
$w\in H(x)^\perp$ such that $z = z' w$. Since $(B_{x_0} \times
V_0)\cdot(\Bb^{\ess})^\perp \subseteq \pi^{-1}(A)$, we have $(x,z)\in \pi^{-1}(A)$. Hence
$A(\Bb,V) \subset A$ as required.
\end{proof}

Combining~\cref{thm:Aopen,cor:open-if}, we obtain the following complete description of the hull-kernel topology.

\begin{corollary}\label{cor:open-if-and-only-if}
Let $X$ be a second-countable locally compact Hausdorff space and suppose
that $T \colon \N^k \curvearrowright X$ is an action by local homeomorphisms
that admits harmonious families bisections. Let $\pi \colon X \times \T^k \to
\Prim(C^*(G_T))$ be the map of \cref{ntn:pi}. A subset $A \subset
\Prim(C^*(G_T))$ is open if and only if for every $(x,z)\in \pi^{-1}(A)$
there exist a harmonious family of bisections $\Bb$ based at $x$ and an open
neighbourhood $V\subset \T^k$ of $z$ such that $(x, z) \in B_0 \times V$ and
$(B_0 \times V)\cdot(\Bb^{\ess})^\perp \subseteq \pi^{-1}(A)$.
\end{corollary}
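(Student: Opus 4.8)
The plan is to assemble the biconditional from the two one-directional results already in hand: \cref{cor:open-if} supplies the ``if'' implication essentially verbatim, while \cref{thm:Aopen} supplies the ``only if'' implication once we reconcile a mismatch between the base neighbourhoods the two results produce and require. So the proof will be short, with a single genuinely new ingredient.

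For the ``if'' direction I would do nothing beyond quoting \cref{cor:open-if}: its hypothesis is exactly the displayed condition (the extra requirement $(x,z)\in B_0\times V$ is automatic, since $x\in B_0$ and $z\in V$), and its conclusion is precisely that $A$ is open.

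For the ``only if'' direction I would suppose $A$ is open and fix $(x,z)\in\pi^{-1}(A)$. Since $T$ admits harmonious families of bisections, I would choose one, say $\Bb=(B_\alpha)_{\alpha\in\J_x}$, based at $x$; by \cref{def:Bf}\ref{it:Bf homogeneous} it satisfies $\alpha\in B_\alpha\subseteq\coc^{-1}(\coc(\alpha))$, so \cref{thm:Aopen} applies and yields an open neighbourhood $U\subseteq B_x$ of $x$ and an open neighbourhood $V\subseteq\T^k$ of $z$ with $(U\times V)\cdot(\Bb^{\ess})^\perp\subseteq\pi^{-1}(A)$. The difficulty is that \cref{thm:Aopen} shrinks the base to $U\subseteq B_x$, whereas the condition I must verify is phrased in terms of the unit bisection $B_0$ of a harmonious family. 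I would resolve this by restricting $\Bb$ to $U$: set $B'_\alpha\coloneqq B_\alpha\cap s^{-1}(U)$ for each $\alpha\in\J_x$, and argue that $\Bb'=(B'_\alpha)_{\alpha\in\J_x}$ is again a harmonious family based at $x$, now with unit bisection $B'_x=U$.

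The only real computation will be checking that $\Bb'$ is harmonious, and I expect this restriction step to be the main obstacle. Conditions \ref{it:Bf units}--\ref{it:Bf right inv} of \cref{def:Bf} should be inherited from $\Bb$: restricting the source to $U$ does not disturb \ref{it:Bf homogeneous}, and since an isotropy element has equal range and source, restriction of sources to $U$ is symmetric under inversion and compatible with the partial product, giving \ref{it:Bf inverses} and \ref{it:Bf right inv}. Condition \ref{it:compact.open} should survive because $s$ is injective on each bisection, so that $s(B'_\alpha)=s(B_\alpha)\cap U=K_\alpha\cap B_x\cap U=K_\alpha\cap B'_x$ and the original compact witnesses $K_\alpha$ continue to work. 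Because $U\subseteq B_x$ and every isotropy element over a point $y\in U$ has source $y\in U$, the essential-isotropy fibres are left unchanged over the new base, i.e.\ $(\Bb')^{\ess}_y=\Bb^{\ess}_y$ for all $y\in U$; hence $(B'_x\times V)\cdot((\Bb')^{\ess})^\perp=(U\times V)\cdot(\Bb^{\ess})^\perp\subseteq\pi^{-1}(A)$. Since $(x,z)\in U\times V=B'_x\times V$, the pair $(\Bb',V)$ then witnesses the required condition, completing the ``only if'' direction. The crux is thus verifying that shrinking the base of a harmonious family to an arbitrary smaller open neighbourhood of $x$ preserves all five axioms of \cref{def:Bf} while leaving the saturation operation $(\,\cdot\,)\cdot(\Bb^{\ess})^\perp$ intact over the new base.
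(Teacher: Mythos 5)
Your proposal is correct and takes essentially the same route as the paper, which obtains this corollary precisely by combining \cref{thm:Aopen} (the ``only if'' direction) with \cref{cor:open-if} (the ``if'' direction). Your restriction step---replacing $\Bb$ by $B'_\alpha \coloneqq B_\alpha \cap s^{-1}(U)$ and checking that all five axioms of \cref{def:Bf} survive and that $(\Bb')^{\ess}_y = \Bb^{\ess}_y$ for $y \in U$---correctly fills in the reconciliation between the shrunken neighbourhood $U \subseteq B_x$ produced by \cref{thm:Aopen} and the full unit bisection demanded by the corollary, a detail the paper leaves implicit.
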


\begin{corollary}\label{cor:base for topology}
Let $X$ be a second-countable locally compact Hausdorff space and suppose
that $T\colon \N^k \curvearrowright X$ is an action by local homeomorphisms
that admits harmonious families of bisections. Let $\pi \colon X \times \T^k \to
\Prim(C^*(G_T))$ be the map of \cref{ntn:pi}. For each $x \in X$, fix a
harmonious family of bisections $\Bb(x) = (B(x)_\alpha)_{\alpha \in \J_x}$ at
$x$. Then the collection
\[
   \big\{\pi\big((U \times V) \cdot (\Bb(x)^{\ess})^\perp\big) : x \in X, x \in U \subseteq_{\text{open}} B(x)_x, V \subset_{\text{open}} \T\big\}
\]
is a base for the topology on $\Prim(C^*(G_T))$.
\end{corollary}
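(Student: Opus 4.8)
The plan is to verify the two defining properties of a base. First, every set in the collection must be open; second, for every open $A \subseteq \Prim(C^*(G_T))$ and every $P \in A$, some member of the collection must lie between $P$ and $A$. The refinement property is the quick half and falls straight out of \cref{thm:Aopen}, whereas openness of the individual members is where a new observation is needed.

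For openness, I would fix a datum $(x, U, V)$ with $x \in U \subseteq B(x)_x$ and $V$ open, write $\Bb = \Bb(x) = (B_\alpha)_{\alpha \in \J_x}$, and introduce the \emph{source-restricted family} $\Bb|_U \coloneqq (B_\alpha \cap s^{-1}(U))_{\alpha \in \J_x}$. The first task is to check that $\Bb|_U$ is again a harmonious family, now based at $x$ with unit bisection $U$. Conditions~\ref{it:Bf units}, \ref{it:Bf homogeneous}, \ref{it:Bf right inv}, and~\ref{it:compact.open} of \cref{def:Bf} are routine; for~\ref{it:compact.open} one simply reuses the compact sets $K_\alpha$ for $\Bb$, since $s(B_\alpha \cap s^{-1}(U)) = s(B_\alpha) \cap U = K_\alpha \cap U$. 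The single delicate point is the inverse condition~\ref{it:Bf inverses}: inversion interchanges $r$ and $s$, so a priori it would turn the constraint $s^{-1}(U)$ into $r^{-1}(U)$. The resolution is that every element of $\Iess$ is isotropy, whence $r = s$ on $\Iess$ and the two constraints agree there. Granting this, the essential fibres over points of $U$ are unchanged, that is $(\Bb|_U)^{\ess}_{x'} = \Bb^{\ess}_{x'}$ for all $x' \in U$ (the extra constraint $s^{-1}(U)$ is automatic on $G_{x'}$), so $H_{\Bb|_U}(x') = H_\Bb(x')$ throughout $U$. It follows that the set $A(\Bb|_U, V)$ of \labelcref{eq:basic-open} coincides with $\pi((U \times V) \cdot (\Bb^{\ess})^\perp)$, and \cref{thm:neighbourhood-basis} then tells us it is open. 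This shows every member of the collection is open.

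For the refinement property, I would take $A \subseteq \Prim(C^*(G_T))$ open and $P \in A$. Since $\pi$ is surjective, write $P = \ker(\pi_{(x_0, z_0)})$, so $(x_0, z_0) \in \pi^{-1}(A)$. The fixed family $\Bb(x_0)$ satisfies $\alpha \in B_\alpha \subseteq \coc^{-1}(\coc(\alpha))$, which is exactly the hypothesis of \cref{thm:Aopen}; that theorem then yields an open neighbourhood $U$ of $x_0$ inside $B(x_0)_{x_0}$ and an open neighbourhood $V$ of $z_0$ with $(U \times V) \cdot (\Bb(x_0)^{\ess})^\perp \subseteq \pi^{-1}(A)$. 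Applying $\pi$ gives $\pi((U \times V) \cdot (\Bb(x_0)^{\ess})^\perp) \subseteq A$; and since $x_0 \in U$, $z_0 \in V$, and $(x_0, z_0) \in (U \times V) \cdot (\Bb(x_0)^{\ess})^\perp$ via the trivial annihilator element $(x_0, 1) \in (\Bb(x_0)^{\ess})^\perp$, we get $P \in \pi((U \times V) \cdot (\Bb(x_0)^{\ess})^\perp)$. This produces a member of the collection squeezed between $P$ and $A$, completing the verification.

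I expect the main obstacle to be the careful justification that $\Bb|_U$ is harmonious, above all condition~\ref{it:Bf inverses}, together with the bookkeeping identifying $A(\Bb|_U, V)$ with $\pi((U \times V) \cdot (\Bb^{\ess})^\perp)$; everything else is an assembly of \cref{thm:Aopen} and \cref{thm:neighbourhood-basis}. The structural fact that makes the source-restriction harmless is \cref{lem:Icirc-Iess normal} combined with the observation that $r$ and $s$ agree on the isotropy, so cutting down the sources of the $B_\alpha$ leaves their behaviour on the essential isotropy, and hence the groups $H_\Bb(\cdot)$ and the annihilator bundle $(\Bb^{\ess})^\perp$, untouched over $U$.
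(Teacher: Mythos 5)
Your proof is correct and follows the route the paper intends: \cref{cor:base for topology} is stated in the paper without an explicit proof, as a direct combination of \cref{thm:Aopen} (the refinement property) and \cref{thm:neighbourhood-basis} (openness of the members), which is exactly your decomposition. Your one substantive addition---verifying that the source-restricted family $\Bb|_U = \big(B_\alpha \cap s^{-1}(U)\big)_{\alpha \in \J_x}$ is again harmonious, with the point that $r = s$ on $\Iess$ rescuing condition \ref{it:Bf inverses} of \cref{def:Bf}, so that $\pi\big((U \times V)\cdot(\Bb^{\ess})^\perp\big) = A(\Bb|_U, V)$ is open by \cref{thm:neighbourhood-basis}---is a genuine detail the paper leaves implicit (the same restriction device appears in \cref{rem:Bf}(2)), and it is needed, since \cref{thm:neighbourhood-basis} as stated only covers the case $U = B(x)_x$.
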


Finally, we mention a few immediate examples to illustrate how the base for the topology recovers well known results.
\begin{example}
  \begin{enumerate}
    \item If $G_T$ is strongly effective, then as in \cref{eg:isolated or
        strongly effective}(2) $\Bb \coloneqq \{G_T^{(0)}\}$ defines a harmonious family of
        bisections at each $x$. We then have $(U \times V) \cdot (\Bb(x)^{\ess})^\perp
        = U \times \T^k$, and \cref{cor:base for topology} reduces to the
        statement that the primitive ideal space is homeomorphic to the quasi-orbit
        space of $G_T$.
    \item If $G_T$ is minimal, then as in \cref{eg:minimal} we have $H(x) = H(y)
        = H$ for all $x,y \in X$, and $G_T$ admits harmonious families of bisections.
        For the harmonious family of bisections $W_x B(n)$ of
        \cref{eg:minimal}, we have $(U \times V) \cdot (\Bb(x)^{\ess})^\perp = U
        \times (V H^\perp)$ whenever $U \subseteq W_x$. The quasi-orbit space is a
        point, so $\ker(\pi_{(x,z)}) = \ker(\pi_{(y,w)})$ if and only if $zH^\perp =
        wH^\perp$, and so \cref{cor:base for topology} establishes that
        $\Prim(C^*(G_T))$ is homeomorphic to $H^\perp$. This recovers
        \cite[Theorem~4.7]{Sims-Williams} for $T$ minimal.
    \item In particular, if $G_T$ is minimal and effective, then the base for the
        topology is just a singleton so $C^*(G_T)$ is simple.
  \end{enumerate}
\end{example}

\section{The lattice of ideals of \texorpdfstring{$C^*(G_T)$}{the C*-algebra of a Deaconu--Renault
groupoid}}\label{sec:lattice}

In this section we apply the results and ideas from the preceding section to describe the
lattice of ideals of $C^*(G_T)$. We provide a partial description for arbitrary $T$, but
in order to obtain a complete description, we must assume that $T$ admits harmonious
families of bisections.

Let $X$ be a second-countable locally compact Hausdorff space. Suppose that $T \colon
\N^k \curvearrowright X$ is an action by local homeomorphisms. Recall
from~\cref{sec:ideals} that for each ideal $I$ in $C^*(G_T)$,
\[
  A_I \coloneqq \{ P \in \Prim(C^*(G_T)) : I \not\subset P \}
\]
is open in the hull-kernel topology on $\Prim(C^*(G_T))$. So
\cref{cor:pi-continuous} implies that with respect to the map $\pi \colon X \times
\T^k \to \Prim(C^*(G_T))$ of \cref{ntn:pi}, the map
\begin{equation} \label{eq:lattice-homomorphism}
  \theta\colon I \mapsto \pi^{-1}(A_I)
\end{equation}
is an injective lattice homomorphism from the ideals of $C^*(G_T)$ into the open subsets
of $X\times \T^k$ ordered by inclusion.

The range of this lattice homomorphism is difficult to describe in general, but
\cref{thm:Aopen} describes a (fairly technical) invariance condition that every $A_I$
must satisfy. The range of $\theta$ consists of open sets $W$ that are
$\pi$-saturated in the sense that $W = \pi^{-1}(\pi(W))$. However, not every open
$\pi$-saturated subset of $X$ need be in the range of $\theta$; if it were, then $\pi$
would be a quotient map, and \cite[Remark 3.3 and Example 3.4]{Sims-Williams} show that
this is not the case in general.

To describe the range of $\theta$, and to describe generators of the ideal
$\theta^{-1}(W)$ for a given set $W$, we must assume that $T$ admits harmonious families
of bisections. The description of the range follows directly from
\cref{cor:open-if-and-only-if}.

\begin{lemma}\label{lem:image-of-theta}
Let $X$ be a second-countable locally compact Hausdorff space and suppose that $T \colon
\N^k \curvearrowright X$ is an action by local homeomorphisms that admits harmonious
families of bisections. Let $\theta$ be the lattice homomorphism
of~\labelcref{eq:lattice-homomorphism}. A subset $W \subset X\times \T^k$ is in the image
of $\theta$ if and only if
\begin{enumerate}
  \item $W$ is open and $\pi$-saturated, and
  \item\label{it:near W} whenever $(x,z)\in W$, then there exist a harmonious families
      of bisections $\Bb$ at $x$ and an open neighbourhood $V\subset \T^k$ of $z$ such
      that $(B_x \times V) \cdot (\Bb^{\ess})^\perp \subseteq W$.
\end{enumerate}
\end{lemma}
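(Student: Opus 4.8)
The statement characterizes the image of the injective lattice homomorphism $\theta \colon I \mapsto \pi^{-1}(A_I)$. Since $\theta$ is injective, describing its image amounts to identifying which open $\pi$-saturated sets $W$ arise as $\pi^{-1}(A_I)$ for some ideal $I$. The plan is to prove the two directions separately, using the bijective correspondence between ideals of $C^*(G_T)$ and open subsets of $\Prim(C^*(G_T))$ recalled in \cref{sec:ideals}, together with the topological characterization of open sets in $\Prim(C^*(G_T))$ furnished by \cref{cor:open-if-and-only-if}.

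Let me think about this. The forward implication is essentially formal. If $W = \theta(I) = \pi^{-1}(A_I)$, then $W$ is open by \cref{cor:pi-continuous} (since $A_I$ is open and $\pi$ is continuous) and $\pi$-saturated because it is literally a preimage under $\pi$, so $\pi^{-1}(\pi(W)) = \pi^{-1}(\pi(\pi^{-1}(A_I))) = \pi^{-1}(A_I) = W$ (using surjectivity of $\pi$). For condition~(2), I would apply \cref{thm:Aopen} to the open set $A = A_I$: given $(x,z) \in \pi^{-1}(A_I)$ and any harmonious family $\Bb$ at $x$ (which exists since $T$ admits them), the theorem produces open neighbourhoods $U \ni x$ and $V \ni z$ with $(U \times V)\cdot(\Bb^{\ess})^\perp \subseteq \pi^{-1}(A_I) = W$. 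Shrinking so that $U \subseteq B_x$ and replacing $U$ by $B_x$ only enlarges the saturated set within the bisection family's reach; more carefully, one uses the harmonious family restricted appropriately, but the essential content is exactly the conclusion of \cref{thm:Aopen}.

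**The reverse implication.** Suppose $W$ satisfies (1) and (2). The key observation is that because $W$ is open and $\pi$-saturated and $\pi$ is a continuous surjection, the set $A \coloneqq \pi(W) \subseteq \Prim(C^*(G_T))$ satisfies $\pi^{-1}(A) = W$. It then suffices to show $A$ is open in $\Prim(C^*(G_T))$; for then $A = A_I$ for the ideal $I$ corresponding to the open set $A$ under the bijection of \cref{sec:ideals}, and $\theta(I) = \pi^{-1}(A_I) = \pi^{-1}(A) = W$, placing $W$ in the image of $\theta$. To prove $A$ is open I would invoke \cref{cor:open-if}: its hypothesis asks precisely that for every $(x_0, z_0) \in \pi^{-1}(A) = W$ there be a harmonious family $\Bb$ at $x_0$ and an open $V_0 \ni z_0$ with $(B_{x_0} \times V_0)\cdot(\Bb^{\ess})^\perp \subseteq \pi^{-1}(A) = W$, and this is exactly condition~(2). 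Hence $A$ is open, completing the argument.

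**Where the real work lives.** The proof itself is short because all the analytic heavy lifting has been front-loaded: \cref{thm:Aopen} supplies the necessary condition (direction $\Rightarrow$, giving (2)) and \cref{cor:open-if}, which rests on the noncommutative Urysohn lemma \cref{prp:fP}, supplies the sufficient condition (direction $\Leftarrow$). The only genuinely delicate point to verify carefully is the identity $\pi^{-1}(\pi(W)) = W$, i.e.\ that $\pi$-saturation of $W$ lets us pass freely between $W$ and $A = \pi(W)$; this uses surjectivity of $\pi$ and the definition of $\pi$-saturation, and it is the hinge that lets condition~(2) be read simultaneously as a statement about $W$ and as the hypothesis of \cref{cor:open-if} about $\pi^{-1}(A)$. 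I expect no serious obstacle beyond bookkeeping: the main thing to watch is ensuring that the neighbourhoods $U$ produced by \cref{thm:Aopen} can be taken to be (or replaced by) the base point $B_x$ of a harmonious family, so that condition~(2) is stated in the exact form that matches \cref{cor:open-if}.
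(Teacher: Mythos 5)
Your proposal is correct and takes essentially the same route as the paper, which obtains this lemma directly from \cref{cor:open-if-and-only-if} --- itself exactly the combination of \cref{thm:Aopen} and \cref{cor:open-if} that you invoke --- together with the ideal/open-set correspondence and the identity $W = \pi^{-1}(\pi(W))$ for $\pi$-saturated $W$. Your only loose end is the passage from the neighbourhood $U \subseteq B_x$ produced by \cref{thm:Aopen} to the base $B_x$ itself: as you indicate, the fix is to restrict the family (e.g.\ to $B_\alpha \cap s^{-1}(U)$, which is again harmonious with base $U$ and the same essential-isotropy fibres over $U$), not to replace $U$ by $B_x$, since enlarging the saturated set runs in the wrong direction for the desired containment in $W$.
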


We may now describe generators for the ideal $\theta^{-1}(A)$ corresponding to a given
set satisfying the conditions of \cref{lem:image-of-theta}.
The generating elements are exactly the elements of $C^*(G_T)$ we obtain from the
noncommutative Urysohn lemma,~\cref{prp:fP}.

\begin{proposition}\label{prp:ideal generators}
Let $X$ be a second-countable locally compact Hausdorff space and suppose that $T \colon
\N^k \curvearrowright X$ is an action by local homeomorphisms that admits harmonious
families of bisections. Let $\theta$ be the lattice homomorphism
of~\labelcref{eq:lattice-homomorphism}. Suppose that $W \subseteq X \times \T^k$
satisfies the conditions of \cref{lem:image-of-theta}. Then
\[
\theta^{-1}(W) = \bigcap_{(x, z) \in (X \times \T^k)\setminus W} \ker(\pi_{(x, z)}).
\]
For each $(x_0, z_0) \in W$, fix a harmonious families of bisections $\Bb^{(x_0, z_0)}$
at $x_0$ and an open neighbourhood $V^{(x_0, z_0)}$ of $z_0$ as in
Condition~\labelcref{it:near W} of \cref{lem:image-of-theta}, and let $f^{(x_0, z_0)}$ be
any element obtained from~\cref{prp:fP} applied to $(x_0, z_0)$, $\Bb^{(x_0, z_0)}$, and
$V^{(x_0, z_0)}$. Then $\theta^{-1}(W)$ is generated as an ideal by $\{f^{(x_0, z_0)} :
(x_0, z_0) \in W\}$.
\end{proposition}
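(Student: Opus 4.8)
The plan is to prove the two assertions in turn: first the intersection formula for $\theta^{-1}(W)$, and then, using it together with the noncommutative Urysohn lemma \cref{prp:fP}, the identification of generators. Throughout write $I \coloneqq \theta^{-1}(W)$ and $J$ for the ideal generated by $\{f^{(x_0,z_0)} : (x_0,z_0)\in W\}$; note $I$ exists because $W$ satisfies the hypotheses of \cref{lem:image-of-theta}. For the intersection formula I would start from the definition $\theta(I) = \pi^{-1}(A_I)$, so that $I$ is characterised by $\pi^{-1}(A_I) = W$. Since $C^*(G_T)$ is separable, $I = \bigcap_{P \in h(I)} P$. A primitive ideal $P$ lies in $h(I)$ exactly when $P \notin A_I$; writing $P = \ker(\pi_{(x,z)})$ (possible by surjectivity of $\pi$), the identity $\pi^{-1}(A_I) = W$ together with $\pi$-saturation of $W$ gives $P \in A_I \iff (x,z) \in W$. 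Hence $h(I) = \{\ker(\pi_{(x,z)}) : (x,z) \notin W\}$, and the formula $\theta^{-1}(W) = \bigcap_{(x,z)\notin W} \ker(\pi_{(x,z)})$ follows.

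Next I would prove $J \subseteq I$ by showing each generator $f = f^{(x_0,z_0)}$ lies in every $\ker(\pi_{(x,z)})$ with $(x,z) \notin W$. By \cref{prp:fP}, $f \in Q$ for every primitive $Q \notin \pi(S)$, where $S = (B_{x_0} \times V^{(x_0,z_0)}) \cdot \big((\Bb^{(x_0,z_0)})^{\ess}\big)^\perp$. Condition~\labelcref{it:near W} of \cref{lem:image-of-theta} gives $S \subseteq W$, so if $(x,z) \notin W$ then $\ker(\pi_{(x,z)}) \notin \pi(S)$: otherwise $\ker(\pi_{(x,z)}) = \ker(\pi_{(x',z')})$ for some $(x',z') \in S \subseteq W$, forcing $(x,z) \in \pi^{-1}(\pi(W)) = W$, a contradiction. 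Thus $f \in \ker(\pi_{(x,z)})$ for all $(x,z)\notin W$, so by the intersection formula each generator lies in $I$, and therefore $J \subseteq I$.

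For the reverse inclusion it suffices, by the inclusion-reversing hull--kernel bijection (so that $I = k(h(I)) \supseteq k(h(J)) = J$ would follow from $h(J) \subseteq h(I)$), to show $A_I \subseteq A_J$. Given $P \in A_I$, write $P = \ker(\pi_{(x,z)})$; then $(x,z) \in \pi^{-1}(A_I) = W$, so $f^{(x,z)}$ is one of the generators of $J$. The element $f^{(x,z)}$ is produced by \cref{prp:fP} with inner point $(x,z)$ and trivial perturbation $w = 1 \in H_\Bb(x)^\perp$, and therefore satisfies $\pi_{(x,z)}(f^{(x,z)}) \neq 0$. Hence $J \not\subseteq \ker(\pi_{(x,z)}) = P$, i.e. $P \in A_J$. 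This gives $A_I \subseteq A_J$, whence $I \subseteq J$, and combined with the previous paragraph, $I = J$.

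The main obstacle is not any single hard estimate but the careful bookkeeping between the non-injective parametrisation $(x,z) \mapsto \ker(\pi_{(x,z)})$ and the primitive ideal space; this is handled uniformly by invoking surjectivity of $\pi$ and $\pi$-saturation of $W$ to translate statements about primitive ideals into statements about points of $X \times \T^k$. A secondary delicate point is that the reverse inclusion relies on $\pi_{(x,z)}(f^{(x,z)}) \neq 0$, so one must insist that the generator indexed by $(x,z)$ be extracted from \cref{prp:fP} at the inner point $(x,z)$ itself with $w = 1$, rather than at some other point of $B_{x_0}\times V^{(x_0,z_0)}$.
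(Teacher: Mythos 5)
Your proof is correct and follows essentially the same route as the paper's: both arguments rest on the fact that an ideal equals the intersection of the primitive ideals containing it, on the $\pi$-saturation of $W$ to pass between primitive ideals and points of $X\times\T^k$, and on the two conclusions of \cref{prp:fP} (that $f^{(x_0,z_0)}\in\ker(\pi_{(x,z)})$ whenever $(x,z)\notin W$, and that $\pi_{(x,z)}(f^{(x,z)})\neq 0$ when $(x,z)\in W$), with your split into $J\subseteq I$ and $I\subseteq J$ being merely a reorganisation of the paper's single equivalence ``$J\subseteq\ker(\pi_{(x,z)})$ if and only if $(x,z)\notin W$''. The only blemish is the parenthetical in your final paragraph, which should read $I = k(h(I)) \subseteq k(h(J)) = J$; the implication you actually use (that $A_I\subseteq A_J$ yields $I\subseteq J$) is stated and applied correctly.
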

\begin{proof}
Fix a set $W$ in the range of $\theta$. By~\cite[Proposition A.17(a)]{Raeburn-Williams}
the ideal $\theta^{-1}(W)$ is equal to the intersection of the primitive ideals
that contain it. Hence \cref{lem:image-of-theta} gives the first statement.

Let $I = \theta^{-1}(W)$. By definition of $\theta$, we have $W = \{(x,z) \in X \times
\T^k : I \not\subseteq \ker(\pi_{(x, z)})\}$. Let $J$ be the ideal generated by the
$f^{(x_0, z_0)}$. Since every ideal is the intersection of the primitive ideals
containing it and since $\pi$ is surjective, it suffices to show that for $(x, z) \in X
\times \T^k$, we have $J \subseteq \ker(\pi_{(x,z)})$ if and only if $(x,z) \not\in W$.

So fix $(x, z) \in X \times \T^k$. First suppose that $(x, z) \not\in W$. Then for each
$(x_0, z_0) \in W$,
\[
A(\Bb^{(x_0, z_0)}, V^{(x_0, z_0)})
    = (B^{(x_0, z_0)}_{x_0} \times V^{(x_0, z_0)}) \cdot ((\Bb^{(x_0, z_0)})^{\ess})^\perp
    \subseteq W,
\]
so $(x, z) \not\in A(\Bb^{(x_0, z_0)}, V^{(x_0, z_0)})$. Hence, by~\cref{prp:fP},
$f^{(x_0, z_0)} \in \ker(\pi_{(x, z)})$. Since $(x_0, z_0) \in W$ was arbitrary, it
follows that all the generators of $J$ belong to the kernel of $\pi_{(x, z)}$. So $J
\subseteq \ker(\pi_{(x, z)})$ as required. Now suppose that $(x, z) \in W$. Then
$\pi_{(x,z)}(f^{(x, z)}) \not= 0$ by~\cref{prp:fP}. Since $f^{(x, z)}$ is a generator of
the ideal $J$, it belongs to $J$ and so $J \not\subseteq \ker(\pi_{(x, z)})$ as required.
\end{proof}

\section{Convergence of primitive ideals}\label{sec:convergence}
Now we apply our results on the primitive ideal space of $C^*(G_T)$ to describe convergence of primitive ideals.
We consider only systems $T\colon \N^k\curvearrowright X$  on second-countable spaces, so the $C^*$-algebras $C^*(G_T)$ are separable,
and the primitive ideal space $\Prim(C^*(G_T))$ is second-countable (cf. e.g.~\cite[p. 231]{Raeburn-Williams}),
so it suffices to consider convergent \emph{sequences}.

The map $\pi\colon X\times \T^k \to \Prim(C^*(G_T))$ from \cref{ntn:pi} is
continuous, so if $(x_i, z_i)_i \to (x,z)$ in $X\times \T^k$, then $\pi(x_i,
z_i) \to \pi(x,z)$ in $\Prim(C^*(G_T))$. This is however far from a complete
descrition: many divergent sequences in $X \times \T^k$ descend to convergent
sequences in $\Prim(C^*(G_T))$.

We first describe a weaker sufficient condition for convergence of a sequence of primitive ideals.
In order to do this, we need to extend the notation of \cref{rmk:H_B groups}. Suppose
that $X$ is a second-countable locally compact Hausdorff space and suppose that $T \colon \N^k
\curvearrowright X$ is an action by local homeomorphisms. Fix $x \in X$, and let $\Bb \coloneqq
(B_\alpha)_{\alpha \in \J_x}$ be a collection of open bisections such that $\alpha \in
B_\alpha \subseteq \coc^{-1}(\coc(\alpha))$ for all $\alpha \in \J_x$. For each $y \in
X$, we consider the subgroup
\[
H_\Bb(y) \coloneqq \Span_{\Z} \coc\Big(\bigcup \Bb^{\ess}_y\Big)
\]
in $\Z^k$ generated by the values of $\coc$ on the intersection of $\bigcup \Bb$ with
$\Iess_y$. That is,
\[
H_\Bb(y) = \Span_{\Z} \{\coc(\alpha) : \alpha \in \J_x\text{ and } B_\alpha \cap \Iess_y \not= \varnothing\}.
\]
If $\Bb$ is a harmonious family of bisections and $y \in B_x$, then $\Bb^{\ess}_y$ is a
group by Conditions \ref{it:Bf right inv}~and~\ref{it:Bf inverses} of \cref{def:Bf}, so
this new definition of $H_\Bb(y)$ agrees with the one given in \cref{rmk:H_B groups}.

Now given a subgroup $H\subset \Z^k$, we let $\hat{q}_H\colon \T^k \to \hat{H}$ be the canonical quotient map.

\begin{definition}\label{dfn:converges along}
Suppose that $(H_n)_{n}$ is a sequence of subgroups of $\Z^k$. A sequence
$(z_n)_{n}$ in $\T^k$ \emph{converges to $z\in\T^k$ along $(H_n)_{n}$} if for
every open neighbourhood $V\subset \T^k$ of $z$ in $\T^k$ there exists $N\in\N$ such that
$\hat{q}_{H_n}(z_n)\in \hat{q}_{H_n}(V)$ for all $n\geq N$.
\end{definition}

The following sufficient condition for convergence of primitive ideals applies to any Deaconu--Renault system.

\begin{proposition}\label{prp:sufficient for convergence}
Let $X$ be a second-countable locally compact Hausdorff space and suppose
that $T \colon \N^k \curvearrowright X$ is an action by local homeomorphisms.
For $x \in X$ and $z \in \T^k$, let $\pi_{(x,z)}$ be as in \cref{ntn:pi}. Let
$(x,z)\in X\times \T^k$ and let $\Bb = (B_{\alpha})_{\alpha \in \J_x}$ be a
collection of open bisections such that $\alpha \in B_\alpha \subseteq
\coc^{-1}(\coc(\alpha))$ for all $\alpha$. If $(x_i, z_i)_i$ is a sequence in
$\in X \times \T^k$ satisfying $x_i \to x$ and that $z_i \to z$ along
$H_\Bb(x_i)$, then $\ker(\pi_{(x_i, z_i)}) \to \ker(\pi_{(x, z)})$ in
$\Prim(C^*(G_T))$.
\end{proposition}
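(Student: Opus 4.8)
The plan is to deduce this directly from our first main result, \cref{thm:Aopen}, after unwinding what convergence means in the Jacobson topology. Recall that a sequence $(P_i)$ converges to $P$ in $\Prim(C^*(G_T))$ precisely when every open set $A$ containing $P$ contains $P_i$ for all large $i$; this is the correct formulation even in the (generally non-Hausdorff) Jacobson topology, where it replaces any Hausdorff intuition about unique limits. So, writing $P_i = \ker(\pi_{(x_i,z_i)})$ and $P = \ker(\pi_{(x,z)})$, I would fix an arbitrary open set $A \subseteq \Prim(C^*(G_T))$ with $(x,z) \in \pi^{-1}(A)$ and aim to show that $(x_i, z_i) \in \pi^{-1}(A)$ for all large $i$.

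The family $\Bb = (B_\alpha)_{\alpha \in \J_x}$ in the hypotheses satisfies exactly the standing assumption $\alpha \in B_\alpha \subseteq \coc^{-1}(\coc(\alpha))$ needed to invoke \cref{thm:Aopen}. Applying that theorem to $A$ and $(x,z)$ produces open neighbourhoods $U \subseteq B_x \cap X$ of $x$ and $V \subseteq \T^k$ of $z$ with $(U \times V)\cdot(\Bb^{\ess})^\perp \subseteq \pi^{-1}(A)$. Since $G_T$ is \'etale, $X = G_T^{(0)}$ is open in $G_T$, so $U$ is an open neighbourhood of $x$ in $X$; as $x_i \to x$, we have $x_i \in U$ for all large $i$. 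The remaining, and only slightly delicate, step is to check that the convergence hypothesis forces $(x_i, z_i)$ into this $\Bb$-saturation. For this I would record the identification $(\Bb^{\ess}_y)^\perp = H_\Bb(y)^\perp$ (the annihilator of a set equals the annihilator of the subgroup it generates), so that for $y \in U$ the condition $(y, z') \in (U \times V)\cdot(\Bb^{\ess})^\perp$ reads $z' \in V\cdot H_\Bb(y)^\perp$. Unwinding \cref{dfn:converges along}, membership $z_i \in V\cdot H_\Bb(x_i)^\perp$ is literally the statement $\hat{q}_{H_\Bb(x_i)}(z_i) \in \hat{q}_{H_\Bb(x_i)}(V)$, which holds for all large $i$ because $z_i \to z$ along $(H_\Bb(x_i))_i$ and $V$ is a neighbourhood of $z$. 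Combining the two, $(x_i, z_i) \in (U \times V)\cdot(\Bb^{\ess})^\perp \subseteq \pi^{-1}(A)$ for all large $i$, as required.

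Since all of the representation-theoretic work is already packaged into \cref{thm:Aopen}, I do not expect a genuinely hard analytic step here: the argument is a clean reduction. The only points requiring care are the correct convergence criterion in the Jacobson topology and the bookkeeping that translates the abstract $\Bb$-saturation $(U\times V)\cdot(\Bb^{\ess})^\perp$ into the concrete fibrewise condition $z_i \in V \cdot H_\Bb(x_i)^\perp$ matching \cref{dfn:converges along}. I expect this translation of definitions, together with the observation that $U$ is open in $X$, to be the main (and modest) obstacle.
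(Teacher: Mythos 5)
Your proposal is correct and follows essentially the same route as the paper's own proof: fix an open $A$ containing $\ker(\pi_{(x,z)})$, invoke \cref{thm:Aopen} to obtain $U$ and $V$ with $(U\times V)\cdot(\Bb^{\ess})^\perp\subseteq\pi^{-1}(A)$, and then use $x_i\to x$ together with \cref{dfn:converges along} (via the identification $(\Bb^{\ess}_{x_i})^\perp = H_\Bb(x_i)^\perp$) to place $(x_i,z_i)$ in this saturation for large $i$. The paper's argument differs only in bookkeeping, explicitly writing $z_i = w_i z_i'$ with $z_i'\in V$ and $w_i\in(\Bb^{\ess}_{x_i})^\perp$, which is exactly your translation step.
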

\begin{proof}
Fix an open set $A \subseteq \Prim(C^*(G_T))$ that contains $\ker(\pi_{(x, z)})$. We must show that
$\ker(\pi_{(x_i, z_i)}) \in A$ for large $i$.
By~\cref{thm:Aopen} there exist an open neighbourhood $U \subset X$ of $x$ and an open neighbourhood $V\subset \T^k$ of $z$ such that
$(U \times V) \cdot (\Bb^{\ess})^\perp \subseteq A$.
Since $x_i \to x$ we see that $x_i \in U$ for large $i$,
so it suffices to show that $z_i \in V \cdot (\Bb^{\ess}_{x_i})^\perp$ for large $i$.

For each $i$, let $H_i \coloneqq H_\Bb(x_i)$.
Since $z_i \to z$ along $(H_i)_i$, we have $\hat{q}_{H_i}(z_i) \in \hat{q}_{H_i}(V)$ for large $i$.
So there exist $z_i'\in V$ such that for large $i$, we have $(z_i)^h = (z_i')^h$ for all $h \in H_i$.
That is, $w_i \coloneqq z_i \overline{z'_i} \in (\Bb^{\ess}_{x_i})^\perp$ for large $i$,
and it follows that $z_i = w_i z'_i \in V \cdot (\Bb^{\ess}_{x_i})^\perp$ for large $i$ as required.
\end{proof}

We can also identify a necessary condition that is valid for all Deaconu--Renault
systems. Recall that the \emph{quasi-orbit space} $\Q(G)$ of an \'etale groupoid $G$ is
the set $\big\{\overline{[x]} : x \in G^{(0)}\big\}$ of orbit closures in $G$, in the
quotient topology induced by the surjection $\Q\colon x \mapsto \overline{[x]}$ from
$G^{(0)}$ to $\Q(G)$.

\begin{lemma} \label{lem:prim-to-Q-cts}
Let $X$ be a second-countable locally compact Hausdorff space and suppose
that $T\colon \N^k\curvearrowright X$ is an action by local homeomorphisms.
For $x \in X$ and $z \in \T^k$, let $\pi_{(x,z)}$ be as in \cref{ntn:pi}.
Then the quasi-orbit map $q\colon \Prim(C^*(G_T)) \to \Q(G_T)$ given by
$q(\ker(\pi_{(x,z)})) = \overline{[x]}$ for all $(x,z)\in X\times \T^k$ is
continuous. In particular, if $\ker(\pi_{(x_n,z_n)}) \to \ker(\pi_{(x,z)})$
in $\Prim(C^*(G_T))$, then $\overline{[x_n]} \to \overline{[x]}$ in
$\Q(G_T)$.
\end{lemma}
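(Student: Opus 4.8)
The plan is to realise $q$ as the map induced by the continuous surjection $\pi$ of \cref{ntn:pi} together with the quotient map $\Q\colon X \to \Q(G_T)$, and then to identify preimages of open sets in $\Q(G_T)$ with complements of hulls in $\Prim(C^*(G_T))$. First I would record that $q$ is well defined: by the kernel criterion recalled immediately after \cref{ntn:pi} (from \cite[Theorem~3.2]{Sims-Williams}), the equality $\ker(\pi_{(x,z)}) = \ker(\pi_{(x',z')})$ forces $\overline{[x]} = \overline{[x']}$, so the value $\overline{[x]}$ depends only on the primitive ideal $\ker(\pi_{(x,z)})$. Writing $\mathrm{pr}_X \colon X \times \T^k \to X$ for the first-coordinate projection, the guiding identity is $q \circ \pi = \Q \circ \mathrm{pr}_X$, since $q(\ker(\pi_{(x,z)})) = \overline{[x]} = \Q(x)$.

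Since $\Q(G_T)$ carries the quotient topology, an open set $W \subseteq \Q(G_T)$ is exactly one for which $S \coloneqq \Q^{-1}(W)$ is open in $X$; moreover $S$ is automatically invariant (a union of orbits), because $\Q^{-1}(W)$ is saturated for the relation $\overline{[x]} = \overline{[y]}$ and points of a common orbit have equal orbit closures. The key step is then the claim that
\[
  q^{-1}(W) = \{P \in \Prim(C^*(G_T)) : I_S \not\subseteq P\},
\]
where $I_S = C^*(G_T|_S)$ is the ideal associated with the open invariant set $S$. The right-hand side is open by definition of the hull-kernel topology, so the claim yields continuity of $q$.

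To prove the claim it suffices to show, for each $(x,z)$, that $I_S \subseteq \ker(\pi_{(x,z)})$ if and only if $x \notin S$; indeed, $I_S \not\subseteq \ker(\pi_{(x,z)})$ then holds precisely when $x \in S = \Q^{-1}(W)$, i.e.\ when $\overline{[x]} = q(\ker(\pi_{(x,z)})) \in W$. Using \cref{rmk:H=Zk} I would work with the unitarily equivalent representation $\pi_{x,z}$ on $\ell^2([x])$. For the ``if'' direction, when $x \notin S$ invariance of $S$ gives $[x] \cap S = \varnothing$; hence for any $f \in C_c(G_T|_S)$ and $y, y' \in [x]$, every $\gamma \in (G_T)_y$ with $r(\gamma)=y'$ satisfies $f(\gamma) = 0$ (its source and range would have to lie in $S$), so $\pi_{x,z}(f) = 0$ on the dense subalgebra $C_c(G_T|_S)$ of $I_S$ and thus $I_S \subseteq \ker(\pi_{(x,z)})$. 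For the converse, when $x \in S$ I would pick $f \in C_0(S) \subseteq C_0(X) \cap I_S$ with $f(x) \neq 0$; since functions on the unit space act diagonally, $\langle e_x, \pi_{x,z}(f) e_x\rangle = f(x) \neq 0$, so $I_S \not\subseteq \ker(\pi_{(x,z)})$. The ``in particular'' statement is then immediate, as continuity of $q$ carries $\ker(\pi_{(x_n,z_n)}) \to \ker(\pi_{(x,z)})$ to $\overline{[x_n]} \to \overline{[x]}$ in $\Q(G_T)$. I expect the only genuine obstacle to be the fact, emphasised in the paper, that $\pi$ is typically not a quotient map: this prevents concluding openness of $q^{-1}(W)$ directly from openness of its $\pi$-preimage $S \times \T^k$, and forces the hands-on identification of $q^{-1}(W)$ as the complement of the hull of the concrete ideal $I_S$.
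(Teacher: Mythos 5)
Your proposal is correct, and it proves the lemma by a route that is dual to, though computationally the same as, the paper's. The paper argues with closed sets: given a closed $K \subseteq \Q(G_T)$, it takes a point $\ker(\pi_{(y,w)})$ in the hull-kernel closure of $q^{-1}(K)$, assumes $\overline{[y]} \notin K$, and uses Urysohn's lemma to produce $f \in C_c(X,[0,1])$ with $f(y) = 1$ and $f|_{\Q^{-1}(K)} = 0$; since $\Q^{-1}(K)$ is closed and invariant, such an $f$ lies in $\ker(\pi_{(x,z)})$ for every $\overline{[x]} \in K$ but not in $\ker(\pi_{(y,w)})$, contradicting the closure condition. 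You instead work with open sets and identify $q^{-1}(W)$ globally as the complement of the hull of the concrete ideal $I_S = C^*(G_T|_S)$ with $S = \Q^{-1}(W)$ open and invariant. Both arguments rest on the same two facts: unit-space functions act diagonally in the representations $\pi_{(x,z)}$, and elements supported over an invariant set are annihilated by the representations attached to points outside it. What your version buys is an explicit description of each $q^{-1}(W)$ as the open set of primitive ideals not containing a specific dynamical ideal, which connects naturally to the sandwiching framework of \cite{BriCarSim:sandwich}; what the paper's version buys is economy of means, since it needs only Urysohn's lemma on $X$ and the diagonal action, and never invokes the fact that $C_c(G_T|_S)$ completes to an ideal of $C^*(G_T)$ (a fact that is nevertheless available in the paper, via \cite[Lemma~3.3]{BriCarSim:sandwich}, so your use of it is legitimate). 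One small point worth making explicit in your write-up: the dichotomy $I_S \subseteq \ker(\pi_{(x,z)}) \Leftrightarrow x \notin S$ characterises $q^{-1}(W)$ as a subset of $\Prim(C^*(G_T))$ only because every primitive ideal has the form $\ker(\pi_{(x,z)})$, i.e.\ because the map $\pi$ of \cref{ntn:pi} is surjective; you use this silently, and it is exactly where \cite[Theorem~3.2]{Sims-Williams} enters.
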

\begin{proof}
  Take a closed subset $K$ of $\Q(G_T)$ and consider the preimage
  \[
    W = q^{-1}(K) = \{ \ker(\pi_{(x,z)}) : \overline{[x]}\in K, z\in \T^k\}.
  \]
  We aim to show that $W$ is closed in $\Prim(C^*(G_T))$.
  Take $\ker(\pi_{(y,w)})$ in the closure of $W$.
  By definition of the hull-kernel topology, this means that
  \[
    \bigcap_{\overline{x}\in K, z\in \T^k} \ker(\pi_{(x,z)}) \subset \ker(\pi_{(y,w)}).
  \]
  It suffices to show that $\overline{[y]} \in K$, so assume for contradiction that this is not the case.
  Then $y$ is not in the preimage $Y = \Q^{-1}(K)$ which is closed and invariant.
  By Urysohn's lemma, there is a function $f\in C_c(X, [0,1])$ satisfying $f(y) = 1$ and $f|_Y = 0$.
  But then $f\in \ker(\pi_{(x,z)})$ whenever $\overline{[x]}\in K$ and $f\notin \ker(\pi_{(y,w)})$,
  and this contradicts the inclusion above.
\end{proof}

B\"{o}nicke and Li~\cite[Remark~3.16]{Bonicke-Li} observe that $\Q$ is a continuous and
open surjection and that $\Q(G)$ is T0 and Baire. When $X$ is second-countable, the
quasi-orbit space $\Q(G_T)$ is also second-countable. Therefore, the quasi-orbit map is
\emph{sequence-covering} in the sense of Siwiec, see~\cite[Proposition~2.4]{Siwiec}
(Siwiec assumes that all spaces are Hausdorff but the proof is valid even if the codomain
is not Hausdorff). Indeed, since the quasi-orbit map is not just an almost-open map but a
\emph{bona fide} open map, the proof of \cite[Proposition~2.4]{Siwiec} establishes the
following strong sequence-covering condition:

\begin{lemma}\label{lem:siwiec+}
Let $X$ be a second-countable locally compact Hausdorff space and suppose
that $T\colon \N^k\curvearrowright X$ is an action by local homeomorphisms.
Suppose that $x \in X$ and that $(x_n)_{n}$ is a sequence in $X$ such that
$\overline{[x_n]} \to \overline{[x]}$ in $\Q(G_T)$. Then there is a sequence
$(x'_n)_n$ in $X$ such that $x'_n \to x$ and $\overline{[x'_n]} =
\overline{[x_n]}$ for all $n$.
\end{lemma}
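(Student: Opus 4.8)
The plan is to run the standard argument showing that open continuous surjections from first-countable spaces are sequence-covering, as in the proof of \cite[Proposition~2.4]{Siwiec}, using the three inputs already recorded above: the quasi-orbit map $\Q\colon X \to \Q(G_T)$ is open, $X$ is second-countable (hence first-countable), and by hypothesis $\overline{[x_n]} = \Q(x_n) \to \Q(x) = \overline{[x]}$ in $\Q(G_T)$.

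First I would use first-countability of $X$ to fix a decreasing countable neighbourhood base $U_1 \supseteq U_2 \supseteq \cdots$ at $x$, so that every open neighbourhood of $x$ contains some $U_m$. Since $\Q$ is open, each image $\Q(U_m)$ is an open neighbourhood of $\overline{[x]}$ in $\Q(G_T)$. Convergence $\overline{[x_n]} \to \overline{[x]}$ then yields, for each $m$, an index $N_m$ with $\overline{[x_n]} \in \Q(U_m)$ whenever $n \ge N_m$; replacing $N_m$ by $\max\{N_1, \dots, N_m\} + m$ if necessary, I may assume $N_1 < N_2 < \cdots$.

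Next I would construct the lifted sequence blockwise. For $n < N_1$ I set $x'_n \coloneqq x_n$. For each $m$ and each $n$ with $N_m \le n < N_{m+1}$, the membership $\overline{[x_n]} \in \Q(U_m)$ provides a point $x'_n \in U_m$ with $\Q(x'_n) = \overline{[x_n]}$; since $\Q(x'_n) = \overline{[x'_n]}$, this gives $\overline{[x'_n]} = \overline{[x_n]}$, as required. Finally I would verify $x'_n \to x$: given any neighbourhood $W$ of $x$, I pick $m$ with $U_m \subseteq W$; for $n \ge N_m$, the block index $m'$ with $N_{m'} \le n < N_{m'+1}$ satisfies $m' \ge m$, so $x'_n \in U_{m'} \subseteq U_m \subseteq W$. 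Hence $x'_n \in W$ for all $n \ge N_m$, proving convergence.

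The only substantive ingredient is the openness of $\Q$, which is already available from the B\"onicke--Li observation cited immediately before the statement; granted this, the remainder is the routine first-countable lifting argument and presents no genuine obstacle. The one point requiring a little care is the blockwise indexing: I must arrange the thresholds $N_m$ to be strictly increasing so that each $n$ lies in a unique block and so that the membership $x'_n \in U_{m'}$ with $m' \ge m$ is inherited for all large $n$, which is precisely what forces $x'_n \to x$.
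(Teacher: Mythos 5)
Your proof is correct and is essentially the same argument the paper uses: the paper simply invokes the proof of \cite[Proposition~2.4]{Siwiec} (the standard lifting argument for open continuous maps from first-countable spaces), whereas you have written that argument out explicitly, with the openness of $\Q$ from B\"onicke--Li as the only substantive input in both cases. Your blockwise construction with strictly increasing thresholds $N_m$ is exactly the content of Siwiec's argument, so there is nothing to add.
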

\begin{proof}
Since $\Q$ is open, $\Q(U)$ is a neighbourhood of $\overline{[x]}$ in $\Q(G)$ for each
neighbourhood $U$ of $x$. Now putting $y_n \coloneqq \overline{[x_n]} \in \Q(G)$ for each $n$,
the proof of \cite[Proposition~2.4]{Siwiec} starting from the fifth sentence ``Let $F_n =
f^{-1}(y_n)\dots$,'' establishes the result.
\end{proof}

We will now use \cref{lem:siwiec+} together with harmonious families of bisections to
obtain a complete description of convergence of primitive ideals.

\begin{theorem}\label{thm:necsuff for convergence}
Let $X$ be a second-countable locally compact Hausdorff space and suppose
that $T\colon \N^k\curvearrowright X$ is an action by local homeomorphisms
that admits bisection families. For $x \in X$ and $z \in \T^k$, let
$\pi_{(x,z)}$ be as in \cref{ntn:pi}. Take $(x,z)\in X\times\T^k$, let $\Bb =
\big(B_\alpha)_{\alpha \in \J_x}$ be a harmonious family of bisections at
$x$, and let $P = \ker(\pi_{(x,z)})$. Let $(P_n)_n$ be a sequence in
$\Prim(C^*(G_T))$. Then $P_n\to \ker(\pi_{(x,z)})$ in $\Prim(C^*(G_T))$ if
and only if there is a sequence $(x'_n,z_n)_n$ in $X\times\T^k$ satisfying
\begin{enumerate}[label=(\roman*)]
\item $\ker(\pi_{(x'_n, z_n)}) = P_n$ for all $n\in\N$,
\item $x'_n \to x$ in $X$,
\item $z_n \to z$ along $(H_\Bb(x'_n))_n$.
\end{enumerate}
\end{theorem}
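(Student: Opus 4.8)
The plan is to prove the two implications separately, with the forward (sufficiency) direction essentially immediate and the reverse (necessity) direction carrying the weight.

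For sufficiency, suppose $(x'_n,z_n)_n$ satisfies (i)--(iii). Since $\Bb$ is harmonious it is in particular a family of open bisections with $\alpha\in B_\alpha\subseteq\coc^{-1}(\coc(\alpha))$, so \cref{prp:sufficient for convergence} applies verbatim to $(x'_n,z_n)_n$: condition~(ii) supplies $x'_n\to x$ and condition~(iii) supplies $z_n\to z$ along $(H_\Bb(x'_n))_n$, whence $\ker(\pi_{(x'_n,z_n)})\to\ker(\pi_{(x,z)})$, and condition~(i) gives $P_n\to P$.

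For necessity, the first step is to produce a convenient countable neighbourhood base at $P$. I would show that the sets $\pi\big((U\times V)\cdot(\Bb^{\ess})^\perp\big)$, as $U$ ranges over open neighbourhoods of $x$ inside $B_x$ and $V$ over open neighbourhoods of $z$, form a neighbourhood base at $P$. Each such set is an \emph{open} neighbourhood of $P$: replacing each $B_\alpha$ by $B_\alpha\cap s^{-1}(U)$ yields a harmonious family $\Bb'$ with $B'_x=U$ and $H_{\Bb'}(y)=H_\Bb(y)$ for $y\in U$ (the five conditions of \cref{def:Bf} are inherited, using that $r=s$ on $\Iess$ for condition~\ref{it:Bf inverses}), so that $\pi((U\times V)(\Bb^{\ess})^\perp)=A(\Bb',V)$ is open by \cref{thm:neighbourhood-basis}; moreover $(x,0,x)\in B_x\cap\Iess$ witnesses $x\in s(\Bb^{\ess})$, so $(x,1)\in(\Bb^{\ess})^\perp$ and hence $(x,z)\in(U\times V)\cdot(\Bb^{\ess})^\perp$. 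That these sets are cofinal among all neighbourhoods of $P$ is exactly \cref{thm:Aopen}: given open $A\ni P$, it provides $U\ni x$ and $V\ni z$ with $(U\times V)(\Bb^{\ess})^\perp\subseteq\pi^{-1}(A)$, hence $\pi((U\times V)(\Bb^{\ess})^\perp)\subseteq A$. Fixing decreasing countable neighbourhood bases $(U_m)_m$ at $x$ (inside $B_x$) and $(V_m)_m$ at $z$, the sets $\pi((U_m\times V_m)(\Bb^{\ess})^\perp)$ then form a countable neighbourhood base at $P$.

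I would then run a diagonal selection. Since $P_n\to P$, for each $m$ there is a strictly increasing $N_m$ with $P_n\in\pi((U_m\times V_m)(\Bb^{\ess})^\perp)$ for $n\ge N_m$; set $m(n)=\max\{m:N_m\le n\}$ for $n\ge N_1$, so $m(n)\to\infty$. Unwinding the definition of $\pi((U_{m(n)}\times V_{m(n)})(\Bb^{\ess})^\perp)$, membership of $P_n$ yields a representative $x'_n\in U_{m(n)}$ and $z_n\in V_{m(n)}H_\Bb(x'_n)^\perp$ with $\ker(\pi_{(x'_n,z_n)})=P_n$; for the finitely many $n<N_1$ choose any representative using surjectivity of $\pi$. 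This gives~(i); since $m(n)\to\infty$ and the $U_m$ shrink to $x$ we get $x'_n\to x$, which is~(ii); and since $z_n\in V_{m(n)}H_\Bb(x'_n)^\perp$ with $m(n)\to\infty$ and the $V_m$ shrink to $z$, for every neighbourhood $V$ of $z$ we have $z_n\in VH_\Bb(x'_n)^\perp$ for all large $n$, which is~(iii).

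The main obstacle is the necessity direction, and specifically the temptation to first fix representatives $x'_n\to x$ (say via \cref{lem:prim-to-Q-cts} and \cref{lem:siwiec+}) and only afterwards choose $z_n$: because $H_\Bb(\cdot)$ need not be constant on orbit closures, knowing merely that $P_n$ has \emph{some} representative lying in $V_mH_\Bb(\cdot)^\perp$ does not let one transfer the character estimate to a preselected representative $x'_n$. The resolution is to let the representatives be supplied directly by membership in the shrinking neighbourhoods $\pi((U_m\times V_m)(\Bb^{\ess})^\perp)$; taking $x'_n$ to be the witness forces both $x'_n\to x$ and the estimate $z_n\in V_{m(n)}H_\Bb(x'_n)^\perp$ at once, so no independent transport of characters along orbit closures is required.
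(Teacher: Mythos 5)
Your proof is correct, but the necessity direction takes a genuinely different route from the paper's. The paper fixes \emph{arbitrary} representatives $(x_n,z_n)$ of $P_n$, uses continuity of the quasi-orbit map (\cref{lem:prim-to-Q-cts}) and the sequence-covering property (\cref{lem:siwiec+}) to replace the first coordinates by a sequence $x'_n \to x$ with the same orbit closures, and then deduces condition~(iii) from the fact that $P_n$ eventually lies in each $A(\Bb,V)$ of~\labelcref{eq:basic-open}. Your route dispenses with the quasi-orbit machinery altogether: you build a countable neighbourhood base at $P$ out of the sets $\pi\big((U_m\times V_m)\cdot(\Bb^{\ess})^\perp\big)$ -- openness by restricting the harmonious family to $s^{-1}(U_m)$ (your verification that $\Bb\cap s^{-1}(U)$ is again harmonious with the same groups $H_\Bb(y)$ for $y\in U$ is a small extra lemma the paper never needs, and it checks out) together with \cref{thm:neighbourhood-basis}, and cofinality by \cref{thm:Aopen} -- and then extract the representatives $(x'_n,z_n)$ by a diagonal selection as the witnesses of membership. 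The obstacle you flag is real and is in fact present in the paper's own argument: membership $P_n\in A(\Bb,V)$ only provides \emph{some} representative $(y_n,w_n)$ with $w_n\in VH_\Bb(y_n)^\perp$, and since $H_\Bb(\cdot)$ need not agree at different points of $B_x$ representing the same primitive ideal, passing from this to the assertion that the preselected $z_n$ lies in $VH_\Bb(x'_n)^\perp$ requires a justification the paper's proof glosses over. Your construction sidesteps this entirely, because the representative and the character estimate arrive together; the price is that you must take the witnesses as your representatives rather than ones produced independently by \cref{lem:siwiec+}, which is harmless since the theorem only asserts existence of \emph{some} sequence of representatives. Your sufficiency direction, via \cref{prp:sufficient for convergence}, is the same argument as the paper's (the paper inlines an equivalent contradiction argument using \cref{thm:Aopen}).
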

\begin{proof}
First suppose that $P_n \to P$. Choose a sequence $(x_n, z_n)_n$ in $X\times \T^k$ such
that $P_n = \ker(\pi_{(x_n,z_n)})$ for all $n$. Then $\overline{[x_n]} \to
\overline{[x]}$ in $\Q(G_T)$ by~\cref{lem:prim-to-Q-cts}. By \cref{lem:siwiec+},
there is a convergent sequence $(x'_n)_n$ in $X$ such that $\overline{[x'_n]} =
\overline{[x_n]}$ for all $n$ and $x'_n \to x$. We must show that $z_n \to z$ along
$(H_\Bb(x'_n))_n$. For this, fix an open set $V\subset \T^k$ containing $z$.
\cref{thm:neighbourhood-basis} shows that the set $A = A(\Bb,V)$
of~\labelcref{eq:basic-open} is an open neighbourhood of $P$, so $P_n \in A$ for large
$n$. Hence for large $n$ there exists $z_n''\in V$ such that $(z_n)^h = (z_n'')^h$ for
all $h\in H_\Bb(x'_n)$. So $z_n \to z$ along $\big(H_\Bb(x'_n)\big)_n$.

Conversely, suppose that there is a sequence $(x'_n, z_n)$ satisfying the three
conditions in the theorem. Assume for contradiction that $P_n \not\to P$ and choose an
open neighbourhood $A$ of $\pi(x,z)$ and a subsequence $(P_{n_i})_i$ of $(P_n)_n$ such
that $P_{n_i} \not\in A$ for all $i$. By~\cref{thm:Aopen}, there exist an open $U
\subseteq X$ containing $x$ and an open $V \subseteq \T^k$ containing $z$ such that $(U
\times V) \cdot (\Bb^{\ess})^\perp \subseteq \pi^{-1}(A)$. In particular, $P_{n_i} \not
\in \pi((U \times V) \cdot (\Bb^{\ess})^\perp)$ for all $i$. Since $(x'_n, z_n)$
satisfies the conditions of the theorem, we have $x'_{n_i} \in U$ for large $i$. Since
$z_n \to z$ along $(H_\Bb(x_n))_n$, we have $z_{n_i} \to z$ along $(H_\Bb(x_{n_i}))_i$,
which forces $z_{n_i} \in V \cdot H_\Bb(x'_{n_i})^\perp = V \cdot H_\Bb(x_{n_i})^\perp$
for large $i$. Hence $(x_{n_i}, z_{n_i}) \in (U \times V) \cdot (\Bb^{\ess})^\perp$ for
large $i$, contradicting that $\ker(\pi_{(x_{n_i}, z_{n_i})}) = P_{n_i} \notin A$.
\end{proof}

\section{Examples}\label{sec:examples}

In this section, we provide a number of examples. Firstly, we apply our theorems to
recover the primitive-ideal space of the $C^*$-algebra of the ``dumbbell graph'' of
\cref{eg:dumbbell}, and then also those of $C^*$-algebras of arbitrary row-finite graphs
with no sources, and of crossed products of locally compact Hausdorff spaces by actions
of $\Z^k$. These examples are intended to be illustrative. The results are not new.

Next we outline how our results and techniques relate to those of Katsura in
\cite{Katsura2021}. Much of the work on the present paper had been completed when
Katsura's work was posted on the arXiv, so naturally we were interested to determine how
the two approaches relate. It turns out that our techniques can be used in Katsura's
setting, but not by a straightforward application of our main theorem. Combining the two
approaches seems like an avenue for future exploration.

Finally, we demonstrate that our results and hypotheses are checkable for all $2$-graph
groupoids. The question of how to describe the primitive-ideal spaces of $C^*$-algebras
of $2$-graphs was the initial motivation for our line of investigation, so we detail how
our main results answer this question.

\subsection{The Dumbell graph}\label{sec:dumbbell}

The ideal structure of the $C^*$-algebra of the dumbell graph is relatively simple, but
illuminating. It is also well-known---it is an example of the original results of an Huef
and Raeburn on primitive-ideal spaces of Cuntz--Krieger
algebras~\cite{anHuef-Raeburn1997}, and is analysed explicitly
in~\cite[Example~3.4]{Sims-Williams}.

First recall that the dumbell graph is the graph $E$ depicted below
\[
\begin{tikzpicture}
	\node[inner sep=0pt, circle] (v) at (0,0) {$v$};
	\node[inner sep=0pt, circle] (w) at (2,0) {$w$};
	\draw[-stealth] (w) to node[pos=0.5, above] {$f$} (v);
	\draw[stealth-] (v.north) arc[radius=0.75, start angle=5, end angle=343];
	\draw[-stealth] (w.south) arc[radius=0.75, start angle=195, end angle=530];
	\node[inner sep=0pt, circle, anchor=east] at (-1.5,0) {$\vphantom{g}e$};
	\node[inner sep=0pt, circle, anchor=west] at (3.5,0) {$g\vphantom{e}$};
\end{tikzpicture}
\]
As discussed in \cref{eg:dumbbell}, the essential isotropy in $G_E$ is
\[
\{(e^\infty, n, e^\infty) : n \in \Z\} \cup \{(e^mfg^\infty, n, e^mfg^\infty) : m\ge0, n \in \Z\}
	\cup \{(g^\infty, n, g^\infty) : n \in \Z\}.
\]
The unit space is a clopen subset homeomorphic to $\N \cup \{\infty\}$, with $(e^\infty,
n, e^\infty)$ identified with the point at infinity, and the remainder of $\Iess$ is
discrete.

In particular, there are just two orbits, namely $[e^\infty] = \{e^\infty\}$,
and $[g^\infty] = E^\infty \setminus \{e^\infty\}$. Their orbit closures are
$\{e^\infty\}$ and $E^\infty$ respectively. For $x \in E^\infty$ and $z \in
\T$, let $\pi_{(x,z)}$ be as in \cref{ntn:pi}. Then the primitive ideals of
$C^*(E)$ are $\{\ker(\pi_{(e^\infty, z)}), \ker(\pi_{(g^\infty, z)}) : z \in
\T\}$. For each of $x = e^\infty$ and $x = g^\infty$, the group $H(x) =
\coc(\Iess_x)$ is $\Z$, so two elements $z, z' \in \T$ induce the same
character of $H(x)$ if and only if they are equal. Hence $\pi : (x,z) \to
\ker(\pi_{(x, z)})$ is a bijection $\{e^\infty, g^\infty\} \times \T \to
\Prim(C^*(E))$.

To describe the topology, first observe that since $g^\infty$ is an isolated point in
$E^\infty$, we obtain a harmonious family of bisections $\Bb^g$ at $g^\infty$ by putting
$B_{(g^\infty, n, g^\infty)} = \{(g^\infty, n, g^\infty)\}$. By
\cref{thm:neighbourhood-basis}, the sets $\{A(\Bb^g, V) : V \subseteq \T\text{ is
open}\}$ are a basis for the topology on the clopen subset $\{\ker(\pi_{(g^\infty, z)} :
z \in \T\}$, and we deduce that this subset is homeomorphic to $\T$ via
$\ker(\pi_{(g^\infty, z)} \mapsto z$.

Now consider $e^\infty$. To lighten notation, we write $\alpha(n) \coloneqq (e^\infty, n,
e^\infty)$ for each $n \in \Z$. We define bisections $C_{\alpha(n)}$ as follows:
\[
C_{\alpha(n)} \coloneqq \begin{cases}
		Z(e^n, v) &\text{ if $n > 0$}\\
		E^\infty &\text{ if $n = 0$}\\
		Z(v, e^{-n}) & \text{if $n < 0$}
\end{cases}
\]
We claim that $\mathcal{C} \coloneqq (C_{\alpha(n)})_{n \in \Z}$ is a harmonious family
of bisections for $e^\infty$. Clearly each $C_{\alpha(n)}$ is an open bisection.
Conditions \ref{it:Bf units}~and~\ref{it:Bf homogeneous} are immediate from the
definition of the $C_{\alpha(n)}$. Since each $C_{\alpha(n)}$ is a compact open
bisection, it satisfies~\ref{it:compact.open} with $K_{\alpha(n)} = s(C_{\alpha(n)})$.
For conditions \ref{it:Bf inverses}~and~\ref{it:Bf right inv}, it suffices to show that
for $n \not= 0$, we have $C_{\alpha(n)} \cap \Iess = \{\alpha(n)\}$. To see this, observe
that if $x \in r(C_{\alpha(n)})$, then $(x, n, \sigma^n(x))$ is the unique element of
$C_{\alpha(n)}$, so it suffices to fix $x \in r(C_{\alpha(n)}) \setminus \{e^\infty\}$
and show that $\sigma^n(x) \not= x$. For this, note that by definition of $C_{\alpha(n)}$
we have $x = e^n x'$ and $\sigma^n(x) = x'$ for some $x'$. Since $x \not= e^\infty$, we
deduce that there exists $k \ge 0$ such that $x = e^{n+k}fg^\infty$, and hence $x' = e^k
f g^\infty \not= x$.

Now given $z \in \T^k$, \cref{thm:Aopen,thm:neighbourhood-basis} imply that the sets $(U
\times V)\cdot(\mathcal{C}^{\ess})^\perp$ ranging over open neighbourhoods $U$ of
$e^\infty$ and $V$ of $z$ are the pre-images of a neighbourhood basis for
$\ker(\pi_{(e^\infty, z)})$. Since, for any unit $y \not= e^\infty$ in $U$, we have
$\mathcal{C}^{\ess}_y \cap (G_E)_y = \{y\}$, we see that $(\mathcal{C}^{\ess})^\perp_y =
\{y\} \times \T$ for $y \not= e^\infty$. Since $\mathcal{C}^{\ess}_{e^\infty} \cap
(G_E)_{e^\infty} = \{\alpha(n) : n \in \Z\}$, we have $(\Bb^{\ess})^\perp_y =
\{e^\infty\} \times \{1\}$. So for any neighbourhood $U$ of $e^\infty$ and any
neighbourhood $V$ of $z$, we have
\[
	(U \times V)\cdot(\mathcal{C}^{\ess})^\perp = \big(\{e^\infty\}\times V\big) \cup \big((U \setminus \{e^\infty\}) \times \T\big).
\]
Since $\ker(\pi_{(x, z)}) = \ker(\pi_{(g^\infty, z)})$ for $x \in
E^\infty\setminus\{e^\infty\}$, it follows that a basic open neighbourhood of
$\ker(\pi_{(e^{\infty}, z)})$ has the form
\[
\{\ker(\pi_{(e^\infty, w)}) : w \in V\} \cup \{\ker\pi_{(g^\infty, w)} : w \in \T\}.
\]

To summarise, if we put
\[
P_v \coloneqq \{\ker(\pi_{(g^\infty, z)}) : z \in \T\}\quad\text{ and }\quad P_w \coloneqq \{\ker(\pi_{(e^\infty, z)}) : z \in \T\},
\]
then $\Prim(C^*(E)) = P_v \sqcup P_w$; the subset $P_w$ is open; the map
$\ker(\pi_{(e^\infty, z)}) \mapsto z$ is a homeomorphism of $P_v$ in the relative
topology onto $\T$; the map $\ker(\pi_{(g^\infty, z)}) \mapsto z$ is a homeomorphism of
$P_w$ in the relative topology onto $\T$; and for any $z$, the closure of the point
$\ker(\pi_{(g^\infty, z)})$ is $P_v \cup \ker(\pi_{(g^\infty, z)})$.

\begin{remark}
As mentioned in~\cite[Example~3.4]{Sims-Williams}, this example demonstrates that the map $\pi \colon (x, z) \mapsto \ker(\pi_{(x,z)})$ is not an open map,
since the image of $W \coloneqq E^\infty \times \{z \in \T : \operatorname{Re}(z) > 0\}$ is not open.
Indeed, $\pi$ is not even a quotient map: $\pi(W) = \{\ker(\pi_{(e^\infty, z)}), \ker(\pi_{(g^\infty, z)}) : \operatorname{Re}(z) > 0\}$,
and since $\pi^{-1}(\ker(\pi_{(g^\infty, z)})) = \{\ker(\pi_{(\alpha g^\infty, z)}) : \alpha \in E^* w\}$ it follows that $\pi^{-1}(\pi(W)) = W$;
so $\pi(W)$ is a subset of $\Prim(C^*(E))$ that is not open but whose preimage in $E^\infty \times \T$ is open.
\end{remark}

\subsection{Graph groupoids}

The analysis of the dumbbell graph above extends to arbitrary row-finite graphs with no
sources. Our results here are not new---they are special cases of the theorems of
\cite{Hong-Szymanski}, and also appear in \cite{Carlsen-Sims}. We include them to
indicate how our results relate to those papers.

The orbit closures are indexed by the maximal tails $T$ as in \cite{Hong-Szymanski},
but for our picture, we prefer to describe them in terms of orbit closures. The correspondence is as follows.
Given an infinite path $x$ with orbit-closure
\[
\overline{[x]} = \{y :\text{ for each }m \ge 0\text{ there exists } n \ge 0\text{ such that } r(y_m) E^* r(x_n)\not= \varnothing\},
\]
the corresponding maximal tail $T_x$ is the set of vertices $v$ such that $v E^* r(x_n) \not=\varnothing$ for some $n$.
Conversely, given a maximal tail $T$, enumerate the vertices of $T$ as $(v_1, v_2, \dots)$.
Let $w_1 \coloneqq v_1$.
Condition~(3) for maximal tails ensures that there exists $w^0_2$ such that $w_1 E^* w^0_2$ and $v_2 E^* w^0_2$ are nonempty.
Condition~(2) for maximal tails ensures that there exists $e \in w^0_2 E^1$ with $s(e) \in T$. So $w_2 \in T$
has the property that $w_1 E^* w_2 \setminus E^0$ and $v_2 E^* w_2$ are nonempty.
Repeating this procedure, we obtain a sequence $w_n$ such that $w_1 = v_1$ and $w_n E^* w_{n+1} \setminus E^0$ and $v_{n+1} E^* w_{n+1}$ are both nonempty for all $n$.
For each $n$, fix $\mu_n \in w_n E^* w_{n+1}$.
Then $x = \mu_1 \mu_2 \dots$ is an infinite path.
We clam that $T = T_x$.
Since each $v_n E^* w_n \not= \varnothing$, we see that $v_n \in T_x$ for all $n$, and so $T \subseteq T_x$.
For the reverse containment, fix $v \in T_x$.
Then $v E^* r(x_i) \not= \varnothing$ for some $i$, say $\alpha \in v E^* r(x_i)$.
Since each $|\mu_j| \ge 1$ there exists $n$ such that $l \coloneqq |\mu_1 \dots \mu_n| \ge i$.
So $\mu_1 \dots \mu_n = x_1 \dots x_i x_{i+1} \dots x_l$ and in particular $\alpha x_i x_{i+1} \dots x_l \in v E^* r(x_{l+1})$.
So $v \in T$ by condition~(1) for maximal tails. So $T_x \subseteq T$ as required.

If $x$ is an infinite path such that every cycle in $E T_x$ has an entrance in $T_x$,
then $\Iess_x = \{x\}$, so constructing a bisecton family at $x$ is trivial (just take
$B_x = Z(r(x))$). If $x$ is an infinite path and there is a cycle $\mu \in E T_x$ with no
entrance in $E T_x$, then $x = \alpha\mu^\infty$ for some $\alpha$, and we may as well
take $\alpha = r(\mu)$ and $x = \mu^\infty$, and assume that $\mu$ is the cycle of
minimal length such that $\mu^\infty = x$ (that is, that $\mu$ is not a multiple of a
shorter cycle). In this instance, $H(x) = |\mu| \Z$, and $\J_x = \{(\mu^\infty, n|\mu|,
\mu^\infty) : n \in \Z\}$. We write $\alpha(n) \coloneqq (\mu^\infty, n|\mu|, \mu^\infty)$ for
all $n$. Define
\begin{equation}\label{eq:BF for graph}
B_{\alpha(n)} \coloneqq \begin{cases}
	Z(\mu^n, r(\mu)) &\text{ if $n > 0$}\\
	E^\infty &\text{ if $n = 0$}\\
	Z(r(\mu), \mu^{-n}) & \text{if $n < 0$.}
\end{cases}
\end{equation}
We claim that $B_{\alpha(n)} \cap \Iess = \{\alpha(n)\}$ for all $n$. To see this,
suppose that $x \in r(B_{\alpha(n)}) \setminus \{\mu^\infty\}$. Since $\mu$ has no
entrance in $ET_x$, there exists a minimum $i \in \N$ such that $s(x_i) \not\in T$. We
have $x = \mu^n x'$ for some $x'$, and since the vertices on $\mu$ belong to $t$, we have
$i > n|\mu|$. Hence the minimum $j \in \N$ such that $s(x'_j) \not\in T$ is $j = i -
n|\mu| \not= i$, and so $x' \not= x$. Since $(x, n|\mu|, x')$ is the unique element of
$B_{\alpha(n)}$ with range $x$, we deduce that $B_{\alpha(n)} \cap \I(G_E)_x =
\varnothing$. Since $x \in r(B_{\alpha(n)}) \setminus \{\mu^\infty\}$ was arbitrary, we
deduce that $B_{\alpha(n)} \cap \I(G_E) = B_{\alpha(n)} \cap \I(G_E)_{\mu^\infty} =
\{\alpha(n)\}$ as claimed. It now follows just as in the dumbbell graph that $\Bb =
(B_{\alpha(n)})_{n \in \Z}$ is a harmonious family of bisections at $\mu^\infty$.

Let $S \subseteq E^\infty$ be a set containing one representative for each
orbit closure. We define a partial order $\le$ on $S$ by $y \le x$ if and
only if $y \in \overline{[x]}$. Write $S = S_a \sqcup S_p$ where $S_a$
consists of precisely the elements of $S$ such that every cycle in the
associated maximal tail has an entrance. For each $x \in S_p$ we can choose a
cycle $\mu$ of minimal length with no entrance in the tail corresponding to
$x$, and may assume that $x = \mu^\infty$. For $x \in E^\infty$ and $z \in
\T$, let $\pi_{(x,z)}$ be as in \cref{ntn:pi}. Then
\[
\Prim(C^*(E)) = \{\ker(\pi_{(x,z)}) : x \in S\},
\]
and that $\ker(\pi_{(x, z)}) = \ker(\pi_{(x, w)})$ for all $z,w$ if $x \in S_a$, whereas
$\ker(\pi_{(\mu^\infty, z)}) = \ker(\pi_{(\mu^\infty, w)})$ if and only if $z^{|\mu|} =
w^{|\mu|}$ when $\mu^\infty \in S_p$.

To describe point closures, we argue precisely as in the dumbbell-graph example to see
that for any infinite path $y \in S$ and any $z$, the closure of $\ker(\pi_{(y, z)})$ is
$\ker(\pi_{(y, z)}) \cup \{\ker(\pi_{(y', w)}) : y' < y\text{ and } w \in \T\}$.

We can recover \cite[Theorem~4.1]{Carlsen-Sims}, which describes the closure operation in
the hull-kernel topology as follows. Fix a set $Y$ of pairs $(x, z)$ consisting of an
infinite path $x$ and an element $z \in \T$. To describe the corresponding set of pairs
$(T, z)$ consisting of a maximal tail and an element of $\T$ in
\cite[Theorem~4.1]{Carlsen-Sims}, for a maximal tail $T$, we define $\Per(T)$ to be equal
to $n$ if $T$ contains a cycle with no entrance and the minimal length of such a cycle is
$n$, and to be 0 otherwise. Then the set of pairs appearing in
\cite[Theorem~4.1]{Carlsen-Sims} is $\{(T_x, z^{\Per(T_x)}) : (x,z) \in Y\}$. We must
show that $\ker(\pi_{(y, w)})$ belongs to the closure of $\{\ker(\pi_{(x,z)}) : (x,z) \in
Y\}$ if and only if $T_y \subseteq \bigcup_{(x,z) \in Y} T_x$ and if $\Per(T_y) \not= 0$
and the cycle $\mu$ with no entrance in $T_y$ has no entrance in $\bigcup_{(x, z) \in Y}
T_x$ then $z^{\Per(T_y)} \in \overline{\{z^{\Per(T_x)} : (x, z) \in Y\text{ and }T_x =
T_y\}}$.

First observe that if $T_y \not\subseteq \bigcup_{(x,z) \in Y} T_x$ then there is a
neighbourhood $U$ of $y$ that does not intersect $\bigcup_{(x, z) \in Y} [x]$, so for any
harmonious family of bisections $\Bb$ at $y$, the set $\pi\big((U \times \T^k) \cdot
(\Bb^{\ess})^\perp\big)$ is a neighbourhood of $\ker(\pi_{(y, w)})$ that is disjoint from
$\{\ker(\pi_{(x,z)}) : (x,z) \in Y\}$. So we suppose that $T_y \subseteq \bigcup_{(x,z)
\in Y} T_x$ and prove that $\ker(\pi_{(y, w)}) \in \overline{\{\ker(\pi_{(x,z)}) : (x,z)
\in Y\}}$ if and only if, if $\Per(T_y) \not= 0$ and the cycle $\mu$ with no entrance in
$T_y$ has no entrance in $\bigcup_{(x, z) \in Y} T_x$ then $z^{\Per(T_y)} \in
\overline{\{z^{\Per(T_x)} : (x, z) \in Y\text{ and }T_x = T_y\}}$.

First suppose that every cycle in $T_y$ has an entrance. We saw in Example~\ref{eg:graph
groupoids} that in a graph groupoid, $\Iess \setminus E^\infty$ is discrete, so $\J_y =
\Iess_y = \{y\}$. Hence a harmonious family of bisections at $y$ is just an open
neighbourhood $B_y$ of $y$ in $E^\infty$. For any open $V \subseteq \T$, the
corresponding neighbourhood $A(\Bb, V)$ of $\ker(\pi_{(y, w)})$ is $\{\ker(\pi_{(x, z)})
: x \in U, z \in \T\}$, so we see that $\ker(\pi_{(y, w)}) \in
\overline{\{\ker(\pi_{(x,z)}) : (x,z) \in Y\}}$ precisely if $U \cap \bigcup_{(x,z) \in
Y} [x]$ is nonempty for every neighbourhood $U$ of $y$; that is, if and only if $y$ is in
the closure of $\bigcup_{(x,z) \in Y} [x]$, which is precisely if $T_y \subseteq
\bigcup_{(x,z) \in Y} T_x$.

Now suppose that there is a cycle with no entrance in $T_y$, and let $\mu$ be such a
cycle of minimal length. Then $y = \alpha\mu^\infty$ for some $\alpha$ and since
$\ker(\pi_{(\alpha\mu^\infty, z)}) = \ker(\pi_{(\mu^\infty,z)})$ for all $z$, we may as
well assume that $y = \mu^\infty$. We must consider two cases.

Case~1: suppose that $\mu$ has an entrance in $\bigcup_{(x, z) \in Y} T_x$. Then there is
a sequence $(x_n, z_n)$ such that each $[x_n] = [x'_n]$ for some $(x'_n, z_n) \in Y$, and
there is no cycle with an entrance in each $T_{x_n}$. For the harmonious family of
bisections $\Bb$ at $y$ described in~\eqref{eq:BF for graph}, we then have $H_\Bb(x_n) =
\{0\}$ for all $n$. So for any open $U$ containing $y$ and contained in $B_y$ and any
open $V \subseteq \T$, we have $x_n \times \T$ eventually contained in $(U \times V)
\cdot (\Bb(y)^{\ess})^\perp$. Thus $\ker(\pi_{(y, w)}) \in \overline{\{\ker(\pi_{(x,z)})
: (x,z) \in Y\}}$ by \cref{cor:base for topology}.

Case~2: suppose that $\mu$ has no entrance in $\bigcup_{(x, z) \in Y} T_x$. Let $\Bb$ be
the harmonious family of bisections at $y$ described in~\eqref{eq:BF for graph}. Since
$\mu$ has no entrance in any $T_x$, the sets $W_y \coloneqq \{x : [x] = [y]\}$ and $W'
\coloneqq \{x : r(\mu) \not\in T_x\}$ satisfy $\{x : (x, z) \in Y\} = W_y \sqcup W'$. The
set $W_y$ is nonempty because $T_y \not\subseteq \bigcup_{(x,z) \in Y} T_x$. Since $y =
\mu^\infty$, we have $\pi\big((U \times V) \cdot (\Bb^{\ess})^\perp\big) \cap
\{\ker(\pi_{(x, z)}) : x \in W'\} = \varnothing$, and so $\ker(\pi_{(y, w)}) \in
\overline{\{\ker(\pi_{(x,z)}) : (x,z) \in Y\}}$ if and only if
\begin{align*}
\ker(\pi_{(y, w)})
    &\in \overline{\{\ker(\pi_{(x,z)}) : (x,z) \in Y\text{ and }x \in W_y\}}\\
    &= \overline{\{\ker(\pi_{(y,z)}) : (x,z) \in Y\text{ and }x \in W_y\}}.
\end{align*}
The set $\{\ker(\pi_{(y, z)}) : z \in \T\}$ is homeomorphic to $\T$ via the map induced
by $z \mapsto z^{|\mu|}$, so
\begin{align*}
\ker(\pi_{(y, w)}) &{}\in \overline{\{\ker(\pi_{(y,z)}) : (x,z) \in Y\text{ and }x \in W_y\}} \\
&{}\Longleftrightarrow w^{|\mu|} \in \overline{\{z^{|\mu|} : (x,z) \in Y\text{ and }T_x = T_y\}}
\end{align*}
as required.

\subsection{Crossed products by free abelian groups}\label{subsec:cps}

The primitive ideals of crossed products of spaces by free abelian group are well understood, see e.g. \cite[Theorem 8.39]{Williams2007}.
Here we describe how to recover their structure from our results.

Let $X$ be a second-countable locally compact Hausdorff space and suppose that $T\colon \Z^k \curvearrowright X$ is an action by homeomorphisms of $X$.
The groupoid $G_T \coloneqq \{(x, n, y) \in X \times \Z^k \times X : T^n(y) = x\}$ is identical to the Deaconu--Renault groupoid of the restriction of $T$ to $\N^k$,
and the map $(x, n, y) \mapsto (n,y)$ is an isomorphism of $G_T$ onto $\Z^k \times X$.

Note that if $y \in \overline{[x]}$, then $T^{n_i} x \to y$ for some sequence $(n_i)_i$, so $T^{-n_i} y \to x$.
This shows that $\overline{[x]} = \overline{[y]}$ if and only if $y \in \overline{[x]}$.
Moreover, if $T^n x = x$, then for every $m \in \Z^k$, we have $T^n(T^m x) = T^m T^n x = T^m x$,
and so $T^n y = y$ for all $y \in [x]$.
By continuity, it follows that $T^n y = y$ for all $y \in \overline{[x]}$.
Therefore, $\Iess(G_T) = \I(G_T)$ which is closed so $\J_x = \Iess_x = \I(G_T)_x$ for every unit $x\in X$.
In particular, all isotropy is essential isotropy.

For every periodic point $x\in X$ and every $n\in \Z^k$ with $T^n x = x$, the sets
\[
  B^x_{(x, n, x)} \coloneqq \{(T^n y, n, y) : y \in X\}
\]
constitute a harmonious family of bisections at $x$, cf.~\cref{lem:commuting-bisections}.
Hence $T$ admits harmonious families of bisections.

We write $\operatorname{Stab}(x) = \{n \in \Z^k : T^nx = x\}$ for the
stabiliser group. For $x \in X$ and $z \in \T^k$, let $\pi_{(x,z)}$ be as in
\cref{ntn:pi}. The primitive ideals of $C^*(G_T) \cong C_0(X) \rtimes \Z^k$
are the kernels $\{\ker(\pi_{(x, z)}) : x \in X, z \in \Z\}$, and we have
$\ker(\pi_{(x, w)}) = \ker(\pi_{(y, z)})$ precisely if $y \in \overline{[x]}$
and $w^n = z^n$ for all $n \in \Stab(x)$. A neighbourhood base at
$\ker(\pi_{(x, z)})$ is described by \cref{thm:Aopen,thm:neighbourhood-basis}
as follows. Given open neighbourhoods $x \in U \subseteq X$ and $z \in V
\subseteq \T^k$, the set
\[
A^x(U, V) = \{\ker(\pi_{(y, w)}) : y \in U\text{ and }w^n = z^n\text{ for all }n \in \Stab(y) \cap \Stab(x)\}
\]
is an open neighbourhood of $\ker(\pi_{(x, z)})$, and the collection
\[
  \{A^x(U, V) : x \in U \subseteq_{\text{open}} X\text{ and }z \in V\subseteq_{\text{open}} \T^k\}
\]
is a neighbourhood base at $x$.

By \cref{thm:necsuff for convergence}, a sequence $\big( \ker(\pi_{(x_i, z_i)}) \big)_i$ of primitive
ideals of $C_0(X) \rtimes \Z^k$ converges to $\ker(\pi_{(x,z)})$ if and only if
for every subsequence $\big(\ker(\pi_{(x_{i_j}, z_{i_j})}) \big)_j$ there is a sequence $(y_j, w_j)_j$ in $X\times \T^k$ such that
\begin{enumerate}
\item $y_j \in \overline{[x_{i_j}]}$ and $(w_j)^n = z_{i_j}^n$ for all $n \in \Stab(x_{i_j})$ for all $j$;
\item $y_j \to x$; and
\item $z_i$ converges to $z$ along $\big(\Stab(x_i) \cap \Stab(x)\big)_i$ in the sense of \cref{dfn:converges along}.
\end{enumerate}

\subsection{Singly generated dynamical systems}\label{sec:Katsura stuff}
This paper was fairly advanced when Katsura's work \cite{Katsura2021} was posted to the arXiv.
In it, Katsura completely describes the ideals of the $C^*$-algebra of a singly generated dynamical system,
which are merely (partially defined) local homeomorphisms on a locally compact Hausdorff space,
and this had been one of our main objectives.
Katsura applied the technology of $C^*$-correspondences and his adaptation of Cuntz--Pimsner algebras \cite{Katsura2004} to define the $C^*$-algebras of topological graphs.
Here we outline how we can recover his results in the setting of globally defined local homeomorphisms of second-countable locally compact Hausdorff spaces.

Let $X$ be a second-countable locally compact Hausdorff space and suppose that $T\colon \N\curvearrowright X$ is an action by local homeomorphisms.
We will use ideas and terminology from our previous work \cite{BriCarSim:sandwich}.
Our result \cite[Theorem~3.5]{BriCarSim:sandwich} shows that the problem of describing the ideal structure of $C^*(G_T)$
can be reduced to considering nested open invariant subsets $U \subseteq V \subseteq X$
and describing all ideals $J$ of $C^*((G_T)|_{V \setminus U})$ satisfying $J \cap C_0(V \setminus U) = \{0\}$ and $\supp(J) = V \setminus U$.
We called such ideals purely non-dynamical with full support.

Since $U$ and $V$ are invariant, so is $V \setminus U$, and $(G_T)|_{V \setminus U}$ is
precisely the Deaconu--Renault groupoid of the restricted action $T \colon \N
\curvearrowright (V \setminus U)$. So we may as well replace $X$ with $V \setminus U$;
our task is then to describe all of the ideals $J$ of $C^*(G_T)$ such that $J \cap C_0(X)
= \{0\}$ and $\supp(J) = G_T$.
Note that the open invariant sets in our sense are
precisely the $T$-invariant sets in the sense of \cite[Definition~3.1]{Katsura2021}.

Assume that $G_T$ admits a purely non-dynamical ideal $J$ with full support.
By \cite[Lemma~5.2]{BriCarSim:sandwich} (cf. \cite[Example~7.6]{AraLolk}) the groupoid $G_T$
is jointly effective where it is effective. For each $p \in \N$, let
\[
\mathcal{P}_p \coloneqq \bigcup\{V \subseteq X : V\text{ is open and } T^p(y)= y\text{ for all }y \in V\},
\]
and let
\[
P = \bigcup_{p > 0} P_p.
\]
Then \cite[Lemma~5.1]{BriCarSim:sandwich} shows that complement in $X$ of the points at
which $G_T$ is effective in the sense of \cite[Definition~4.1]{BriCarSim:sandwich} is
precisely $P' \coloneqq \{x \in X : [x] \cap P \not= \varnothing\}$. Hence
\cite[Theorem~4.12]{BriCarSim:sandwich} (cf. \cite[Theorem~7.12]{AraLolk}) implies that
$J$ is contained in the ideal $I_{P'}$. Since $\supp(J) = G_T$ we deduce that $P' = X$,
and so $P$ is a full open subset of $X$. Hence by \cite[Example~2.7]{MuhRenWil}, the
inclusion $C^*(G_T|_P) \hookrightarrow C^*(G_T)$ defines a Morita equivalence, and in
particular induces a bijection between ideals of $C^*(G_T|_P)$ and $C^*(G_T)$.

For each $P_n$, the map $T^{n-1}$ is an inverse for $T|_{P_n}$, so $T$ is injective on
each $P_n$ and hence on $P$. Since $T$ is a local homeomorphism, it follows that it
restricts to a homeomorphism of $P$. So $G_T|_P$ is the transformation groupoid
associated to the homeomorphism $T|_P$ on $P$, so it admits harmonious families of
bisections, and its ideals are described by \cref{subsec:cps} above.

\begin{remark}
To relate our discussion to Katsura's work, observe that given an action $T
\colon \N \curvearrowright X$, the representations $\pi_{x,z}$ of $C^*(G_T)$
from \cite{Sims-Williams}, whose kernels coincide with those of the
representations $\pi_{(x,z)}$ of \cref{ntn:pi} by \cref{rmk:H=Zk}, are
precisely the representations described by Katsura in
\cite[Definition~2.6]{Katsura2021}. For an ideal $I$ of $C^*(G_T)$, the set
$W = \{(x,z) : I \not\subseteq \ker(\pi_{(x,z)})\}$ is the complement of the
set $Y = Y_I$ of \cite[Definition~5.1]{Katsura2021}. So \cref{prp:U-V for DR}
shows that the sandwich sets $U$ and $V$ of
\cite[Lemma~3.3]{BriCarSim:sandwich} are the sets $\{x : Y_x = \varnothing\}$
and $\{x : Y_x = \T\}$, respectively, that feature in Katsura's definition
\cite[Definition~5.7]{Katsura2021}.
\end{remark}

\subsection{Rank-two graphs} \label{sec:rank2}
A significant goal for this project was to obtain a systematic approach to calculating
the primitive-ideal spaces of the $C^*$-algebras of $k$-graphs, and in particular to
explicitly calculate the primitive ideal spaces of $C^*$-algebras of $2$-graphs. In this
section, we show that $2$-graph groupoids always admit harmonious families of bisections,
and therefore that \cref{prp:ideal generators} and \cref{thm:necsuff for convergence} can
be used to compute the ideal spaces of arbitrary $2$-graph algebras. This is the first
general computation of the ideal lattices of $2$-graph $C^*$-algebras.

Let $\Lambda$ be a row-finite rank-two graph with no sources as
in~\cite{Kumjian-Pask2000}. The infinite path space $X = \Lambda^\infty$ is
second-countable and locally compact and Hausdorff, and the two translation operations
$T= (T^{\varepsilon_1}, T^{\varepsilon_2})$ are local homeomorphisms~\cite[Remarks
2.5]{Kumjian-Pask2000}, so $T\colon \N^2\curvearrowright X$ is an action of commuting
local homeomorphisms. We will show that any point in the path space admits a harmonious
family of bisections.

Fix $x\in X$. If $\J_x = \{x\}$ then $\mathcal{B} = \{B_x\}$ where $B_x = X$ is trivially
a harmonious family of bisections at $x$. Suppose that $\J_x$ is a rank-one subgroup of
$\Z^2$ with generator $h\in \Z^2$. Let $B_h \subset X\times \{h\}\times X$ be any compact
open bisection containing $(x,h,x)$. Then the association $nh \mapsto (B_h)^n$ defines a
harmonious family of bisections at $x$.

Now suppose that $\J_x$ is a rank-two subgroup of $\Z^2$. By
\cref{lem:positive-generation}, there exists $h_1, h_2\in \coc(\J_x) \cap \N^2$ that
generate $\coc(\J_x)$. Fix $n \in \N^2$ such that
\begin{equation} \label{eq:2-graph-n}
  T^{h_1 + n} x = T^n x = T^{h_2 + n} x.
\end{equation}
We first construct an explicit harmonious family of bisections in the situation where $n
= 0$.

Consider the open sets $U_i \coloneqq Z(x_{[0,h_i]})$ in $X$ and define open bisections
\begin{equation} \label{eq:2-graph-Bf-generators}
  B_{h_i} \coloneqq Z(U_i, h_i, 0, X)
\end{equation}
which contain $(x, h_i, x)$, for $i = 1,2$.

We record some technical properties.

\begin{lemma}\label{lem:2-graph setup}
  The bisections defined in~\labelcref{eq:2-graph-Bf-generators} commute, and they satisfy
 \begin{align}
    B_{h_2} B_{h_1}^{-1} &\subset B_{h_1}^{-1} B_{h_2}, \label{eq:2-graph-Bf-1} \\
    B_{h_1} B_{h_2}^{-1} &\subset B_{h_2}^{-1} B_{h_1}. \label{eq:2-graph-Bf-2}
  \end{align}
\end{lemma}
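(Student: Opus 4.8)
The plan is to reduce both the commuting relation and the two containments to a pair of factorization identities coming from the $n=0$ periodicity hypothesis, and then to compute the relevant products of open bisections directly. Since we are in the case $n = 0$, equation~\eqref{eq:2-graph-n} reads $T^{h_1}x = x = T^{h_2}x$; equivalently, $x_{[h_i, h_i + m]} = x_{[0,m]}$ for every $m \in \N^2$. Feeding this into the factorization property of $\Lambda$ gives the identities
\[
  x_{[0, h_1 + h_2]} = x_{[0,h_1]}\,x_{[h_1, h_1 + h_2]} = x_{[0,h_1]}\,x_{[0,h_2]} = x_{[0,h_2]}\,x_{[0,h_1]},
\]
and I would record these first, since this is essentially the only place the periodicity is used.

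For the commuting relation I would write the bisections of~\eqref{eq:2-graph-Bf-generators} explicitly as $B_{h_i} = \{(u, h_i, T^{h_i}u) : u \in U_i\}$ with $U_i = Z(x_{[0,h_i]})$, and compute the groupoid product. A direct calculation gives $B_{h_1}B_{h_2} = \{(u, h_1 + h_2, T^{h_1+h_2}u) : u \in U_1,\ T^{h_1}u \in U_2\}$, and describing $U_1, U_2$ as cylinder sets together with the identity $x_{[0,h_1]}x_{[0,h_2]} = x_{[0,h_1+h_2]}$ shows that the index set $\{u \in U_1 : T^{h_1}u \in U_2\}$ is exactly $Z(x_{[0,h_1+h_2]})$. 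The symmetric computation, using $x_{[0,h_2]}x_{[0,h_1]} = x_{[0,h_1+h_2]}$, identifies $B_{h_2}B_{h_1}$ with the same set $Z(Z(x_{[0,h_1+h_2]}), h_1 + h_2, 0, X)$, so the two products coincide.

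For the containment~\eqref{eq:2-graph-Bf-1} I would take a typical element. Using $B_{h_1}^{-1} = \{(T^{h_1}u, -h_1, u) : u \in U_1\}$, a generic element of $B_{h_2}B_{h_1}^{-1}$ has the form $(v, h_2 - h_1, u)$ with $v \in U_2$, $u \in U_1$ and the composability constraint $T^{h_2}v = T^{h_1}u$, whereas a generic element of $B_{h_1}^{-1}B_{h_2}$ has the form $(T^{h_1}w, h_2 - h_1, T^{h_2}w)$ with $w \in U_1 \cap U_2$. The task is to manufacture from $v$ and $u$ a path $w \in U_1 \cap U_2$ with $T^{h_1}w = v$ and $T^{h_2}w = u$; the natural candidate is $w \coloneqq x_{[0,h_1]}\,v$, which lies in $U_1$ by construction, and, writing $v = x_{[0,h_2]}v'$, the factorization identity rewrites $w = x_{[0,h_1+h_2]}v'$, so that $w \in U_1 \cap U_2$ and $T^{h_1}w = v$ automatically. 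The step I expect to be the main obstacle — and the only one requiring genuine care — is verifying $T^{h_2}w = u$: this amounts to showing $v' = u'$ where $u = x_{[0,h_1]}u'$, which is precisely the composability constraint $T^{h_2}v = T^{h_1}u$ rewritten as $v' = u'$ via $T^{h_2}v = v'$ and $T^{h_1}u = u'$. Once this bookkeeping is complete the containment follows, and~\eqref{eq:2-graph-Bf-2} is obtained by interchanging the roles of $h_1$ and $h_2$ throughout.
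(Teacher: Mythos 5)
Your proposal is correct and follows essentially the same route as the paper: both arguments reduce everything to the factorisation identities $x_{[0,h_1]}x_{[0,h_2]} = x_{[0,h_1+h_2]} = x_{[0,h_2]}x_{[0,h_1]}$ coming from $T^{h_1}x = x = T^{h_2}x$, and both prove the containments by manufacturing the same ``midpoint'' path $x_{[0,h_1+h_2]}v'$ over the common shifted tail. The only cosmetic difference is in the commutation step, where you identify both products with the single set $Z(Z(x_{[0,h_1+h_2]}), h_1+h_2, 0, X)$ rather than running two symmetric element-chases as the paper does.
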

\begin{proof}
We first verify that the $B_{h_i}$ commute.
Take composable elements $(y_1, h_1, y_2)\in B_{h_1}$ and $(y_2, h_2, y_3)\in B_{h_2}$ so that
\[
  (y_1, h_1 + h_2, y_3) = (y_1, h_1, y_2)(y_2, h_2, y_3) \in B_{h_1} B_{h_2}.
\]
This means that $T^{h_1}y_1 = y_2 \in U_1$ and $y_3 = T^{h_2}y_2 = T^{h_1 + h_2} y_1$.
In particular, $y_1 \in Z(x_{[0, h_1 + h_2]})$ so by the factorisation property we have $T^{h_2}y_1 \in U_1$.
It now follows that
\[
  (y_1, h_1 + h_2, y_3) = (y_1, h_2, T^{h_2}y_1) (T^{h_2}y_1, h_1, T^{h_1+h_2}y_1) \in B_{h_2} B_{h_1}
\]
showing that $B_{h_1} B_{h_2} \subset B_{h_2} B_{h_1}$.
A similar argument shows that $B_{h_2} B_{h_1} \subset B_{h_1} B_{h_2}$.

Next we verify~\cref{eq:2-graph-Bf-1}; a similar argument applies to~\cref{eq:2-graph-Bf-2}.
Take composable elements $(y_1, h_1, h_2)\in B_{h_1}$ and $(y_2, -h_2, y_3)\in B_{h_2}$ so that
\[
  (y_1, h_1 - h_2, y_3) = (y_1, h_1, h_2)(y_2, -h_2, y_3) \in B_{h_1} B_{h_2}^{-1}.
\]
In particular, $y_1 = x_{[0, h_1]} y_2$ and $y_3 = x_{[0,h_2]} y_2$.
Consider $z\coloneqq x_{[0, h_1 + h_2]} y_2$ and observe that $(y_1, -h_2, z)\in {B_{h_2}}^{-1}$ and $(z, h_1, y_3)\in B_{h_1}$ so we have
\[
  (y_1, h_1 - h_2, y_3) = (y_1, -h_2, z) (z, h_1, y_3) \in  B_{h_2}^{-1} B_{h_1},
\]
giving $B_{h_1} B_{h_2}^{-1} \subset B_{h_2}^{-1} B_{h_1}$.
\end{proof}

\begin{corollary}
Let $B_{h_1}$ and $B_{h_2}$ be as in~\labelcref{eq:2-graph-Bf-generators}. For $h\in \J_x$,
write $h = m_1(h) h_1 + m_2(h) h_2$ with $m_1(h), m_2(h) \in \Z$. Define
\begin{equation} \label{eq:2-graph-Bf}
  B_h \coloneqq
  \begin{cases}
    B_{h_1}^{m_1(h)} B_{h_2}^{m_2(h)} &\textrm{if}~m_1(h)\leq m_2 \\
    B_{h_2}^{m_2(h)} B_{h_1}^{m_1(h)} &\textrm{if}~m_2(h) < m_1,
  \end{cases}
\end{equation}
with the convention that $B_{h_i}^0 = \Lambda^\infty$. Then $B_{(x,h,x)} \coloneqq B_h$
defines a harmonious family of bisections at $x$.
\end{corollary}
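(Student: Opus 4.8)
The plan is to deduce the corollary directly from \cref{lem:relatively commuting}, using \cref{lem:2-graph setup} to supply the commutation hypotheses. First I would set $\alpha_i \coloneqq (x, h_i, x) \in \J_x$ and take $M = \{\alpha_1, \alpha_2\}$. Since $\coc$ restricts to an injective homomorphism on the isotropy group $\J_x$, and since $h_1, h_2$ generate $\coc(\J_x)$ by \cref{lem:positive-generation}, the elements $\alpha_1, \alpha_2$ generate $\J_x$ and satisfy $|M| = 2 = \operatorname{rank}(\J_x)$, as required by the hypotheses of \cref{lem:relatively commuting}.

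Next I would verify the three numbered conditions of \cref{lem:relatively commuting} for the bisections $B_{h_1}, B_{h_2}$ of \eqref{eq:2-graph-Bf-generators}. Each $B_{h_i} = Z(U_i, h_i, 0, X)$ is a basic compact open bisection, contains $\alpha_i$, and is homogeneous with $\coc(B_{h_i}) = \{h_i\}$; this is condition~(1). Condition~(2), namely $B_{h_1} B_{h_2} = B_{h_2} B_{h_1}$, together with the off-diagonal instances of condition~(3), namely $B_{h_1} B_{h_2}^{-1} \subseteq B_{h_2}^{-1} B_{h_1}$ and $B_{h_2} B_{h_1}^{-1} \subseteq B_{h_1}^{-1} B_{h_2}$, are precisely \eqref{eq:2-graph-Bf-1} and \eqref{eq:2-graph-Bf-2} of \cref{lem:2-graph setup}. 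The diagonal instances $B_{h_i} B_{h_i}^{-1} \subseteq B_{h_i}^{-1} B_{h_i}$ of condition~(3) reduce to $r(B_{h_i}) \subseteq s(B_{h_i})$, and since $n = 0$ forces $x_{[0,h_i]}$ to be a cycle, we have $r(B_{h_i}) = Z(x_{[0,h_i]}) \subseteq Z(r(x)) = s(B_{h_i})$, so these hold too. Hence \cref{lem:relatively commuting} applies and produces a harmonious family of bisections at $x$ indexed by $\J_x$.

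Finally I would reconcile the bisections produced by \cref{lem:relatively commuting} with those defined in \eqref{eq:2-graph-Bf}. Writing $h = m_1(h) h_1 + m_2(h) h_2$, the family of \cref{lem:relatively commuting} assigns to $(x,h,x)$ the bisection that places all negatively-indexed factors to the left of all positively-indexed factors, whereas \eqref{eq:2-graph-Bf} orders the two factors according to the relative size of $m_1(h)$ and $m_2(h)$. I would check that these agree by a short case analysis: in the mixed-sign cases the magnitude ordering of \eqref{eq:2-graph-Bf} automatically places the negatively-indexed factor first, so it literally matches the expression from \cref{lem:relatively commuting}; in the same-sign cases both factors are positive powers or both are negative powers, which commute because $B_{h_1}$ and $B_{h_2}$ commute (and hence so do their inverses). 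Thus the two expressions coincide, so the family $(B_{(x,h,x)})_{h \in \J_x}$ of \eqref{eq:2-graph-Bf} is exactly the harmonious family furnished by \cref{lem:relatively commuting}.

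The hard part will be the last step, matching the magnitude-based ordering of \eqref{eq:2-graph-Bf} with the sign-based ordering of \cref{lem:relatively commuting}; everything else is a direct citation of \cref{lem:2-graph setup}. The subtlety to keep in mind is that $B_{h_1}$ and $B_{h_2}^{-1}$ need \emph{not} commute (only the inclusion \eqref{eq:2-graph-Bf-2} is available), so the reconciliation must never interchange opposite-sign powers; fortunately the magnitude ordering is arranged so that this is never required.
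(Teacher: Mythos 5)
Your proposal is correct and takes essentially the same route as the paper, whose entire proof is the one-line citation of \cref{lem:2-graph setup} together with \cref{lem:relatively commuting}. The extra details you supply---checking the diagonal instances $B_{h_i}B_{h_i}^{-1} \subseteq B_{h_i}^{-1}B_{h_i}$ via $r(B_{h_i}) = Z(x_{[0,h_i]}) \subseteq Z(r(x)) = s(B_{h_i})$, matching generators of $\J_x$ through the injectivity of $\coc$ on isotropy, and reconciling the ordering in~\labelcref{eq:2-graph-Bf} with the negative-powers-first convention of \cref{lem:relatively commuting}---are all points the paper leaves implicit, and they go through exactly as you describe.
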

\begin{proof}
This follows immediately from \cref{lem:2-graph setup} and
\cref{lem:relatively commuting}.
\end{proof}

\begin{remark}
The argument above breaks into two parts corresponding to the situation where $\J_x$ is
singly generated and where $\J_x$ is generated as a group by its intersection with
$c^{-1}(\N^k)$. The point of restricting to $2$-graphs is that these two cases cover all
possibilities thanks to \cref{lem:positive-generation}. In general, the arguments
presented above could be run verbatim to prove the following statement: Let $\Lambda$ be
a $k$-graph and suppose that for each $x \in \Lambda^\infty$, the group $\J_x$ either has
rank~1, or is generated by its intersection with $c^{-1}(\N^k)$. Then $G_\Lambda$ admits
harmonious families of bisections. In particular, if $\operatorname{rank}(\J_x) \in \{0,
1, k\}$ for all $x$, then $G_\Lambda$ admits harmonious families of bisections.
\end{remark}

\subsection{A countable subshift}
The following example is an adaptation of~\cite[Example 7.9]{AraLolk}. Ara and Lolk show
that this example is not relatively strongly topologically free (cf.~\cite[Definition
7.4]{AraLolk}) and hence not amenable to their techniques for studying ideal structure.
This example does however admit harmonious families of bisections.

For $n\in \N$, define $x_n, y_n\colon \N^2 \to \{0,1\}$ by
\[
  x_n(a,b) =
  \begin{cases}
    1 & \textrm{if}~b\leq n, \\
    0 & \textrm{if}~b > n
  \end{cases} \quad \textrm{and} \quad
  y_n(a,b) =
  \begin{cases}
    1 & \textrm{if}~a\leq n, \\
    0 & \textrm{if}~a > n
  \end{cases}
\]
for all $(a,b)\in \N^2$.
Let $\bar{0},\bar{1}\colon \N^2 \to \{0,1\}$ be constantly $0$ and $1$, respectively.
Then $(x_n)_n$ and $(y_n)_n$ converge to $\bar{1}$ as $n\to \infty$,
and $X = \{ \bar{0}, \bar{1}, x_n, y_n : n\in \N\}$ is a compact and Hausdorff subspace of $\{0,1\}^{\N^2}$.
The coordinate translations on $\{0,1\}^{\N^2}$ restrict to an action $T\colon \N^2\curvearrowright X$ given by
\[
  T^{(k,l)} x(a,b) = x(a+k, b+l),
\]
for all $(k,l)\in \N^2$, $(a,b)\in \N^2$, and $x\in X$.
Note that $\bar{0}$ and $\bar{1}$ are fixed points of $T$, and that $T^{(1,0)}(x_{n+1}) = x_n$ and $T^{(1,0)}y_n = y_n$,
and $T^{(0,1)}x_n = x_n$ and $T^{(0,1)} y_{n+1} = g_n$, for all $n\in \N$.

The only point in $X$ that is not isolated is $\bar{1}$.
The set
\[
  U_{(1,0)} = \{ \bar{0},\bar{1}, x_{n+1}, y_n : n\in \N\}
\]
is an open neighborhood of $\bar{1}$ in $X$ and $T^{(1,0)}$ restricts to a homeomorphism on $U_{(1,0)}$;
similarly,
\[
  U_{(0,1)} = \{ \bar{0}, \bar{1}, x_n, y_{n+1} : n\in \N\}
\]
is an open neighborhood of $\bar{1}$ in $X$, and $T^{(0,1)}$ restricts to a homeomorphism on $U_{(0,1)}$.
This shows that $T\colon \N^2\curvearrowright X$ is an action by local homeomorphisms.

We will now verify that $T$ admits harmonious families of bisections. Every isolated
point admits a harmonious family of bisections so we only need to construct one for
$\bar{1}$. The sets
\[
  B_{(1,0)} = Z(U_{(1,0)}, (1,0), (0,0), X) \quad \textrm{and} \quad
  B_{(0,1)} = Z(U_{(0,1)}, (0,1), (0,0), X)
\]
are homogeneous compact open bisections that contain $(\bar{1}, (1,0), \bar{1})$ and
$(\bar{1},(0,1), \bar{1})$, respectively. In fact, the bisections satisfy the conditions
of \cref{lem:relatively commuting}, so $\bar{1}$ admits a harmonious family of
bisections.

\begin{remark}
While we have proved that the preceding example admits harmonious families of bisections
by constructing them by hand, we could also establish this by proving that it is
conjugate to the shift space of a $2$-graph as follows, and then using the general
results of \cref{sec:rank2} above.

Consider the 2-coloured graph
\[
\begin{tikzpicture}[>=stealth, decoration={markings, mark=at position 0.5 with {\arrow{>}}}]
    \node[circle, inner sep=0pt] (v) at (0,0) {$v$};
    \node[circle, inner sep=0pt] (w) at (2,0) {$w$};
    \draw[red, postaction=decorate] (v) .. controls +(-1, 1) and +(1, 1) .. (v) node[pos=0.5, above] {$e$};
    \draw[blue, postaction=decorate] (v) .. controls +(-1, -1) and +(1, -1) .. (v) node[pos=0.5, below] {$a$};
    \draw[red, postaction=decorate, in=30, out=150] (w) to node[pos=0.5, above] {$f$} (v);
    \draw[blue, postaction=decorate, in=330, out=210] (w) to node[pos=0.5, below] {$b$} (v);
    \draw[red, postaction=decorate] (w) .. controls +(-1, 1) and +(1, 1) .. (w) node[pos=0.5, above] {$g$};
    \draw[blue, postaction=decorate] (w) .. controls +(-1, -1) and +(1, -1) .. (w) node[pos=0.5, below] {$c$};
\end{tikzpicture}
\]
By \cite[Section~6]{Kumjian-Pask2000}, there is a unique $2$-graph $\Lambda$ with this
skeleton and satisfying the factorisation rules
\[
ae = ea,\quad af = fc,\quad bg = eb,\quad\text{and}\quad cg=gc.
\]
Writing $T_\Lambda \colon \N^2 \curvearrowright \Lambda^\infty$ for the shift action, it is
routine to check that the formulas
\[
\theta(x_v) = \overline{1}, \quad
\theta(a^n b x_w) = y_{n+1}, \quad
\theta(e^n f x_w) = x_{n+1}, \quad\text{and}\quad
\theta(x_w) = \overline{0},
\]
define a conjugacy $\theta$ from $(\Lambda^\infty, T_\Lambda)$ to $(X, T)$.

\end{remark}

\section{Ideals of higher-rank graphs}\label{sec:hrgs}

In this section, we present a catalogue of all ideals in the $C^*$-algebra of a
higher-rank graph $\Lambda$ whose groupoid admits harmonious families of bisections, in
terms of open subsets $D$ of $\Lambda^0 \times \T^k$.
This extends the catalogue of ideals that are gauge-equivariant.
Although the condition that characterises which subsets $D \subseteq \Lambda^0 \times \T^k$
correspond to ideals is quite technical, in practice the description is usable,
as we show by example at the end of the section.

Our strategy is as follows. We begin by describing the collection $\mathcal{A}_\Lambda$
of subsets of $\Lambda^\infty \times \T^k$ that correspond to ideals of $C^*(\Lambda)$
via \cref{cor:open-if-and-only-if}. We then describe maps $\delta$ and $\alpha$, the
first of which carries subsets of $\Lambda^\infty \times \T^k$ to subsets of $\Lambda^0
\times \T^k$ and the second of which takes subsets of $\Lambda^0 \times \T^k$ to subsets
of $\Lambda^\infty \times \T^k$. We show that $\alpha\circ\delta$ is the identity map on
$\mathcal{A}_\Lambda$, and deduce that the ideals of $C^*(\Lambda)$ are indexed by the
elements of $\delta(\mathcal{A}_\Lambda)$. We then identify exactly which subsets of
$\Lambda^0 \times \T^k$ belong to $\delta(\mathcal{A}_\Lambda)$.

Recall from the beginning of \cref{sec:necessary condition} that if $\Bb =
(B_\gamma)_{\gamma \in \J_x}$ is a harmonious family of bisections based at $x \in
\Lambda^\infty$, then $\Bb^{\ess}$ denotes its intersection with the essential isotropy
of $G_\Lambda$, and that $(\Bb^{\ess})^\perp$ is the group bundle $\{(y,z) \in B_x \times
\T^k : z^{\coc(\gamma)} = 1\text{ for all } \gamma \in \Bb^{\ess}_y\}$, which is a
sub-bundle of the group bundle $B_x \times \T^k$.

\begin{notation}
Let $\Lambda$ be a row-finite higher-rank graph with no sources whose groupoid
$G_\Lambda$ admits harmonious families of bisections. We write $\mathcal{A}_\Lambda$ for
the collection of all subsets $A \subseteq \Lambda^\infty \times \T^k$ such that
\begin{enumerate}[label=(A\arabic*)]
  \item\label{(A1)} if $(x,z)\in A$, and $(x',z') \in X \times \T^k$ satisfy
    $\overline{[x]}=\overline{[x']}$, and $(\overline{z}z')^{\coc(\gamma)} = 1$ for all
    $\gamma \in \Iess_x$, then $(x',z')\in A$; and
  \item \label{(A2)} for each $(x,z)\in A$, then there exist a harmonious family of
      bisections $\Bb = (B_g)_{g\in\J_x}$ based at $x$ and an open neighbourhood
      $V\subseteq\T^k$ of $z$ such that $(B_x \times V) \cdot (\Bb^{\ess})^\perp
      \subseteq A$.
\end{enumerate}
\end{notation}

It follows from \cref{cor:open-if-and-only-if} applied to the $k$-graph
groupoid $G_\Lambda$ associated to the shift action $T \colon \N^k
\curvearrowright \Lambda^\infty$ of \cite{Kumjian-Pask2000} that the map
$\pi$ of \cref{ntn:pi} is a bijection from $\mathcal{A}_\Lambda$ to the set
of open subsets of $\Prim(C^*(G_\Lambda))$.

For the following, recall that if $x \in \Lambda^\infty$ and $n \in \N^k$, then $x(n)$
denotes the vertex $x(n) = r(T^n(x)) \in \Lambda^0$

\begin{lemma}\label{lem:A containment}
Let $\Lambda$ be a row-finite higher-rank graph with no sources whose groupoid
$G_\Lambda$ admits harmonious families of bisections. Suppose that $A \in
\mathcal{A}_\Lambda$. For $x \in \Lambda^\infty$, $n \in \N^k$ and $z \in \T^k$ we have
$(x,z) \in A$ if and only if $(T^n(x),z) \in A$. If $(x,z) \in A$ then there exist $n \in
\N^k$ and an open neighbourhood $V \subseteq \T^k$ of $z$ such that $Z(x(n)) \times V
\subseteq A$.
\end{lemma}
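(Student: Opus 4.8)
The plan is to establish the shift-equivalence $(x,z)\in A \Leftrightarrow (T^n(x),z)\in A$ first, and then to bootstrap it into the vertex-cylinder containment. For the equivalence, the point is that $T^n(x)\in[x]$, so $\overline{[T^n(x)]}=\overline{[x]}$, and both implications follow from condition~\ref{(A1)} applied with source and target points $(x,z)$ and $(T^n(x),z)$ (and then the reverse): the character hypothesis $(\overline{z}z')^{\coc(\gamma)}=1$ holds automatically because $z'=z$ forces $\overline{z}z'=1$ in $\T^k$, so the condition is satisfied for every $\gamma$. This part is essentially immediate.

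For the second assertion I would invoke condition~\ref{(A2)} to produce a harmonious family $\Bb=(B_g)_{g\in\J_x}$ and an open neighbourhood $V$ of $z$ with $(B_x\times V)\cdot(\Bb^{\ess})^\perp\subseteq A$. The key simplification is that the saturation contains $B_x\times V$ outright: since $B_x\subseteq X$ consists of units, the unit space is contained in $\Iess$, and $B_x=B_{(x,0,x)}\subseteq\bigcup\Bb$, we have $B_x\subseteq\Bb^{\ess}$, whence $(y,1)\in(\Bb^{\ess})^\perp$ for every $y\in B_x$. Therefore $B_x\times V\subseteq(B_x\times V)\cdot(\Bb^{\ess})^\perp\subseteq A$ with no further effort, and the annihilator bundle plays no role at this stage.

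Because $B_x$ is an open neighbourhood of $x$ and the cylinders form a basis for $\Lambda^\infty$, I would then choose $n\in\N^k$ with $x\in Z(x_{[0,n]})\subseteq B_x$, giving $Z(x_{[0,n]})\times V\subseteq A$. The final move replaces this path cylinder by the vertex cylinder $Z(x(n))$: the shift $T^n$ restricts to a homeomorphism of $Z(x_{[0,n]})$ onto $Z(s(x_{[0,n]}))=Z(x(n))$, so any $(y',w)\in Z(x(n))\times V$ equals $(T^n(y),w)$ for a unique $y\in Z(x_{[0,n]})$; as $(y,w)\in Z(x_{[0,n]})\times V\subseteq A$, the shift-invariance from the first part delivers $(y',w)\in A$, and hence $Z(x(n))\times V\subseteq A$, as required.

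I do not anticipate a genuine obstacle, since the argument is mostly a careful unwinding of the definitions of the $\Bb$-saturation, of $(\Bb^{\ess})^\perp$, and of the $k$-graph shift conventions. The one step that uses structure rather than bookkeeping is the last one: openness of $B_x$ together with the cylinder basis only supplies a \emph{path} cylinder $Z(x_{[0,n]})$, whereas the statement demands the \emph{vertex} cylinder $Z(x(n))$, and the passage between the two is exactly what the shift-invariance of $A$ established at the outset is for.
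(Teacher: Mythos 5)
Your proof is correct, and it differs from the paper's in one genuine (if modest) respect: how openness of $A$ around the point $(x,z)$ is obtained. The paper gets it from heavy machinery already in place: since $A \in \mathcal{A}_\Lambda$ is the $\pi$-preimage of an open subset of $\Prim(C^*(G_\Lambda))$, continuity of $\pi$ (\cref{cor:pi-continuous}) makes $A$ open in $\Lambda^\infty \times \T^k$, so a basic neighbourhood $Z(x(0,n)) \times V \subseteq A$ exists. You instead extract the same neighbourhood directly from condition~\labelcref{(A2)}, via the observation that $B_x \subseteq \bigcup\Bb \cap \Iess$ (units are essential isotropy), so $s(\Bb^{\ess}) \times \{1\} \supseteq B_x \times \{1\}$ lies in $(\Bb^{\ess})^\perp$ and hence $B_x \times V$ sits inside its own $\Bb$-saturation; this is exactly the observation the paper itself makes inside the proof of \cref{cor:pi-continuous}, so your route is more self-contained---it uses only the defining conditions \labelcref{(A1)}--\labelcref{(A2)} and never invokes the continuity theorem---at the cost of redoing that small computation. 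From there the two arguments coincide: both convert the path cylinder $Z(x(0,n))$ into the vertex cylinder $Z(x(n))$ using orbit-invariance; you do it through the already-proved shift-equivalence (part one of the lemma), while the paper applies condition~\labelcref{(A1)} directly to $[x(0,n)y] = [y]$ (the paper's citation of \labelcref{(A2)} at that step is evidently a typo for \labelcref{(A1)}, and your version makes the intended logic explicit).
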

\begin{proof}
  For $x \in \Lambda^\infty$, we have $[x] = [T^n(x)]$. Hence~\labelcref{(A1)} implies that
$(x,z) \in A$ if and only if $(T^n(x),z) \in A$.

Suppose that $(x,z) \in A$. Since $\pi$ is continuous by \cref{cor:pi-continuous}, the set
$A \subseteq X \times \T^k$ is open. So there exist $n \in \N^k$ and an open neighbourhood
$V \subseteq \T^k$ of $z$ such that $Z(x(0,n)) \times V \subseteq A$. Since $[x(0,n) y] =
[y]$ for all $y \in Z(v)$, it follows from \labelcref{(A2)} that $Z(x(n)) \times V
\subseteq A$.
\end{proof}

\begin{lemma}\label{lem:alphabeta}
Let $\Lambda$ be a row-finite higher-rank graph with no sources whose groupoid
$G_\Lambda$ admits harmonious families of bisections. Given a subset $A \subseteq
\Lambda^\infty \times \T^k$, define
\[
\delta(A)\coloneqq \{(v,z)\in\Lambda^0\times\T^k : Z(v)\times\{z\}\subseteq A\}.
\]
Given a subset $D \subseteq \Lambda^0 \times \T^k$, define
\[
\alpha(D) \coloneqq \{(x,z) \in \Lambda^\infty \times \T^k : (x(n), z) \in D\text{ for some }n \in \N^k\}.
\]
Then $\alpha(\delta(A)) = A$ for all $A \in \mathcal{A}_\Lambda$.
\end{lemma}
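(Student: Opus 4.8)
The plan is to prove the two inclusions $A \subseteq \alpha(\delta(A))$ and $\alpha(\delta(A)) \subseteq A$ separately, with both following almost immediately from \cref{lem:A containment}, which is the only substantive input. The key structural facts I will lean on are the shift-invariance ``$(x,z) \in A \iff (T^n(x),z) \in A$'' and the existence, for each $(x,z) \in A$, of an $n \in \N^k$ with $Z(x(n)) \times V \subseteq A$ for some neighbourhood $V$ of $z$.

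First I would establish $A \subseteq \alpha(\delta(A))$. Fixing $(x,z) \in A$, the second assertion of \cref{lem:A containment} supplies an $n \in \N^k$ and an open $V \ni z$ with $Z(x(n)) \times V \subseteq A$; in particular $Z(x(n)) \times \{z\} \subseteq A$, which is exactly the condition $(x(n), z) \in \delta(A)$. Since $x(n)$ is the range vertex of the shifted path $T^n(x)$, this same $n$ witnesses that $(x,z) \in \alpha(\delta(A))$ by definition of $\alpha$.

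Next I would establish the reverse inclusion $\alpha(\delta(A)) \subseteq A$. Fixing $(x,z) \in \alpha(\delta(A))$, choose $n \in \N^k$ with $(x(n), z) \in \delta(A)$, i.e.\ $Z(x(n)) \times \{z\} \subseteq A$. The crucial observation is that $r(T^n(x)) = x(n)$, so $T^n(x) \in Z(x(n))$ and hence $(T^n(x), z) \in A$. Applying the ``only if'' direction of the biconditional in the first assertion of \cref{lem:A containment} then yields $(x,z) \in A$, completing this inclusion.

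Combining the two containments gives $\alpha(\delta(A)) = A$. I do not expect a genuine obstacle here, since all the work has been front-loaded into \cref{lem:A containment}; the only points requiring a moment of care are recognising that $T^n(x)$ lies in the cylinder $Z(x(n))$ because $x(n) = r(T^n(x))$, and invoking the shift-invariance biconditional in the correct direction to transport membership from $(T^n(x),z)$ back to $(x,z)$.
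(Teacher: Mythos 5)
Your proof is correct and takes essentially the same route as the paper: both inclusions are deduced from \cref{lem:A containment}, the forward one via its second assertion exactly as in the paper, and the reverse one via shift-invariance (the paper invokes \labelcref{(A1)} directly there, but that is precisely the content of the lemma's first assertion, so the arguments coincide). One cosmetic slip: to pass from $(T^n(x),z)\in A$ to $(x,z)\in A$ you need the ``if'' direction of the biconditional, not the ``only if'' direction.
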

\begin{proof}
Fix $(x,z) \in A$. By \cref{lem:A containment}, there exist $n \in \N^k$ and an open
neighbourhood $V$ of $z$ such that $Z(x(n)) \times V \subseteq A$. We then have $(x(n),
z) \in x(n) \times V \subseteq \delta(A)$. By definition of $\alpha$, we then have $(y,z)
\in \alpha(\delta(A))$ whenever $y \in \Lambda^\infty$ satisfies $x(n) \Lambda y(m) \not=
\varnothing$ for some $m \in \N^k$. Taking $m = n$ and $y = x$ we see that $(x, z) \in
\alpha(\delta(A))$. That is $A \subseteq \alpha(\delta(A))$.

For the reverse containment, fix $(x,z) \in \alpha(\delta(A))$. Then there exists $n \in
\N^k$ such that $(x(n), z) \in \delta(A)$. Hence $Z(x(n)) \times \{z\} \subseteq A$. In
particular, $(\sigma^n(x), z) \in A$. Since $[\sigma^n(x)] = [x]$ and since $A$
satisfies~\labelcref{(A1)}, we deduce that $(x,z) \in A$. Hence $\alpha(\delta(A)) \subseteq A$.
\end{proof}

To describe the lattice of ideals of $C^*(\Lambda)$, it therefore suffices to describe
the range of the map $\delta \colon \mathcal{A}_\Lambda \to \mathcal{P}(\Lambda^0 \times \T^k)$. We
start with some elementary necessary conditions for membership of $\delta(\mathcal{A}_\Lambda)$.

\begin{lemma}\label{lem:easy conditions}
Let $\Lambda$ be a row-finite higher-rank graph with no sources whose groupoid
$G_\Lambda$ admits harmonious families of bisections. If $A \in \mathcal{A}_\Lambda$,
then
\begin{enumerate}
\item for $\lambda \in \Lambda$, if $(r(\lambda), z) \in \delta(A)$, then $(s(\lambda),
    z) \in \delta(A)$;
\item for $n \in \N^k$, $v \in \Lambda^0$ and $z \in \T^k$, if $(s(\lambda), z) \in
    \delta(A)$ for all $\lambda \in v\Lambda^n$, then $(v, z) \in \delta(A)$; and
\item for each $v \in \Lambda^0$, the set $\{z \in \T^k : (v,z) \in \delta(A)\}$ is
    open.
\end{enumerate}
\end{lemma}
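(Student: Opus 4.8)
The plan is to derive all three conditions from \cref{lem:A containment}, whose first assertion---that $(x,z) \in A$ if and only if $(T^n(x),z)\in A$---does most of the work, together with the compactness of the cylinder sets $Z(v)$ in a row-finite $k$-graph with no sources.

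For part~(1) I would unfold the definition of $\delta$: the hypothesis $(r(\lambda),z)\in\delta(A)$ says exactly that $Z(r(\lambda))\times\{z\}\subseteq A$. Given any $y\in Z(s(\lambda))$, the concatenation $\lambda y$ lies in $Z(r(\lambda))$, so $(\lambda y, z)\in A$; since $T^{d(\lambda)}(\lambda y)=y$, the shift-invariance in \cref{lem:A containment} yields $(y,z)\in A$. As $y\in Z(s(\lambda))$ was arbitrary, $Z(s(\lambda))\times\{z\}\subseteq A$, i.e.\ $(s(\lambda),z)\in\delta(A)$. For part~(2) I would fix $x\in Z(v)$ and set $\lambda = x(0,n)\in v\Lambda^n$, so that $s(\lambda)=x(n)$. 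The hypothesis gives $(x(n),z)\in\delta(A)$, i.e.\ $Z(x(n))\times\{z\}\subseteq A$; since $T^n(x)\in Z(x(n))$ we obtain $(T^n(x),z)\in A$, and shift-invariance gives $(x,z)\in A$. Ranging over $x\in Z(v)$ shows $(v,z)\in\delta(A)$.

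Part~(3) is where the real work lies, and the plan is a tube-lemma argument. Fix $v$ and suppose $(v,z_0)\in\delta(A)$, so $Z(v)\times\{z_0\}\subseteq A$. For each $x\in Z(v)$, since $(x,z_0)\in A$, \cref{lem:A containment} supplies $n_x\in\N^k$ and an open $V_x\ni z_0$ with $Z(x(n_x))\times V_x\subseteq A$. The crucial reduction is to replace the vertex cylinder $Z(x(n_x))$ (which is \emph{not} contained in $Z(v)$) by the finite-path cylinder $Z(x(0,n_x))\subseteq Z(v)$: for $y\in Z(x(0,n_x))$ and $z\in V_x$ we have $T^{n_x}(y)\in Z(x(n_x))$, hence $(T^{n_x}(y),z)\in A$, and shift-invariance gives $(y,z)\in A$; thus $Z(x(0,n_x))\times V_x\subseteq A$. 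The sets $\{Z(x(0,n_x)) : x\in Z(v)\}$ then form an open cover of the compact set $Z(v)$ by subsets of $Z(v)$, so finitely many $Z(x_1(0,n_{x_1})),\dots,Z(x_m(0,n_{x_m}))$ suffice. Setting $V=\bigcap_{i=1}^m V_{x_i}$, an open neighbourhood of $z_0$, every $(y,z)\in Z(v)\times V$ lies in some $Z(x_i(0,n_{x_i}))\times V_{x_i}\subseteq A$, so $Z(v)\times\{z\}\subseteq A$ for each $z\in V$, i.e.\ $V\subseteq\{z : (v,z)\in\delta(A)\}$, giving openness.

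The main obstacle is the bookkeeping in part~(3): \cref{lem:A containment} only produces neighbourhoods of the form $Z(x(n_x))\times V_x$ built from \emph{vertex} cylinders, which do not sit inside $Z(v)$, so one cannot cover $Z(v)$ with them directly. Passing to the finite-path cylinders $Z(x(0,n_x))$ and using shift-invariance to transport the containment is the decisive manoeuvre; once that is in place, compactness of $Z(v)$ (standard for row-finite $k$-graphs with no sources) closes the argument routinely.
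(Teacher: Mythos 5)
Your proof is correct and takes essentially the same route as the paper's: parts (1) and (2) follow from the orbit/shift-invariance encoded in~\labelcref{(A1)} (which you access through \cref{lem:A containment}), and part (3) is the same compactness (tube-lemma) argument covering $Z(v)$ by path cylinders and intersecting finitely many neighbourhoods $V_x$. Your explicit reduction from the vertex cylinder $Z(x(n_x))$ to the path cylinder $Z(x(0,n_x))$ via shift-invariance cleanly justifies a step that the paper's proof of (3) passes over silently when it quotes \cref{lem:A containment} as directly yielding $Z(x(0,n_x)) \times V_x \subseteq A$.
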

\begin{proof}
(1) Suppose that $(r(\lambda), z) \in \delta(A)$. Then $Z(\lambda) \times \{z\} \subseteq
Z(r(\lambda)) \times \{z\} \subseteq A$. Since $\overline{[\sigma^{d(\lambda)}(x)]} =
\overline{[x]}$ for all $x \in \Lambda^\infty$, and since $Z(s(\lambda)) =
\sigma^{d(\lambda)}(Z(\lambda))$, condition~\labelcref{(A1)} implies that $Z(s(\lambda)) \times \{z\}
\subseteq A$, and hence $(s(\lambda), z) \in \delta(A)$.

(2) Fix $n \in \N^k$, $v \in \Lambda^0$ and $z \in \T^k$ and suppose that $(s(\lambda),
z) \in \delta(A)$ for all $\lambda \in v\Lambda^n$. Then $Z(s(\lambda)) \times \{z\}
\subseteq A$ for all $\lambda \in v\Lambda^n$. For each $\lambda \in v\Lambda^n$ and $x
\in Z(s(\lambda))$, we have $\overline{[\lambda x]} = \overline{[x]}$. Hence
condition~\labelcref{(A1)} implies that $\bigcup_{\lambda \in v\Lambda^n} (Z(\lambda) \times \{z\}) =
\{(\lambda x, z) : \lambda \in \Lambda^n, x \in Z(s(\lambda))\} \subseteq A$. Since $Z(v)
= \bigcup_{\lambda \in v\Lambda^n} Z(\lambda)$, it follows that $Z(v) \times \{z\}
\subseteq A$ and hence $(v, z) \in \delta(A)$.

(3) Fix $v \in \Lambda^0$ and $z \in \T^k$ such that $(v, z) \in \delta(A)$. We must show
that there exists an open neighbourhood $V \subseteq \T^k$ of $z$ such that $\{v\} \times V
\subseteq \delta(A)$. Since $(v,z) \in \delta(A)$, we have $Z(v) \times \{z\} \subseteq
A$. For each $x \in Z(v)$, \cref{lem:A containment} gives $n_x \in \N^k$ and an open
neighbourhood $V_x \subseteq \T^k$ of $z$ such that $Z(x(0, n_x)) \times V_x \subseteq
A$. The sets $Z(x(0,n_x))$ constitute an open cover of $Z(v)$, which is compact. So there
is a finite $F \subseteq Z(v)$ such that $\bigcup_{x \in F} Z(x(0,n_x)) = Z(v)$. Let $V
= \bigcap_{x \in F} V_x$. This is an open neighbourhood around $z$, and
$Z(v) \times V = \bigcup_{x \in F} (Z(x(0,n_x)) \times V) \subseteq A$.
Thus $\{v\} \times V \subseteq \delta(A)$.
\end{proof}

\begin{lemma}\label{lem:betaalpha}
Let $\Lambda$ be a row-finite higher-rank graph with no sources whose groupoid
$G_\Lambda$ admits harmonious families of bisections. Let $D$ be subset of $\Lambda^0
\times \T^k$. Then $D \subseteq \delta(\alpha(D))$. If $D$ satisfies conditions
(1)~and~(2) of \cref{lem:easy conditions}, then $D = \delta(\alpha(D))$.
\end{lemma}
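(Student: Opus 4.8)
The statement splits into an unconditional inclusion and a conditional reverse inclusion, and I would treat these separately. For $D \subseteq \delta(\alpha(D))$ I expect no hypotheses to be needed. The plan is to fix $(v,z) \in D$ and verify the defining property of $\delta(\alpha(D))$, namely $Z(v) \times \{z\} \subseteq \alpha(D)$. Given $x \in Z(v)$ we have $x(0) = r(x) = v$, so $(x(0), z) = (v, z) \in D$; taking $n = 0$ in the definition of $\alpha$ shows $(x, z) \in \alpha(D)$. Hence $Z(v)\times\{z\}\subseteq\alpha(D)$, which is exactly $(v,z)\in\delta(\alpha(D))$.

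For the reverse inclusion under conditions (1) and (2) of \cref{lem:easy conditions}, I would start from $(v,z) \in \delta(\alpha(D))$, which unpacks to $Z(v) \times \{z\} \subseteq \alpha(D)$. Thus for every $x \in Z(v)$ there is some $n_x \in \N^k$ with $(x(n_x), z) \in D$; writing $\mu_x \coloneqq x(0, n_x)$, this says $r(\mu_x) = v$, $(s(\mu_x), z) \in D$, and $x \in Z(\mu_x) \subseteq Z(v)$. Since $\Lambda$ is row-finite with no sources, $Z(v)$ is compact, so the open cover $\{Z(\mu_x) : x \in Z(v)\}$ admits a finite subcover $Z(v) = \bigcup_{i=1}^m Z(\mu_i)$ with each $(s(\mu_i), z) \in D$. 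This compactness reduction mirrors the argument already used in the proof of \cref{lem:easy conditions}(3).

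The crux is to convert this finite set of ``descendant'' vertices lying in $D$ into the single membership $(v,z) \in D$, and here both hypotheses enter. I would choose $n \in \N^k$ to be a coordinatewise upper bound for the degrees $d(\mu_1), \dots, d(\mu_m)$ and claim $(s(\lambda), z) \in D$ for every $\lambda \in v\Lambda^n$. To see this, extend $\lambda$ to an infinite path $x = \lambda x' \in Z(v)$ (possible since $\Lambda$ has no sources); then $x \in Z(\mu_i)$ for some $i$, and since $d(\mu_i) \le n = d(\lambda)$, the factorisation property forces $\mu_i$ to be an initial segment of $\lambda$, say $\lambda = \mu_i\lambda'$ with $r(\lambda') = s(\mu_i)$. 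Condition~(1) applied to $\lambda'$, using $(s(\mu_i), z) = (r(\lambda'), z) \in D$, yields $(s(\lambda'), z) = (s(\lambda), z) \in D$. Having established $(s(\lambda), z) \in D$ for all $\lambda \in v\Lambda^n$, condition~(2) delivers $(v, z) \in D$, completing $\delta(\alpha(D)) \subseteq D$.

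The main obstacle I anticipate is the bookkeeping in this last step: producing one uniform degree $n$ that simultaneously dominates all the finitely many $\mu_i$, and then invoking the $k$-graph factorisation property to guarantee that every $\lambda \in v\Lambda^n$ genuinely extends one of the $\mu_i$. Everything else is routine, and the two conditions are used in exactly complementary ways --- (1) to transport $D$-membership downward from the vertices $s(\mu_i)$ to all sources of $v\Lambda^n$, and (2) to pull that information back up to $v$.
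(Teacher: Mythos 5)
Your proof is correct and follows essentially the same route as the paper: the first inclusion by unwinding the definitions of $\alpha$ and $\delta$, and the reverse inclusion by compactness of $Z(v)$, taking $n$ to be the join of the finitely many degrees, transporting $D$-membership down along tails via condition~(1), and pulling back up to $v$ via condition~(2). The only (immaterial) difference is how you check that every $\lambda \in v\Lambda^n$ extends one of the finitely many $\mu_i$: you extend $\lambda$ to an infinite path and invoke the factorisation property, whereas the paper argues via disjointness of the cylinder sets $Z(\eta)$, $\eta \in v\Lambda^n$.
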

\begin{proof}
For the first statement, fix $(v,z) \in D$. For each $x \in Z(v)$, we have $(x(0), z) \in
D$ and so by definition of $\alpha(D)$ we have $Z(v) \times \{z\} \subseteq \alpha(D)$.
So by definition of $\delta$, we have $(v,z) \in \delta(\alpha(D))$.

For the second statement, suppose that $D$ satisfies conditions (1)~and~(2) of
\cref{lem:easy conditions}. We must show that $\delta(\alpha(D)) \subseteq D$. Fix $(v,z)
\in \delta(\alpha(D))$. Then $Z(v) \times \{z\} \subseteq \alpha(D)$, and so for each $x
\in Z(v)$, there exists $n_x \in \N^k$ such that $(x(n_x), z) \in D$. The sets $Z(x(0,
n_x))_{x \in Z(v)}$ are an open cover of $Z(v)$ in $\Lambda^\infty$, and since $Z(v)$ is
compact, it follows that there is a finite subset $F \subseteq Z(v)$ such that $Z(v) =
\bigcup_{x \in F} Z(x(0,n_x))$. Let $n = \bigvee_{x \in F} n_x$. For each $x \in F$ we
have $Z(x(0,n_x)) = \bigcup_{\tau \in x(n_x)\Lambda^{n-n_x}} Z(x(0,n_x)\tau)$. Since each
$x(0,n_x)\tau$ belongs to $v\Lambda^n$ and since the sets $Z(\eta), \eta \in \Lambda^n$
are mutually disjoint nonempty subsets of $Z(v)$, we deduce that $\{x(0, n_x)\tau : x \in
F, \tau \in x(n_x)\Lambda^{n - n_x}\} = v\Lambda^n$. For $x \in F$ and $\tau \in
x(n_x)\Lambda^{n - n_x}$, the fact that $D$ satisfies \cref{lem:easy conditions}(1) implies that
$(s(\tau), z) \in D$, so $\{(s(\eta), z) : \eta \in v\Lambda^n\} \subseteq D$. Since
$D$ satisfies \cref{lem:easy conditions}(2), we obtain $(v, z) \in D$.
\end{proof}

We now characterise the sets $D \subseteq \Lambda^0 \times \T^k$ that have the form
$\delta(A)$ for some $A \in \mathcal{A}_\Lambda$. The condition is technical, but we will
demonstrate by example that it can be checked in practice.

Throughout what follows, given a harmonious family of bisections $\Bb =
(B_\gamma)_{\gamma \in \J_x}$ based at $x \in \Lambda^\infty$, and given $y \in B_x$, we
write $(\Bb^{\ess}_y)^\perp$ for the subgroup
\[
(\Bb^{\ess}_y)^\perp \coloneqq
    \{z \in \T^k : z^{\coc(\gamma)} = 1\text{ for all } \gamma \in \Bb^{\ess}_y\}.
\]
Thus, the fibre $(\Bb^{\ess})^\perp_y$ of $(\Bb^{\ess})^\perp$ over the point $y$ is
$\{y\} \times (\Bb^{\ess}_y)^\perp$.

\begin{proposition}\label{prp:D description}
Let $\Lambda$ be a row-finite higher-rank graph with no sources whose groupoid
$G_\Lambda$ admits harmonious families of bisections. A set $D \subseteq \Lambda^0 \times
\T^k$ has the form $\delta(A)$ for some $A \subseteq \Lambda^\infty \times \T^k$
satisfying \labelcref{(A1),(A2)} if and only if $D$ satisfies (1)~and~(2) of
\cref{lem:easy conditions} and for every $(v,z) \in D$ and every $x \in Z(v)$, there
exist a bisection family $\Bb = (B_\gamma)_{\gamma \in \J_x}$ based at $x$ and an open
neighbourhood $V$ of $z$ in $\T^k$ such that for every $y \in B_x$ there exists $n \in
\N^k$ such that $\{y(n)\} \times V(\Bb^{\ess}_y)^\perp \subseteq D$.

In particular, $\alpha$ is a bijection from the collection of such sets $D$ to $\mathcal{A}_\Lambda$.
\end{proposition}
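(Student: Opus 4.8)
The plan is to prove the displayed ``if and only if'' by establishing the two inclusions $\delta(\mathcal{A}_\Lambda)\subseteq\mathcal{D}_\Lambda$ and $\mathcal{D}_\Lambda\subseteq\delta(\mathcal{A}_\Lambda)$, where I write $\mathcal{D}_\Lambda$ as shorthand for the collection of sets $D$ satisfying conditions (1),(2) of \cref{lem:easy conditions} together with the bisection-family condition in the statement; the final ``in particular'' then drops out formally. Indeed, once the two inclusions are established, \cref{lem:alphabeta} gives $\alpha\circ\delta=\mathrm{id}$ on $\mathcal{A}_\Lambda$, and \cref{lem:betaalpha} gives $\delta\circ\alpha=\mathrm{id}$ on $\mathcal{D}_\Lambda$ (since every $D\in\mathcal{D}_\Lambda$ satisfies (1),(2)), so $\alpha$ and $\delta$ restrict to mutually inverse bijections between $\mathcal{D}_\Lambda$ and $\mathcal{A}_\Lambda$.

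For the forward inclusion, fix $A\in\mathcal{A}_\Lambda$ and put $D=\delta(A)$. Conditions (1),(2) hold by \cref{lem:easy conditions}. For the bisection-family condition, fix $(v,z)\in D$ and $x\in Z(v)$; then $(x,z)\in A$, so \labelcref{(A2)} supplies a harmonious family $\Bb$ at $x$ and an open $V\ni z$ with $(B_x\times V)\cdot(\Bb^{\ess})^\perp\subseteq A$. Shrinking $V$, I may assume it is precompact. Fixing $y\in B_x$, every $w$ in the \emph{compact} set $\overline{V}\,(\Bb^{\ess}_y)^\perp$ satisfies $(y,w)\in A$, so \cref{lem:A containment} covers this set by finitely many boxes $Z(y(n_i))\times V_i\subseteq A$; setting $n=\bigvee_i n_i$ and using that condition (1) makes $\{m:(y(m),w)\in D\}$ upward closed (via the initial segment of $T^{n_i}(y)$ of degree $n-n_i$, which runs from $y(n_i)$ to $y(n)$), I obtain $\{y(n)\}\times V(\Bb^{\ess}_y)^\perp\subseteq D$, as required.

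For the reverse inclusion, fix $D\in\mathcal{D}_\Lambda$ and set $A=\alpha(D)$; then $\delta(A)=D$ by \cref{lem:betaalpha}, so it remains to check $A\in\mathcal{A}_\Lambda$. For \labelcref{(A2)}, given $(x,z)\in A$ choose $n$ with $(x(n),z)\in D$ and apply the bisection-family condition at $T^n(x)\in Z(x(n))$ to obtain $\Bb$ and $V$; transporting $\Bb$ to $x$ along an open homogeneous bisection through $(x,n,T^n(x))$ via \cref{lem:bisection family transport} (conjugation preserves cocycle values, hence the relevant annihilators agree fibrewise) yields a harmonious family $\Bb'$ at $x$ with $(B'_x\times V)\cdot((\Bb')^{\ess})^\perp\subseteq A$, after reindexing the witnesses $y(m_y)$ as $y'(n+m_y)$. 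For \labelcref{(A1)}, I first observe that its hypothesis is exactly $\ker(\pi_{(x,z)})=\ker(\pi_{(x',z')})$, i.e.\ $\overline{[x]}=\overline{[x']}$ and $\overline{z}z'\in\coc(\Iess_x)^\perp$. Writing $x_1=T^n(x)$ (so $\coc(\Iess_{x_1})=\coc(\Iess_x)$ as $x_1\in[x]$) and using $\Bb^{\ess}_{x_1}=\Iess_{x_1}$, the bisection-family condition at $x_1$ pushes $(x_1,z)$ to $(x_1,z')\in A$ through the $(\Bb^{\ess}_{x_1})^\perp$-saturation; it then remains to move $x_1$ to $x'$ with $z'$ held fixed. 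This last step is an invariance statement: for fixed $z'$ the set $U=\{x:(x,z')\in A\}=\{x:x(m)\in D_{z'}\text{ for some }m\}$ is open and invariant under the orbit equivalence because $D_{z'}\coloneqq\{v:(v,z')\in D\}$ is hereditary and saturated by (1),(2); and an open orbit-invariant $U$ is automatically closed under orbit-closure equivalence, since if $x\in U$, $\overline{[x']}=\overline{[x]}$, but $x'\notin U$, then $\overline{[x]}\setminus U$ is a closed invariant set containing $x'$, hence containing $\overline{[x']}=\overline{[x]}\ni x$, contradicting $x\in U$.

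I expect the main obstacle to be the bisection-family condition in the forward direction: the set $V(\Bb^{\ess}_y)^\perp$ is open but not compact, so a single $n$ cannot be extracted by naive compactness. The resolution is the two-fold device of passing to a precompact $V$ (so that $\overline{V}\,(\Bb^{\ess}_y)^\perp$ is compact while still lying in the region where $(y,\cdot)\in A$) and exploiting that condition (1) of \cref{lem:easy conditions} makes membership $(y(m),w)\in D$ upward closed in $m$, so that the finitely many exponents coming from a subcover can be amalgamated into one. The transport of harmonious families in \labelcref{(A2)} and the orbit-closure-invariance step in \labelcref{(A1)} also require care, but each reduces to tools already available, namely \cref{lem:bisection family transport} and the standard invariance of orbit closures in \'etale groupoids.
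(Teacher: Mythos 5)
Your proposal is correct, and its skeleton coincides with the paper's: the forward implication is proved via \cref{lem:easy conditions}, \cref{lem:A containment}, and a finite-subcover argument over the compact set $\overline{V}(\Bb^{\ess}_y)^\perp$, followed by taking $n$ to be the join of the finitely many exponents and invoking heredity; the reverse implication is proved by setting $A=\alpha(D)$, quoting \cref{lem:betaalpha}, and verifying \labelcref{(A1),(A2)}, with the bijection statement then following formally from \cref{lem:alphabeta,lem:betaalpha}. The genuine differences are in how you verify the two conditions, and they are worth recording. For \labelcref{(A2)}, the paper simply asserts that the hypothesis on $D$ yields a harmonious family based at $x$ itself, but the hypothesis only applies at points of $Z(x(n))$, i.e.\ at $T^n(x)$; your transport along a homogeneous bisection through $(x,n,T^n(x))$ via \cref{lem:bisection family transport}, together with the observation that conjugation by a homogeneous bisection preserves cocycle values (so the fibrewise annihilators agree, and the witnesses reindex as $y(n+m)$), is precisely the step the paper elides, so your treatment is the more careful one. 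For \labelcref{(A1)}, the paper argues in the opposite order: it first moves $x$ to $x'$ combinatorially, producing a path in $x(n)\Lambda x'(m)$ from $[x']\cap Z(x(n))\neq\varnothing$ and applying heredity, and only then adjusts the character; you first adjust the character at $x_1=T^n(x)$ --- using the correct observations that $\Bb^{\ess}_{x_1}=\Iess_{x_1}$ at the base point of any harmonious family and that $\coc(\Iess_{x_1})=\coc(\Iess_{x})$ since $x_1\in[x]$ --- and then move $x_1$ to $x'$ by the topological fact that an open invariant subset of the unit space is closed under orbit-closure equivalence. Both mechanisms are valid; the paper's is more hands-on, while yours isolates a reusable invariance principle. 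Two cosmetic points to tidy in a final write-up: in $\T^k$ every subset is precompact, so in the forward direction you should say you shrink $V$ so that $\overline{V}$ is contained in the neighbourhood $V_0$ furnished by \labelcref{(A2)} (your closing paragraph shows this is what you intend, and it is exactly the paper's device of taking $V$ to be the interior of a compact neighbourhood $K\subseteq V_0$ of $z$); and invariance of your slice $U=\{y:(y,z')\in A\}$ uses heredity (condition~(1)) alone --- saturation (condition~(2)) plays no role there.
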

\begin{proof}
If $D = \delta(A)$, then $D$ satisfies (1)--(2) by \cref{lem:easy conditions}.
Fix $(v,z) \in D$. Since $D = \delta(A)$ we first observe that $Z(v) \times \{z\} \subseteq A$.

Fix $x \in Z(v)$. By condition~\labelcref{(A2)}, there exist a harmonious family of
bisections $\Bb = (B_\gamma)_{\gamma\in\J_x}$ based at $x$ and an open neighbourhood $V_0
\subseteq\T^k$ of $z$ such that $(B_x \times V_0) \cdot (\Bb^{\ess})^\perp \subseteq A$.
Since $\Lambda^\infty$ is totally disconnected, we can fix a compact open neighbourhood
$C$ of $x$ contained in $B_x$ and apply \cref{lem:bisection family transport} to obtain a
harmonious family of bisections $\{CB_\gamma C : \gamma \in \J_x\}$ with $CB_x C$ compact
open. So we can assume without loss of generality that $B_x$ is compact open. Let $K
\subseteq V_0$ be a compact neighbourhood of $z$, and let $V$ be the interior of $K$. We
claim that $\Bb$ and $V$ have the desired properties.

To see this, fix $y \in B_x$. We must find $n \in \N^k$ such that $\{y(n)\} \times
V(\Bb^{\ess}_y)^\perp \subseteq D$. Since $y \in B_x$, we have
\[
\{y\} \times K(\Bb^{\ess}_y)^\perp
    \subseteq \{y\} \times V_0(\Bb^{\ess}_y)^\perp \subseteq A.
\]
By \cref{lem:A containment}, for each $w \in K(\Bb^{\ess}_y)^\perp$, there exists $n_w
\in \N^k$ and a neighbourhood $U_w$ of $w$ such that $Z(y(n_w)) \times U_w \subseteq A$.
Since $K(\Bb^{\ess}_y)^\perp$ is compact, the open cover $\{U_w : w \in
K(\Bb^{\ess}_y)^\perp\}$ admits a finite subcover $\{U_w : w \in F\}$. Since each
$Z(y(n_w)) \times U_w \subseteq A$, each $\{y(n_w)\} \times U_w \subseteq D$. Let $n =
\bigvee_{w \in F} n_w$. Since $D$ satisfies condition~(1) of \cref{lem:easy conditions},
we have $\{y(n)\} \times U_w \subseteq D$ for all $w \in F$. Hence,
\begin{align*}
\{y(n)\} \times V(\Bb^{\ess}_y)^\perp
    &\subseteq \{y(n)\} \times K(\Bb^{\ess}_y)^\perp \\
    &\subseteq \{y(n)\} \times \textstyle\big(\bigcup_{w \in F} U_w\big)
    = \bigcup_{w \in F} (\{y(n)\} \times U_w)
    \subseteq D.
\end{align*}

Now suppose that $D$ satisfies (1)--(2) of \cref{lem:easy conditions} and that for every
$(v,z) \in D$ and every $x \in Z(v)$, there exist a harmonious family of bisections $\Bb
= (B_\gamma)_{\gamma \in \J_x}$ based at $x$ and an open neighbourhood $V$ of $z$ in
$\T^k$ such that for every $y \in B_x$ there exists $n \in \N^k$ such that $\{y(n)\}
\times V(\Bb^{\ess}_y)^\perp \subseteq D$.

By \cref{lem:betaalpha}, we have $D = \delta(\alpha(D))$, so we just need to show that
$\alpha(D)$ belongs to $\mathcal{A}_\Lambda$. For condition~\labelcref{(A1)}, suppose
that $(x,z) \in \alpha(D)$ and that $\overline{[x']} = \overline{[x]}$ and that $z$ and
$z'$ determine the same character of $\coc(\Iess_{x'})$. Since $(x,z) \in \alpha(D)$
there exists $n \in \N^k$ such that $(x(n), z) \in D$. We have $T^n(x) \in [x] \subseteq
\overline{[x]} = \overline{[x']}$. Hence $\overline{[x']} \cap Z(x(n)) \not=
\varnothing$. Since $Z(x(n))$ is open, it follows that $[x'] \cap Z(x(n)) \not=
\varnothing$. Hence there exists $m \in \N^k$ such that $x(n) \Lambda x'(m) \not=
\varnothing$. Since $(x(n), z) \in D$, \cref{lem:easy conditions}(1) ensures that
$(x'(m), z) \in D$. Hence $(x', z) \in \alpha(D)$. By hypothesis on $D$, there exist a
harmonious family of bisections $\Bb$ based at $x'$, a neighbourhood $V$ of $z$ and $p
\in \N^k$ such that $\{x'(p)\} \times V(\Bb^{\ess}_{x'})^\perp \subseteq D$. Since $D$
satisfies condition~(1) of \cref{lem:easy conditions}, we then have $\{x'(p \vee m)\}
\times V(\Bb^{\ess}_{x'})^\perp \subseteq D$, so by replacing $p$ with $p \vee m$ we can
assume that $p \ge m$. By definition we have $\Bb^{\ess}_{x'} = \bigcup \Bb \cap
\Iess_{x'} \subseteq \Iess_{x'}$, so $(\Bb^{\ess}_{x'})^\perp \supseteq
(\Iess_{x'})^\perp$. Since $z$ and $z'$ determine the same characters of
$\coc(\Iess_{x'})$, we therefore have $z' \in z \coc(\Iess_{x'})^\perp \subseteq
V(\Bb^{\ess}_{x'})^\perp$, and hence $(x'(p), z') \in D$. Therefore, $(x', z') \in
\alpha(D)$.

For~\labelcref{(A2)}, fix $(x,z) \in \alpha(D)$. By hypothesis, there exist a harmonious
family of bisections $\Bb$ based at $x$, and a neighbourhood $V$ of $z$ such that for
every $y \in B_x$ there exists $n_y \in \N^k$ such that $\{y(n_y)\} \times
V(\Bb^{\ess}_y)^\perp \subseteq D$. In particular, for $y \in B_x$, we have
\[
\big((B_x \times V) \cdot (\Bb^{\ess})^\perp\big)_y
    = \{y\} \times V(\Bb^{\ess}_y)^\perp
    \subseteq \alpha(D).
\]
That is, $(B_x \times V) \cdot (\Bb^{\ess})^\perp \subseteq \alpha(D)$.

The final statement follows from \cref{lem:alphabeta,lem:betaalpha}.
\end{proof}

The following complete description of the ideal structure of the higher-rank graph
$C^*$-algebra whose groupoid $G_\Lambda$ admits harmonious families of bisections now
follows directly from \cref{cor:open-if-and-only-if,prp:D description}. We identify
$C^*(\Lambda)$ with the groupoid $C^*$-algebra $C^*(G_\Lambda)$.

\begin{corollary}\label{cor:DLambda description}
Let $\Lambda$ be a row-finite higher-rank graph with no sources whose groupoid
$G_\Lambda$ admits harmonious families of bisections. Let $\Dd_\Lambda$ denote the
collection of subsets $D$ of $\Lambda^0 \times \T^k$ such that
\begin{enumerate}[label=(D\arabic*)]
  \item \label{(D1)} for $\lambda \in \Lambda$, if $(r(\lambda), z) \in \delta(A)$, then
    $(s(\lambda), z) \in \delta(A)$;
  \item \label{(D2)} for $n \in \N^k$, $v \in \Lambda^0$ and $z \in \T^k$, if $s(v\Lambda^n)
    \times \{z\} \subseteq \delta(A)$ then $(v, z) \in \delta(A)$; and
  \item \label{(D3)} for every $(v,z) \in D$ and every $x \in Z(v)$, there exist a bisection
    family $\Bb = (B_\gamma)_{\gamma \in \J_x}$ and an open neighbourhood $V$ of $z$ in
    $\T^k$ such that for every $y \in B_x$ there exists $n \in \N^k$ such that
    $\{y(n)\} \times V(\Bb^{\ess}_y)^\perp \subseteq D$.
\end{enumerate}
Let $\alpha \colon \Dd_\Lambda \to \mathcal{A}_\Lambda$ be the restriction of the map of
\cref{lem:alphabeta}, and for $(x,z) \in \Lambda^\infty \times \T^k$,
let $\pi_{(x,z)}$ be the representation $C^*(G_\Lambda)$ of \cref{rmk:H=Zk}.
Then
\[
  D \mapsto \bigcap_{(x,z) \in (\Lambda^\infty \times \T^k)\setminus \alpha(D)} \ker(\pi_{(x,z)})
\]
is a bijection between $\Dd_\Lambda$ and the collection of ideals of $C^*(\Lambda)$.
\end{corollary}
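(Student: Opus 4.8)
The plan is to realise the displayed assignment as a composite of three bijections that are already in hand, and then to check that this composite is given by the stated formula. First I would recall the bijections. Observe that $\Dd_\Lambda$ is, by definition, exactly the collection of subsets $D \subseteq \Lambda^0 \times \T^k$ characterised in \cref{prp:D description}, namely those satisfying conditions (1) and (2) of \cref{lem:easy conditions} together with the bisection-family condition of \cref{prp:D description}. Hence, by \cref{prp:D description} combined with \cref{lem:alphabeta,lem:betaalpha}, the maps $\alpha$ and $\delta$ restrict to mutually inverse bijections between $\Dd_\Lambda$ and $\mathcal{A}_\Lambda$. Second, the discussion immediately following the definition of $\mathcal{A}_\Lambda$ shows, via \cref{cor:open-if-and-only-if}, that $A \mapsto \pi(A)$ is a bijection from $\mathcal{A}_\Lambda$ onto the open subsets of $\Prim(C^*(G_\Lambda))$. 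Third, as recalled in \cref{sec:ideals}, the assignment $O \mapsto k\big(\Prim(C^*(G_\Lambda)) \setminus O\big)$ is a bijection from the open subsets of $\Prim(C^*(G_\Lambda))$ onto the ideals of $C^*(\Lambda) = C^*(G_\Lambda)$.

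With these three bijections available, I would identify their composite with the displayed map. Fix $D \in \Dd_\Lambda$ and set $A = \alpha(D) \in \mathcal{A}_\Lambda$. Condition \labelcref{(A1)} says exactly that $A$ is $\pi$-saturated, i.e. $A = \pi^{-1}(\pi(A))$; combined with surjectivity of $\pi$ from \cref{ntn:pi}, this gives $\Prim(C^*(G_\Lambda)) \setminus \pi(A) = \pi\big((\Lambda^\infty \times \T^k) \setminus A\big)$. Since $\Prim(C^*(G_\Lambda)) = \{\ker(\pi_{(x,z)}) : (x,z) \in \Lambda^\infty \times \T^k\}$ and $\pi(x,z) = \ker(\pi_{(x,z)})$, taking the kernel of the closed complement yields
\[
  k\big(\Prim(C^*(G_\Lambda)) \setminus \pi(A)\big) = \bigcap_{(x,z) \in (\Lambda^\infty \times \T^k) \setminus A} \ker(\pi_{(x,z)}),
\]
which is precisely the displayed map evaluated at $D$ once we substitute $A = \alpha(D)$. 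As a composite of three bijections, the displayed map is therefore a bijection from $\Dd_\Lambda$ onto the ideals of $C^*(\Lambda)$, as claimed.

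I do not anticipate a substantive obstacle here, since the two cited results \cref{prp:D description,cor:open-if-and-only-if} do the real work and the remaining argument is essentially bookkeeping. The one point deserving care is the $\pi$-saturation step in the second paragraph: one must confirm that \labelcref{(A1)} furnishes the full equality $\pi^{-1}(\pi(A)) = A$ rather than merely $A \subseteq \pi^{-1}(\pi(A))$, which holds because \labelcref{(A1)} describes exactly the fibre $\pi^{-1}(\ker(\pi_{(x,z)}))$ through each $(x,z) \in A$ (using the characterisation of when $\ker(\pi_{(x,z)}) = \ker(\pi_{(x',z')})$ recorded after \cref{ntn:pi}), and one must note that collapsing repeated primitive ideals does not change the intersection, so that indexing over points of $\Lambda^\infty \times \T^k$ agrees with indexing over the distinct primitive ideals outside $\pi(A)$.
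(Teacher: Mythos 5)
Your proposal is correct and follows essentially the same route as the paper: the paper's proof is precisely the observation that the claim ``follows directly from \cref{cor:open-if-and-only-if,prp:D description}'', i.e.\ the composite of the bijection $\alpha\colon\Dd_\Lambda\to\mathcal{A}_\Lambda$, the bijection $A\mapsto\pi(A)$ onto the open subsets of $\Prim(C^*(G_\Lambda))$, and the standard correspondence between open subsets of $\Prim$ and ideals from \cref{sec:ideals}. Your additional verification that \labelcref{(A1)} gives full $\pi$-saturation, so that intersecting over $(\Lambda^\infty\times\T^k)\setminus\alpha(D)$ agrees with intersecting over the primitive ideals outside $\pi(\alpha(D))$, is exactly the bookkeeping the paper leaves implicit.
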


\begin{remark}
The map from $\mathcal{D}_\Lambda$ to the set of ideals of $C^*(\Lambda)$ described above
generalises the well-known map \cite[Theorem~5.2]{RaeSimYee} from saturated hereditary
subsets of $\Lambda^0$ to gauge-invariant ideals of $C^*(\Lambda)$. Specifically, if
$D\in\mathcal{D}_\Lambda$ and $I$ is the corresponding ideal of $C^*(\Lambda)$, then $I$
is gauge-invariant if and only if $D = H\times\T^k$ for some subset
$H\subseteq\Lambda^0$, and then (D1)~and~(D2) say precisely that $H$ is saturated and
hereditary; specifically, it is the saturated hereditary set $\{v\in\Lambda^0 : p_v\in
I\}$. Observe that if the complement of every saturated hereditary subgraph of $\Lambda$
satisfies the aperiodicity condition~(B) of \cite{RaeSimYee}, then $\Iess(G_\Lambda)$ is
just the unit space, so~(D3) says that $\Dd_\Lambda$ consists of sets of the form $H
\times \T^k$ for $H \subseteq \Lambda^0$, and (D1)~and~(D2) say that these sets $H$ are
saturated and hereditary; so we recover \cite[Theorem~5.3]{RaeSimYee}.
\end{remark}

\begin{example}\label{eg:aHNS example}
We illustrate our results by applying them to the $2$-graph $\Lambda$ with the following
skeleton.
\[
	\begin{tikzpicture}[>=stealth, decoration={markings, mark=at position 0.5 with {\arrow{>}}}]
		\node[circle, inner sep=0pt] (v1) at (0,0) {$v_1$};
		\node[circle, inner sep=0pt] (v2) at (2,0) {$v_2$};
		\node[circle, inner sep=0pt] (v3) at (4,0) {$v_3$};
		\node[circle, inner sep=0pt] (v4) at (6,0) {$v_4$};
		\node[circle, inner sep=0pt] at (7.5,0) {$\dots$};
		\node[circle, inner sep=0pt] (w1) at (0,2) {$w_1$};
		\node[circle, inner sep=0pt] (w2) at (2,2) {$w_2$};
		\node[circle, inner sep=0pt] (w3) at (4,2) {$w_3$};
		\node[circle, inner sep=0pt] (w4) at (6,2) {$w_4$};
		\node[circle, inner sep=0pt] at (7.5,2) {$\dots$};
		\node[circle, inner sep=0pt] (x1) at (4.5,-2) {$x_1$};
		\node[circle, inner sep=0pt] (x2) at (5.5,-2) {$x_2$};
		\node[circle, inner sep=0pt] (x3) at (6.5,-2) {$x_3$};
		\node[circle, inner sep=0pt] (x4) at (7.5,-2) {$x_4$};
		\node[circle, inner sep=0pt] at (8.5,-2) {$\dots$};
		\node[circle, inner sep=0pt] (y1) at (1.5,-2) {$y_1$};
		\node[circle, inner sep=0pt] (y2) at (0.5,-2) {$y_2$};
		\node[circle, inner sep=0pt] (y3) at (-0.5,-2) {$y_3$};
		\node[circle, inner sep=0pt] (y4) at (-1.5,-2) {$y_4$};
		\node[circle, inner sep=0pt] at (-2.5,-2) {$\dots$};
		\draw[blue, postaction=decorate, out=150, in=30] (v2) to (v1);
		\draw[blue, postaction=decorate, out=150, in=30] (v3) to (v2);
		\draw[blue, postaction=decorate, out=150, in=30] (v4) to (v3);
		\draw[red, dashed, postaction=decorate, out=210, in=330] (v2) to (v1);
		\draw[red, dashed, postaction=decorate, out=210, in=330] (v3) to (v2);
		\draw[red, dashed, postaction=decorate, out=210, in=330] (v4) to (v3);
		\draw[blue, postaction=decorate, out=240, in=120] (w1) to (v1);
		\draw[blue, postaction=decorate, out=240, in=120] (w2) to (v2);
		\draw[blue, postaction=decorate, out=240, in=120] (w3) to (v3);
		\draw[blue, postaction=decorate, out=240, in=120] (w4) to (v4);
		\draw[red, dashed, postaction=decorate, out=300, in=60] (w1) to (v1);
		\draw[red, dashed, postaction=decorate, out=300, in=60] (w2) to (v2);
		\draw[red, dashed, postaction=decorate, out=300, in=60] (w3) to (v3);
		\draw[red, dashed, postaction=decorate, out=300, in=60] (w4) to (v4);
		\draw[blue, postaction=decorate] (w1).. controls +(-0.75, 0.75) and +(-0.75, -0.75) .. (w1);
		\draw[blue, postaction=decorate] (w2).. controls +(-0.75, 0.75) and +(-0.75, -0.75) .. (w2);
		\draw[blue, postaction=decorate] (w3).. controls +(-0.75, 0.75) and +(-0.75, -0.75) .. (w3);
		\draw[blue, postaction=decorate] (w4).. controls +(-0.75, 0.75) and +(-0.75, -0.75) .. (w4);
		\draw[red, dashed, postaction=decorate] (w1).. controls +(0.75, 0.75) and +(0.75, -0.75) .. (w1);
		\draw[red, dashed, postaction=decorate] (w2).. controls +(0.75, 0.75) and +(0.75, -0.75) .. (w2);
		\draw[red, dashed, postaction=decorate] (w3).. controls +(0.75, 0.75) and +(0.75, -0.75) .. (w3);
		\draw[red, dashed, postaction=decorate] (w4).. controls +(0.75, 0.75) and +(0.75, -0.75) .. (w4);
		\draw[blue, postaction=decorate] (x1) to (v1);
		\draw[blue, postaction=decorate] (x1) to (v2);
		\draw[blue, postaction=decorate] (x1) to (v3);
		\draw[blue, postaction=decorate] (x1) to (v4);
		\draw[blue, postaction=decorate] (x2) to (x1);
		\draw[blue, postaction=decorate] (x3) to (x2);
		\draw[blue, postaction=decorate] (x4) to (x3);
		\draw[red, dashed, postaction=decorate] (x1).. controls +(0.75, -0.75) and +(-0.75, -0.75) .. (x1);
		\draw[red, dashed, postaction=decorate] (x2).. controls +(0.75, -0.75) and +(-0.75, -0.75) .. (x2);
		\draw[red, dashed, postaction=decorate] (x3).. controls +(0.75, -0.75) and +(-0.75, -0.75) .. (x3);
		\draw[red, dashed, postaction=decorate] (x4).. controls +(0.75, -0.75) and +(-0.75, -0.75) .. (x4);
		\draw[red, dashed, postaction=decorate] (y1) to (v1);
		\draw[red, dashed, postaction=decorate] (y1) to (v2);
		\draw[red, dashed, postaction=decorate] (y1) to (v3);
		\draw[red, dashed, postaction=decorate] (y1) to (v4);
		\draw[red, dashed, postaction=decorate] (y2) to (y1);
		\draw[red, dashed, postaction=decorate] (y3) to (y2);
		\draw[red, dashed, postaction=decorate] (y4) to (y3);
		\draw[blue, postaction=decorate] (y1).. controls +(0.75, -0.75) and +(-0.75, -0.75) .. (y1);
		\draw[blue, postaction=decorate] (y2).. controls +(0.75, -0.75) and +(-0.75, -0.75) .. (y2);
		\draw[blue, postaction=decorate] (y3).. controls +(0.75, -0.75) and +(-0.75, -0.75) .. (y3);
		\draw[blue, postaction=decorate] (y4).. controls +(0.75, -0.75) and +(-0.75, -0.75) .. (y4);
	\end{tikzpicture}
\]
This example appeared in \cite{anHuefNgSims2021} as an illustration of a $2$-graph whose
$C^*$-algebra is stably finite. We have chosen it as an illustration because it has a
reasonably complex essential-isotropy structure and also a reasonably complex ideal
structure that cannot be described using existing results (for example, it is not a
pullback of a $1$-graph, nor a product of $1$-graphs).

We describe $\Dd_\Lambda$. It is convenient here to regard an element of $\Dd_\Lambda$
here as a function $D \colon \Lambda^0 \to \mathsf{Open}(\T^2)$ from the vertices of $\Lambda$
to the open subsets of $\T^2$; a subset $D \subseteq \Lambda^0 \times \T^2$ is
identified with the function $v \mapsto D(v) \coloneqq \{z \in \T^2 : (v,z) \in D\}$.

For $D \in \Dd_\Lambda$, condition~\labelcref{(D1)} implies that:
\begin{itemize}
\item $D(v_i) \subseteq D(v_{i+1})$ for all $i$;
\item $D(v_i) \subseteq D(w_i)$ for all $i$;
\item $D(x_i) \subseteq D(x_{i+1})$ for all $i$;
\item $D(y_i) \subseteq D(y_{i+1})$ for all $i$; and
\item $\bigcup_j D(v_j) \subseteq D(x_1) \cap D(y_1)$.
\end{itemize}

The consequences of condition~\labelcref{(D2)} are as follows.
\begin{itemize}
  \item Applied with $v = x_i$ and $n = (1,0)$, condition~\labelcref{(D2)} forces $D(x_{i+1})
    \subseteq D(x_i)$ for all $i$; combining this with the consequenes of~\labelcref{(D1)} above
    implies that $D$ is constant on $\{x_i : i \in \N\}$.
  \item Applied with $v = y_i$ and $n = (0,1)$, condition~\labelcref{(D2)} forces $D(y_{i+1})
    \subseteq D(y_i)$ for all $i$; combining this with the consequenes of~\labelcref{(D1)} above
    implies that $D$ is constant on $\{y_i : i \in \N\}$.
  \item Applied with $v = v_i$ and $n = (j, 0)$, condition~\labelcref{(D2)} implies that $D(v_{i+j})
    \cap \big(\bigcap_{0 \le l < j} D(w_{i+l})) \cap D(x_1) \subseteq D(v_i)$ for all
    $i$; since each $D(w_{i+l})$ contains $D(v_{i+l})$, and since the $D(v_i)$ are
    increasing and contained in $D(x_1)$, this reduces to $D(v_{i+1}) \cap D(w_i)
    \subseteq D(v_i)$.
\end{itemize}

To understand the consequences of~\labelcref{(D3)}, we must first understand the
harmonious families of bisections in this $2$-graph groupoid. To do this, first note that
each $x_i \Lambda^\infty$, each $y_i \Lambda^\infty$ and each $w_i \Lambda^\infty$ is a
singleton; we will write $\zeta(u)$ for the unique element of $u\Lambda^\infty$ for each
$u \in \{x_i, y_i, z_i : i \in \N\}$. Since each $u\Lambda^\infty$ is clopen in
$\Lambda^\infty$, each $\zeta(u)$ is an isolated point. For a given vertex $u$ and
infinite path $\eta \in u\Lambda^\infty$, if $\Bb$ and $\Bb'$ are harmonious bisection
families based at $\eta$ such that $B_\alpha \subseteq B'_\alpha$ for all $\alpha$, then
the collection of functions $D$ that satisfy~\labelcref{(D3)} with respect to $\Bb$ is
larger than the collection that satisfy~\labelcref{(D3)} with respect to $\Bb'$.
Combining this with \cref{lem:bisection family transport} applied with $C = Z(u)$ we see
that it suffices to consider harmonious families of bisections $\Bb^u$ based at each
$\zeta(u)$ such that $B^u_{\zeta(u)} \subseteq u\Lambda^\infty = \{\zeta(u)\}$. For each
$u$, there is only one such harmonious family of bisections, namely $\mathcal{B}^u =
\{B^u_\alpha : \alpha \in \J_{\zeta(u)}\}$ given by $B^u_\alpha \coloneqq \{\alpha\}$ for
each $\alpha \in \J_{\zeta(u)} = \{(\zeta(u), p-q, \zeta(u)) : T^p(\zeta(u)) =
T^q(\zeta(u))\}$. In particular,
\begin{align*}
 \Bb^{x_i} &= \{(\zeta(x_i), (0,l), \zeta(x_i)) : l \in \Z\},  \\
 \Bb^{y_i} &= \{(\zeta(x_i), (l,0), \zeta(x_i)) : l \in \Z\}, \\
 \Bb^{w_i} &= \{(\zeta(x_i), (l_1,l_2), \zeta(x_i)) : (l_1, l_2) \in \Z^2\}.
\end{align*}
So~\labelcref{(D3)} implies that
\begin{itemize}
\item each $D(x_i)$ is invariant under multiplication by $\T \times \{1\} \subseteq
    \T^2$, and
\item each $D(y_i)$ is invariant under multiplication by $\{1\} \times \T \subseteq
    \T^2$.
\end{itemize}
Since each $((\Bb^{w_i})^{\ess})^\perp_{\zeta(w_i)} = \{1\}$, condition~\labelcref{(D3)} imposes no
condition on the $D(w_i)$.

It remains to analyse the $D(v_i)$. For this, let $S \coloneqq \{v_i : i \in \N\}$. Then
$\Lambda S$ is a sub-$k$-graph of $\Lambda$. There are infinitely many infinite paths in
each $v_i\Lambda^\infty$, but all but one of these has the form $\mu \zeta(u)$ for some
finite path $u$ and some $u \in \{x_i, y_i, z_i : i \in \N\}$. The one remaining infinite
path is the unique infinite path $\zeta(v_i)$ in $v_i\Lambda S$. Each
$\overline{[\zeta(v_i)]} = \{\zeta(v_j) : j \in \N\} = (\Lambda S)^\infty$. We have
$T^n(\zeta(v_i)) = T^m(\zeta(v_i))$ if and only if $m - n \in \Z(1, -1) \subseteq \Z^2$.
So~\labelcref{(D3)} implies that there exists $l \in \N$ such that $D(v_j)$ is closed under
multiplication by $\Z(1, -1)^\perp = \{(w, w) : w \in \T\} \subseteq \T^k$. Since we
already established that $D(v_j) \subseteq D(x_j) = D(x_1)$ and since $D(x_1)$ is also
closed under multiplication by $\T \times \{1\}$, we deduce that if any $D(v_j) \not=
\varnothing$ then $D(x_i) = \T^2$ for all $i$. Similar considerations show that if any
$D(v_j) \not= \varnothing$ then each $D(y_i) = \T^2$.

We are now in a position to describe all of $\Dd_\Lambda$. A function $D \colon \Lambda^0 \to
\mathsf{Open}(\T^2)$ belongs to $\Dd_\Lambda$ if and only if either:
\begin{itemize}
\item $D(v_i) = \varnothing$ for all $i$; $D$ is constant on $\{x_i : i \in \N\}$ and
    $D(x_1)$ is invariant under $\T \times \{1\}$; and $D$ is constant on $\{y_i : i
    \in \N\}$ and $D(y_1)$ is invariant under $\{1\} \times \T$; or
\item $D(x_i) = D(y_i) = \T^2$ for all $i$; each $D(v_i)$ is invariant under
    $\{(w,\overline{w}) : w \in \T\}$ and each $D(v_i) \subseteq D(v_{i+1})$; each
    $D(v_i) \subseteq D(w_i)$; and each $D(w_i) \cap D(v_{i+1}) \subseteq D(v_i)$.
\end{itemize}
\end{example}

\begin{example}
To illustrate how aperiodicity affects the ideal structure, suppose that we modify
\cref{eg:aHNS example} as follows. Fix disjoint subsets $S_0, S_1 \subseteq \N$. For each
$n \in S \coloneqq S_0 \cup S_1$, add an additional red loop and an additional blue loop at
$w_n$ (in the picture below, $w_3 \in S$ while $w_1, w_2, w_4 \not\in S$).

For $n \in S$ there are now multiple red-blue loops at $w_n$ and multiple red-blue paths
from $w_n$ to $v_n$, so we must specify factorisation rules (see \cite[Theorems
4.4~and~4.5]{HazRaeSimWeb}). For each $n \in S$, denote the blue loops at $w_n$ by
$e^n_1, e^n_2$, the red loops by $f^n_1, f^n_2$, the blue edge from $w_n$ to $v_n$ by
$a_n$ and the red edge from $w_n$ to $v_n$ by $b_n$. For all $n \in S$, impose the
factorisation rules $a_n f^n_i = b_n e^n_i$ on red-blue paths from $w_n$ to $v_n$. For
blue-red loops at $w_n$, impose factorisation rules depending on whether $n \in S_0$ or
$n \in S_1$:
\[
e^n_i f^n_j =
    \begin{cases}
        f^n_j e^n_i &\text{ if $n \in S_0$.}\\
        f^n_i e^n_j &\text{ if $n \in S_1$}
    \end{cases}
\]
\[
	\begin{tikzpicture}[>=stealth, decoration={markings, mark=at position 0.5 with {\arrow{>}}}]
		\node[circle, inner sep=0pt] (v1) at (0,0) {$v_1$};
		\node[circle, inner sep=0pt] (v2) at (2,0) {$v_2$};
		\node[circle, inner sep=0pt] (v3) at (4,0) {$v_3$};
		\node[circle, inner sep=0pt] (v4) at (6,0) {$v_4$};
		\node[circle, inner sep=0pt] at (7.5,0) {$\dots$};
		\node[circle, inner sep=0pt] (w1) at (0,2) {$w_1$};
		\node[circle, inner sep=0pt] (w2) at (2,2) {$w_2$};
		\node[circle, inner sep=0pt] (w3) at (4,2) {$w_3$};
		\node[circle, inner sep=0pt] (w4) at (6,2) {$w_4$};
		\node[circle, inner sep=0pt] at (7.5,2) {$\dots$};
		\node[circle, inner sep=0pt] (x1) at (4.5,-2) {$x_1$};
		\node[circle, inner sep=0pt] (x2) at (5.5,-2) {$x_2$};
		\node[circle, inner sep=0pt] (x3) at (6.5,-2) {$x_3$};
		\node[circle, inner sep=0pt] (x4) at (7.5,-2) {$x_4$};
		\node[circle, inner sep=0pt] at (8.5,-2) {$\dots$};
		\node[circle, inner sep=0pt] (y1) at (1.5,-2) {$y_1$};
		\node[circle, inner sep=0pt] (y2) at (0.5,-2) {$y_2$};
		\node[circle, inner sep=0pt] (y3) at (-0.5,-2) {$y_3$};
		\node[circle, inner sep=0pt] (y4) at (-1.5,-2) {$y_4$};
		\node[circle, inner sep=0pt] at (-2.5,-2) {$\dots$};
		\draw[blue, postaction=decorate, out=150, in=30] (v2) to (v1);
		\draw[blue, postaction=decorate, out=150, in=30] (v3) to (v2);
		\draw[blue, postaction=decorate, out=150, in=30] (v4) to (v3);
		\draw[red, dashed, postaction=decorate, out=210, in=330] (v2) to (v1);
		\draw[red, dashed, postaction=decorate, out=210, in=330] (v3) to (v2);
		\draw[red, dashed, postaction=decorate, out=210, in=330] (v4) to (v3);
		\draw[blue, postaction=decorate, out=240, in=120] (w1) to (v1);
		\draw[blue, postaction=decorate, out=240, in=120] (w2) to (v2);
		\draw[blue, postaction=decorate, out=240, in=120] (w3) to node[pos=0.5, left, inner sep=1.5pt] {\scriptsize$a_3$} (v3);
		\draw[blue, postaction=decorate, out=240, in=120] (w4) to (v4);
		\draw[red, dashed, postaction=decorate, out=300, in=60] (w1) to (v1);
		\draw[red, dashed, postaction=decorate, out=300, in=60] (w2) to (v2);
		\draw[red, dashed, postaction=decorate, out=300, in=60] (w3) to node[pos=0.5, right, inner sep=1.5pt] {\scriptsize$b_3$} (v3);
		\draw[red, dashed, postaction=decorate, out=300, in=60] (w4) to (v4);
		\draw[blue, postaction=decorate] (w1).. controls +(-0.75, 0.75) and +(-0.75, -0.75) .. (w1);
		\draw[blue, postaction=decorate] (w2).. controls +(-0.75, 0.75) and +(-0.75, -0.75) .. (w2);
		\draw[blue, postaction=decorate] (w3).. controls +(-0.75, 0.75) and +(-0.75, -0.75) .. (w3);
		\draw[blue, postaction=decorate] (w3).. controls +(-1, 1) and +(-1, -1) .. (w3) node[pos=0.25, above, inner sep=2pt] {\scriptsize$e^3_1, e^3_2$};
		\draw[blue, postaction=decorate] (w4).. controls +(-0.75, 0.75) and +(-0.75, -0.75) .. (w4);
		\draw[red, dashed, postaction=decorate] (w1).. controls +(0.75, 0.75) and +(0.75, -0.75) .. (w1);
		\draw[red, dashed, postaction=decorate] (w2).. controls +(0.75, 0.75) and +(0.75, -0.75) .. (w2);
		\draw[red, dashed, postaction=decorate] (w3).. controls +(0.75, 0.75) and +(0.75, -0.75) .. (w3);
		\draw[red, dashed, postaction=decorate] (w3).. controls +(1, 1) and +(1, -1) .. (w3) node[pos=0.25, above, inner sep=2pt] {\scriptsize$f^3_1, f^3_2$};
		\draw[red, dashed, postaction=decorate] (w4).. controls +(0.75, 0.75) and +(0.75, -0.75) .. (w4);
		\draw[blue, postaction=decorate] (x1) to (v1);
		\draw[blue, postaction=decorate] (x1) to (v2);
		\draw[blue, postaction=decorate] (x1) to (v3);
		\draw[blue, postaction=decorate] (x1) to (v4);
		\draw[blue, postaction=decorate] (x2) to (x1);
		\draw[blue, postaction=decorate] (x3) to (x2);
		\draw[blue, postaction=decorate] (x4) to (x3);
		\draw[red, dashed, postaction=decorate] (x1).. controls +(0.75, -0.75) and +(-0.75, -0.75) .. (x1);
		\draw[red, dashed, postaction=decorate] (x2).. controls +(0.75, -0.75) and +(-0.75, -0.75) .. (x2);
		\draw[red, dashed, postaction=decorate] (x3).. controls +(0.75, -0.75) and +(-0.75, -0.75) .. (x3);
		\draw[red, dashed, postaction=decorate] (x4).. controls +(0.75, -0.75) and +(-0.75, -0.75) .. (x4);
		\draw[red, dashed, postaction=decorate] (y1) to (v1);
		\draw[red, dashed, postaction=decorate] (y1) to (v2);
		\draw[red, dashed, postaction=decorate] (y1) to (v3);
		\draw[red, dashed, postaction=decorate] (y1) to (v4);
		\draw[red, dashed, postaction=decorate] (y2) to (y1);
		\draw[red, dashed, postaction=decorate] (y3) to (y2);
		\draw[red, dashed, postaction=decorate] (y4) to (y3);
		\draw[blue, postaction=decorate] (y1).. controls +(0.75, -0.75) and +(-0.75, -0.75) .. (y1);
		\draw[blue, postaction=decorate] (y2).. controls +(0.75, -0.75) and +(-0.75, -0.75) .. (y2);
		\draw[blue, postaction=decorate] (y3).. controls +(0.75, -0.75) and +(-0.75, -0.75) .. (y3);
		\draw[blue, postaction=decorate] (y4).. controls +(0.75, -0.75) and +(-0.75, -0.75) .. (y4);
	\end{tikzpicture}
\]

Let $\Gamma$ be the $2$-graph with this skeleton and these factorisation
rules. For $n \in S_0$, the subgraph $w_n \Gamma w_n$ is a cartesian product
of two copies of the $1$-graph $B_2$ that has one vertex and two loops. So
the reduction of $G_\Gamma$ to $Z(w_n)$ for $n \in S_0$ is the cartesian
product of two copies of the standard groupoid $\Hh_2$ for the Cuntz algebra
$\mathcal{O}_2$, which has trivial essential isotropy. Thus, for $n \in S_0$,
and $x \in Z(w_n)$, we have $\J_x = \{0\}$.

For $n \in S_1$, the subgraph $w_n \Gamma w_n$ is the pullback of the same $1$-graph
$B_2$ by the functor $m \mapsto m_1 + m_2$ from $\N^2$ to $\N$. So by
\cite[Proposition~2.10]{Kumjian-Pask2000}, the reduction of $G_\Gamma$ to $Z(w_n)$ for $n
\in S_1$ is isomorphic to the cartesian product of $\Hh_2$ with the group $\Z$. For $n
\in S_1$, and $x \in Z(w_n)$, we have $\J_x = \{(x, (m, -m), x) : m \in \Z\}$.

The set $\mathcal{D}_\Gamma$ differs from $\mathcal{D}_\Lambda$ in \cref{eg:aHNS example} only
in that if $D \in \mathcal{D}_\Gamma$, then for $n \in S_0$, we have $D(w_n) \in
\{\varnothing, \T^2\}$, and for $n \in S_1$, the set $D(w_n)$ is invariant for
multiplication by $\Z(1,-1)^\perp = \{(w, w) : w \in \T\} \subseteq \T^2$. Aside from
that, the constraints on $D$ are as in \cref{eg:aHNS example}. Writing $S^\circ_0 = \{n
\in S_0 : D(w_n) = \varnothing\}$ and $S^\bullet_0 = \{n \in S_0 : D(w_n) = \T^2\}$, if
$\bigcup D(v_i) \not= \varnothing$ so that $D(x_1) = D(y_1) = \T^2$, we have $D(v_n) =
D(v_{n+1})$ for $n \in S_0^\bullet$.
\end{example}

\end{document}